  \newtheorem{thm}{Theorem}[section]
  \newtheorem{lem}[thm]{Lemma}
  \newtheorem{prop}[thm]{Proposition}
  \newtheorem{cor}[thm]{Corollary}
  \theoremstyle{definition}
  \newtheorem{defn}[thm]{Definition}
  \newtheorem{exm}[thm]{Example}
  \newtheorem{rmk}[thm]{Remark}
 \newcommand\ra{\rightarrow}
\newcommand{\lex}{\,\overrightarrow{\times}\,}
 \newcommand\mI{\mathcal{I}}
 \newcommand\s{\subseteq}
 \newcommand\supp{\mathrm{Supp}}
  \newcommand\GMV{\mathrm{EMV}}
  \newcommand\lam{\lambda}
\newcommand{\Ker}{\mbox{\rm Ker}}
 \numberwithin{equation}{section}
\def\iff{if and only if }
\def\iff{if and only if }
\begin{document}
\title{\bf{On $EMV$-algebras} }
\author{ Anatolij Dvure\v{c}enskij$^{^{1,2}}$, Omid Zahiri$^{^{3}}$\footnote{Corresponding author } \\
{\small\em $^1$Mathematical Institute,  Slovak Academy of Sciences, \v Stef\'anikova 49, SK-814 73 Bratislava, Slovakia} \\
{\small\em $^2$Depart. Algebra  Geom.,  Palack\'{y} Univer. 17. listopadu 12, CZ-771 46 Olomouc, Czech Republic} \\
{\small\em  $^{3}$University of Applied Science and Technology, Tehran, Iran}\\
{\small\tt  dvurecen@mat.savba.sk\quad   zahiri@protonmail.com} }
\date{}
\maketitle
\begin{abstract}
The paper deals with an algebraic extension of $MV$-algebras based on the definition of generalized Boolean algebras. We introduce a new algebraic structure, not necessarily with a top element,  which is called an $EMV$-algebra and every $EMV$-algebra contains an $MV$-algebra. First, we present basic properties of $EMV$-algebras, give some examples, introduce and investigate congruence relations, ideals and filters on this algebra. We show that each $EMV$-algebra can be embedded into an $MV$-algebra and we characterize $EMV$-algebras either as $MV$-algebras or maximal ideals of $MV$-algebras. We study the lattice of ideals of an $EMV$-algebra and prove that any $EMV$-algebra has at least one maximal ideal. We define an $EMV$-clan of fuzzy sets as a special $EMV$-algebra. We show any semisimple $EMV$-algebra is isomorphic to an $EMV$-clan of fuzzy functions on a set. We consider the variety of $EMV$-algebra and we present an equational base for each proper subvariety of the variety of $EMV$-algebras. We establish a categorical equivalencies of the category of proper $EMV$-algebras, the category of $MV$-algebras with a fixed special maximal ideal, and a special category of Abelian unital $\ell$-groups.
\end{abstract}

{\small {\it AMS Mathematics Subject Classification (2010)}:  06C15, 06D35 }

{\small {\it Keywords:} Generalized Boolean algebra, MV-algebra, idempotent element, $qEMV$-algebra, $EMV$-algebra, $EMV$-clan, state-morphism, ideal, filter, variety, equational base, categorical equivalence
 }

{\small {\it Acknowledgement:} AD is thankful for the support by the grants VEGA No. 2/0069/16 SAV and GA\v{C}R 15-15286S  }

\section{ Introduction }
$MV$-algebras were defined by Chang \cite{Cha} as an algebraic counterpart of many-valued reasoning. The principal result of the theory of $MV$-algebras is a representation theorem by Mundici \cite{Mun} saying that there is a categorical equivalence between the category of MV-algebras and the category of unital Abelian $\ell$-groups. Today the theory of $MV$-algebras is very deep and has many interesting connections with other parts of mathematics with many important applications to different areas. For more details on $MV$-algebras, we recommend the monographs \cite{mundici 1, mundici 2}.

$GMV$-algebras, called also pseudo $MV$-algebras \cite{georgescu} or  non-commutative $MV$-algebras \cite{Rach}, are a non-commutative
generalization of $MV$-algebras and the algebraic counterparts of non-commutative many valued logic. Moreover, Galatos and Tsinakis generalized the notion of an $MV$-algebra in the context of residuated lattices to include both commutative and unbounded structures in \cite{Tsinakis} and introduced the notion of generalized $MV$-algebra.
Indeed, a pseudo $MV$-algebra is a bounded integral generalized $MV$-algebra. They extended the relation between unital $\ell$-groups and pseudo $MV$-algebras established by Mundici \cite{Mun} for $MV$-algebras and by
Dvure\v{c}enskij \cite{Dvu2} for pseudo $MV$-algebras. Many other results in these structures can be find in \cite{DiDvTs,Dvu1,Dvu2,Rach2,Shang}. We note that $MV$-algebras are studied in the last period  also in the frames of involutive semirings, see \cite{DiRu}.

There is another way how to generalize the concept of $MV$-algebras considering the definition of generalized Boolean algebras.
In the paper, first we use the definition of  generalized Boolean algebras to extend the concept of $MV$-algebras. We call this structure an $EMV$-algebra. These algebras generalize $MV$-algebras, and in any $EMV$-algebra $M$, the interval $[0,a]$ forms an $MV$-algebra for each idempotent $a\in M$.

The paper is organized as follows. After Preliminaries, Section 2, we introduce $EMV$-algebras in Section 3. We present some examples and find relations between generalized Boolean algebras and $EMV$-algebras. We exhibit basic properties of $EMV$-algebras, and in particular, we show a one-to-one relationship between ideals and congruences of $EMV$-algebras.
In Section 4, we introduce state-morphisms as analogues of finitely additive probability measures, and we show their intimate relationship with maximal ideals. We introduce $EMV$-clans as $EMV$-algebras of fuzzy sets where all algebraic operations are defined by points; they are exactly
semisimple $EMV$-algebras up to isomorphism. We also show that any semisimple $EMV$-algebra can be embedded into an $MV$-algebra.
In Section 5, we show a relationship between ideals and filters of $EMV$-algebras and we show that nevertheless an $EMV$-algebra has not necessarily a top element, it contains at least one maximal ideal. Finally, we prove that every $EMV$-algebra can be embedded into an $MV$-algebra. Moreover, every $EMV$-algebra is either an $MV$-algebra or it can be embedded into an $MV$-algebra as its maximal ideal. This allows us to study subvarieties of the variety of $EMV$-algebras and to present an equational base for each subvariety of $EMV$-algebras. In addition, in Section 6, we present mutual categorical equivalencies of the category of proper $EMV$-algebras with the special category of $MV$-algebras with a fixed maximal ideal having enough idempotents or with a special category of Abelian unital $\ell$-groups.

\section{ Preliminaries}

In the section, we gather some basic notions relevant to $MV$-algebras which will be needed in the next sections. For more details,
we recommend to consult
\cite{Di Nola,mundici 1, mundici 2} for $MV$-algebras.

An $MV$-{\it algebra} is an algebra $(M;\oplus,',0,1)$ (henceforth write simply $M=(M;\oplus,',0,1)$) of type $(2,1,0,0)$, where $(M;\oplus,0)$ is a
commutative monoid with the neutral element $0$ and for all $x,y\in M$, we have:
\vspace{1mm}
\begin{enumerate}[nolistsep]
	\item[(i)] $x''=x$;
	\item[(ii)] $x\oplus 1=1$;
	\item[(iii)] $x\oplus (x\oplus y')'=y\oplus (y\oplus x')'$.
\end{enumerate}

\noindent
In any $MV$-algebra $(M;\oplus,',0,1)$,
we can define also the following  operations:
\[
x\odot y:=(x'\oplus y')',\quad x\ominus y:=(x'\oplus y)'.
\]
\noindent
In addition, let $x \in M$. For any integer $n\ge 0$, we set
$$0.x=0,\quad n.x=(n-1).x\oplus x, \ n\ge 1,$$
and
$$x^0=1, \quad x^n=x^{n-1}\odot x, \ n\ge 1.$$

Moreover, the relation $x\leq y \Leftrightarrow x'\oplus y=1$ is a partial order on $M$ and $(M;\leq)$ is a lattice, where
$x\vee y=(x\ominus y)\oplus y$ and $x\wedge y=x\odot (x'\oplus y)$. Note that, for each $x\in M$, $x'$ is the least element
of the set $\{y\in M\mid x\oplus y=1\}$.
We use $\mathbb{MV}$ to denote the category of $MV$-algebras whose objects are $MV$-algebras and morphisms are $MV$-homomorphisms.
A non-empty subset $I$ of an $MV$-algebra $(M;\oplus,',0,1)$ is called an {\it ideal} of $M$ if $I$ is a down set which is closed under $\oplus$.
The set of all ideals of $M$ is denoted by $I(M)$. It is well known that for each $x,y\in M$, if $I\in I(M)$ and $y,x\ominus y\in I$, then $x\in I$.
For each ideal $I$ of $M$, the relation $\theta_I$ on $M$ defined by $(x,y)\in\theta_I$ if and only if $x\ominus y,y\ominus x\in I$ is a congruence relation on $M$, and $x/I$ and $M/I$ will denote $\{y\in M\mid (x,y)\in\theta_I\}$ and $\{x/I\mid x\in M\}$, respectively. A {\it prime ideal} is an ideal $I\ne M$ of $M$ such that $M/I$ is a linearly ordered $MV$-algebra, or equivalently, for all $x,y \in M$, $x\ominus y \in I$ or $y\ominus x\in I$. The set of all minimal prime ideals of $M$ is denoted by $Min(M)$.
An element $a$ of an $MV$-algebra  $(M;\oplus,',0,1)$ is called a
{\it Boolean element} if there is $b\in M$ such that $a\wedge b=0$ and $a\vee b=1$. The set of all Boolean elements of $M$ forms a Boolean algebra; it is denoted by $B(M)$.

\begin{thm}\label{2.1}{\rm \cite[Thm. 1.5.3]{mundici 1}}
For every element $x$ in an $MV$-algebra $M$, the following conditions are equivalent:
\vspace{1mm}
\begin{itemize}[nolistsep]
\item[{\rm (i)}] $x\in B(M)$;
\item[{\rm (ii)}] $x\vee x'=1$;
\item[{\rm (iii)}] $x\wedge x'=0$;
\item[{\rm (iv)}] $x\oplus x=x$;
\item[{\rm (v)}] $x\odot x=x$;
\item[{\rm (vi)}]  $x\oplus y=x\vee y$ for all $y\in M$;
\item[{\rm (vii)}]  $x\odot y=x\wedge y$ for all $y\in M$.
\end{itemize}
\end{thm}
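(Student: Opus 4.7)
The plan is to route the equivalences through the central conditions (ii) and (iii), which are immediately equivalent by De Morgan duality (since $x\wedge x' = (x'\vee x)'$), and to reach each of the remaining items by a short direct argument, specializing the universally quantified statements (vi) and (vii) to $y=x'$ to close the loop.

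For (i) $\Leftrightarrow$ (ii) $\wedge$ (iii) I would run a uniqueness-of-complement argument. Assuming $x\wedge b=0$ and $x\vee b=1$, the first equality gives $x\odot b\leq x\wedge b=0$, and in any $MV$-algebra $x\odot c=0$ is equivalent to $c\leq x'$, so $b\leq x'$; on the other hand $x\oplus b\geq x\vee b=1$, and the preliminaries identify $x'$ as the least element $c$ with $x\oplus c=1$, so $b\geq x'$. Hence $b=x'$, giving (ii) and (iii). The converse is immediate by taking $b:=x'$.

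To link (iv) and (v) with the centre, the computation
\[
1=x\vee x'=(x\ominus x')\oplus x'=(x\odot x)\oplus x'
\]
shows $(x\odot x)'\leq x'$, i.e., $x\leq x\odot x$; combined with the trivial $x\odot x\leq x$ this gives (v), and the converse $(v)\Rightarrow(ii)$ is the same computation read backwards using $x\oplus x'=1$. Since (ii) is symmetric under $x\leftrightarrow x'$, the same argument applied to $x'$ yields $x'\odot x'=x'$, which by De Morgan is exactly (iv).

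The heart of the proof is the step (ii)--(v) $\Rightarrow$ (vi), with (vii) following dually. Given any $y$, lattice distributivity together with $x\vee x'=1$ yields $y=(y\wedge x)\vee(y\wedge x')$, so the distributivity of $\oplus$ over $\vee$ splits $x\oplus y=(x\oplus(y\wedge x))\vee(x\oplus(y\wedge x'))$. The first summand equals $x$, since $x\leq x\oplus(y\wedge x)\leq x\oplus x=x$ by (iv). For the second, $x\wedge(y\wedge x')\leq x\wedge x'=0$ by (iii), and the standard $MV$-identity $(a\wedge b)\oplus(a\vee b)=a\oplus b$ applied to $a=x$, $b=y\wedge x'$ yields $x\oplus(y\wedge x')=x\vee(y\wedge x')$. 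Reassembling gives $x\oplus y=x\vee y$. The converses (vi) $\Rightarrow$ (ii) and (vii) $\Rightarrow$ (iii) are immediate by specializing $y=x'$. I expect this last step to be the main obstacle: one has to spot the partition $y=(y\wedge x)\vee(y\wedge x')$ and recognise that $\oplus$ collapses to $\vee$ on the ``disjoint'' summand via $(a\wedge b)\oplus(a\vee b)=a\oplus b$; everything else is direct manipulation with the $MV$-axioms recalled in the preliminaries.
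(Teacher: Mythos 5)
Your argument is correct. Note, however, that the paper does not prove this statement at all: it is quoted verbatim from \cite[Thm.\ 1.5.3]{mundici 1}, so there is no in-paper proof to compare against. Your derivation is a valid self-contained one, essentially the standard textbook argument: the loop (i)$\Leftrightarrow$(ii)$\Leftrightarrow$(iii)$\Leftrightarrow$(iv)$\Leftrightarrow$(v) checks out (the identification $x\ominus x'=x\odot x$ and the minimality of $x'$ among $\{c\mid x\oplus c=1\}$ are used correctly), and the step to (vi) via the splitting $y=(y\wedge x)\vee(y\wedge x')$ together with the identities $a\oplus(b\vee c)=(a\oplus b)\vee(a\oplus c)$ and $(a\wedge b)\oplus(a\vee b)=a\oplus b$ is sound, provided you are content to take those two identities as known facts about $MV$-algebras (they are standard, but they are the only ingredients not recalled in the paper's preliminaries). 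The specializations $y=x'$ for the converses of (vi) and (vii) close the cycle correctly.
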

Let $(M;+,0)$ be a monoid. An element $a\in M$ is called {\it idempotent} if $a+a=a$. The set of all idempotent elements of $M$
is denoted by $\mathcal{I}(M)$.
A monoid $(G;+,0)$ is called {\it partially ordered} if it is equipped with a partial order relation $\leq$ that is compatible with $+$, that is,
$a\leq b$ implies  $x+a+y\leq x+b+y$ for all $x,y\in G$.
A partially ordered monoid $(G;+,0)$ is called a {\it lattice ordered monoid} or simply an $\ell$-{\it monoid}
if $G$ with its partially order relation is a lattice. In a similar way,
a group $(G;+,0)$ is said to be a {\it partially ordered group} if it is a partially ordered monoid. A partially ordered group $(G;+,0)$ is called a {\it lattice ordered group} or simply an $\ell$-{\it group}
if $G$ with its partially order relation is a lattice. An element $x\in G$ is called {\it positive} if $0\leq x$. An element $u$ of an $\ell$-group $(G;+,0)$ is called a {\it strong unit} of $G$ if, for each $g\in G$, there exists $n\in \mathbb{N}$ such that $g\leq nu$. A  couple $(G,u)$, where $G$ is an $\ell$-group and $u$ is a fixed strong unit for $G$, is said to be a {\it unital} $\ell$-{\it group}.

If $(G;+,0)$ is an Abelian $\ell$-group with strong unit $u$, then the interval $[0,u]:=\{g \in G \mid 0\leq g \leq u\}$ with the operations $x\oplus y:=(x+y)\wedge u$ and $x':=u-x$ forms
an $MV$-algebra, which is denoted by $\Gamma(G,u)=([0,u];\oplus,',0,u)$. Moreover, if $(M;\oplus,0,1)$ is an $MV$-algebra, then according to the famous theorem by Mundici, \cite{Mun}, there exists a unique (up to isomorphism)  unital Abelian $\ell$-group
$(G,u)$ with strong $u$ such that $\Gamma(G,u)$ and $(M;\oplus,0,1)$ are isomorphic (as $MV$-algebras).
Let $\mathcal{A}$ be the category of unital Abelian $\ell$-groups whose objects are unital $\ell$-groups and morphisms are unital $\ell$-group morphisms (i.e. homomorphisms of $\ell$-groups preserving fixed strong units). It is important to note that $\mathbb{MV}$ is a variety whereas $\mathcal A$ not because it is not closed under infinite products.  Then $\Gamma: \mathcal{A}\ra \mathbb{MV}$ is a functor between these categories. Moreover, there is another functor from the category of $MV$-algebras to $\mathcal{A}$ sending $M$ to a Chang $\ell$-group induced by good sequences of the $MV$-algebra $M$, which is
denoted by $\Xi:\mathbb{MV}\ra \mathcal{A}$. For more details relevant to these functors, please see \cite[Chaps 2 and 7]{mundici 1}.

\begin{thm}{\rm\cite[Thms 7.1.2, 7.1.7]{mundici 1}}\label{functor}
The composite functors $\Gamma\Xi$ and $\Xi\Gamma$ are naturally equivalent to the identity functors of
$\mathbb{MV}$ and $\mathcal{A}$, respectively. Therefore, the categories $\mathcal{A}$ and $\mathbb{MV}$ are categorically equivalent.
\end{thm}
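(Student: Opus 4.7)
The plan is to establish the two natural equivalences separately by exhibiting explicit natural isomorphisms on objects and then checking naturality. Since the first assertion implies the second (once one verifies that the functors are indeed functors and that both composites go to identities up to isomorphism), the proof splits into: (i) $\Gamma$ and $\Xi$ are well-defined functors, (ii) $\Gamma\Xi(M)\cong M$ naturally in $M\in\mathbb{MV}$, and (iii) $\Xi\Gamma(G,u)\cong (G,u)$ naturally in $(G,u)\in\mathcal{A}$.

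For (i), I would first verify that $\Gamma(G,u)=([0,u];\oplus,',0,u)$ with $x\oplus y=(x+y)\wedge u$ and $x'=u-x$ satisfies Chang's axioms; commutativity and associativity of $\oplus$ follow from those of $+$ together with the distributivity of $+$ over $\wedge$ in an $\ell$-group, the involution is immediate, $x\oplus u=u$ is trivial, and the Chang axiom $x\oplus(x\oplus y')'=y\oplus(y\oplus x')'$ reduces (after translating $\oplus$ and $'$ back into the group) to the identity $x\vee y=(x-y)^{+}+y$. A unital $\ell$-group morphism $f:(G,u)\to (H,v)$ restricts to an MV-morphism $\Gamma(f):[0,u]\to[0,v]$. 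For $\Xi$, I would introduce good sequences $(a_{1},a_{2},\ldots)$ in $M$ (eventually $0$ with $a_{i}\oplus a_{i+1}=a_{i}$), show that componentwise addition followed by a carry rule turns the set of good sequences into a cancellative commutative monoid $M^{*}$, and define $\Xi(M)$ as its Grothendieck $\ell$-group with strong unit $u_{M}=[(1,0,0,\ldots)]$.

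For (ii), the map $a\mapsto[(a,0,0,\ldots)]$ from $M$ into $\Gamma\Xi(M)$ is clearly an MV-morphism by construction of $\Xi$; injectivity is easy because good sequences carry the MV-structure of $M$ on their first coordinate below $1$, and surjectivity says precisely that every element of $\Xi(M)$ lying in $[0,u_{M}]$ is realized by a one-term good sequence, which holds by definition. Naturality in $M$ reduces to the functorial extension of MV-morphisms to good sequences. For (iii), I would define $\eta_{(G,u)}:\Xi\Gamma(G,u)\to G$ by sending the class of a good sequence $(a_{1},\ldots,a_{n})\in[0,u]^{n}$ to $a_{1}+\cdots+a_{n}\in G$; well-definedness uses $a_{i}+a_{i+1}=a_{i}\oplus a_{i+1}+(a_{i}\wedge (u-a_{i+1}))$ style identities, homomorphism and lattice-preservation follow from the Riesz decomposition in $\ell$-groups, while surjectivity uses that $u$ is a strong unit, so every positive element of $G$ is bounded by some $nu$ and hence decomposes as a sum of elements in $[0,u]$; then we pass to differences since $G$ is a group. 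Injectivity requires that any two such decompositions of the same positive element give equivalent good sequences, which is the Riesz interpolation property of Abelian $\ell$-groups.

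The main obstacle is step (iii): concretely, the two delicate lemmas are that $M^{*}$ is cancellative (so that the Grothendieck construction does not collapse) and that the resulting group carries a compatible lattice order whose positive cone is exactly the image of $M^{*}$. Both rest on the Riesz decomposition theorem in Abelian $\ell$-groups, which must be proved first from the lattice identities in $M$ via good sequences; this is the technical heart of Mundici's argument. Once these are in hand, naturality of $\eta$ in $(G,u)$ and of the inverse of step (ii) in $M$ follow formally because both constructions are defined coordinatewise, and the categorical equivalence $\mathcal{A}\simeq\mathbb{MV}$ is then immediate.
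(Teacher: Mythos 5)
The paper offers no proof of this statement at all: it is quoted directly from \cite[Thms 7.1.2, 7.1.7]{mundici 1}, and your outline is precisely the good-sequence construction used there ($\Gamma$ on the interval $[0,u]$, $\Xi$ via the cancellative monoid of good sequences and its enveloping $\ell$-group with unit $[(1,0,0,\dots)]$, the unit map $a\mapsto[(a,0,0,\dots)]$, and the counit summing a good sequence inside $G$), so your approach coincides with that of the cited source and its architecture is sound. One local correction: the identity underlying well-definedness and additivity of the counit is $a+b=(a\oplus b)+(a\odot b)$ computed in $\Gamma(G,u)$, where $a\odot b=(a+b-u)\vee 0$, not $a+b=(a\oplus b)+\bigl(a\wedge(u-b)\bigr)$; the element $a\wedge(u-b)$ equals $a\odot(a'\oplus b')$ and differs from $a\odot b$ in general (for $a=b=u/2$ in $\Gamma(\mathbb{R},1)$ one gets $a\odot b=0$ while $a\wedge(u-b)=1/2$). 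With that identity repaired, the remaining steps (cancellativity of the good-sequence monoid, the lattice order on the Grothendieck group, and naturality of both transformations) proceed exactly as in the reference.
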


Recall that a {\em residuated lattice} is an algebra $(L;\vee,\wedge,\cdot,\setminus,/,e)$ of type $(2,2,2,2,2,0)$ such that $(L;\vee,\wedge)$ is a lattice,
$(L;\cdot,e)$ is a monoid, and for all $x,y,z\in L$,
\[x\cdot y\leq z \Leftrightarrow x\leq z/y \Leftrightarrow y\leq x\setminus z.
\]
A residuated lattice is called {\it commutative} if it satisfies the identity $x\cdot y=y\cdot x$ and is called {\em integral} if
it satisfied the identity $x\wedge e=x$.
Galatos and Tsinakis, \cite{Tsinakis}, introduced the concept of a {\em generalized MV-algebra} ($GMV$-algebra) which is a
residuated lattice that satisfies the identities
\[x/((x\vee y)\setminus x)=x\vee y=(x/(x\vee y))\setminus x.
\]
It is well known that bounded commutative integral $GMV$-algebras and $MV$-algebras coincide (see \cite{Jipsen,Tsinakis}).

\section{$EMV$-algebras, Ideals, and Congruences}

In the section, we define $qEMV$-algebras and $EMV$-algebras which form  an important subclass of $qEMV$-algebras. We present some examples and we define subalgebras and homomorphisms. We show that $EMV$-algebras form a variety. Congruences on the class of $EMV$-algebras are in a one-to-one correspondence with the set of ideals. We show that every semisimple $EMV$-algebra can be embedded into an MV-algebra.

\begin{defn}\label{de:GMV}
An algebra $(M;\vee,\wedge,\oplus,0)$ of type $(2,2,2,0)$ is called a \emph{quasi extended $MV$-algebra} ($qEMV$-algebra in short) if it satisfies the
following conditions:
\vspace{1mm}
\begin{itemize}[nolistsep]
\item[]{\rm ($\GMV1$)}\ $(M;\vee,\wedge,0)$ is a distributive lattice with the least element $0$;
\item[]{\rm ($\GMV2$)}\  $(M;\oplus,0)$ is a commutative ordered monoid with neutral element $0$;
\item[]{\rm ($\GMV3$)}\ for all $a,b\in \mI(M)$ such that $a\leq b$, the element
$$
\lambda_{a,b}(x)=\min\{z\in[a,b]\mid x\oplus z=b\}
$$
exists in $M$ for all $x\in [a,b]$, and the algebra $([a,b];\oplus,\lambda_{a,b},a,b)$ is an $MV$-algebra.
\end{itemize}
\vspace{2mm}
We say that an $qEMV$-algebra $(M;\vee,\wedge,\oplus,0)$ has  {\it enough idempotent elements} if, for each $x\in M$, there is $a\in \mI(M)$ such that
$x\leq a$. An {\it extended $MV$-algebra}, an {\it $EMV$-algebra} in short, is a $qEMV$-algebra $(M;\vee,\wedge,\oplus,0)$ which has enough idempotent elements.
\end{defn}
From now on, in this paper, we usually denote $\lambda_{0,b}$ by $\lambda_{b}$.

Now we present some examples of $qEMV$-algebras and $EMV$-algebras, respectively.

\begin{exm}\label{3.1}
\begin{enumerate}
\item[(1)]  Any $MV$-algebra $(M;\oplus,',0,1)$ is an $EMV$-algebra. Let $a,b\in B(M)$.  By Proposition \ref{2.1}, for each
$x,y\in [a,b]$, we have $x\oplus y\geq a\oplus y=a\vee y=y\geq a$ and $x\oplus y\leq x\oplus b=x\vee b=b$, thus
$[a,b]$ is closed under $\oplus$. It can be easily seen that $\overline{x}:=(x'\vee a)\wedge b$ is the least element of the set
$\{z\in [a,b]\mid x\oplus z=b\}$. Moreover, for each $x\in [a,b]$,
\begin{eqnarray*}
\overline{\overline{x}}&=&\Big(((x'\vee a)\wedge b)'\vee a\Big)\wedge b=\Big(((x''\wedge a')\vee b')\vee a\Big)\wedge b \\
&=& \Big( ((x\wedge a')\vee a)\vee b'\Big)\wedge b=(x\vee b')\wedge b=x.
\end{eqnarray*}
Therefore, $([a,b];\oplus,^{-},a,b)$ is an $MV$-algebra (for more details we refer to \cite{dmn}), and so any $MV$-algebra is an $EMV$-algebra.

\item[(2)]  Any generalized Boolean algebra $(M;\vee,\wedge,0)$  (studied also as a Boolean ring, see \cite{LuZa,Kel}) forms an $EMV$-algebra $(M;\vee,\wedge,\oplus,0)$, where $\oplus =\vee$ and if $a\le b$, then $\lambda_{a,b}(x)$ is the unique relative complement of $x$ in the interval $[a,b]$.

\item[(3)] Let $(B;\vee,\wedge)$ be a  generalized Boolean algebra and $(M;\oplus,',0,1)$ be an $MV$-algebra. Then it can be easily
shown that $M\times B$ is an $EMV$-algebra.

\item[(4)] Any bounded $qEMV$-algebra is an $MV$-algebra. Note that if $M$ is a $qEMV$-algebra with the greatest element $1$, then
$M=[0,1]$ and $M$ is an $MV$-algebra.

\item[(5)]  Let $G$ be a non-trivial $\ell$-group. The set of positive elements $G^{+}$ of $G$ with the natural operation $+$ and natural ordering is a $qEMV$-algebra. Since $0$ is the only idempotent element, so $G^+$ is not an $EMV$-algebra.

\item[(6)] Let $\{(M_i;\oplus,',0,1)\}_{i\in I}$ be a family of $MV$-algebras and $S=\{f\in \prod_{i\in I}M_i\mid \supp(f) \mbox{ is finite}  \}$.
Clearly, $S$ is closed under $\vee$, $\wedge$ and $\oplus$. Moreover, if $f\in S$, then $u=(u_i)_{i\in I}$, where $u_i=1$ for all
$i\in \supp(f)$ and $u_i=0$ for all $i\in I\setminus \supp(f)$, is an element of $S$ which is idempotent and $f\leq u$.
It can be easily shown that $S$ is a $qEMV$-algebra and so $S$ is an $EMV$-algebra. We will denote this $qEMV$-algebra by
$\sum_{i\in I} M_i$.

\item[(7)] Let $(M;\oplus,',0,1)$ be an $MV$-algebra and $A$ be any
ideal of $M$. Then similarly to (1), we can see that
$A$ is a  $qEMV$-algebra.

\item[(8)] Let $J$ be an ideal of an $MV$-algebra $(A;\oplus,',0,1)$ and $B$ be a generalized Boolean algebra. Then
$B\times J$ with the pointwise operations forms a $qEMV$-algebra.

\item[(9)] Let $(M;\vee,\wedge,\oplus,0)$ be an $EMV$-algebra. Then it is straightforward to show that
$(\mI(M);\vee,\wedge,0)$ is a generalized Boolean algebra. Moreover,
$M$ is an $MV$-algebra \iff $\mI(M)$ is a Boolean algebra.

\item[(10)] Every finite $EMV$-algebra is an $MV$-algebra.

\item[(11)] Every $EMV$-clan of fuzzy sets on some $\Omega\ne \emptyset$ is an $EMV$-algebra, where all operations are defined by points, see Definition \ref{de:clan} and Proposition \ref{pr:clan} below.
	\end{enumerate}
\end{exm}

\begin{rmk}\label{3.2}
Let $(M;\vee,\wedge,\oplus,0)$ be an $EMV$-algebra.

(i) For each $x,y\in M$, there exist $a,b\in \mI(M)$ such $a\le x\oplus y\leq b$ and so
$x,y\in [a,b]$. Since $([a,b];\oplus,\lambda_{a,b},a,b)$ is an $MV$-algebra, then the element $\lambda_{a,b}(\lambda_{a,b}(x)\oplus y)\oplus y$ is the supremum of $x$ and $y$ taken in the MV-algebra $[a,b]$ and it coincides with   $x\vee y$. Similarly, $x\wedge y = \lambda_{a,b}(\lambda_{a,b}(x)\oplus \lambda_{a,b}(\lambda_{a,b}(x)\oplus y))$. In addition, if $a_0\le x,y \le b_0$ for some $a_0,b_0\in \mI(M)$, then $\lambda_{a,b}(\lambda_{a,b}(x)\oplus y)= x\vee y = \lambda_{a_0,b_0}(\lambda_{a_0,b_0}(x)\oplus y)$.

(ii)  If $a,c, b$ are idempotents with $a\le c \le b$ and $x \in M$ such $x \in [a,b]$, then $x\oplus c = x\vee c$ and $x\wedge c = \lambda_{a,b}(\lambda_{a,b}(x)\oplus \lambda_{a,b}(\lambda_{a,b}(x)\oplus c))$.

(iii) The Riesz Decomposition Theorem holds: If $z\le x\oplus y$, then there are $x_z\le x$ and $y_z\le y$ such that $x=x_z\oplus y_z$. Or if $x_1\oplus x_2=y_1\oplus y_2$, there are four elements $c_{11}, c_{12}, c_{21}, c_{22}\in M$ such that $x_1=c_{11} \oplus c_{12}$, $x_2 =c_{21}\oplus c_{22}$, $y_1 = c_{11} \oplus c_{21}$ and $y_2=c_{21} \oplus c_{22}$. These facts follow from the analogous properties in the $MV$-algebra $[0,a]$, where $a\ge x,y,x_1,x_2,y_1,y_2$.
\end{rmk}

\begin{rmk}\label{GMV-EMV}
	Let $(L;\vee,\wedge,\cdot,\setminus,/,e)$ be a residuated lattice such that $(L;\cdot,e)$ is commutative and $(L;\vee,\wedge)$ is a lattice with the least element $0$.
Then $y/x=x\setminus y$ for all $x,y\in L$, and $x\setminus y$ is usually written $x\ra y$ (see \cite[p. 12]{Tsinakis}). We claim that $L$ is not an $EMV$-algebra. Otherwise, since $0\ra 0$ is the greatest element of $L$, then by Example \ref{3.1}(4), $L$ is an $MV$-algebra. Therefore, from Example \ref{3.1} we get that there exists an $EMV$-algebra which is not a generalized $MV$-algebra in the sense of \cite{Tsinakis}. 	
\end{rmk}

The following proposition shows that the notions of a $qEMV$-algebra and of an $EMV$-algebra can be defined also in a simpler way.

\begin{prop}\label{3.3}
Let $(M;\vee,\wedge,\oplus,0)$ be an algebra of type $(2,2,2,0)$. Then
\begin{itemize}
 \item[{\rm (i)}] $M$ is a $qEMV$-algebra if and only if  $([0,b];\oplus,\lam_b,0,b)$ is an $MV$-algebra for all $b\in \mI(M)$. In such a case, $\lam_{a,b}(x)=\lam_b(x)\vee a$.
 \item[{\rm (ii)}] $M$ is an $EMV$-algebra if and only if
  \subitem{\em ($\GMV_I1$)} $(M;\vee,\wedge,0)$ is a lattice with the least element 0;
  \subitem{\em ($\GMV_I2$)} $(M;\oplus,0)$ is a commutative monoid with neutral element $0$;
  \subitem{\em ($\GMV_I3$)}  for each $x\in M$, there is $b\in \mI(M)$ with $x \le b$ such that $([0,b];\oplus,\lam_b,0,b)$ is an $MV$-algebra.
 \end{itemize}
\end{prop}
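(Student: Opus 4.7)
For part (i), the forward direction is immediate: take $a=0$ (an idempotent) in axiom ($\GMV 3$) and recall the convention $\lambda_b := \lambda_{0,b}$. For the converse, under the baseline axioms ($\GMV 1$) and ($\GMV 2$), I must deduce the full ($\GMV 3$). Fix idempotents $a \le b$. In the MV-algebra $[0,b]$ given by hypothesis, $a$ is still idempotent, hence a Boolean element by Theorem \ref{2.1}; the top $b$ is trivially Boolean. The construction of Example \ref{3.1}(1), applied inside $[0,b]$ to the Boolean pair $a \le b$, then yields an MV-algebra structure on $[a,b]$ whose complementation sends $x$ to $(\lambda_b(x)\vee a)\wedge b = \lambda_b(x)\vee a$. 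To match this with the $\lambda_{a,b}$ of Definition \ref{de:GMV}, I would verify that $x\oplus(\lambda_b(x)\vee a)=b$, expanding by distributivity of $\oplus$ over $\vee$ in $[0,b]$ and using $x\oplus\lambda_b(x)=b$, and then check minimality from the fact that any $z\in[a,b]$ with $x\oplus z=b$ satisfies both $z\ge\lambda_b(x)$ (by minimality in $[0,b]$) and $z\ge a$.

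For part (ii), the forward direction follows from (i) together with the enough-idempotents property. For the converse the real task is to upgrade ($\GMV_I1$)--($\GMV_I3$) to the full $qEMV$-algebra axiom system, which additionally demands distributivity of $(M;\vee,\wedge,0)$ and monotonicity of $\oplus$. My plan is a common-idempotent-majorant argument: given finitely many $x_1,\dots,x_n\in M$, choose idempotents $b_i\ge x_i$ by ($\GMV_I3$), form the lattice join $d=b_1\vee\cdots\vee b_n\in M$, and apply ($\GMV_I3$) once more to obtain an idempotent $c\ge d$, so that $x_1,\dots,x_n$ all lie in $[0,c]$. Inside the MV-algebra $[0,c]$, distributivity of the lattice and the implication $x\le y\Rightarrow x\oplus z\le y\oplus z$ are standard, and they transfer to $M$ once the $\vee,\wedge$ of $M$ restricted to $[0,c]$ are identified with the MV-algebra lattice operations there. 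The remaining axiom ($\GMV 3$) then comes from part (i), and combined with enough idempotents this gives that $M$ is an $EMV$-algebra.

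The step I expect to be the main obstacle is precisely this compatibility in part (ii): showing that the MV-algebra order on $[0,c]$ (defined via $\oplus$ and $\lambda_c$) coincides with the restriction of the ambient lattice order of $M$, so that the MV-algebra join and meet agree with $\vee$ and $\wedge$ of $M$ on $[0,c]$. Once this is settled, the common-majorant device reduces any finitary identity in $M$ to a computation inside a single MV-algebra, and distributivity together with monotonicity fall out immediately; the rest of the verification is then essentially mechanical.
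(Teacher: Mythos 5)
Your part (i) is essentially the paper's proof: one verifies $x\oplus(\lambda_b(x)\vee a)=(x\oplus\lambda_b(x))\vee(x\oplus a)=b$ and obtains minimality from $z\ge \lambda_b(x)$ together with $z\ge a$, then appeals to the Example \ref{3.1}(1) relative-complement construction to see that $([a,b];\oplus,\lambda_{a,b},a,b)$ is an $MV$-algebra. Likewise, the common-idempotent-majorant device in part (ii) is exactly the paper's argument for monotonicity of $\oplus$ and distributivity of the lattice (the paper simply applies ($\GMV_I3$) to $y\vee z$ rather than first joining several idempotents $b_i$, which is the same idea).

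There are, however, two places where the proposal stops short. The concrete gap is the sentence ``the remaining axiom ($\GMV 3$) then comes from part (i)'': the converse of part (i) requires $([0,b];\oplus,\lambda_b,0,b)$ to be an $MV$-algebra for \emph{every} $b\in\mathcal I(M)$, whereas ($\GMV_I3$) supplies this only for a cofinal family of idempotents. The paper closes this explicitly: given an arbitrary idempotent $b$, choose $u\in\mathcal I(M)$ with $b\le u$ and $[0,u]$ an $MV$-algebra; then $b$ is a Boolean element of $[0,u]$, one checks $\lambda_b(x)=b\wedge\lambda_u(x)$ for $x\in[0,b]$, and the Example \ref{3.1}(1) construction makes $[0,b]$ an $MV$-algebra. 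This is the same trick you already used for $[a,b]$ in part (i), so the repair is short, but the step is needed and is missing. The second point is the compatibility of the $MV$-order on $[0,c]$ with the ambient lattice order, which you name as the main obstacle and leave unresolved; here you are in the same position as the paper, which asserts $x\oplus z\le y\oplus z$ ``since $x,y,z\in[0,a]$'' without further comment. Note that one direction is immediate from the definition of $\lambda_c$ as an ambient minimum: if $x\le y$ in the $MV$-order, then $\lambda_c(x)\oplus y=c$, so $y$ lies in the set whose ambient minimum is $\lambda_c(\lambda_c(x))=x$, whence $x\le y$ in $M$; the converse inclusion of the orders is what both your proposal and the paper's proof tacitly take for granted.
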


\begin{proof}
(i) Let, for all $b\in \mI(M)$, $([0,b];\oplus,\lam_b,0,b)$ be an $MV$-algebra. Take $a,b\in\mI(M)$ such that $a\leq b$.
We show that for each $x\in [a,b]$, $\lam_{a,b}(x)$ exists and $\lam_{a,b}(x)=\lam_b(x)\vee a$.
Indeed, $x\oplus (\lam_b(x)\vee a)=(x\oplus \lam_b(x))\vee (x\oplus a)=b$. Now, let $z\in [a,b]$ such that $x\oplus z=b$. Then
by definition of $\lam_b(x)$, we have $\lam_b(x)\leq z$ and so $\lam_b(x)\vee a\leq z\vee a=z$. Now, we can easily see that $([a,b];\oplus,\lam_{a,b},a,b)$ is an $MV$-algebra. Therefore, $(M;\vee,\wedge,\oplus,0)$ is a $qEMV$-algebra. The proof of the converse is clear.

(ii)  Let ($\GMV_I1$)--($\GMV_I3$) hold.
First we show that $(M;\oplus,0)$ is an ordered monoid. Let $x,y\in M$ be such that $x\leq y$. For each $z\in M$, by the assumption,
there exists $a\in\mI(M)$ such that $y\vee z\leq a$ and $([0,a];\oplus,\lam_a,0,a)$ is an $MV$-algebra and so
$x\oplus z\leq y\oplus z$ (since $x,y,z\in [0,a]$). That is,  $(M;\oplus,0)$ is an ordered monoid. In a similar way, we can show that
$(M;\vee,\wedge)$ is a distributive lattice. Now, by (i), it is enough to show that for all $b\in \mI(M)$, $([0,b];\oplus,\lam_b,0,b)$ is an $MV$-algebra.
Let $b$ be an arbitrary idempotent element of $M$. By the assumption,
there is $u\in \mI(M)$ such that $b\leq u$ and $([0,u];\oplus,\lam_u,0,u)$ is an $MV$-algebra.
It can be easily seen that $\lam_b(x)=b\wedge \lam_u(x)$ for all $x\in [0,b]$, and similarly
to Example \ref{3.1}(1),  $([0,b];\oplus,\lam_b,0,b)$ is an $MV$-algebra. Therefore, $M$ is an $EMV$-algebra.
Clearly, the converse holds.
\end{proof}

Let $(M;\vee,\wedge,\oplus,0)$ be an $EMV$-algebra. Then for all $a\in\mI(M)$, we have a well-known binary operation
$$
x\odot_{_a} y=\lam_a(\lam_a(x)\oplus \lam_a(y)),\quad x,y \in [0,a],
$$
on the $MV$-algebra $([0,a];\oplus,\lam_a,0,a)$.

Inspired by the equivalence in Proposition \ref{3.3}(i), we can define the notion of a $qEMV$-subalgebra also in the following equivalent way.

\begin{defn}\label{3.4}
(i)  Let $(M;\vee,\wedge,\oplus,0)$ be a $qEMV$-algebra. A subset $A\s M$ is called a $qEMV$-{\it subalgebra} of $M$ if $A$ is closed
under $\vee$, $\wedge$, $\oplus$ and $0$ and for each $b\in \mI(M)\cap A$ the set $[0,b]_A:=[0,b]\cap A$ is a subalgebra
of the $MV$-algebra $([0,b];\oplus,\lam_b,0,b)$.  Clearly, the last condition is equivalent to the following condition:
$$\forall\, b\in A\cap \mI(M),\quad \forall\, x\in [0,b]_A,\ \ \min\{z\in [0,b]_A\mid x\oplus z=b\}=\min\{z\in [0,b]\mid x\oplus z=b\},
$$
and due to Proposition \ref{3.3}(i), this also means $\lambda_{a,b}(x)$ is defined in $[a,b]_A$ for all $a,b\in \mathcal I (M)\cap A$ with $a\le b$ and $x\in [a,b]_A$.
If $(M;\vee,\wedge,\oplus,0)$ is an $EMV$-algebra and $A$ is a $qEMV$-subalgebra of $M$ such that
for all $x\in A$,  there is $b\in A\cap \mI(M)$ such that $x\leq b$, then $A$ is called an $EMV$-{\it subalgebra} of $M$.

(ii) Let $(M_1;\vee,\wedge,\oplus,0)$ and $(M_2;\vee,\wedge,\oplus,0)$ be $qEMV$-algebras. A map $f:M_1\ra M_2$ is called a $qEMV$-{\it homomorphism}
if $f$ preserves the operations $\vee$, $\wedge$, $\oplus$ and $0$, and for each $b\in\mI(M_1)$ and for each $x\in [0,b]$, $f(\lam_b(x))= \lam_{f(b)}(f(x))$.

If $M_1$ and $M_2$ are two $EMV$-algebras, then each $qEMV$-homomorphism $f:M_1\ra M_2$ is said to be an $EMV$-{\it homomorphism}.
\end{defn}

\begin{lem}\label{le:hom}
Let $M_1$ and $M_2$ be two $qEMV$-algebras and $f:M_1\ra M_2$ be a $qEMV$-homomorphism.
\vspace{1mm}
\begin{itemize}[nolistsep]
\item[{\rm (i)}] If $B$ is a subalgebra of $M_2$, then $f^{-1}(B)$ is a subalgebra of $M_1$.

\item[{\rm (ii)}] If $M_1$ and $M_2$ are $EMV$-algebras, $f:M_1\to M_2$ is an $EMV$-homomorphism, and $A$ is an $EMV$-subalgebra of $M_1$, then $f(A)$ is an $EMV$-subalgebra of $M_2$.
\end{itemize}
\end{lem}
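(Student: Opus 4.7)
The plan is to verify each part by unpacking the definition of a subalgebra: closure under $\vee,\wedge,\oplus,0$ together with the requirement that for $b\in\mathcal I(\cdot)$ lying in the subalgebra and $x\in [0,b]$ lying in the subalgebra, the element $\lambda_b(x)$ also lies in the subalgebra. I will use throughout that a $qEMV$-homomorphism sends idempotents to idempotents, since $f(b)\oplus f(b)=f(b\oplus b)=f(b)$, and by definition commutes with $\lambda_b$ whenever $b\in\mathcal I(M_1)$.

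For (i), closure of $f^{-1}(B)$ under $\vee,\wedge,\oplus,0$ is immediate from the homomorphism property and the closure of $B$. For the subalgebra condition, take $b\in\mathcal I(M_1)\cap f^{-1}(B)$ and $x\in [0,b]_{f^{-1}(B)}$. Then $f(b)\in\mathcal I(M_2)\cap B$ and $f(x)\in [0,f(b)]_B$, so by the subalgebra property of $B$, $\lambda_{f(b)}(f(x))\in B$. The homomorphism property gives $f(\lambda_b(x))=\lambda_{f(b)}(f(x))\in B$, whence $\lambda_b(x)\in f^{-1}(B)$; minimality in $[0,b]_{f^{-1}(B)}$ is automatic from minimality in the larger set $[0,b]$.

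For (ii), closure of $f(A)$ under $\vee,\wedge,\oplus,0$ is again routine, and the enough-idempotents condition is clear: for $y=f(x)\in f(A)$, choose $c\in A\cap\mathcal I(M_1)$ with $x\le c$; then $y\le f(c)\in f(A)\cap\mathcal I(M_2)$. The main obstacle is that an idempotent $b\in\mathcal I(M_2)\cap f(A)$, while of the form $b=f(a')$ for some $a'\in A$, need not be the image of an idempotent of $A$, so one cannot invoke the $qEMV$-homomorphism identity for $\lambda_b$ directly. To sidestep this, given $b=f(a')$ and $x=f(x')\in [0,b]_{f(A)}$, I choose $d\in A\cap\mathcal I(M_1)$ with $a',x'\le d$, which is possible because $A$ is an $EMV$-subalgebra and $\mathcal I(M_1)$ is closed under $\vee$ by Example \ref{3.1}(9). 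Inside the $MV$-algebra $[0,f(d)]$ of $M_2$, the element $b$ is a Boolean element by Theorem \ref{2.1}, and the formula recalled in the proof of Proposition \ref{3.3}(ii) gives $\lambda_b(x)=b\wedge \lambda_{f(d)}(x)$. Using the homomorphism property $\lambda_{f(d)}(f(x'))=f(\lambda_d(x'))$ together with $\lambda_d(x')\in A$ (since $A$ is a subalgebra, $d\in A\cap\mathcal I(M_1)$, and $x'\in [0,d]_A$), this becomes $\lambda_b(x)=f(a')\wedge f(\lambda_d(x'))=f(a'\wedge\lambda_d(x'))\in f(A)$, and minimality in $[0,b]_{f(A)}$ is again inherited from minimality in $[0,b]$.
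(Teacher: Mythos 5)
Your proof is correct and follows essentially the same route as the paper: for (i) you pull back $\lambda_{f(b)}(f(x))\in B$ through the homomorphism identity, and for (ii) you dominate $a'$ and $x'$ by an idempotent $d\in A\cap\mathcal I(M_1)$ and compute $\lambda_b(x)=f(a')\wedge f(\lambda_d(x'))=f(a'\wedge\lambda_d(x'))$, which is exactly the paper's computation (with $u$ in place of $d$). Your extra remarks on why such a $d$ exists and on the minimality being inherited are welcome but do not change the argument.
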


\begin{proof}
(i) Clearly, $f^{-1}(B)$ is closed under the operations $\oplus$, $\vee$, $\wedge$ and $0$. Let $a\in f^{-1}(B)\cap \mI(M_1)$. We show that $[0,a]\cap f^{-1}(B)$ is a subalgebra of the $MV$-algebra $[0,a]$.
Put $x,y\in [0,a]\cap f^{-1}(B)$. Then clearly, $x\oplus y\in [0,a]\cap f^{-1}(B)$. Also from
$f(\lam_a(x))=\lam_{f(a)}(f(x))$, we get that $\lam_a(x)\in [0,a]\cap f^{-1}(B)$, so $[0,a]\cap f^{-1}(B)$ is a
subalgebra of $[0,a]$. Therefore, $f^{-1}(B)$ is a subalgebra of $M_1$.

(ii) Clearly, $f(A)$ is closed under $\oplus,\vee,\wedge$ and $0$.
Now we have to show that, for each $b\in f(A)\cap\mI(M_2)$, the set $f(A)\cap [0,b]$ is a subalgebra of the $MV$-algebra $[0,b]$.
By definition, since $A$ is an $EMV$-subalgebra of $M_1$, then for each $y\in f(A)$, there is an element $b\in f(A)\cap\mI(M_2)$ such that $y\leq b$. We only need to show that $f(A)\cap [0,b]$ is closed under $\lam_b$. Put $y\in f(A)\cap [0,b]$. Then there exist $a,x\in A$ such that $f(a)=b$ and $f(x)=y$. Let $u\in\mI(M_1)\cap A$ such that $a,x\leq u$. Then $b,y\in [0, f(u)]$ and so
\[\lam_b(y)=\lam_{f(u)}(y)\wedge b=\lam_{f(u)}(f(x))\wedge f(a)=f(\lam_u(x))\wedge f(a)=f(\lam_u(x)\wedge a). \]
Since $A$ is a subalgebra of $M_1$, then $[0,u]\cap A$ is a subalgebra of the $MV$-algebra $[0,u]$ and so $\lam_u(x)\wedge a\in A$, which implies that $\lam_b(y)\in f(A)$. Clearly, $\lam_b(y)\in[0,b]$. Therefore,
$\lam_b(y)\in [0,b]\cap f(A)$. That is, $f(A)$ is an $EMV$-subalgebra of $M_2$.
\end{proof}

\begin{rmk}
	(i) Definition \ref{3.4} yields that each $MV$-homomorphism is an $EMV$-homomorphism, but the converse is not true in general case.
	Indeed, let $(M;\vee, \wedge,\oplus,0)$ be an $EMV$-algebra and $a\in\mI(M)$. Then $([0,a];\oplus,\lam_a,0,a)$ is an $MV$-algebra (and so an $EMV$-algebra). Clearly, the inclusion map $i:[0,a]\ra M$ is an $EMV$-homomorphism. Now, if $(M;\oplus,',0,1)$ is an $MV$-algebra and $a\in M\setminus \{0,1\}$ is its Boolean element, then the inclusion map $i:[0,a]\ra M$ is not an $MV$-homomorphism (since $i(a)\neq 1$).

An $EMV$-homomorphism $f:M\ra N$ is said to be {\em strong} if, for each $b\in \mI(N)$, there exists $a\in\mI(M)$ such that $b\leq f(a)$. Clearly, any $MV$-homomorphism is strong as an $EMV$-homomorphism. Moreover, if $(M;\oplus,',0,1)$ is an $MV$-algebra and $f:M\ra N$ is an $EMV$-homomorphism, then $N$ is an $MV$-algebra and $f$ is an $MV$-homomorphism, since $x\leq f(1)$ for all $x\in N$.
	
	(ii) Let  $f:M\ra N$ be a strong $EMV$-homomorphism and $S$ be a {\it full subset} of $\mI(M)$ (that is, for each $b\in \mI(M)$, there exists $a\in S$ such that $b\leq a$). For each $a\in \mI(M)$, set $f_a:=f\mid_{_{[0,a]}}$. Then we have
	
	(F1)  $\{f_a\mid a\in S\}$ is a family of $MV$-homomorphisms;
	
	(F2) if $a,b\in S$ such that $a\leq b$, then $f_a=f_{b}\mid_{_{[0,a]}}$;
	
	(F3) $\{f(a)\mid a\in S\}$ is a full subset of $\mI(N)$.
	
Conversely, if $(M;\vee, \wedge,\oplus,0)$ and $(N;\vee, \wedge,\oplus,0)$ are two $EMV$-algebras, $S$ is a full subset of $M$ and $\{f_a:[0,a]\ra N\mid a\in S\}$ is a family of maps satisfying conditions (F1)--(F3), then	 the map $f:M\ra N$ defined by $f(x)=f_a(x)$, where $a\in S$ and $x\in [0,a]$, is a strong $EMV$-homomorphism.
\end{rmk}

\begin{prop}\label{3.5}
Let $(M;\vee,\wedge,\oplus,0)$ be a $qEMV$-algebra, $a,b\in \mI(M)$ such that $a\leq b$.
Then  for each $x\in [0,a]$, we have
\vspace{1mm}
\begin{itemize}[nolistsep]
\item[{\rm (i)}]  $\lam_a(x)=\lam_b(x)\wedge a$;
\item[{\rm (ii)}]   $\lam_b(x)=\lam_a(x)\oplus \lam_b(a)$;
\item[{\rm (iii)}] $\lam_{a,b}(x)=\lam_b(x)\vee a$;
\item[{\rm (iv)}] $\lambda_a(x) \le \lambda_b(x)$;
\item[{\rm (v)}] $\lam_b(a)$ is an idempotent, and $\lam_a(a)=0$.
\end{itemize}
\end{prop}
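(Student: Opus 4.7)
Since $a$ is an idempotent of $M$ with $a\le b$, it is an idempotent of the MV-algebra $[0,b]$ and hence, by Theorem \ref{2.1}, a Boolean element of $[0,b]$: $a\vee \lam_b(a)=b$, $a\wedge \lam_b(a)=0$, and $a\oplus y=a\vee y$ for every $y\in [0,b]$. Part (v) is immediate: $\lam_a(a)=0$ because $a$ is the top of $[0,a]$, and $\lam_b(a)$, being the complement of a Boolean element in $[0,b]$, is itself Boolean in $[0,b]$, hence idempotent by Theorem \ref{2.1}(iv).

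For part (i) I would check that $\lam_b(x)\wedge a$ satisfies the defining minimality property of $\lam_a(x)$ inside $[0,a]$. Using the $\ell$-group level distributivity $u\oplus (v\wedge w)=(u\oplus v)\wedge (u\oplus w)$ in $[0,b]$,
\[
x\oplus (\lam_b(x)\wedge a)=(x\oplus \lam_b(x))\wedge (x\oplus a)=b\wedge (x\vee a)=a,
\]
where $x\oplus a=x\vee a=a$ uses that $a$ is Boolean and $x\le a$. For minimality, if $z\in [0,a]$ and $x\oplus z=a$, then $x\oplus (z\oplus \lam_b(a))=a\oplus \lam_b(a)=b$, so $z\oplus \lam_b(a)\ge \lam_b(x)$ by minimality inside $[0,b]$. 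Meeting with $a$ and using the identity $(z\oplus \lam_b(a))\wedge a=z$ -- which holds because any $w\in [0,a]$ with $w\le z\oplus \lam_b(a)$ satisfies $w=w\odot a=w\ominus \lam_b(a)\le z$ by MV-residuation in $[0,b]$ -- yields $z\ge \lam_b(x)\wedge a$.

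Part (ii) follows from the standard splitting of $[0,b]$ along the Boolean element $a$: every $y\in [0,b]$ equals $(y\wedge a)\oplus (y\wedge \lam_b(a))$. Indeed, the two summands meet at $0$, so their $\oplus$ coincides with their $\vee$ via the MV-identity $u\oplus v=(u\vee v)\oplus (u\wedge v)$, and $(y\wedge a)\vee (y\wedge \lam_b(a))=y\wedge (a\vee \lam_b(a))=y\wedge b=y$ by distributivity. Taking $y=\lam_b(x)$, noting $\lam_b(a)\le \lam_b(x)$ (from $x\le a$) so the second summand becomes $\lam_b(a)$, and invoking (i) for the first summand, I obtain $\lam_b(x)=\lam_a(x)\oplus \lam_b(a)$. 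Part (iv) is then immediate from (ii), and part (iii) is precisely the identity already proved in the proof of Proposition \ref{3.3}(i).

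The only subtle step is the identity $(z\oplus \lam_b(a))\wedge a=z$ used in (i); conceptually, it (and all of (i)--(iv)) is a manifestation of the internal direct-product decomposition $[0,b]\cong [0,a]\times [0,\lam_b(a)]$ induced by the Boolean idempotent $a$. Under this splitting, $x\in [0,a]$ corresponds to $(x,0)$ and $\lam_b(x)$ to $(\lam_a(x),\lam_b(a))$, so all five assertions could alternatively be read off coordinatewise -- the cleanest path if one is willing to quote the decomposition as a known MV-algebraic fact.
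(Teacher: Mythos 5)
Your proof is correct and follows essentially the same route as the paper's: both arguments rest on the observation that $a$ and $\lam_b(a)$ are Boolean elements of the $MV$-algebra $[0,b]$, combined with Theorem \ref{2.1} and distributivity of $\oplus$ over $\wedge$. The only cosmetic differences are that in (i) you verify minimality of $\lam_b(x)\wedge a$ against an arbitrary $z$ with $x\oplus z=a$ (the paper runs the same computation with the specific element $z=\lam_a(x)$), and in (ii) you invoke the orthogonal splitting $y=(y\wedge a)\oplus(y\wedge\lam_b(a))$ where the paper directly computes $\lam_a(x)\vee\lam_b(a)=\lam_b(x)$; both reductions use exactly the same ingredients.
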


\begin{proof}
Since $([0,b];\oplus,\lam_b,0,b)$ is an $MV$-algebra, $a\in [0,b]$ and $a\oplus a=a$, we get that $a\vee x=a\oplus x$ for all
$x\in [0,b]$.  Let $x\in [0,a]$.

(i)  From $\lam_b(x)\wedge a\in [0,a]$ and  $(\lam_b(x)\wedge a)\oplus x=(\lam_b(x)\oplus x)\wedge (a\oplus x)=b\wedge (a\vee x)=a$,
it follows that $\lam_a(x)\leq \lam_b(x)\wedge a$. Also,
$b=a\oplus \lam_b(a)=(x\oplus \lam_a(x))\oplus \lam_b(a)=x\oplus (\lam_a(x)\oplus \lam_b(a))$, so
$\lam_a(x)\oplus \lam_b(x)\geq \lam_b(x)$. Hence
$(\lam_a(x)\oplus \lam_b(a))\wedge a\geq \lam_b(x)\wedge a$. Since $a$ is a Boolean element of the $MV$-algebra $[0,b]$, then so is
$\lam_b(a)$, which implies that
$\lam_b(x)\wedge a\leq (\lam_a(x)\oplus \lam_b(a))\wedge a=(\lam_a(x)\vee \lam_b(a))\wedge a=\lam_a(x)\wedge a=\lam_a(x)$.
Summing up the above results, we get that
$\lam_a(x)=\lam_b(x)\wedge a$.

(ii) By (i) we have
\begin{align*}
\lam_a(x)\vee \lam_b(a)& =(\lam_b(x)\wedge a)\vee \lam_b(a)\\
&=(\lam_b(x)\vee \lam_b(a))\wedge (a\vee \lam_b(a))\\
&=(\lam_b(x)\vee \lam_b(a))\wedge b=\lam_b(x)\vee \lam_b(a).
\end{align*}
On the other hand, $x,a\in [0,b]$ and $x\leq a$, it follows that $\lam_b(a)\leq \lam_b(x)$ and so $\lam_b(x)\vee \lam_b(a)=\lam_b(x)$.
Therefore, $\lam_b(x)=\lam_a(x)\vee \lam_b(a)$.

(iii) It was proved in Proposition \ref{3.3}(i).

(iv) It follows from (i) or (ii).

(v) Since $[0,b]$ is an $MV$-algebra and $a\in [0,b]$ is an idempotent, $\lambda_b(a)$ is the relative complement of $a$ in $[0,b]$, so it is also an idempotent. The rest statement $\lambda_a(a)=0$ follows from definition of $\lambda_a$.
\end{proof}

\begin{rmk}\label{3.6}
Let $(M;\vee,\wedge,\oplus,0)$ and  $(N;\vee,\wedge,\oplus,0)$ be
$EMV$-algebras and $f:M\ra N$ be a map preserving $\oplus$ and $0$.
If for each $x\in M$, there is $b\in \mI(M)$ such that $x\in [0,b]$ and
$f(\lam_b(x))=\lam_{f(b)}(f(x))$, then $f$ is an $EMV$-homomorphism.
Indeed, if $x,y\in M$, there is
$b\in \mI(M)$ such that $x,y\in [0,b]$. Since $([0,b];\oplus,\lam_b,0,b)$
is an $MV$-algebra,
$x\vee y=\lam_b(\lam_b(x)\oplus y)\oplus y$ and $x\wedge
y=\lam_b(\lam_b(x)\vee \lam_b(y))$. Hence, $f$ preserves $\vee$ and
$\wedge$.
It follows that for each $x\in M$, there is $a\in\mI(M)$ such that $x\leq
a$ and $f:[0,a]\ra [0,f(a)]$ is a homomorphism of
$MV$-algebras.
Now, let $a$ be an arbitrary idempotent element of $M$. Then there exists
$u\in \mI(M)$ such that $a\leq u$ and
$f: [0,u]\ra [0,f(u)]$ is a homomorphism of $MV$-algebras. By Proposition \ref{3.5}(v), for each $x\in
[0,a]$, we have
\[f(\lam_a(x))=f(\lam_u(x)\wedge a)=\lam_{f(u)}(f(x))\wedge
f(a)=\lam_{f(a)}(f(x)).\]
It follows that $f$ is an $EMV$-homomorphism.
\end{rmk}

\begin{thm}\label{3.7}
Let $\mathbb{EMV}$ be the class of $EMV$-algebras. Then $\mathbb{EMV}$ is a variety.
\end{thm}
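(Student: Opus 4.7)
The plan is to invoke Birkhoff's HSP theorem: $\mathbb{EMV}$ is a variety if and only if it is closed under direct products, subalgebras, and homomorphic images. I will verify each closure condition in turn, working with algebras of type $(2,2,2,0)$ as in Definition \ref{de:GMV} and with the notions of subalgebra and homomorphism of Definition \ref{3.4}. The main structural tool is Proposition \ref{3.3}(ii), which replaces axiom ($\GMV3$) by the simpler requirement that for every $x$ there exist an idempotent $b\ge x$ such that $([0,b];\oplus,\lam_b,0,b)$ is an $MV$-algebra.

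For direct products, given a family $\{M_i\}_{i\in I}$ of $EMV$-algebras, equip $M=\prod_i M_i$ with coordinate-wise operations. The distributive-lattice and commutative-monoid axioms transfer coordinate-wise. An element $(a_i)_i$ is idempotent iff each $a_i\in \mI(M_i)$; given any $(x_i)_i\in M$, the enough-idempotents hypothesis on each factor furnishes $a_i\in \mI(M_i)$ with $x_i\le a_i$, and then $(a_i)_i\in \mI(M)$ dominates $(x_i)_i$. The interval $[0,(a_i)_i]$ in $M$ is canonically identified with $\prod_i[0,a_i]$, a product of $MV$-algebras and hence itself an $MV$-algebra with $\lam$ computed coordinate-wise. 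Proposition \ref{3.3}(ii) then yields $M\in\mathbb{EMV}$.

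For subalgebras, let $A$ be an $EMV$-subalgebra of an $EMV$-algebra $M$ in the sense of Definition \ref{3.4}(i). Then $A$ is closed under $\vee,\wedge,\oplus,0$, the set $[0,b]_A$ is a sub-$MV$-algebra of $[0,b]$ for every $b\in A\cap\mI(M)$, and each $x\in A$ is dominated by some $b\in A\cap\mI(M)$. Proposition \ref{3.3}(ii) applied to $A$ makes it an $EMV$-algebra. For homomorphic images, let $f:M\to N$ be a surjective $EMV$-homomorphism with $M\in\mathbb{EMV}$; Lemma \ref{le:hom}(ii) shows $f(M)=N$ is an $EMV$-subalgebra of $N$, and by the subalgebra step, $N\in\mathbb{EMV}$.

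The main obstacle is reconciling the idempotent-indexed partial operation $\lam_b$ and the existential ``enough idempotents'' clause with the uniform signature required for a variety. The key point is that, once $\oplus$ and the lattice structure are fixed, $\lam_b(x)$ is uniquely characterized as the minimum of $\{z:x\oplus z=b\}$, so the subalgebra and homomorphism conditions of Definition \ref{3.4} are intrinsic to the operations $(\vee,\wedge,\oplus,0)$; combined with the coordinate-wise and point-wise preservation verified above, Birkhoff's theorem then furnishes an equational axiomatization of $\mathbb{EMV}$.
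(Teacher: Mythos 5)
Your proposal takes the same route as the paper: the paper's entire proof of Theorem~\ref{3.7} is the single sentence ``The class $\mathbb{EMV}$ is closed under HSP'', and your verification of closure under products and under $EMV$-subalgebras and $EMV$-homomorphic images (Definition~\ref{3.4}, Lemma~\ref{le:hom}) is a correct elaboration of the parts of that sentence that actually work. The genuine gap sits exactly at the point you flag in your final paragraph and then argue away. Birkhoff's theorem requires closure under $\mathrm{S}$ and $\mathrm{H}$ computed in the similarity type $(2,2,2,0)$: a subalgebra is \emph{any} subset closed under $\vee,\wedge,\oplus,0$, and a homomorphism is any map preserving these four operations. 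Your claim that the extra conditions of Definition~\ref{3.4} are ``intrinsic to the operations'' because $\lambda_b(x)$ is characterized as a minimum is false: the minimum of $\{z\in A\mid x\oplus z=b\}$ computed inside a subset $A$ need not coincide with the minimum computed in $M$, which is precisely why Definition~\ref{3.4} must impose the equality of the two minima as an additional hypothesis rather than derive it. Concretely, let $M=[0,1]$ be the standard $MV$-algebra and $A=\{0\}\cup[1/2,1]$. Then $A$ contains $0$ and is closed under $\vee$, $\wedge$ and $\oplus$ (indeed $x\oplus y=1$ whenever $x,y\ge 1/2$), so $A$ is a subalgebra in the sense Birkhoff requires; but $\mathcal{I}(A)=\{0,1\}$ and, computing inside $A$, $\lambda_1(x)=\min\{z\in A\mid x\oplus z=1\}=1/2$ for every $x\in[1/2,1)$, so $\lambda_1(\lambda_1(3/4))=1/2\ne 3/4$ and the double-negation axiom fails: $A$ is not an $EMV$-algebra. (The three-element chain inside the four-element Boolean algebra, viewed as an $EMV$-algebra via Example~\ref{3.1}(2), gives an even simpler instance.) Hence $\mathbb{EMV}$ is not closed under $\mathrm{S}$ in the signature $(\vee,\wedge,\oplus,0)$, and the HSP argument cannot be completed as you (or the paper) state it; the $\mathrm{H}$ step has the analogous defect, since a surjection preserving only $\vee,\wedge,\oplus,0$ is weaker than an $EMV$-homomorphism and Definition~\ref{3.8} of a congruence likewise carries an extra clause beyond compatibility with the signature operations.

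To be clear, this is not a defect you introduced: the paper's one-line proof glosses over the identical point, and your write-up has the merit of making the problematic step visible. What is actually established by your argument (and by the paper's machinery) is closure under $\mathrm{P}$ and under the \emph{restricted} operators ``$EMV$-subalgebra'' and ``image under an $EMV$-homomorphism''; that is enough for several later applications but it is not Birkhoff's hypothesis. A genuine repair has to change the setting, for instance by enlarging the similarity type with an operation encoding $\lambda$ (exactly as one adds relative complementation to present generalized Boolean algebras equationally), and then re-proving $\mathrm{S}$- and $\mathrm{H}$-closure in the enlarged type. As written, neither your argument nor the paper's one-liner proves the theorem.
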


\begin{proof}
The class $\mathbb{EMV}$ is closed under ${\rm HSP}$.
\end{proof}

\begin{defn}\label{3.8}
Let $(M;\vee,\wedge,\oplus,0)$ be a $qEMV$-algebra. An equivalence relation $\theta$ on $M$ is called a {\it congruence relation} or simply a \emph{congruence} if
it satisfies the following conditions:
\vspace{1mm}
\begin{itemize}[nolistsep]
\item[(i)]  $\theta$ is compatible with $\vee$, $\wedge$ and $\oplus$;

\item[(ii)] for all $b\in \mI(M)$, $\theta\cap ([0,b]\times [0,b])$ is a congruence relation on the $MV$-algebra $([0,b];\oplus,\lam_b,0,b)$.
\end{itemize}
We denote by $\mathrm{Con}(M)$ the set of all congruences on $M$.
\end{defn}

The next proposition makes our work easier when we want to verify that an equivalence relation on a $qEMV$-algebra is a congruence.

\begin{prop}\label{3.9}
An equivalence relation $\theta$ on an $EMV$-algebra $(M;\vee,\wedge,\oplus,0)$ is a congruence if it is compatible with $\vee$, $\wedge$ and $\oplus$, and
for all $(x,y)\in\theta$, there exists $b\in\mI(M)$ such that $x,y\leq b$ and $(\lam_b(x),\lam_b(y))\in\theta$.
\end{prop}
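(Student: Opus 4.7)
The plan is to verify that condition (ii) of Definition \ref{3.8} follows from the given hypotheses. That is, for every $c\in\mI(M)$, the restriction $\theta_c:=\theta\cap([0,c]\times[0,c])$ must be a congruence on the $MV$-algebra $([0,c];\oplus,\lam_c,0,c)$.

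Compatibility of $\theta_c$ with $\oplus$ is immediate: since $c$ is idempotent, $x,y\in[0,c]$ implies $x\oplus y\le c\oplus c=c$, so the $\oplus$ of the $MV$-algebra $[0,c]$ agrees with that of $M$, and compatibility on $M$ transfers directly. Lattice compatibility on $[0,c]$ is similar, using Remark \ref{3.2}(i) to see that the lattice operations on $[0,c]$ agree with those of $M$.

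The crux is showing compatibility with $\lam_c$: namely, whenever $(x,y)\in\theta$ and $x,y\le c$, we must derive $(\lam_c(x),\lam_c(y))\in\theta$. The hypothesis only gives us \emph{some} idempotent witness $b$ with $x,y\le b$ and $(\lam_b(x),\lam_b(y))\in\theta$, and a priori $b$ has nothing to do with the prescribed $c$. To bridge these, set $d:=b\vee c$; by Example \ref{3.1}(9) the idempotents form a generalized Boolean algebra, so $d\in\mI(M)$, and clearly $x,y\le d$. Applying Proposition \ref{3.5}(ii) with $b\le d$ yields
\[
\lam_d(x)=\lam_b(x)\oplus\lam_d(b),\qquad \lam_d(y)=\lam_b(y)\oplus\lam_d(b),
\]
so compatibility of $\theta$ with $\oplus$ gives $(\lam_d(x),\lam_d(y))\in\theta$. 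Next, applying Proposition \ref{3.5}(i) with $c\le d$ and $x,y\in[0,c]$ yields $\lam_c(x)=\lam_d(x)\wedge c$ and $\lam_c(y)=\lam_d(y)\wedge c$, so compatibility with $\wedge$ delivers $(\lam_c(x),\lam_c(y))\in\theta$. Since both sides lie in $[0,c]$, this pair lies in $\theta_c$, completing the verification.

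The main obstacle is precisely the mismatch between the single witness idempotent $b$ produced by the hypothesis and the arbitrary idempotent $c$ demanded by Definition \ref{3.8}(ii); everything else is routine transfer along the restriction. The passage through the common upper bound $d=b\vee c$, together with the two identities in Proposition \ref{3.5}(i),(ii) expressing $\lam_c$ and $\lam_d$ in terms of one another, is exactly what resolves this mismatch, and no further use of the MV-algebra structure on intervals is needed.
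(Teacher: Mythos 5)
Your proof is correct and follows essentially the same route as the paper: both pass to a common idempotent upper bound of the witness and the prescribed idempotent (the paper uses $v=u\oplus b$, you use $d=b\vee c$, which coincide for idempotents), then apply Proposition \ref{3.5}(ii) together with $\oplus$-compatibility to lift, and Proposition \ref{3.5}(i) together with $\wedge$-compatibility to descend. No substantive difference.
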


\begin{proof}
Suppose that for each $(x,y)\in\theta$ there exists $u\in\mI(M)$ such that $x,y\leq u$ and $(\lam_u(x),\lam_u(y))\in\theta$. Put $b\in \mI(M)$. We will show that $\theta\cap ([0,b]\times [0,b])$ is a congruence on $([0,b];\oplus,\lam_b,0,b)$. Let $x,y\in [0,b]$ such that $(x,y)\in \theta$. Then  by the assumption, there exists $u\in\mI(M)$ such that $x,y\in [0,u]$ and $(\lam_u(x),\lam_u(y))\in\theta$. Suppose that $v\in\mI(M)$ such that $u,b\leq v$ (for example, $v=u\oplus b$).
By Proposition \ref{3.5}(ii), $\lam_v(x)=\lam_u(x)\oplus \lam_v(u)$ and $\lam_v(y)=\lam_u(y)\oplus \lam_v(u)$. Since $(\lam_u(x),\lam_u(y))\in\theta$ and $\theta$ is compatible with $\oplus$, we get that
$(\lam_v(x),\lam_v(y))\in\theta$. It follows that $(\lam_v(x)\wedge b,\lam_v(y)\wedge b)\in\theta$ and so by Proposition \ref{3.5}(i), $(\lam_b(x),\lam_b(y))\in\theta$.  The proof of the converse is clear.
\end{proof}

Let $\theta$ be a congruence relation on an $EMV$-algebra $(M;\vee, \wedge,\oplus,0)$ and $M/\theta=\{[x]\mid x\in M\}$ (we usually use $x/\theta$ instead of $[x]$). Consider the induced operations $\vee$, $\wedge$ and $\oplus$ on $M/\theta$ defined by
$$
[x]\vee[y]=[x\vee y],\  \  [x]\wedge [y]=[x\wedge y],\   \  [x]\oplus [y]=[x\oplus y],\quad \forall\, x,y\in M.
$$
Clearly, $(\GMV_I1)$ and $(\GMV_I2)$ hold. By Proposition \ref{3.3}(ii), it suffices to show that for each $x/\theta\in M/\theta$, there exists an idempotent element $a/\theta\in M/\theta$ such that $x/\theta\leq a/\theta$ and $([0/\theta,a/\theta];\oplus,\lam_{a/\theta},0/\theta,a/\theta)$ is an $MV$-algebra. Put $x/\theta\in M/\theta$.
There is $a\in \mI(M)$ such that $x\leq a$. Clearly, $x/\theta\leq a/\theta$ and $a/\theta$ is idempotent.
Also, $([0,a];\oplus,\lam_a,0,a)$ is an $MV$ algebra and  $\theta_a:=\theta\cap ([0,a]\times [0,a])$ is its congruence relation,
so $[0,a]/\theta_a$ (with the quotient operations) is an $MV$-algebra, where
\begin{eqnarray}
\label{Eq3.9}
\lam_{a/\theta_a}(x/\theta_a)=\min\{z/\theta_a\mid z\in [0,a],z/\theta_a\oplus x/\theta_a=a/\theta_a\}=\lam_a(x)/\theta_a,\  \  \forall\,
x/\theta_a\in [0,a]/\theta_a.
\end{eqnarray}
First, we show that for all $x/\theta\in [0/\theta,a/\theta]$, $\lam_a(x)/\theta$ is the least element of the set
$\{z/\theta\in [0/\theta,a/\theta]\mid z/\theta\oplus x/\theta=a/\theta\}$. For each $x/\theta\in [0/\theta,a/\theta]$, we have
$x/\theta=x/\theta\wedge a/\theta=(x\wedge a)/\theta$ and $x\wedge a\in [0,a]$. So, we can assume that $x\in [0,a]$.
If $y/\theta\in [0/\theta,a/\theta]$ such that $x/\theta\oplus y/\theta=a/\theta$, then $(x\oplus y,a)\in\theta$ and $x,y,a\in [0,a]$, thus
$(x\oplus y,a)\in \theta_a$, that is $x/\theta_a\oplus y/\theta_a=a/\theta_a$
(which implies that $x/\theta\oplus y/\theta=a/\theta$).
Hence by (\ref{Eq3.9}), $y/\theta_a\geq \lam_a(x)/\theta_a$
and so $y/\theta\geq \lam_a (x)/\theta$. Also, $\lam_a(x)/\theta\oplus x/\theta=a/\theta$. Thus $\lam_{a/\theta}(x/\theta)$ exists and
is equal to $\lam_a(x)/\theta$.  Now, it is straightforward to check that $\lam_{a/\theta}$ satisfies the conditions (1) and (3) in
definition of $MV$-algebras. It follows that  $([0/\theta,a/\theta];\oplus,\lam_{a/\theta},0/\theta,a/\theta)$ is an $MV$-algebra.
Therefore, by Proposition \ref{3.3}(ii), $(M/\theta;\vee,\wedge,\oplus,0/\theta)$ is an $EMV$-algebra, and the mapping $x\mapsto x/\theta$ is an $EMV$-homomorphism from $M$ onto $M/\theta$.

\begin{exm}\label{3.10}
(i) Let $f:M\ra N$ be a $qEMV$-homomorphism. Then $\ker(f)=\{(x,y)\in M\times N\mid f(x)=f(y)\}$ is a congruence on $M$.

(ii) Let $\{(M_i;\oplus,',0,1)\mid i\in I\}$ be a family of $MV$-algebras and $\theta_i$ be a congruence on $M_i$ for all $i\in I$.  Set
$$
\theta=\{(f,g)\mid f,g\in \sum_{i\in I}M_i, (f(i),g(i))\in\theta_{i},\ \forall\, i\in I\}.
$$
Then clearly, $\theta$ is an equivalence relation on $\sum_{i\in I} M_i$ which is compatible with $\vee$, $\wedge$ and $\oplus$.
Let $(f,g)\in\theta$, for some $f,g\in\sum_{i\in I}M_i$. Define $h:I\ra \bigcup_{i\in I} M_i$ by
\[h(i)=\begin{cases}
1, &  i\in \supp(f)\cup \supp(g),\\
0, & \text{otherwise}.
\end{cases}
\]
Clearly, $h$ is an idempotent element of $\sum_{i\in I} M_i$ and $f,g\leq h$.  By Example \ref{3.1}(5), we know that
$\sum_{i\in I} M_i$ is an $EMV$-algebra. Let $b\in\sum_{i\in I} M_i$ be an idempotent.
Consider the $MV$-algebra $([0,b];\oplus,\lam_b,0,b)$.
It can be easily seen that for all $\alpha\in [0,b]$,
\[\lam_b(\alpha)(i) =\begin{cases}
\lam_{b_i}(\alpha(i)), &  i\in \supp(f)\cup \supp(g),\\
0, & \text{otherwise. }
\end{cases}\]
Since $\theta_i$ is a congruence on $M_i$ for all $i\in I$, then
$(\lam_b(f),\lam_b(g))\in\theta$ and so by Proposition \ref{3.9}, $\theta$ is a congruence on the $qEMV$-algebra $\sum_{i\in I} M_i$.
\end{exm}

\begin{defn}\label{3.11}
A non-empty subset $I$ of a $qEMV$-algebra $(M;\vee,\wedge,\oplus,0)$ is called an \emph{ideal} if for each $x,y\in M$
\vspace{1mm}
\begin{itemize}[nolistsep]
\item[(i)] $x\oplus y\in I$ for all $x,y\in I$;
\item[(ii)] $x\leq y$ and $y\in I$ implies that $x\in I$.
\end{itemize}
The set of all ideals of $M$ is denoted by $\mathrm{Ideal}(M)$. Clearly $\{0\},M \in \mathrm{Ideal}(M)$. An ideal $I$ of $M$ is {\it proper} if $I\ne M$.
\end{defn}

Similarly as for $MV$-algebras, see \cite[Prop 1.2.6]{mundici 1}, we have a one-to-one relationship between the set of ideals and the set of congruences on a $qEMV$-algebra.

\begin{thm}\label{3.12}
If $\theta$ is a congruence on an $EMV$-algebra $(M;\vee, \wedge,\oplus,0)$, then $I_\theta:=0/\theta$ is an ideal of $M$.

Conversely, let $I$ be an ideal of an $EMV$-algebra $(M;\vee, \wedge,\oplus,0)$. Then the relation $\theta_I$ defined by
\begin{eqnarray}\label{ER0}
(x,y)\in\theta_I \Longleftrightarrow \Big(\exists\, b\in \mI(M): x,y\leq b\quad \& \quad \lam_b(\lam_b(x)\oplus y),
\lam_b(\lam_b(y)\oplus x)\in I\Big)
\end{eqnarray}
is a congruence on $M$. In addition, the mapping $I\mapsto \theta_I$ is a bijection from the set $\mathrm{Ideal}(M)$ onto the set of congruences on $M$.
\end{thm}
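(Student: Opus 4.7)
The plan is to reduce everything to the already-known correspondence between ideals and congruences on the individual $MV$-algebras $[0,b]$, $b\in\mI(M)$, and then glue. For the first assertion, $I_\theta=0/\theta$ is closed under $\oplus$ because $x,y\in 0/\theta$ gives $x\oplus y\,\theta\, 0\oplus 0=0$, and it is downward closed because $x\le y$ means $x=x\wedge y$, so if $y\,\theta\,0$ then $x\,\theta\, x\wedge 0=0$ by compatibility with $\wedge$.

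Before touching $\theta_I$ I would verify that the defining condition is independent of the choice of $b\in\mI(M)$ with $x,y\le b$. Given $b\le b'$ both idempotent and $x,y\in[0,b]$, the element $b$ is a Boolean element of the $MV$-algebra $[0,b']$ (Theorem \ref{2.1}), so $[0,b']$ decomposes as a direct product $[0,b]\times[0,\lambda_{b'}(b)]$. Using Proposition \ref{3.5}(ii), $\lambda_{b'}(x)=\lambda_b(x)\oplus\lambda_{b'}(b)$ corresponds to $(\lambda_b(x),\lambda_{b'}(b))$ under this isomorphism, and a direct computation in the product gives $\lambda_{b'}(\lambda_{b'}(x)\oplus y)=\lambda_b(\lambda_b(x)\oplus y)$. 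Hence membership in $I$ does not depend on $b$. Observe that in the $MV$-algebra $[0,b]$ this element is exactly $x\ominus y$, so all subsequent computations can be performed with standard $MV$-algebra identities inside $[0,b]$ for any $b$ that majorizes the finitely many elements currently under consideration.

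Next I would check the congruence properties of $\theta_I$. Reflexivity follows from $\lambda_b(\lambda_b(x)\oplus x)=\lambda_b(b)=0\in I$; symmetry is immediate. For transitivity, if $(x,y),(y,z)\in\theta_I$, pick $b\in\mI(M)$ with $x,y,z\le b$ and use the $MV$-algebra identity $x\ominus z\le(x\ominus y)\oplus(y\ominus z)$ in $[0,b]$ together with the ideal properties of $I$. Compatibility with $\oplus$ follows from $(x_1\oplus x_2)\ominus(y_1\oplus y_2)\le(x_1\ominus y_1)\oplus(x_2\ominus y_2)$ in any $MV$-algebra containing all six elements (choose $b\ge x_1\oplus x_2,\ y_1\oplus y_2$); compatibility with $\vee,\wedge$ follows analogously from the standard monotonicity of $\ominus$ with respect to lattice operations. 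Finally, if $(x,y)\in\theta_I$ with witness $b$, then in $[0,b]$ one has $\lambda_b(x)\ominus\lambda_b(y)=y\ominus x\in I$, so $(\lambda_b(x),\lambda_b(y))\in\theta_I$, and Proposition \ref{3.9} yields that $\theta_I$ is a congruence.

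It remains to prove that $I\mapsto\theta_I$ is a bijection, which I would do via the two round-trip identities. For $I_{\theta_I}=I$, note $x\in I_{\theta_I}$ iff $(x,0)\in\theta_I$; choosing any idempotent $b\ge x$, one computes $\lambda_b(\lambda_b(x)\oplus 0)=\lambda_b(\lambda_b(x))=x$ and $\lambda_b(\lambda_b(0)\oplus x)=\lambda_b(b)=0$, so the condition collapses to $x\in I$. For $\theta_{I_\theta}=\theta$, given a congruence $\theta$ with $I=0/\theta$: if $(x,y)\in\theta_I$ choose $b$ as in the definition; then $x\ominus y,y\ominus x\in 0/\theta$, and since $\theta\cap([0,b]\times[0,b])$ is already a congruence on the $MV$-algebra $[0,b]$, the classical correspondence for $MV$-algebras (\cite[Prop.\ 1.2.6]{mundici 1}) gives $(x,y)\in\theta$. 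Conversely, if $(x,y)\in\theta$, use Proposition \ref{3.9} to pick $b\in\mI(M)$ with $x,y\le b$ and $(\lambda_b(x),\lambda_b(y))\in\theta$; then $(\lambda_b(x)\oplus y,\,\lambda_b(y)\oplus y)=(\lambda_b(x)\oplus y,\,b)\in\theta$, and applying $\lambda_b$ gives $\lambda_b(\lambda_b(x)\oplus y)\,\theta\,0$, i.e.\ in $I$; symmetrically for the other term, so $(x,y)\in\theta_I$. The main obstacle is the well-definedness step, since without it none of the identities above make unambiguous sense; everything after that reduces to the known $MV$-algebra correspondence applied inside a suitably chosen interval $[0,b]$.
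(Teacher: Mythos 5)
Your proof is correct and follows essentially the same route as the paper: everything is reduced to the classical ideal--congruence correspondence for $MV$-algebras applied inside the intervals $[0,b]$, $b\in\mI(M)$, with the key step being that the defining condition of $\theta_I$ is stable under enlarging the idempotent witness. The only cosmetic difference is that you obtain this stability from the direct-product decomposition $[0,b']\cong[0,b]\times[0,\lambda_{b'}(b)]$, whereas the paper computes $\lam_u(\lam_u(x)\oplus y)\ominus\lam_a(\lam_a(x)\oplus y)=0$ directly; you are also more explicit than the paper about the two round-trip identities, which it dispatches with ``in an analogous way.''
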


\begin{proof}
Let  $\theta$ be a congruence on an $EMV$-algebra $(M;\vee,\wedge,\oplus,0)$. Then it can be easily shown that
$I_\theta:=0/\theta$ is an ideal of $M$.

Now, let $I$ be an ideal of $M$. Then for each $b\in\mI(M)$, $I\cap [0,b]$ is an ideal of the $MV$-algebra $([0,b];\oplus,\lam_b,0,b)$.
Define a relation $\theta_I$ on $M$ by (\ref{ER0}).
For each $(x,y),(z,w)\in \theta_I$, we can easily see that there exists $u\in \mI(M)$ such that $x,y,z,w\in [0,u]$ and $(x,y),(z,w)\in\theta_{I_u}$,
where $I_u=I\cap [0,u]$. Indeed, put $(x,y),(z,w)\in\theta_I$. Then there are $a,b\in\mI(M)$ such that $x,y\in [0,a]$,
$z,w\in [0,b]$  and $\lam_a(\lam_a(x)\oplus y),\lam_a(\lam_a(y)\oplus x)\in I$ and
$\lam_b(\lam_b(z)\oplus w),\lam_b(\lam_b(w)\oplus z)\in I$. Since $a,b\leq a\oplus b$ and $M$ is an $EMV$-algebra, then
there exists $u\in \mI(M)$ such that $a\oplus b\leq u$. Thus $x,y,z,w\in [0,u]$. By Proposition \ref{3.5},
\begin{eqnarray}\label{ER1}
\lam_u(s)=\lam_a(s)\oplus \lam_u(a),\quad \lam_u(t)=\lam_b(t)\oplus \lam_u(b),\quad \forall\, s\in [0,a],\forall\, t\in [0,b].
\end{eqnarray}
Since $\lam_u(\lam_u(x)\oplus y),\lam_a(\lam_a(x)\oplus y)\in [0,u]$, $\lam_a(\lam_a(x)\oplus y)\in I\cap [0,u]$ and
$I\cap [0,u]$ is an ideal of the $MV$-algebra $([0,u];\oplus,\lam_u,0,u)$, then
from
\begin{eqnarray*}
\lam_u(\lam_u(x)\oplus y)\ominus\lam_a(\lam_a(x)\oplus y)&=&
\lam_u\Big(\lam_u\big(\lam_u(\lam_u(x)\oplus y)\big)\oplus\lam_a(\lam_a(x)\oplus y)\Big)\\
&=& \lam_u\Big(\lam_u(x)\oplus y\oplus\lam_a(\lam_a(x)\oplus y)\Big)\\
&=& \lam_u\Big(\lam_a(x)\oplus \lam_u(a)\oplus y\oplus\lam_a(\lam_a(x)\oplus y)\Big)\\
&=& \lam_u\Big(\big(\lam_a(x) \oplus y\big)\oplus\lam_a(\lam_a(x)\oplus y)\oplus\lam_u(a)\Big)\\
&=& \lam_u(a\oplus\lam_u(a))=\lam_u(u)=0\in I\cap[0,u].
\end{eqnarray*}
It follows that $\lam_u(\lam_u(x)\oplus y)\in I\cap [0,u]$. In a similar way, we can show that
\[\lam_u(\lam_u(y)\oplus x), \lam_u(\lam_u(z)\oplus w), \lam_u(\lam_u(w)\oplus z)\in I\cap [0,u].\]
Hence
$(x,y),(y,z)\in\theta_{I_u}$.  Since $\theta_{I_u}$ is a congruence on the $MV$-algebra $([0,u];\oplus,\lam_u,0,b)$, it proves that $\theta_I$ is a congruence on $M$.

In an analogous way, and using \cite[Prop 1.2.6]{mundici 1}, we have that the mapping $I\mapsto \theta_I$ is a bijection in question.
\end{proof}

\begin{defn}\label{3.13}
Let $I$ be an ideal of an $EMV$-algebra $(M;\vee, \wedge,\oplus,0)$. We denote the $EMV$-algebra $(M/\theta_I;\vee,\wedge, \oplus,0/\theta_I)$ simply by $M/I$,  and $M/I$ is called the {\it quotient $EMV$-algebra} of $M$ induced by $I$.
\end{defn}

In Example \ref{3.1}(2), we showed that any generalized Boolean algebra is an $EMV$-algebra.  Now, let $(M;\vee, \wedge,\oplus,0)$ be an $EMV$-algebra and $I$ be an ideal of $M$ such that for each $x\in M$, there is $a\in\mI(M)$ such that $x\in [0,a]$ and $\lam_a(x)\wedge x\in I$. Then $M/I$ is a generalized Boolean algebra. By definition, it suffices to show that for each $x\in M$, $x/I\oplus x/I=x/I$
(since in this case, the $MV$-algebra $[x/I,y/I]$ is a Boolean algebra).
First, we note that for each $x\in M$, $x\in I$ if and only if $x/I=0/I$.
Let $x\in M$. Then by the assumption, there exists $a\in \mI(M)$ such that
$x\wedge \lam_a(x)\in I$, so in the $MV$-algebra $([0/I,a/I];\oplus,\lam_{a/I},0/I)$ we have
$x/I\wedge \lam_{a/I}(x/I)=x/I\wedge \lam_a(x)/I=0/I$.
Hence, $x/I$ is a Boolean element of this $MV$-algebra and so $x/I\oplus x/I=x/I$.

We recall that a $qEMV$-algebra $M$ is {\it simple} if $M$ possesses only two congruences, and due to Theorem \ref{3.12}, this is equivalent to the condition $\mathrm{Ideal}(M)=\{\{0\},M\}$.

\begin{thm}\label{3.14}
Any simple $EMV$-algebra is a simple $MV$-algebra.
\end{thm}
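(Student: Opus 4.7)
The plan is to exhibit a nontrivial proper candidate for an ideal, conclude by simplicity that it must be all of $M$, and thus obtain a top element.

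First, I would observe that for every idempotent $a\in\mI(M)$, the interval $[0,a]$ is an ideal of $M$. Downward closure is immediate from the definition. Closure under $\oplus$ holds because for $x,y\in[0,a]$, monotonicity of $\oplus$ (from $(\GMV_I2)$ together with the ordered monoid property established in Proposition \ref{3.3}(ii)) gives $x\oplus y\le a\oplus a=a$. Thus $[0,a]\in\mathrm{Ideal}(M)$.

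Next, assume $M$ is simple, so $\mathrm{Ideal}(M)=\{\{0\},M\}$. If $M=\{0\}$ there is nothing to prove, so assume there exists $x\in M$ with $x\neq 0$. Since $M$ has enough idempotents, pick $a\in\mI(M)$ with $x\le a$; then $a\neq 0$. The ideal $[0,a]$ contains the nonzero element $a$, so $[0,a]\neq\{0\}$, and simplicity forces $[0,a]=M$. This means every element of $M$ lies below $a$, i.e., $a$ is a top element of $M$, so $M$ is bounded.

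By Example \ref{3.1}(4), every bounded $qEMV$-algebra is an $MV$-algebra, so $M=([0,a];\oplus,\lam_a,0,a)$ is an $MV$-algebra. Finally, the notion of ideal in Definition \ref{3.11} coincides with the $MV$-algebraic notion of ideal (both are nonempty downsets closed under $\oplus$), so simplicity of $M$ as an $EMV$-algebra is the same as simplicity of $M$ as an $MV$-algebra. Hence $M$ is a simple $MV$-algebra. The only step that requires care is ensuring that the candidate ideal $[0,a]$ really is an ideal in the $EMV$-sense, and this is immediate once monotonicity of $\oplus$ is in hand; there is no serious obstacle.
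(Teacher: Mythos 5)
Your proof is correct, and it takes a genuinely different and more economical route than the paper's. The paper first shows that every interval $([0,a];\oplus,\lam_a,0,a)$, $a\in\mI(M)$, is a \emph{simple} $MV$-algebra (by lifting any nontrivial ideal of $[0,a]$ to a nontrivial ideal of $M$), and then invokes the Archimedean characterization of simple $MV$-algebras (\cite[Thm. 3.5.1]{mundici 1}) to get $n.a=b$ for any idempotent $b\geq a$, whence $a=b$ and $a$ is the top element. You bypass all of that by the single observation that $[0,a]$ is itself an ideal of $M$ (downward closed, and closed under $\oplus$ by monotonicity, since $x\oplus y\le a\oplus a=a$), so simplicity immediately forces $[0,a]=M$ for any nonzero idempotent $a$; then Example \ref{3.1}(4) and the coincidence of the two notions of ideal finish the argument. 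Your approach avoids the external structure theory of simple $MV$-algebras entirely; what the paper's longer detour buys is the intermediate fact that every interval $[0,a]$ is simple, which is not needed for the statement itself. The only cosmetic slip is the phrase ``if $M=\{0\}$ there is nothing to prove'': under the paper's definition (exactly two congruences, equivalently $\mathrm{Ideal}(M)=\{\{0\},M\}$ with $\{0\}\ne M$), the trivial algebra is not simple, so that case simply does not occur; this does not affect the argument.
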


\begin{proof}
Let $(M;\vee, \wedge,\oplus,0)$ be a simple $EMV$-algebra. We claim that $([0,a];\oplus,\lam_a,0,a)$ is a simple $MV$-algebra for
all $a\in\mI(M)$. Otherwise, there are $a\in \mI(M)\setminus \{0\}$ and an ideal $I$ of the $MV$-algebra $[0,a]$ such that $I\neq [0,a]$ and $\{0\}\neq I$. So, $I$ is an ideal of the $EMV$-algebra $M$ different from $\{0\}$ and $M$, which is a contradiction. Thus,
$([0,a];\oplus,\lam_a,0,a)$ is a simple $MV$-algebra. We show that $a=\max M$. Put $x\in M$. Then there exists $b\in\mI(M)$ such
that  $x,a\leq b$. Since $([0,b];\oplus,\lam_b,0,b)$ is simple and $a\in [0,b]$, then by \cite[Thm. 3.5.1]{mundici 1},
there is $n\in\mathbb{N}$ such that $n.a=b$. From $a\in\mI(M)$ it follows that $a=n.a$, hence $a=b$. That is $x\leq a$.
Therefore, $M=[0,a]$ and so it is a simple $MV$-algebra.
\end{proof}

\begin{exm}
The $qEMV$-algebra in Example \ref{3.1}(5) is a simple $qEMV$-algebra if $G=\mathbb R$.
\end{exm}

\begin{prop}\label{3.15}
	Let $(M;\vee, \wedge,\oplus,0)$ be an $EMV$-algebra and $X\s M$. Then $\langle X\rangle$, the least ideal of $M$ generated by $X$,
	is the set
	\[\{z\in M\mid z\leq x_1\oplus x_2\oplus\cdots \oplus x_n, \  \exists\, n\in\mathbb{N},\exists\, x_1,\ldots, x_n\in M\}.\]
\end{prop}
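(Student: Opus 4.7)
The plan is to let $S$ denote the set on the right-hand side (reading $x_1,\ldots,x_n \in X$, which is clearly the intended meaning, since otherwise every $z\in M$ would satisfy $z\le z$ and the set would collapse to $M$), and then to verify (a) that $S$ is an ideal of $M$ containing $X$, and (b) that $S$ is contained in every ideal of $M$ containing $X$. Together these give $\langle X\rangle = S$.

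For (a), downward closure of $S$ is immediate from transitivity of $\le$: if $w\le z$ and $z\le x_1\oplus\cdots\oplus x_n$ with $x_i\in X$, then $w\le x_1\oplus\cdots\oplus x_n$, so $w\in S$. For closure under $\oplus$, I would take $z_1,z_2\in S$ with witnesses $z_1\le x_1\oplus\cdots\oplus x_n$ and $z_2\le y_1\oplus\cdots\oplus y_m$ (all $x_i,y_j\in X$) and invoke axiom $(\GMV2)$, which asserts that $(M;\oplus,0)$ is a commutative ordered monoid; monotonicity of $\oplus$ then yields
\[
z_1\oplus z_2 \;\le\; x_1\oplus\cdots\oplus x_n\oplus y_1\oplus\cdots\oplus y_m,
\]
so $z_1\oplus z_2\in S$. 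Finally each $x\in X$ satisfies $x\le x$ (the case $n=1$), so $X\subseteq S$.

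For (b), let $I$ be any ideal of $M$ with $X\subseteq I$, and let $z\in S$ with witness $z\le x_1\oplus\cdots\oplus x_n$, $x_i\in X\subseteq I$. By iterated application of closure of $I$ under $\oplus$ we obtain $x_1\oplus\cdots\oplus x_n\in I$, and then by downward closure of $I$ we conclude $z\in I$. Hence $S\subseteq I$, so $S$ is the least ideal containing $X$. The edge case $X=\emptyset$ is covered by interpreting the empty sum as $0$ (giving $S=\{0\}$, which is indeed the least ideal of $M$), or by handling it as a separate trivial case.

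There is essentially no obstacle here: this is the standard ``ideal generated by a subset'' argument, and the only structural input beyond elementary manipulation is monotonicity of $\oplus$, which is built into the definition of a $qEMV$-algebra via $(\GMV2)$ and reaffirmed in Remark~\ref{3.2} and Proposition~\ref{3.3}(ii). The proof does not require invoking the idempotent-covering property $(\GMV_I3)$, so in fact the same formula describes the ideal generated by $X$ in any $qEMV$-algebra.
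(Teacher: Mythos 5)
Your proof is correct and is exactly the standard argument the paper has in mind: the authors simply declare the proof ``straightforward,'' and what you wrote (verifying the displayed set is an ideal containing $X$ via monotonicity of $\oplus$ and transitivity of $\le$, then showing minimality) is that straightforward argument spelled out. Your side remarks are also sound: the statement's ``$x_1,\ldots,x_n\in M$'' is indeed a typo for ``$x_1,\ldots,x_n\in X$,'' and the argument uses only ($\GMV2$) and the ideal axioms, so it does hold in any $qEMV$-algebra.
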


\begin{proof}
	The proof is straightforward.
\end{proof}

\begin{cor}\label{3.16}
	If $I$ is an ideal of an $EMV$-algebra $(M;\vee, \wedge,\oplus,0)$, then for each $x\in M$,
	\[\langle I\cup \{x\}\rangle=\{z\in M\mid z\leq a\oplus n.x,\ \exists\, a \in I,\ \exists\, n\in\mathbb{N}\}.\]
\end{cor}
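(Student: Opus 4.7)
The plan is to use Proposition \ref{3.15} directly, denote the right-hand side by $J$, and prove the two inclusions separately.

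First I would show $J\subseteq\langle I\cup\{x\}\rangle$. Take $z\in J$, so $z\le a\oplus n.x$ for some $a\in I$ and $n\in\mathbb{N}$. Since $a\in I\subseteq\langle I\cup\{x\}\rangle$ and $x\in\langle I\cup\{x\}\rangle$, and any ideal is closed under $\oplus$, the element $a\oplus n.x=a\oplus\underbrace{x\oplus\cdots\oplus x}_{n}$ lies in $\langle I\cup\{x\}\rangle$. Because ideals are down-closed, $z\in\langle I\cup\{x\}\rangle$.

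Next I would prove $\langle I\cup\{x\}\rangle\subseteq J$. By Proposition \ref{3.15}, if $z\in\langle I\cup\{x\}\rangle$, there exist $k\in\mathbb{N}$ and $y_1,\ldots,y_k\in I\cup\{x\}$ with $z\le y_1\oplus\cdots\oplus y_k$. Using commutativity and associativity of $\oplus$ (which hold by $(\GMV_I2)$), I can group the $y_i$ that equal $x$ together; let $n$ be the number of such terms, and let $a$ be the $\oplus$-sum of the remaining $y_i$'s (all lying in $I$). If none of the $y_i$'s belong to $I$, set $a=0\in I$; if $n=0$, the contribution $n.x=0$ is understood in the usual way. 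Since $I$ is closed under $\oplus$, we have $a\in I$, and by commutativity $y_1\oplus\cdots\oplus y_k=a\oplus n.x$, so $z\le a\oplus n.x$ and $z\in J$.

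I do not anticipate a genuine obstacle: the only care needed is the bookkeeping of the cases $n=0$ or ``no $I$-terms are present,'' which are handled by putting $a=0$ or $n=0$, respectively. Both inclusions are then immediate consequences of Proposition \ref{3.15}, the closure of $I$ under $\oplus$, and the commutative-monoid axioms for $\oplus$.
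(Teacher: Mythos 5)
Your proof is correct and follows the route the paper intends: Corollary \ref{3.16} is stated as an immediate consequence of Proposition \ref{3.15} (reading the $x_i$ there as elements of the generating set), obtained by commuting the $I$-terms and the $x$-terms together and using that $I$ is closed under $\oplus$ and that ideals are down-closed. The bookkeeping for the degenerate cases ($a=0$ or $n=0$) is handled exactly as one would expect, so nothing is missing.
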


\begin{defn}
	An ideal $I$ of an $EMV$-algebra $(M;\vee, \wedge,\oplus,0)$ is \emph{maximal} if, for all $x\in M\setminus I$, $\langle I\cup\{x\}\rangle=M$.
	The set of all maximal ideals of $M$ is denoted by $\mathrm{MaxI}(M)$. In Theorem \ref{3.26}, it will be proved that every proper $EMV$-algebra $M$ possesses at least one maximal ideal.
\end{defn}

\begin{prop}\label{3.17}
If $I$ is a maximal ideal of an $EMV$-algebra $(M;\vee, \wedge,\oplus,0)$, then for each $b\in \mI(M)$, $I_b:=I\cap [0,b]$ is equal to $[0,b]$ or
$I_b$ is a maximal ideal of the $MV$-algebra $([0,b];\oplus,\lam_b,0,b)$.

In addition, if $B= \mathcal I(M)$, then $I\cap \mathcal I(M)$ is a maximal ideal of $B$.
\end{prop}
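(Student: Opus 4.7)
The plan is to derive both statements from the maximality of $I$ in $M$ via Corollary~\ref{3.16}, combined with Riesz decomposition and the Boolean arithmetic available in each interval $[0,u]$ for $u\in\mI(M)$.

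For the first assertion, I would first verify routinely that $I_b=I\cap [0,b]$ is an ideal of the $MV$-algebra $([0,b];\oplus,\lam_b,0,b)$: downward closure is inherited from $I$, and if $x,y\in I_b$ then $x\oplus y\in I$ with $x\oplus y\le b\oplus b=b$, so $x\oplus y\in I_b$. Assume $I_b\neq [0,b]$, equivalently $b\notin I$. To show maximality of $I_b$ in $[0,b]$, take $x\in [0,b]\setminus I_b$; then $x\notin I$, and Corollary~\ref{3.16} together with the maximality of $I$ supplies $a\in I$ and $n\in\mathbb{N}$ with $b\le a\oplus n.x$. Applying Riesz decomposition (Remark~\ref{3.2}(iii)), write $b=b_1\oplus b_2$ with $b_1\le a$ and $b_2\le n.x$. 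Since $b_1\le b$ as well, $b_1\le a\wedge b\in I_b$, and since $n.x\le n.b=b$, the sum $(a\wedge b)\oplus n.x$ lies in $[0,b]$ and dominates $b$. By Corollary~\ref{3.16} applied to the $MV$-algebra $[0,b]$, this places $b\in\langle I_b\cup\{x\}\rangle_{[0,b]}$, so that ideal is all of $[0,b]$ and $I_b$ is maximal.

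For the additional claim, view $B=\mI(M)$ as a generalized Boolean algebra (Example~\ref{3.1}(9)); the join in $B$ agrees with $\oplus$ because the sum of two idempotents is again idempotent. The set $I\cap B$ is plainly a downward-closed, join-closed subset of $B$, hence an ideal; it is proper because for any $x\in M\setminus I$ there exists $b\in\mI(M)$ with $x\le b$, and then necessarily $b\notin I$. To verify maximality, pick $y\in B\setminus I$ and an arbitrary $z\in B$; it suffices to produce $a'\in I\cap B$ with $z\le a'\vee y$. Since $y$ is idempotent we have $n.y=y$, so maximality of $I$ yields $a\in I$ with $z\le a\oplus y$. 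Choose an idempotent $u$ with $a,y,z\le u$ and work inside the $MV$-algebra $[0,u]$; by Theorem~\ref{2.1}, both $y$ and $z$ are Boolean there. The $MV$-algebra adjunction turns $z\le a\oplus y$ into $z\odot_u\lam_u(y)\le a$, and since $z$ is Boolean we have $z\odot_u\lam_u(y)=z\wedge\lam_u(y)$, a meet of Booleans—hence itself Boolean and an idempotent of $M$. Setting $a':=z\wedge\lam_u(y)$ gives $a'\in I\cap B$, while the identity $a'\oplus y=(z\ominus y)\oplus y=z\vee y$ shows $z\le a'\oplus y=a'\vee y$. This proves $\langle(I\cap B)\cup\{y\}\rangle_B=B$, so $I\cap B$ is maximal.

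The main delicacy is in the second part: the witness $a\in I$ produced by the maximality of $I$ need not be idempotent, while the ambient generalized Boolean algebra $B$ only sees idempotents. The extraction of an idempotent replacement $a'\in I\cap B$ relies precisely on the Boolean collapse $\odot_u=\wedge$ for Boolean arguments inside $[0,u]$, allowing the MV-algebraic decomposition to take place entirely within $B$.
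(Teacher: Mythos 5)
Your proof is correct and follows essentially the same route as the paper's: maximality of $I$ together with Corollary \ref{3.16} yields the dominance $b\le a\oplus n.x$ (resp.\ $z\le a\oplus y$), which is then localized to an interval $MV$-algebra and, for the Boolean part, converted into an idempotent witness lying in $I$. The only cosmetic differences are that where you invoke Riesz decomposition the paper uses the sub-distributive inequality $(a\oplus n.x)\wedge b\le (a\wedge b)\oplus(n.x\wedge b)$, and the paper's Boolean witness $\lambda_a(a\wedge b)$ is the same element as your $z\wedge\lambda_u(y)$, reached by distributing the meet rather than by residuation.
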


\begin{proof}
	Let $b\in \mI(M)$ and $I_b\neq [0,b]$.
	Let $x\in [0,b]\setminus I_b$.  Then $\langle I\cup\{x\}\rangle=M$. By Proposition \ref{3.16}, for each $z\in [0,b]$, there exist
	$n\in\mathbb{N}$ and $a\in I$ such that $z\leq a\oplus n.x$ and so $z=z\wedge(a\oplus n.x)$. Since $([0,b];\oplus,\lam_b,0,b)$
	is an $MV$-algebra, then by \cite[Prop. 1.17(1)]{georgescu}, we have
	\[z=z\wedge b\leq (a\oplus n.x)\wedge b\leq (a\wedge b)\oplus (n.x\wedge b)\le(a\wedge b)\oplus n.x.\]
	Hence $z\in \langle I_b\cup\{x\}\rangle$. Therefore, $I_b$ is a maximal ideal of the $MV$-algebra $([0,b];\oplus,\lam_b,0,b)$. 	

Since $I$ is a proper subset of $M$, there are $x \in M \setminus I$ and $a\in B$ such that $x\le a$, whence, $a \notin I\cap \mathcal I(M)$, which says that $I \cap \mathcal I(M)$ is a proper ideal of $B$. Now let $b \in B \setminus I \cap \mathcal I(M)$. Then $b \notin I$. Hence, for each idempotent $a \in B$, there is an element $c\in I$ and an integer $n$ such that $a\le c \oplus n.b= c\vee b$. Then $a= a \wedge (c\vee b)= (a\wedge c) \vee (a\wedge b)$ so that, $\lambda_a(a\wedge b) \le (a\wedge c)\in I$. But according to Proposition \ref{3.5}(v), $\lambda_a(a\wedge b)$ is an idempotent of $M$, and thus $\lambda_a(a\wedge b) \in I\cap \mathcal I(M)$ which yields, $a \le \lambda_a(a \wedge b) \vee b$, and finally, $I\cap B$ is a maximal ideal of $B$.
\end{proof}

The following theorem is in some sense a converse to Proposition \ref{3.17}.

\begin{thm}\label{th:maxI}
Let $M$ be an $EMV$-algebra and let $B=\mathcal I(M)$ be the set of idempotents of $M$. Then $B$ is an $EMV$-subalgebra of $M$.
For every maximal ideal $I$ of $B$, there is a maximal ideal $K$ of $M$ such that $I=K\cap B$. If $I_1,I_2$ are two different maximal ideals of $B$ and if $K_1,K_2$ are maximal ideals of $M$ such that $I_1=K_1\cap B$ and $I_2=K_2 \cap B$, then $K_1 \ne K_2$. Moreover, if $K$ is a maximal ideal of $M$ such that $K\supseteq \langle I \rangle$, where $\langle I\rangle$ is the ideal of $M$ generated by $I$, then $K\cap\mathcal I(M)=I$.
\end{thm}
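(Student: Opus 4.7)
My plan is to first verify that $B=\mathcal I(M)$ is an $EMV$-subalgebra of $M$. By Example~\ref{3.1}(9), $(B;\vee,\wedge,0)$ is already a generalized Boolean algebra, so $B$ is closed under $\vee$, $\wedge$ and contains $0$. For closure under $\oplus$, given $a,b\in B$, I would pick an idempotent $c\in\mathcal I(M)$ with $a,b\le c$ (using the enough idempotents property on $a\oplus b$); inside the $MV$-algebra $[0,c]$ the elements $a$ and $b$ are Boolean, so Theorem~\ref{2.1} forces $a\oplus b=a\vee b$, which is again Boolean in $[0,c]$ and hence an idempotent of $M$. For each $b\in B$, the set $[0,b]\cap B$ coincides with $B([0,b])$, the Boolean subalgebra of the $MV$-algebra $[0,b]$, and each $b\in B$ trivially lies below itself, so $B$ has enough idempotents.

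Next, for a maximal ideal $I$ of $B$, I would compute $\langle I\rangle\cap B$ using Proposition~\ref{3.15}. The inclusion $I\subseteq\langle I\rangle\cap B$ is clear. For the reverse, given $z\in B$ with $z\le i_1\oplus\cdots\oplus i_n$ and $i_k\in I$, picking an idempotent $c$ above all the $i_k$ lets me apply Theorem~\ref{2.1} inside $[0,c]$ to conclude $i_1\oplus\cdots\oplus i_n=i_1\vee\cdots\vee i_n\in I$ (since $I$ is closed under finite joins in the Boolean algebra $B$), and hence $z\in I$. Consequently $\langle I\rangle\cap B=I$, and in particular $\langle I\rangle$ is a proper ideal of $M$.

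The core step is to produce a maximal ideal $K$ of $M$ with $K\supseteq\langle I\rangle$; then $K\cap B=I$ will follow automatically. I fix $a_0\in B\setminus I$, so that $a_0\notin\langle I\rangle$, and apply Zorn's lemma to the poset
\[
\mathcal P=\{J\in\mathrm{Ideal}(M)\mid J\supseteq\langle I\rangle,\ a_0\notin J\},
\]
which is nonempty and closed under unions of chains (since $a_0$ missing from each member is inherited by the union). Let $K$ be a maximal element. If some ideal $K'\supsetneq K$ were proper, then $K'\notin\mathcal P$ would force $a_0\in K'$; but the maximality of $I$ in $B$ yields $\langle I\cup\{a_0\}\rangle_B=B$, and combining this with enough idempotents and Corollary~\ref{3.16} shows $\langle I\cup\{a_0\}\rangle_M=M$, contradicting $K'\ne M$. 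Hence $K$ is a maximal ideal of $M$. Proposition~\ref{3.17}, together with $a_0\notin K$ showing $K\cap B\ne B$, then makes $K\cap B$ a maximal ideal of $B$ containing $I$, so $K\cap B=I$.

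For the remaining claims, injectivity is immediate: $K_1\cap B=I_1\ne I_2=K_2\cap B$ forces $K_1\ne K_2$. The final clause repeats the Proposition~\ref{3.17} argument above: for any maximal ideal $K\supseteq\langle I\rangle$, the intersection $K\cap B$ is a maximal ideal of $B$ containing $I$, hence equals $I$. The main obstacle is the Zorn argument, since a chain of proper ideals in an $EMV$-algebra need not have a proper union (there is no ambient top element to witness properness), so excluding a chosen witness $a_0\in B\setminus I$ from membership in $\mathcal P$ is essential to keep the chain unions proper.
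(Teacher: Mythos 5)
Your proof is correct and follows essentially the same route as the paper: both establish $\langle I\rangle\cap B=I$ (the paper simply notes $\langle I\rangle=\downarrow I$), both run Zorn's lemma over ideals containing $\langle I\rangle$ and excluding a fixed witness from $B\setminus I$ (the paper phrases the exclusion as $J\cap\uparrow a=\emptyset$, which is equivalent to $a\notin J$ for an ideal), and both use the maximality of $I$ in $B$ together with the enough-idempotents property to show that any ideal properly containing the Zorn-maximal element must contain all of $B$ and hence equal $M$. Your closing remark about why the excluded witness is needed to keep chain unions proper is a sound observation that the paper leaves implicit.
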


\begin{proof}
Let $M$ be an $EMV$-algebra and $B$ be the set of all idempotent elements of $M$. Then $B$ is a subalgebra of $M$ (which is a generalized Boolean algebra).

By Proposition \ref{3.16}, for each ideal $I$ of $B$, the ideal of $M$ generated by $I$ is
\[ \langle I\rangle=\downarrow I=\{x\in M\mid x\leq z, \exists\, z\in I\}. \]
Clearly, $\langle I\rangle\cap B=I$. Now, let $I$ be a maximal ideal of $B$. Then $H=\downarrow I$ is an ideal of $M$. Put $x\in M\setminus H$. Since $M$ is an $EMV$-algebra, there exists $a\in B$ such that $x\leq a$. Clearly, $\uparrow a\cap H=\emptyset$ and so the set $S=\{J\in \mathrm{Ideal}(M)\mid H\s J,\ J\cap \uparrow a=\emptyset\}$ is not empty. By Zorn's lemma, $S$ has a maximal element, say $K$. If $Q$ is an ideal of $M$ such that $K\varsubsetneq Q$, then $I\varsubsetneq Q$ and $a\in Q$ which implies $B \subseteq Q$ and $\langle B\rangle \s Q$. It entails $Q=M$. That is, $K$ is a maximal ideal of $M$ and, moreover, it contains $I$. Since $K\cap B$ is an ideal of $B$, from $I\s K\cap B\s B$, it follows that either $K\cap B=I$ or $K\cap B=B$. In the second case, $B\subseteq K$ which implies $K=M$. Therefore, $K\cap B=I$. If $I'$ is another maximal ideal of $B$, $I'\ne I$, the above construction gives a maximal ideal $K'$ of $M$ such that $K'\cap B=I'$. Hence, $K'\ne K$.

Now let $K$ be a maximal ideal of $M$ containing $\langle I \rangle$. Then $K \cap \langle I\rangle\supseteq I$ and the maximality of $I$ in $\mathcal I(M)$ guarantees $K \cap \langle I\rangle= I$.
\end{proof}

Theorem \ref{th:maxI} will be strengthened for $EMV$-algebras satisfying the general comparability theorem in Theorem \ref{th:GCP} below.

Using Theorem \ref{th:maxI}, Theorem \ref{3.14} can be extended as follows. Another application of Theorem \ref{th:maxI} will be done in Theorem \ref{th:state maxI} below.

\begin{thm}\label{th:finite}
If an $EMV$-algebra $M$ has finitely many maximal ideals, then $M$ is an $MV$-algebra.

In particular, if $M$ is linearly ordered, then $M$ has a unique maximal ideal, and $M$ is an $MV$-algebra.
\end{thm}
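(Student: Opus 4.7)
The plan is to transfer the hypothesis from $M$ to the idempotent subalgebra $B=\mathcal I(M)$ and then show that a generalized Boolean algebra with only finitely many maximal ideals must possess a greatest element; once this is done, Example~\ref{3.1}(9) immediately identifies $M$ with an $MV$-algebra.

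First I would establish $|\mathrm{MaxI}(B)|\le|\mathrm{MaxI}(M)|$. By Proposition~\ref{3.17}, each maximal ideal $K$ of $M$ has $K\cap B$ a maximal ideal of $B$, since the alternative $K\cap B=B$ would force $\langle B\rangle=M\subseteq K$ by the enough-idempotents hypothesis, contradicting properness of $K$. Theorem~\ref{th:maxI} further says that every maximal ideal of $B$ arises as such a restriction, so the map $K\mapsto K\cap B$ is surjective, and hence finiteness of $\mathrm{MaxI}(M)$ forces finiteness of $\mathrm{MaxI}(B)$.

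The heart of the argument is to show that a generalized Boolean algebra $B$ without a greatest element has infinitely many maximal ideals. I would build an infinite pairwise disjoint family of nonzero elements $b_1,b_2,\ldots\in B$ inductively: given pairwise disjoint nonzero $b_1,\ldots,b_n$ with join $S_n$, the absence of a top supplies $c\in B$ with $c\not\le S_n$, and $b_{n+1}$ is taken to be the relative complement of $c\wedge S_n$ in the Boolean algebra $[0,c]$, which is nonzero and disjoint from each previous $b_i$. For each $n$, let $I_n$ be the ideal of $B$ generated by $\{b_m:m\ne n\}$; since $\oplus=\vee$ on $B$ by Theorem~\ref{2.1}(vi), Proposition~\ref{3.15} together with the disjointness shows $b_n\notin I_n$ (otherwise $b_n\le b_{m_1}\vee\cdots\vee b_{m_k}$ would give $b_n=\bigvee_i(b_n\wedge b_{m_i})=0$). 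I would then extend $I_n$ to an honest maximal ideal $P_n$ of $B$ avoiding $b_n$ by routing through the Boolean algebra unitization $B^+$ of $B$: use a Zorn argument in $B^+$ to produce a maximal ideal $P_n^+$ of $B^+$ containing $I_n$ and omitting $b_n$, and set $P_n:=P_n^+\cap B$; this is a maximal ideal of $B$ because $P_n^+\ne B$ (as $b_n\in B\setminus P_n^+$). The $P_n$'s are pairwise distinct since $b_m\in I_n\subseteq P_n$ for $m\ne n$, while $b_m\notin P_m$. This infinitude contradicts finiteness of $\mathrm{MaxI}(B)$, completing the proof of the first statement.

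For the linearly ordered case, the ideals of a linearly ordered $M$ form a chain under inclusion (if $I,J$ were incomparable, elements $x\in I\setminus J$ and $y\in J\setminus I$ would be comparable in $M$, and the smaller would lie in both ideals by downward closure), so $M$ has at most one maximal ideal; the general part then yields that $M$ is an $MV$-algebra, and Zorn's lemma applied to proper ideals of a nontrivial $MV$-algebra supplies at least one, so the maximal ideal is unique. The main obstacle I anticipate is the extension step above: in a generalized Boolean algebra without a top, an ideal maximal among those avoiding a given element need not itself be a maximal ideal of $B$, which is precisely why one must pass to the unitization $B^+$ to obtain a genuine maximal ideal.
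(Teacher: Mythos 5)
Your proof is correct, and while its first step --- transferring finiteness of $\mathrm{MaxI}(M)$ to $B=\mathcal I(M)$ via Proposition \ref{3.17} and Theorem \ref{th:maxI} --- is essentially the paper's, the second step takes a genuinely different route. The paper invokes the proof of \cite[Thm. 2.2]{CoDa} to embed $B$ into the Boolean algebra of subsets of the finite set $\mathrm{MaxI}(B)$, concludes that $B$ itself is finite, and takes $\bigvee\{a\mid a\in B\}$ as the top element of $M$. You instead prove the contrapositive directly: a generalized Boolean algebra without a top contains an infinite pairwise disjoint family of nonzero elements, and each $b_n$ is separated from the ideal generated by the remaining $b_m$ by a genuine maximal ideal of $B$, produced by passing to the Boolean unitization $B^+$ and restricting a prime (hence maximal) ideal of $B^+$ back to $B$; this yields infinitely many maximal ideals of $B$, a contradiction. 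Your construction checks out (the relative complement $\lambda_c(c\wedge S_n)$ is nonzero and disjoint from $S_n$, and $b_n\notin I_n$ by distributivity), and you are right that the detour through $B^+$ is needed, since in a topless $B$ an ideal maximal subject to avoiding $b_n$ need not be maximal; the only step you compress is the standard fact that an ideal of $B^+$ maximal with respect to omitting $b_n$ is prime, hence maximal, and that its trace on the maximal ideal $B$ of $B^+$ remains maximal in $B$ --- both routine. What the paper's argument buys is brevity, at the cost of leaning on the Conrad--Darnel representation; what yours buys is self-containedness and the stronger intermediate statement that a generalized Boolean algebra with no greatest element always has infinitely many maximal ideals. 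The linearly ordered case is handled identically in both.
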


\begin{proof}
Let $M$ be an $EMV$-algebra and $B$ be the set of all idempotent element of $M$. Then $B$ is a subalgebra of $M$. Our aim is to show that $B$ is a finite set.

Theorem \ref{th:maxI} implies that given a maximal ideal $I$ of $B$, there is a maximal ideal $K$ of $M$ such that $I=K\cap B$. In addition, if $I_1$ and $I_2$ are two different maximal ideals of $B$, and if $K_1,K_2$ are maximal ideals of $M$ such that $K_1\cap B=I_1\ne I_2=K_2\cap I_2$, then $K_1\ne K_2$. This implies that if $M$ has finitely many maximal ideals, then $B$ has also finitely many ideals.

By the proof of \cite[Thm. 2.2]{CoDa}, the generalized Boolean algebra can be embedded into a Boolean algebra of subsets of $\mathrm{MaxI}(B)$. Since $\mathrm{MaxI}(B)$ is finite, then $B$ is also a finite set. Therefore, the element $\bigvee\{a\mid a \in B\}$ is the top element of $B$ as well as of $M$ which implies $M$ is an $MV$-algebra.

Assume that $M$ is linearly ordered. If $I_1$ and $I_2$ are two different ideals, there are $x\in I_1\setminus I_2$ and $y\in I_2\setminus I_1$. If $x\le y$, then $x\in I_2$, an absurd, and if $y\le x$ again we get an absurd. Hence, $I_1=I_2$, $|\mathrm{MaxI}(M)|=1$, and by the first part, $M$ is an $MV$-algebra.
\end{proof}

\begin{thm}\label{3.18}
	Let $I$ be a maximal ideal of an $EMV$-algebra $(M;\vee, \wedge,\oplus,0)$. Then $M/I$ is an $MV$-algebra.
\end{thm}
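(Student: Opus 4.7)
The plan is to produce a top element in $M/I$; once that is done, Example \ref{3.1}(4) immediately yields that $M/I$ is an $MV$-algebra. The natural candidate for the top will be the common class shared by every idempotent of $M$ that lies outside $I$.

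First, for every $b\in \mI(M)\setminus I$, Proposition \ref{3.17} gives that $I_b := I\cap [0,b]$ is a maximal ideal of the $MV$-algebra $([0,b];\oplus,\lam_b,0,b)$, so $[0,b]/I_b$ is a simple $MV$-algebra. Classically every such algebra embeds into the standard real interval $[0,1]$, hence is linearly ordered, and consequently its only Boolean elements are $0$ and its top (see \cite[Chap.~3]{mundici 1}). Now take any two idempotents $a_1,a_2\in \mI(M)\setminus I$ and set $a=a_1\vee a_2$. By Example \ref{3.1}(9), $a\in \mI(M)$, and since $a_1\le a$ with $a_1\notin I$, also $a\notin I$. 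By the construction of the quotient $EMV$-algebra carried out between Theorem \ref{3.12} and Definition \ref{3.13}, the interval $[0/I,a/I]\s M/I$ coincides as an $MV$-algebra with $[0,a]/I_a$. In this simple quotient $a_1/I$ and $a_2/I$ are Boolean (as images of the Boolean elements $a_1,a_2$ of the $MV$-algebra $[0,a]$) and nonzero (as $a_1,a_2\notin I$), hence each coincides with the top $a/I$. Therefore $a_1/I=a_2/I$; call this common class $e$.

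For arbitrary $x\in M$ choose $b\in \mI(M)$ with $x\le b$. If $b\in I$, then $x\in I$ and $x/I=0/I\le e$; otherwise $b\notin I$ and $x/I\le b/I=e$. In either case $x/I\le e$, so $e$ is a top of $M/I$ and Example \ref{3.1}(4) completes the proof.

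The main technical point to justify is the claim that in a simple $MV$-algebra the only Boolean elements are $0$ and the top; this reduces to the classical fact that every simple $MV$-algebra embeds into $[0,1]$. A slightly cleaner alternative would be to verify directly that ideals of $M/I$ correspond bijectively to ideals of $M$ containing $I$: maximality of $I$ then forces $M/I$ to possess only the trivial ideals, whence $M/I$ is a simple $EMV$-algebra and Theorem \ref{3.14} applies at once.
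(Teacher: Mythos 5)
Your proof is correct, but it takes a genuinely different route from the paper's. The paper argues structurally: it shows that every ideal $B$ of $M/I$ pulls back to an ideal $A=\{x\in M\mid x/I\in B\}$ of $M$ containing $I$, so maximality of $I$ forces $M/I$ to have only the trivial ideals; then Theorem \ref{3.14} (every simple $EMV$-algebra is a simple $MV$-algebra) finishes the job. That is exactly the ``cleaner alternative'' you sketch in your closing paragraph. Your main argument instead exhibits the top element of $M/I$ concretely as the common class of all idempotents of $M$ lying outside $I$, and then invokes Example \ref{3.1}(4). The key step --- that any two idempotents $a_1,a_2\notin I$ satisfy $a_1/I=a_2/I$ --- is sound: $a=a_1\vee a_2$ is an idempotent outside $I$, the quotient construction following Theorem \ref{3.12} shows $[0/I,a/I]$ is an $MV$-algebra which is a homomorphic image of $[0,a]$ by (an ideal containing) the maximal ideal $I_a$ from Proposition \ref{3.17}, hence simple and linearly ordered, and its only nonzero Boolean element is its top. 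What your approach buys is extra information for free: it identifies the top of $M/I$ explicitly and shows every idempotent outside $I$ is sent to it, which is essentially the content the paper later extracts in Theorem \ref{3.20} (the remark after that theorem notes that every maximal ideal satisfies its condition). What the paper's approach buys is brevity, since Theorem \ref{3.14} has already done the work of producing a top element in the simple case. One cosmetic caution: you should cite the quotient construction between Theorem \ref{3.12} and Definition \ref{3.13} (as you do) rather than Lemma \ref{3.19} for the identification of $[0/I,a/I]$ with a quotient of $[0,a]$, since Lemma \ref{3.19} appears only after Theorem \ref{3.18} in the paper; as written, your argument avoids any circularity.
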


\begin{proof}
	We claim that $M/I$ is a simple $EMV$-algebra. Let $B$ be an ideal of the $EMV$-algebra $M/I$. Set
	$A:=\{x\in M\mid x/I\in B\}$. Clearly, $0\in A$. If $x,y\in A$, then $x/I,y/I\in B$ and so $(x\oplus y)/I=x/I\oplus y/I\in B$.
	Also, if $x,y\in M$ such that $x\leq y\in A$, then clearly, $x/I\leq y/I\in B$ and so $x/I\in B$, that is $x\in A$. So, $A$ is an ideal of the $EMV$-algebra $(M;\vee, \wedge,\oplus,0)$ which clearly contains $I$. Hence $I=A$ or $A=M$.
	Therefore, $M/I$ is a simple $EMV$-algebra. Now, from Theorem \ref{3.14}, it follows that $M/I$ is an $MV$-algebra.
\end{proof}

Due to \cite[Thm 3.5.1]{mundici 1}, if $A$ is a simple $MV$-algebra, then $A$ is isomorphic to a unique $MV$-subalgebra of the $MV$-algebra of the real interval $[0,1]$, hence in Theorem \ref{3.18}, $M/I$ can be embedded in a unique way into the real interval $[0,1]$ because there is an element $x\in M\setminus I$, so that $x/I>0/I$, and we can assume that the maximal value in $M/I$ is equal to the real number $1$.

\section{State-morphisms, Maximal Ideals, and $EMV$-clans}

In the section we introduce state-morphisms which in the case of $MV$-algebras are exactly extremal states. States are averaging of truth-values in \L ukasiewicz logic and they correspond to an analogue of finitely additive measures in classical logic. We show that state-morphisms are in a one-to-one correspondence with maximal ideals. We present $EMV$-clans as $EMV$-algebras of fuzzy sets where all algebraic operations are defined by points. They are prototypes of semisimple $EMV$-algebras.

According to \cite{Mun2}, a mapping $s$ on an $MV$-algebra $M$ such that $s:M\to [0,1]$ is (i) a {\it state} if (a) $s(1)=1$ and (b) $s(a\oplus b)=s(a)+s(b)$ whenever $a\odot b=0$; (ii) a {\it state-morphism} if $s$ is an $MV$-homomorphism from $M$ into the $MV$-algebra of the real interval $[0,1]$; (iii) an {\it extremal state} if $s = \lambda s_1 +(1-\lambda)s_2$, where $s_1,s_2$ are states on $M$ and $\lambda$ is a real number such that $0<\lambda <1$, then $s_1=s_2=s$. Due to \cite{Mun2} and \cite{Dvu3}, we have that (i) every non-degenerate $MV$-algebra possesses at leat one state; (ii) each state-morphism is a state, and it is an extremal state, and conversely, (iii) every extremal state is a state-morphism.

Inspired by the notion of a state-morphism on $MV$-algebras, we define a state-morphism on an $EMV$-algebra $M$ as follows: A mapping $s:M \to [0,1]$ is a {\it state-morphism} if $s$ is an $EMV$-homomorphism from $M$ into the $EMV$-algebra of the real interval $[0,1]$ such that there is an element $x\in M$ with $s(x)=1$. In the latter case, we can assume that there is an idempotent $a$ such that $s(a)=1$. We define the set $\Ker(s)=\{x\in M \mid s(x)=0\}$, the {\it kernel} of a state-morphism $s$.

The basic properties of state-morphisms are as follows.

\begin{prop}\label{pr:property}
Let $s$ be a state-morphism on an $EMV$-algebra $M$. Then
\vspace{1mm}
\begin{itemize}[nolistsep]
\item[{\rm (i)}] $s(0)=0$;
\item[{\rm (ii)}] $s(a)\in \{0,1\}$ for each idempotent $a \in M$;
\item[{\rm (iii)}] if $x\le y$, then $s(x)\le s(y)$;
\item[{\rm (iv)}] $s(\lambda_a(x))=   s(a)-s(x)$ for each $x\in [0,a]$, $a \in \mathcal I(M)$.
\item[{\rm (v)}] $\Ker(s)$ is a proper ideal of $M$.
\end{itemize}
\end{prop}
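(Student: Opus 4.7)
The plan is to prove each part by exploiting the fact that $s$ is an $EMV$-homomorphism into the standard $MV$-algebra $[0,1]$ (so it preserves $\oplus$, $\vee$, $\wedge$, $0$, and the appropriate $\lambda_a$), together with the explicit description of the operations on $[0,1]$: $u\oplus v=\min(u+v,1)$ and the relative complement inside $[0,u]$ being $u-(\cdot)$.

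For \textrm{(i)}, I would note that being a $qEMV$-homomorphism requires $s(0)=0$ by definition. Alternatively, $s(0)=s(0\oplus 0)=s(0)\oplus s(0)$, and the only $t\in[0,1]$ with $t\oplus t=t$ (computed in $[0,1]$) are $0$ and $1$; we can rule out $1$ using $s(0\oplus 0)=s(0)$ in $[0,1]$. For \textrm{(ii)}, if $a\in\mathcal I(M)$ then $s(a)\oplus s(a)=s(a\oplus a)=s(a)$, so by Theorem~\ref{2.1}(iv) applied to the $MV$-algebra $[0,1]$, $s(a)$ is Boolean, and $B([0,1])=\{0,1\}$. For \textrm{(iii)}, since $s$ preserves $\vee$, the assumption $x\le y$ yields $s(y)=s(x\vee y)=s(x)\vee s(y)$, hence $s(x)\le s(y)$.

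For \textrm{(iv)}, fix $a\in\mathcal I(M)$ and $x\in[0,a]$. Because $s$ is an $EMV$-homomorphism, $s(\lambda_a(x))=\lambda_{s(a)}(s(x))$. By \textrm{(ii)}, $s(a)\in\{0,1\}$. If $s(a)=1$, then $\lambda_1(s(x))=1-s(x)=s(a)-s(x)$. If $s(a)=0$, then \textrm{(iii)} applied to $x\le a$ and $\lambda_a(x)\le a$ forces $s(x)=0=s(\lambda_a(x))$, and the identity $s(\lambda_a(x))=s(a)-s(x)$ is the trivial $0=0-0$. Finally, for \textrm{(v)} the ideal axioms in Definition~\ref{3.11} are straightforward: $0\in\Ker(s)$ by \textrm{(i)}, closure under $\oplus$ follows from $s(x\oplus y)=s(x)\oplus s(y)=0\oplus 0=0$, and the downward closure is immediate from \textrm{(iii)}. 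Properness comes from the defining requirement that $s$ take the value $1$ somewhere, so $\Ker(s)\ne M$.

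No step is genuinely hard; the only point that needs a touch of care is \textrm{(iv)} in the degenerate case $s(a)=0$, where one must observe that the $MV$-homomorphism from $[0,a]$ into $[0,s(a)]=\{0\}$ is the zero map, so that the desired arithmetic identity still holds vacuously. Everything else is a direct transport of identities from $M$ to $[0,1]$ using the homomorphism properties.
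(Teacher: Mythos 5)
Your proof is correct and follows essentially the same route as the paper: transport everything to the $MV$-algebra $[0,1]$ via the homomorphism properties of $s$ and split on $s(a)\in\{0,1\}$ where needed. The only (harmless) local variations are in (ii), where you invoke the Boolean-element characterization $t\oplus t=t$ in $[0,1]$ instead of the paper's Archimedean argument with the least $n_0$ satisfying $n_0.s(a)=1$, and in (iii), where you use preservation of $\vee$ directly rather than restricting $s$ to the $MV$-algebra $[0,a]$; both are equally valid and, if anything, slightly more streamlined.
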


\begin{proof}
(i) It is trivial.

(ii) Let $a \in \mathcal I(M)$ and assume $0<s(a)$.  There is the least integer $n_0$ such that $n_0.s(a)=1$ in the $MV$-algebra of the real interval $[0,1]$. Then $s(a) = s(n_0.a) =1$.

(iii) Let $x\le y$. There is an idempotent $a \in M$ such that $y\le a$. If $s(a)=0$, then the restriction $s_a$ of $s$ onto the $MV$-algebra $[0,a]$ is the zero function, so that $s(x)=s(y)$. If $s(a)=1$, then the restriction $s_a$ is a state-morphism on the MV-algebra $[0,a]$, and the monotonicity of $s_a$ in $[0,a]$ implies $s(x)\le s(y)$.

(iv) We have $\lambda_a(x)\oplus x =a$ for each $x\in [0,a]$. If $s(a)=0$, the statement follows from (iii). If $s(a)=1$, the restriction $s_a$ is a state-morphism on the MV-algebra $[0,a]$, and for $s_a$ we have $s_a(\lambda_a(x))=1-s(x)$ which proves (iv).

(v) It follows easily from (i) and (iii).
\end{proof}

\begin{thm}\label{th:state}
{\rm (i)} If $I$ is a maximal ideal of an $EMV$-algebra $M$, then $M/I$ can be embedded in a unique way into the MV-algebra of the real interval $[0,1]$ such that the mapping $s_I: x\mapsto x/I$, $x \in M$, is a state-morphism.

{\rm (ii)} If $s$ is a state-morphism, then $\Ker(s)$ is a maximal ideal of $M$.
In addition, there is a unique maximal ideal $I$ of $M$ such that $s=s_I$.

{\rm (iii)} If for state-morphisms $s_1$ and $s_2$ we have $\Ker(s_1)=\Ker(s_2)$, then $s_1=s_2$.
\end{thm}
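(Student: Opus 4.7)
The plan is to dispatch the three statements in order, leveraging Theorem \ref{3.18} together with standard facts about simple $MV$-algebras, especially the classical fact that a simple $MV$-algebra admits a unique embedding into the $MV$-algebra of the real interval $[0,1]$ \cite[Thm 3.5.1]{mundici 1}.

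For (i), since $I$ is maximal, Theorem \ref{3.18} (and the paragraph following it) gives that $M/I$ is a simple $MV$-algebra which embeds uniquely into $[0,1]$. Composing the canonical quotient $EMV$-homomorphism $\pi:M\to M/I$ with this embedding produces a map $s_I:M\to [0,1]$ that is an $EMV$-homomorphism. Since $M/I$ is non-degenerate and an $MV$-algebra, its top element is attained as $a/I$ for some idempotent $a\in\mathcal I(M)$, and by uniqueness of the embedding its image is $1$; thus $s_I(a)=1$, so $s_I$ is a state-morphism. Uniqueness of the embedding of $M/I$ into $[0,1]$ yields uniqueness of $s_I$.

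For (ii), Proposition \ref{pr:property}(v) already gives that $\Ker(s)$ is a proper ideal. To prove maximality, suppose $J$ is an ideal with $\Ker(s)\subsetneq J$ and pick $x\in J\setminus \Ker(s)$; I must show that an arbitrary $y\in M$ lies in $J$. Choose $a,b\in\mathcal I(M)$ with $x\le a$ and $y\le b$ and put $c=a\vee b\in\mathcal I(M)$ (using Example \ref{3.1}(9)), so $x,y\in[0,c]$. By Proposition \ref{pr:property}(ii), $s(c)\in\{0,1\}$, and since $0<s(x)\le s(c)$ by Proposition \ref{pr:property}(iii), we have $s(c)=1$. Thus the restriction $s_c:=s\!\mid_{[0,c]}$ is a state-morphism on the $MV$-algebra $[0,c]$, so by the standard $MV$-algebra theory its kernel $\Ker(s)\cap[0,c]$ is a maximal ideal of $[0,c]$. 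Now $J\cap[0,c]$ is an ideal of $[0,c]$ strictly containing $\Ker(s)\cap[0,c]$ (it contains $x$), whence by maximality $J\cap[0,c]=[0,c]$ and in particular $y\in J$. Therefore $\Ker(s)$ is maximal. Uniqueness of the corresponding $I$ is immediate because $\Ker(s_I)=I$ for every maximal ideal $I$ by construction in (i).

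For (iii), if $\Ker(s_1)=\Ker(s_2)=I$, then $I$ is a maximal ideal by (ii). Each $s_j$ factors through $M/I$ as an injective $MV$-homomorphism $\bar s_j:M/I\to[0,1]$. The uniqueness part of (i) (i.e.\ the uniqueness of the embedding of the simple $MV$-algebra $M/I$ into $[0,1]$) forces $\bar s_1=\bar s_2$, whence $s_1=s_2$.

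The main obstacle is the maximality argument in (ii): one must ensure that the restriction of $s$ to an appropriate $[0,c]$ is a genuine $MV$-algebra state-morphism (i.e.\ that $s(c)=1$), which is precisely why $c$ is chosen as an idempotent majorising $x$ rather than just $y$; once this is arranged the problem reduces to the well-understood $MV$-algebra case.
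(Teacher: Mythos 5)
Your proof is correct and follows essentially the same route as the paper: in each part you reduce to an interval $[0,c]$ with $c$ idempotent and $s(c)=1$ and then invoke the standard $MV$-algebra facts (kernels of state-morphisms on $MV$-algebras are maximal ideals; a simple $MV$-algebra embeds uniquely into $[0,1]$). The only cosmetic differences are that in (ii) you verify maximality by showing every strictly larger ideal is improper rather than exhibiting $\lambda_a(n.x)\in\Ker(s)$ directly, and in (iii) you appeal to the uniqueness of the embedding of the simple quotient $M/I$ instead of the cited uniqueness result for $MV$-state-morphisms with equal kernels --- both are equivalent reductions to the same underlying $MV$-algebra theory.
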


\begin{proof}
(i) Let $I$ be a maximal ideal. Due to Theorem \ref{3.18} and Proposition \ref{3.17}, the mapping $s_I$ is in fact an $EMV$-homomorphism. Since $I$ is maximal, there is $x \in M\setminus I$, so that $x/I >0/I$. Without loss of generality, we can assume that the greatest value in $M/I$ is $1$. Hence, $s_I$ is a state-morphism on $M$.

(ii) Conversely, let $s$ be a state-morphism. Put $I=\Ker(s)$. Then $I$ is a proper ideal of $M$. Let $x \notin I$. There is an idempotent $a$ of $M$ such that $x\le a \notin I$ and $s(a)=1$. Then the restriction $s_a$ of $s$ restricted to the $MV$-algebra $[0,a]$ is a state-morphism on $[0,a]$. By \cite[Prop 4.3]{Dvu3}, $I\cap [0,a]= \{y \in [0,a]\mid s_a(y)=0\}$ is a maximal ideal of $[0,a]$. Consequently, there is an integer $n$ such that $\lambda_a(n.x)\in I\cap [0,a]\subseteq I$. Hence, $I$ is a maximal ideal of $I$. The uniqueness of $I$ follows from the \cite{Mun,Dvu3}.

(iii) There is an idempotent $a$ such that $a \notin \{x\in M\mid s_1(x)=0\}$. Then for the restrictions of $s_1$ and $s_2$ onto the $MV$-algebra $[0,a]$, we have by \cite[Prop 4.5]{Dvu3} that $s_1(x)=s_2(x)$ for each $x\in [0,a]$, then $s_1(x)=s_2(x)$ for each $x \in M$.
\end{proof}

Let $\mathcal{SM}(M)$ denote the set of state-morphisms on an $EMV$-algebra $M$. In Theorem \ref{3.26}, it will be proved that every $M$ contains a maximal ideal, so that by Theorem \ref{th:state}(i), $\mathcal{SM}(M)$ is non-void whenever $M\ne \{0\}$.
Using Theorem \ref{th:maxI}, we show that every state-morphism on $\mathcal I(M)$ can be extended to a state-morphism on an $EMV$-algebra $M$.

\begin{thm}\label{th:state maxI}
Every state-morphism $s$ on $\mathcal I(M)$ of an $EMV$-algebra $M$ can be extended to a state-morphism $\hat s$ on $M$.
\end{thm}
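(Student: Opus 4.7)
The plan is to reduce the extension problem to the purely ideal-theoretic correspondence already established in Theorem \ref{th:maxI}, and then to invoke Theorem \ref{th:state}(i) to produce the desired state-morphism on $M$.

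First, I would note that $\mathcal I(M)=B$ is a generalized Boolean algebra (Example \ref{3.1}(9)), so by Proposition \ref{pr:property}(ii) the state-morphism $s\colon B\to [0,1]$ actually takes only the values $0$ and $1$ on idempotents of $B$ (every element of $B$ is idempotent). Consequently the set $I=\Ker(s)=\{a\in B\mid s(a)=0\}$ is, in view of Theorem \ref{th:state}(ii) applied in $B$, a maximal ideal of $B$. In other words, a state-morphism on $B$ is nothing other than the quotient map associated to a maximal ideal of the generalized Boolean algebra $B$.

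Next I would apply Theorem \ref{th:maxI}: for the maximal ideal $I$ of $B$ there exists a maximal ideal $K$ of $M$ with $K\cap B=I$. By Theorem \ref{3.18}, the quotient $M/K$ is an $MV$-algebra, and by Theorem \ref{th:state}(i) the canonical map $\hat s\colon M\to [0,1]$ defined by $\hat s(x)=x/K$ (via the unique embedding of $M/K$ into $[0,1]$ normalized so that the top value is $1$) is a state-morphism on $M$.

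It remains to verify that $\hat s$ actually extends $s$. For any $a\in B$ we have $\hat s(a)\in\{0,1\}$ by Proposition \ref{pr:property}(ii), and likewise $s(a)\in\{0,1\}$. Moreover
\[
\hat s(a)=0 \ \Longleftrightarrow\ a\in K \ \Longleftrightarrow\ a\in K\cap B=I \ \Longleftrightarrow\ s(a)=0,
\]
so $\hat s|_B=s$ on all of $B=\mathcal I(M)$, finishing the proof. The main conceptual point is the observation that state-morphisms on $B$ are exactly quotients by maximal ideals of $B$; once this is recognized, Theorem \ref{th:maxI} immediately lifts such a quotient to $M$, and the only potential subtlety—namely that the lifted state-morphism $\hat s$ genuinely restricts to $s$ rather than to some other $\{0,1\}$-valued homomorphism—is handled by the equality $K\cap B=I$ guaranteed by Theorem \ref{th:maxI}.
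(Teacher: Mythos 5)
Your proof is correct and follows essentially the same route as the paper's: pass from $s$ to its kernel as a maximal ideal of $\mathcal I(M)$, lift it to a maximal ideal $K$ of $M$ via Theorem \ref{th:maxI}, and recover $\hat s$ from $K$ via Theorem \ref{th:state}(i). Your explicit chain of equivalences verifying $\hat s|_{\mathcal I(M)}=s$ is a slightly more detailed write-up of the paper's closing remark that both maps are $\{0,1\}$-valued and agree because $K\cap\mathcal I(M)=I$.
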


\begin{proof}
Let $s$ be a state-morphism on $B=\mathcal I(M)$. By Theorem \ref{th:state}, the set $I=\{x\in \mathcal I(M)\mid s(x)=0\}$ is a maximal ideal on $B$. Theorem \ref{th:maxI} guarantees that there is a maximal ideal $K$ of $M$ such that $K\cap B=I$. Due to Theorem \ref{th:state}(i), there is a unique state-morphism $\hat s$ such that $K=\{x\in M \mid \hat s(x)=0\}$. Since by Proposition \ref{pr:property}(ii), $s(a)\in \{0,1\}$, we see that $\hat s$ is an extension (not necessarily unique) of $s$ onto $M$.
\end{proof}

We note that a sufficient condition for the property ``for every maximal ideal $I$ of the set $\mathcal I(M)$ of idempotent elements of an $MV$-algebra $M$, there is a unique maximal ideal $K$ of $M$ such that $K\cap \mathcal I(M)=I$" is a condition that the $MV$-algebra satisfies the general comparability property, see e.g. \cite[Thm 4.4]{Dvu4}. Therefore, we introduce this notion also for $EMV$-algebras as follows. We say that an $EMV$-algebra $M$ satisfies the {\it general comparability property} if it holds for every $MV$-algebra $([0,a]; \oplus,\lambda_a,0,a)$, i.e. if $a \in \mathcal I(M)$ and $x,y \in [0,a]$, there is an idempotent $e$, $e \in [0,a]$ such that $x\wedge e\le y$ and $y\wedge \lambda_a(e) \le x$. We note that every linearly ordered $EMV$-algebra satisfies the general comparability property, on the other hand, there are $MV$-algebras where the general comparability property fails, see e.g. \cite[Ex 8.6]{Dvu4}.

As we promised above, now we strengthen Theorem \ref{th:maxI} as follows.

\begin{thm}\label{th:GCP}
Let $M$ be an $EMV$-algebra satisfying the general comparability property. If $I_1$ and $I_2$ are maximal ideals of $M$ such that $\mathcal I(M) \cap I_1 = \mathcal I(M)\cap I_2$, then $I_1 = I_2$.
\end{thm}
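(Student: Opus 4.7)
My plan is to argue by contradiction and reduce the problem to the corresponding uniqueness statement for $MV$-algebras with the general comparability property, which the paper has already invoked just above.

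Suppose $I_1 \ne I_2$. Then, without loss of generality, there exists $x \in I_1 \setminus I_2$. Since $M$ is an $EMV$-algebra, pick an idempotent $a \in \mathcal I(M)$ with $x \le a$. The first substep is to verify that $a \notin I_1$ (and hence also $a \notin I_2$). Indeed, if $a \in I_1$, then $a \in I_1 \cap \mathcal I(M) = I_2 \cap \mathcal I(M) \subseteq I_2$; since $x \le a$ and $I_2$ is a downward closed set, this forces $x \in I_2$, contradicting the choice of $x$. Symmetrically, one gets $a \notin I_2$.

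Now consider the $MV$-algebra $A := ([0,a]; \oplus, \lambda_a, 0, a)$. Because $a \notin I_1, I_2$, Proposition \ref{3.17} implies that $J_1 := I_1 \cap [0,a]$ and $J_2 := I_2 \cap [0,a]$ are maximal ideals of $A$. I would then observe that the set of Boolean elements of $A$ is exactly $B(A) = \mathcal I(M) \cap [0,a]$, since $b \in [0,a]$ satisfies $b\oplus b = b$ in $A$ iff $b$ is an idempotent of $M$. Consequently,
\[
J_1 \cap B(A) \;=\; I_1 \cap \mathcal I(M) \cap [0,a] \;=\; I_2 \cap \mathcal I(M) \cap [0,a] \;=\; J_2 \cap B(A).
\]

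Next I use the assumption that $M$ satisfies the general comparability property: by the definition given in the text, the $MV$-algebra $A$ itself satisfies the general comparability property. Therefore the sufficient condition cited just before the theorem, namely \cite[Thm.~4.4]{Dvu4}, applies to $A$: for every maximal ideal of $B(A)$ there is a \emph{unique} maximal ideal of $A$ intersecting $B(A)$ in it. Applied to the common Boolean part $J_1 \cap B(A) = J_2 \cap B(A)$, this gives $J_1 = J_2$. But then $x \in J_1 = J_2 \subseteq I_2$, contradicting $x \notin I_2$. Hence $I_1 = I_2$.

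The only delicate step is the identification $B(A) = \mathcal I(M) \cap [0,a]$ and the inheritance of the general comparability property from $M$ to $A$; both are essentially unpacking definitions. The substantive ingredient is the $MV$-algebra theorem \cite[Thm.~4.4]{Dvu4}, which does all the work once we have cut $M$ down to the slice $[0,a]$. So the proof is really a lifting argument: Theorem \ref{th:maxI} already gives existence of a maximal ideal of $M$ above each maximal ideal of $\mathcal I(M)$, and the present theorem upgrades this to uniqueness by pushing the question into a single bounded $MV$-component where the classical result is available.
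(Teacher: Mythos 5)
Your proof is correct and follows essentially the same route as the paper: reduce to the $MV$-algebra $[0,a]$ for an idempotent $a$ dominating the witnessing element(s), use Proposition \ref{3.17} to see that $I_1\cap[0,a]$ and $I_2\cap[0,a]$ are maximal ideals there, and invoke the uniqueness result of \cite[Thm 4.4]{Dvu4} for $MV$-algebras with general comparability to force $I_1\cap[0,a]=I_2\cap[0,a]$. The only cosmetic difference is that the paper takes $x\in I_1\setminus I_2$ and $y\in I_2\setminus I_1$ with $x,y\le a$ to conclude $a\notin I_1\cup I_2$ directly from downward closure, whereas you use the hypothesis $\mathcal I(M)\cap I_1=\mathcal I(M)\cap I_2$ for the same purpose; both are fine.
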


\begin{proof}
Suppose the converse, i.e. $I_1\ne I_2$. Since $I_1$ and $I_2$ are maximal ideals, there is $x \in I_1\setminus I_2$ and $y\in I_2 \setminus I_1$. Choose an idempotent $a$ such that $x,y \le a$. Then $a \notin I_1\cup I_2$. Then $I_1 \cap [0,a]$ and $I_2\cap [0,a]$ are maximal ideals of $[0,a]$, see Proposition \ref{3.17}. In addition, $I_1 \cap [0,a] \cap \mathcal I(M) = I_2 \cap [0,a] \cap \mathcal I(M)$.  Since the $MV$-algebra $[0,a]$ satisfies the general comparability property, using \cite[Thm 4.4, Cor 4.5]{Dvu4} and Theorem \ref{th:state}, we have $x\in I_1\cap [0,a]= I_2\cap [0,a]\ni y$  which gives $y\in I_1$ and $x\in I_2$, an absurd. Hence, $I_1=I_2$.
\end{proof}

\begin{cor}\label{co:state-m}
Let $M$ be an $EMV$-algebra satisfying the general comparability property. Then every state-morphism on $\mathcal I(M)$ can be extended to a unique state-morphism on $M$.
\end{cor}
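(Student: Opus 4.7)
The existence of an extension is already provided by Theorem \ref{th:state maxI}, so the only work is to establish uniqueness under the general comparability hypothesis. My plan is to pass from state-morphisms to their kernels (which are maximal ideals by Theorem \ref{th:state}(ii)), use Theorem \ref{th:GCP} to identify the two candidate kernels, and then invoke Theorem \ref{th:state}(iii) to conclude that the two extensions coincide.

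More precisely, I would fix a state-morphism $s$ on $\mathcal I(M)$ and suppose that $\hat s_1,\hat s_2$ are two state-morphisms on $M$ each restricting to $s$ on $\mathcal I(M)$. Set $K_i=\Ker(\hat s_i)$ for $i=1,2$. Theorem \ref{th:state}(ii) shows that $K_1$ and $K_2$ are maximal ideals of $M$. The key observation is that, by Proposition \ref{pr:property}(ii), a state-morphism takes only values $0$ and $1$ on idempotents, so for every $a\in\mathcal I(M)$,
\[
a\in K_i\cap\mathcal I(M)\ \Longleftrightarrow\ \hat s_i(a)=0\ \Longleftrightarrow\ s(a)=0,
\]
which gives $K_1\cap\mathcal I(M)=\Ker(s)=K_2\cap\mathcal I(M)$.

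Now the general comparability property enters through Theorem \ref{th:GCP}: since $M$ satisfies this property and $K_1,K_2$ are maximal ideals of $M$ with the same trace on $\mathcal I(M)$, we conclude $K_1=K_2$. Finally, Theorem \ref{th:state}(iii) guarantees that two state-morphisms with equal kernels must coincide, whence $\hat s_1=\hat s_2$, which is the desired uniqueness.

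There is no genuine obstacle here beyond lining up the already-proved facts in the correct order; the only subtle point to double-check is that any extension of $s$ really has kernel with trace $\Ker(s)$ on $\mathcal I(M)$, but this is immediate from Proposition \ref{pr:property}(ii). Thus the corollary follows by combining Theorem \ref{th:state maxI} (existence), Theorem \ref{th:GCP} (identification of the kernel), and Theorem \ref{th:state}(iii) (state-morphism determined by its kernel).
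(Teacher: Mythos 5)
Your proposal is correct and follows essentially the same route as the paper: existence via Theorem \ref{th:state maxI}, then uniqueness by passing to kernels, applying Theorem \ref{th:GCP} to identify them, and concluding with Theorem \ref{th:state}(iii). The only difference is that you spell out, via Proposition \ref{pr:property}(ii), why the two kernels have the same trace on $\mathcal I(M)$ --- a detail the paper leaves implicit.
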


\begin{proof}
Let $s$ be a state-morphism on $\mathcal I(M)$. By Theorem \ref{th:state maxI}, there is an extremal state $s_1$ on $M$ which is an extension of $s$. If there is another state-morphism $s_2$ on $M$ which extends $s$, then $I_i =\{x \in M\mid s_i(x)=0\}$ is by  Theorem \ref{th:state}(iii) a maximal ideal of $M$ for $i=1,2$. Since $I_1 \cap \mathcal I(M)=I_2\cap \mathcal I(M)$, Theorem \ref{th:GCP} implies, $I_1=I_2$, which finally gives, Theorem \ref{th:state}, $s_1 = s_2$.
\end{proof}

Now, there is a natural question ``under which suitable condition on an ideal $I$ of an $EMV$-algebra $(M;\vee, \wedge,\oplus,0)$, the
quotient $EMV$-algebra induced by $I$, $M/I$, is an $MV$-algebra''?

\begin{lem}\label{3.19}
	Let $(M;\vee, \wedge,\oplus,0)$ be an $EMV$-algebra, $I$ an ideal of $M$,  $b,d\in \mI(M)$ such that $b\leq d$, and let $I_b$ and $I_d$ be ideals of the $MV$-algebras $[0,b]$ and $[0,d]$, respectively. If $x,y\in [0,b]$ such that
	$x/I_d=y/I_d$, then $x/I_b=y/I_b$.
\end{lem}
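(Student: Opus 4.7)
The natural (and apparently intended) reading is that $I_b$ and $I_d$ are the restrictions of the ambient ideal $I$, i.e.\ $I_b = I\cap [0,b]$ and $I_d = I\cap [0,d]$, so that in particular $I_b = I_d\cap [0,b]$. Recall that in the $MV$-algebra $([0,d];\oplus,\lam_d,0,d)$ the congruence induced by $I_d$ is given by $x/I_d = y/I_d$ iff $\lam_d(\lam_d(x)\oplus y)\in I_d$ and $\lam_d(\lam_d(y)\oplus x)\in I_d$, and analogously for $I_b$ inside $[0,b]$. So it suffices to prove that, for $x,y\in [0,b]$, the $MV$-distances computed in $[0,d]$ coincide with those computed in $[0,b]$.

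The key identity to establish is
\[
\lam_d(\lam_d(x)\oplus y) \;=\; \lam_b(\lam_b(x)\oplus y), \qquad x,y\in [0,b].
\]
By Proposition \ref{3.5}(ii) applied to $b\le d$ we have $\lam_d(x)=\lam_b(x)\oplus \lam_d(b)$, so setting $u:=\lam_b(x)\oplus y\in [0,b]$ gives $\lam_d(x)\oplus y = u\oplus \lam_d(b)$. The De Morgan law in the $MV$-algebra $[0,d]$, together with $\lam_d(\lam_d(b))=b$, yields
\[
\lam_d\bigl(u\oplus \lam_d(b)\bigr) \;=\; \lam_d(u)\odot_d b.
\]
Since $b$ is an idempotent of $[0,d]$, it is a Boolean element of $[0,d]$ by Theorem \ref{2.1}, whence $\lam_d(u)\odot_d b = \lam_d(u)\wedge b$. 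Finally, Proposition \ref{3.5}(i) gives $\lam_d(u)\wedge b = \lam_b(u)$, completing the identity.

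With the identity in hand the lemma is immediate: the hypothesis forces $\lam_d(\lam_d(x)\oplus y)\in I_d$, which by the identity equals $\lam_b(\lam_b(x)\oplus y)\in I_d$; since this element already lies in $[0,b]$, it belongs to $I_d\cap [0,b]=I_b$. Swapping the roles of $x$ and $y$ produces the symmetric condition, so $x/I_b = y/I_b$. The only non-routine ingredient is the key identity: the inclusion $[0,b]\hookrightarrow [0,d]$ is \emph{not} an $MV$-embedding since $\lam_b$ and $\lam_d$ disagree, but the particular combination appearing in the $MV$-congruence is controlled precisely by combining Proposition \ref{3.5} with the fact that every idempotent of $M$ below $d$ is Boolean in $[0,d]$.
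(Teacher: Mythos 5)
Your proof is correct and follows essentially the same route as the paper: both reduce the lemma to relating $\lam_b(\lam_b(x)\oplus y)$ to $\lam_d(\lam_d(x)\oplus y)$ via Proposition \ref{3.5} together with the fact that the idempotent $b$ is a Boolean element of $[0,d]$. The only cosmetic difference is that you derive the exact identity $\lam_d(\lam_d(x)\oplus y)=\lam_b(\lam_b(x)\oplus y)$ (valid since $\lam_d(\lam_d(x)\oplus y)\le b$), whereas the paper computes in the opposite direction and stops at $\lam_b(\lam_b(x)\oplus y)=\lam_d(\lam_d(x)\oplus y)\wedge b$, which suffices equally well.
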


\begin{proof}
	Let $x,y\in [0,b]$ such that $x/I_d=y/I_d$. Then $\lam_d(\lam_d(x)\oplus y),\lam_d(\lam_d(y)\oplus x)\in I_d\s I$.
	We will show that $\lam_b(\lam_b(x)\oplus y),\lam_b(\lam_b(y)\oplus x)\in I_b$.
	\begin{align*}
	\lam_b(\lam_b(x)\oplus y)&=\lam_b\big((\lam_d(x)\wedge b)\oplus y\big) \\
	& = \lam_b\big((\lam_d(x)\oplus y)\wedge (b\oplus y)\big) \\
	& = \lam_b\big((\lam_d(x)\oplus y)\wedge b\big) ,\mbox{   since $y\leq b$ } \\
	& = \lam_d\big((\lam_d(x)\oplus y)\wedge b\big) \wedge b \\
	& = \big(\lam_d(\lam_d(x)\oplus y)\vee \lam_d(b)\big) \wedge b \\
	& = \big(\lam_d(\lam_d(x)\oplus y)\wedge b\big)\vee \big(\lam_d(b)\wedge b\big) \\
	& = \lam_d(\lam_d(x)\oplus y)\wedge b\in I_b.
	\end{align*}
	In a similar way, we can see that $\lam_b(\lam_b(y)\oplus x)\in I_b$. Therefore, $x/I_b=y/I_b$.
\end{proof}

The following equivalencies on the induced order for a quotient $EMV$-algebra are used in Theorem \ref{3.20}.

Let $I$ be an ideal of an $EMV$-algebra $(M;\vee, \wedge,\oplus,0)$ and $x,y\in M$. Then
\begin{align*}
x/I\leq y/I & \Leftrightarrow x/I=x/I\wedge y/I=(x\wedge y)/I \\
 & \Leftrightarrow \lam_b(\lam_b(x)\oplus (x\wedge
y)),\lam_b(\lam_b(x\wedge y)\oplus x)\in I,\quad \exists\, b\in\mI(M) \\
 & \Leftrightarrow \lam_b(\lam_b(x)\oplus (x\wedge y))\in I ,\quad
\exists\, b\in\mI(M) \\
 & \Leftrightarrow \lam_b\big((\lam_b(x)\oplus x)\wedge (\lam_b(x)\oplus
y)\big)\in I ,\quad \exists\, b\in\mI(M) \\
 & \Leftrightarrow \lam_b\big(b\wedge (\lam_b(x)\oplus y)\big)\in I ,\quad
\exists\, b\in\mI(M) \\
 & \Leftrightarrow \lam_b(\lam_b(x)\oplus y)\in I,\quad \exists\,
b\in\mI(M). \\
\end{align*}

\begin{thm}\label{3.20}
	Let $I$ be an ideal of $EMV$-algebra $(M;\vee, \wedge,\oplus,0)$. Then $M/I$ is an $MV$-algebra if and only if
	there exists $a\in\mI(M)$ such that $\lam_b(a)\in I$ for all $b\in\mI(M)$ greater than $a$.
\end{thm}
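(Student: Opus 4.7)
The plan is to prove both implications using the characterization in Example \ref{3.1}(4) that a $qEMV$-algebra is an $MV$-algebra if and only if it has a top element, together with the fact that in any $EMV$-algebra $M$, idempotents lift from the quotient because every element lies below some idempotent.

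For the forward direction, suppose $M/I$ is an $MV$-algebra, so it has a top element $t$, which satisfies $t \oplus t = t$. Pick any $x \in M$ with $x/I = t$ and, using enough idempotents, choose $a \in \mI(M)$ with $x \leq a$. Then $a/I \geq x/I = t$, and since $t$ is the top, $a/I = t$. This gives the candidate idempotent $a$. Now take any $b \in \mI(M)$ with $b \geq a$. Inside the MV-algebra $[0,b]$, the element $a$ is a Boolean element with complement $\lam_b(a)$, so in particular $a \wedge \lam_b(a) = 0$. Passing to the quotient and using that $a/I = t$ is the top of $M/I$:
\[
\lam_b(a)/I \;=\; \lam_b(a)/I \wedge a/I \;=\; (a \wedge \lam_b(a))/I \;=\; 0/I,
\]
whence $\lam_b(a) \in I$, as required.

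For the converse, assume an idempotent $a$ has the stated property; I would show $a/I$ is the top of $M/I$, from which Example \ref{3.1}(4) finishes the argument. Given $x \in M$, invoke the enough-idempotents property to find $b \in \mI(M)$ with $b \geq x \vee a$. In the MV-algebra $[0,b]$ we have $a \oplus \lam_b(a) = b$, and by hypothesis $\lam_b(a) \in I$, so
\[
b/I \;=\; a/I \oplus \lam_b(a)/I \;=\; a/I \oplus 0/I \;=\; a/I.
\]
Since $x \leq b$, this yields $x/I \leq b/I = a/I$. Thus every class lies below $a/I$, making $M/I$ bounded, and hence an $MV$-algebra by Example \ref{3.1}(4).

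There is no serious obstacle here; the proof is a direct translation of the ``bounded $qEMV$ equals $MV$'' criterion to the quotient, plus the observation that Boolean complementarity of $a$ inside each MV-algebra $[0,b]$ is precisely what controls whether $a/I$ is top. The only care needed is the lifting step at the start of the forward direction, where one must replace an arbitrary preimage of the top by an idempotent preimage, which is exactly where the ``enough idempotents'' axiom ($\GMV_I3$) enters.
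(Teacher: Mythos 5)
Your proof is correct, and its overall skeleton is the same as the paper's: both directions reduce to identifying the top of $M/I$ with $a/I$ for an idempotent $a$, and your lifting of an arbitrary preimage of the top to an idempotent preimage is exactly the paper's opening move. The one genuine difference is how you get $\lam_b(a)\in I$ in the forward direction: the paper first deduces $a/I=b/I$ and then invokes Lemma \ref{3.19} (the compatibility of quotients across the intervals $[0,b]\subseteq[0,d]$) to transfer this to $\lam_b(a)=\lam_b(\lam_b(b)\oplus a)\in I$, whereas you use the Boolean identity $a\wedge\lam_b(a)=0$ in the $MV$-algebra $[0,b]$ (Proposition \ref{3.5}(v)) together with the fact that meeting with the top class is the identity, so that $\lam_b(a)/I=(\lam_b(a)\wedge a)/I=0/I$ and hence $\lam_b(a)\in I$. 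This is a bit more economical, since it bypasses Lemma \ref{3.19} entirely. Likewise in the converse you establish $b/I=a/I\oplus\lam_b(a)/I=a/I$ and conclude by monotonicity of the quotient map, while the paper instead uses the explicit description of the induced order on $M/I$ via $\lam_b(\lam_b(x)\oplus a)\le\lam_b(a)\in I$; both computations are valid and yield the same conclusion. The only care needed in your converse --- that the hypothesis is stated for $b$ strictly greater than $a$ while your chosen $b\ge x\vee a$ might equal $a$ --- is harmless, since $\lam_a(a)=0\in I$ in that case.
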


\begin{proof}
	Let $M/I$ be an $MV$-algebra. Then there exists $a\in M$ such that $x/I\leq a/I$ for all $x\in M$. Since $M$ is an $EMV$-algebra,
there is $b\in\mI(M)$ such that $a\leq b$ and so $a/I=b/I$. Thus, without loss of generality we can assume that $a\in \mI(M)$.
	Let $b$ be an arbitrary element of $\mI(M)$ greater than $a$. Since $a/I$ is the maximum of $M/I$, then $a/I=b/I$ and so
	there is $d\in \mI(M)$ such that $a,b\leq d$ and $\lam_d(\lam_d(a)\oplus b),\lam_d(\lam_d(b)\oplus a)\in I$.
	Since $a,b\in [0,b]$, by Lemma \ref{3.19}, we get that $\lam_b(a)=\lam_b(\lam_b(b)\oplus a)\in I$.

	Conversely, let $x\in M$. Then there exists $b\in \mI(M)$ such that $x,a\in [0,b]$. Since
	$\lam_b(\lam_b(x)\oplus a)\leq \lam_b(a)\in I$, then $x/I\leq a/I$. Therefore, $M/I$ is an $MV$-algebra. 	
\end{proof}

From Theorem \ref{3.18} and Theorem \ref{3.20} we get that each maximal ideal satisfies the condition in Theorem \ref{3.20}.

We say that an $EMV$-algebra $M$ is {\it semisimple} if $\text{Rad}(M):=\bigcap\{I \mid I \in \mathrm{MaxI}(M)\}=\{0\}$; the set $\text{Rad}(M)$ is said to be the {\it radical} of $M$.

In what follows, we show that every generalized Boolean algebra is semisimple.

\begin{lem}\label{le:GBA}
Let $M\ne \{0\}$ be a generalized Boolean algebra. Then:
\vspace{1mm}
\begin{itemize}[nolistsep]
\item[{\rm (i)}] An ideal $I$ of $M$ is maximal if and only if, for each $a \notin I$ and each $b\in M$ with $a<b$, $\lambda_b(a)\in I$.
\item[{\rm (ii)}] $M$ is a semisimple $EMV$-algebra.
\end{itemize}
\end{lem}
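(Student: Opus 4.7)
The plan is to exploit the fact that in a generalized Boolean algebra every element is idempotent, so by Example \ref{3.1}(2) the operation $\oplus$ coincides with $\vee$, and hence Corollary \ref{3.16} reads $\langle I\cup\{x\}\rangle=\{z\in M:z\le i\vee x\text{ for some }i\in I\}$. For the forward direction of (i) I start from $a\notin I$ and $a<b$; by maximality, $b\in\langle I\cup\{a\}\rangle$, so $b\le i\vee a$ for some $i\in I$, whence $\lam_b(a)\le i\vee a$. Distributivity together with the relative complement identity $\lam_b(a)\wedge a=0$ then gives
\[
\lam_b(a)=\lam_b(a)\wedge(i\vee a)=(\lam_b(a)\wedge i)\vee(\lam_b(a)\wedge a)=\lam_b(a)\wedge i\le i,
\]
so $\lam_b(a)\in I$. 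Conversely, assuming the stated condition, I take any $a\notin I$ and any $c\in M$ and set $b:=a\vee c$: if $b=a$ then $c\le a\in\langle I\cup\{a\}\rangle$; otherwise $a<b$, so $\lam_b(a)\in I$ and $c\le b=a\vee\lam_b(a)\in\langle I\cup\{a\}\rangle$, showing $\langle I\cup\{a\}\rangle=M$, i.e. $I$ is maximal.

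For part (ii), I fix $a\ne 0$ and apply Zorn's lemma to the family $\mathcal S:=\{J\in\mathrm{Ideal}(M):a\notin J\}$, which is non-empty since $\{0\}\in\mathcal S$ and is closed under unions of chains; let $I$ be a maximal element of $\mathcal S$. The claim is that $I$ is a maximal ideal of $M$, which exhibits a maximal ideal avoiding $a$ and forces $\mathrm{Rad}(M)=\{0\}$. To verify maximality via the criterion in (i), suppose on the contrary that there exist $x\notin I$ and $b>x$ with $\lam_b(x)\notin I$. Then both $\langle I\cup\{x\}\rangle$ and $\langle I\cup\{\lam_b(x)\}\rangle$ strictly contain $I$ and so must contain $a$, giving $a\le i_1\vee x$ and $a\le i_2\vee\lam_b(x)$ for some $i_1,i_2\in I$. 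Meeting and expanding by distributivity,
\[
a\le(i_1\vee x)\wedge(i_2\vee\lam_b(x))=(i_1\wedge i_2)\vee(i_1\wedge\lam_b(x))\vee(x\wedge i_2)\vee(x\wedge\lam_b(x)),
\]
but $x\wedge\lam_b(x)=0$ and each of the remaining three meets lies in $I$, forcing $a\in I$, a contradiction.

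The only delicate point is the forward direction of (i): one does not automatically have $b=i\vee a$ but only $b\le i\vee a$, and the argument succeeds because the identity $\lam_b(a)\wedge a=0$ lets the unwanted term collapse to $0$, leaving $\lam_b(a)$ bounded by an element of $I$. Everything else in (i) is a direct unwinding of Corollary \ref{3.16}, and (ii) is then a standard Zorn-plus-distributivity argument that converts the criterion (i) into semisimplicity.
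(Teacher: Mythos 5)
Your proof is correct. Part (i) is essentially the paper's own argument: the forward direction is identical (bound $\lambda_b(a)$ by $x\vee a$ with $x\in I$, distribute, and kill the term $\lambda_b(a)\wedge a=0$), and your converse, while phrased slightly more cleanly (splitting on whether $a\vee c=a$), is the same idea as the paper's.

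Part (ii) is where you genuinely diverge. The paper does \emph{not} argue directly: it invokes the filter machinery of Section 5, takes a maximal filter $F$ containing a putative $0<x\in\mathrm{Rad}(M)$, and uses the forward-referenced Theorem \ref{3.26} to know that $I_F=\{\lambda_a(z)\mid z\in F,\ a\in\mathcal I(M),\ z\le a\}$ is a maximal ideal; the contradiction then comes from $b=x\oplus\lambda_b(x)\in I_F$ for an idempotent $b\in F$. You instead run Zorn's lemma on the family of ideals avoiding a fixed $a\ne 0$, and verify that a maximal member $I$ of that family satisfies the criterion of (i) by the distributive computation $(i_1\vee x)\wedge(i_2\vee\lambda_b(x))$, in which the cross-term $x\wedge\lambda_b(x)=0$ vanishes and the other three meets lie in $I$. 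This is a self-contained Stone-type argument: it stays entirely inside the generalized Boolean algebra setting, needs only Corollary \ref{3.16} and part (i), and avoids both the filter--ideal correspondence and the forward reference to Theorem \ref{3.26} that the paper's proof leans on. The paper's route, in exchange, reuses machinery it must develop anyway for general $EMV$-algebras. Both are valid; yours is arguably the more elementary and logically tidier proof of this particular lemma.
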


\begin{proof}
If $M$ has the top element, $M$ is a Boolean algebra and the statement is well-known from the theory of Boolean algebras. Thus let us assume that $M$ has no top element. In generalized Boolean algebras we have $x\oplus y = x\vee y$.

(i) Let $I$ be a maximal ideal of $M$, and let $a,b \in M$ such that $a\notin I$ and $a<b$. Maximality of $I$ entails that $\lambda_b(a)\in \langle I\cup\{a\}\rangle$, by Corollary \ref{3.16}, there is an integer $n$ and $x \in I$ such that $\lambda_b(a)\le x \oplus n.a=x\vee a\le x$. Then $\lambda_b(a)=\lambda_b(a)\wedge (x\vee a)=(\lambda_b(a)\wedge x)\vee (\lambda_b(a)\wedge a) = \lambda_b(a)\wedge x\le x$ which yields $\lambda_b(a) \in I$.

Conversely, let an ideal $I$ of $M$ satisfy conditions of (i). Choose $x \in \langle I\cup\{a\}\rangle \setminus I$ and let $b\in M$ be greater than $x$. Then $\lambda_b(x)\in I \subseteq \langle I\cup\{a\}\rangle$. Since $b=x\oplus \lambda_b(x)$, we have $b \in \langle I\cup\{a\}\rangle$. This is true for each $b\in M$ with $b>x$, therefore, we have $\langle I\cup\{a\}\rangle=M$, so that $I$ is maximal.

(ii) First we have to note that every generalized Boolean algebra $M\ne \{0\}$ possesses at least one maximal ideal, as it will be proved in Theorem \ref{3.26} below.

Let $x \in \mathrm{Rad}(M)$. If $x>0$, using Zorn's lemma, we have that there is a maximal filter $F$ of $M$ containing $x$. By Theorem \ref{3.26} below, the set $I_F=\{\lambda_a(z) \mid z\in F,\, a\in \mathcal I(M),\, z \le a\}$ is a maximal ideal of $M$. Let $b$ be an idempotent such that $b \notin I_F$ and $x\le b$. Then $\lambda_b(x)\in I_F$ and $x\in I_F$. Hence, $b= x\oplus \lambda_b(x)\in I_F$ which is absurd. Consequently, $x =0$.
\end{proof}

An important family of $EMV$-algebras is a family of $EMV$-clans of fuzzy sets which as we show below are only semisimple $EMV$-algebras.

\begin{defn}\label{de:clan}
A system $\mathcal T\subseteq [0,1]^\Omega$ of fuzzy sets of a set $\Omega\ne \emptyset$ is said to be an $EMV$-{\it clan} if
\vspace{1mm}
\begin{enumerate}[nolistsep]
\item[(i)] $0_\Omega \in \mathcal T$ where $0_\Omega(\omega)=0$ for each $\omega \in \Omega$;
\item[(ii)] if $a \in \mathcal T$ is a characteristic function, then (a) $a-f \in \mathcal T$ for each $f\in \mathcal T$ with $f(\omega)\le a(\omega)$ for each $\omega \in \Omega$, (b) if $f,g \in \mathcal T$ with $f(\omega),g(\omega)\le a(\omega)$ for each $\omega \in \Omega$, then $f\oplus g \in \mathcal T$, where $(f\oplus g)(\omega) = \min\{f(\omega)+g(\omega),a(\omega)\}$, $\omega \in \Omega$, and $a$ is a characteristic function from $\mathcal T$;
\item[(iii)] for each $f \in \mathcal T$, there is a characteristic function $a \in \mathcal T$ such that $f(\omega)\le a(\omega)$ for each $\omega \in \Omega$;
\item[(iv)] given $\omega \in \Omega$, there is $f \in \mathcal T$ such that $f(\omega)=1$.
\end{enumerate}
\end{defn}

\begin{prop}\label{pr:clan}
Any $EMV$-clan $\mathcal T$ can be organized into an $EMV$-algebra of fuzzy sets where all operations are defined by points.

{\rm (i)} Let $f,g \in \mathcal T$ and $f,g \le a,b$, where $a,b$ are characteristic functions from $\mathcal T$. Then  $(f\odot g)(\omega) = \max\{f(\omega)+g(\omega)-a(\omega),0\}= \max\{f(\omega)+g(\omega)-b(\omega),0\}$ and $f\odot g  \in \mathcal T$. Similarly, $f\ast g= f\odot (a-g)=f\odot (b-g)\in \mathcal T$.

{\rm (ii)} If $f,g \in \mathcal T$ and $f(\omega)\le g(\omega)$ for each $\omega \in \Omega$, then $g-f\in \mathcal T$.

{\rm (iii)} If $f,g\in \mathcal T$, then $f\wedge g, f\vee g \in \mathcal T$, where $(f\wedge g)(\omega)=\min\{f(\omega),g(\omega)\}$ and $(f\vee g)(\omega)=\max\{f(\omega),g(\omega)\}$ for each $\omega \in \Omega$

In addition, given $\omega \in \Omega $, the mapping $s_\omega(f):=f(\omega)$, $f \in \mathcal T$, is a state-morphism and if $I_\omega=\{f \in \mathcal T\mid f(\omega)=0\}$, then $I_\omega$ is a maximal ideal of $\mathcal T$.
\end{prop}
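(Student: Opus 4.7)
The plan is to exploit the simplified characterization of $EMV$-algebras given by Proposition \ref{3.3}(ii), which reduces the problem to: (a) $(\mathcal T;\vee,\wedge,0_\Omega)$ is a lattice with least element $0_\Omega$; (b) $(\mathcal T;\oplus,0_\Omega)$ is a commutative monoid; (c) for each $f\in \mathcal T$ there is an idempotent $a\in \mathcal T$ with $f\le a$ such that $([0_\Omega,a];\oplus,\lambda_a,0_\Omega,a)$ is an $MV$-algebra. The key observation making everything painless is that the idempotents of $\mathcal T$ (under pointwise $\oplus$) are exactly the characteristic functions in $\mathcal T$, since $\min\{2t,1\}=t$ iff $t\in\{0,1\}$. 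Hence axiom (iii) of Definition \ref{de:clan} gives enough idempotents for free.

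First, I would note that for any $f,g\in\mathcal T$ dominated by a characteristic function $a\in\mathcal T$, the definition $(f\oplus g)(\omega)=\min\{f(\omega)+g(\omega),a(\omega)\}$ coincides with $\min\{f(\omega)+g(\omega),1\}$ pointwise: when $a(\omega)=0$ we have $f(\omega)=g(\omega)=0$ so both expressions vanish, and when $a(\omega)=1$ they are equal. Hence $\oplus$ is well-defined on $\mathcal T$ independently of the choice of $a$, and obviously commutative and associative with neutral $0_\Omega$. This same observation also gives the two equalities for $f\odot g$ in (i), since $\max\{f(\omega)+g(\omega)-a(\omega),0\}$ vanishes on $\{\omega:a(\omega)=0\}$ and equals $\max\{f(\omega)+g(\omega)-1,0\}$ on $\{\omega:a(\omega)=1\}$.

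Next, I would fix a characteristic function $a\in\mathcal T$ and verify that $[0_\Omega,a]\cap \mathcal T$, equipped with $\oplus$ and $\lambda_a(f):=a-f$, is an $MV$-subalgebra of the standard $MV$-algebra on $[0_\Omega,a]\subseteq [0,1]^\Omega$. Closure under $\oplus$ is Definition \ref{de:clan}(ii)(b); closure under $\lambda_a$ is Definition \ref{de:clan}(ii)(a); the $MV$-axioms then transfer from the ambient pointwise MV-algebra. This yields condition (c) above, and consequently, via Proposition \ref{3.3}(ii), parts (i), (ii), (iii) will fall out: $f\odot g = \lambda_a(\lambda_a(f)\oplus\lambda_a(g))$ and $f\ast g=f\odot\lambda_a(g)$ lie in $[0_\Omega,a]\cap\mathcal T$; if $f\le g\le a$ then $g-f=\lambda_a(f\oplus\lambda_a(g))\in\mathcal T$; and $f\vee g,\ f\wedge g$ are the lattice operations in $[0_\Omega,a]\cap\mathcal T$, which are exactly pointwise max and min because the ambient MV-algebra has pointwise lattice operations. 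Part (a) of the $EMV$ conditions then follows because any two elements of $\mathcal T$ lie under a common characteristic function (take the join $a\vee b$ of two dominating characteristic functions, which is characteristic and in $\mathcal T$ by the previous step applied to $\oplus$ on $[0_\Omega,a\vee b]\cap \mathcal T$ after enlarging, or more directly: $a\vee b=a\oplus b$ when $a,b$ are characteristic and one chooses any common dominator like $a\oplus b$ itself, which is characteristic).

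For the final assertions, $s_\omega(f):=f(\omega)$ is clearly an $EMV$-homomorphism into the standard $MV$-algebra $[0,1]$, since all operations of $\mathcal T$ are pointwise; Definition \ref{de:clan}(iv) provides $f\in\mathcal T$ with $s_\omega(f)=1$, so $s_\omega$ is a state-morphism. Finally, $I_\omega=\Ker(s_\omega)$, and Theorem \ref{th:state}(ii) identifies $\Ker(s_\omega)$ with a maximal ideal of $\mathcal T$. The only point requiring mild care is to pin down that characteristic functions are precisely the idempotents and that pointwise $\oplus$ agrees with the definition in the clan regardless of the dominating characteristic function chosen; once this is checked, the rest is a routine application of Proposition \ref{3.3}(ii) and Theorem \ref{th:state}.
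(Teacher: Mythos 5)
Your proof is correct and follows essentially the same route as the paper's: independence of $\oplus$ from the choice of dominating characteristic function via the case split $a(\omega)\in\{0,1\}$, identification of idempotents with characteristic functions, the interval $MV$-algebras on $[0_\Omega,a]\cap\mathcal T$ supplied by clan axioms (ii)(a)--(b), the derived operations $\odot$, $\ast$, $\vee$, $\wedge$ from the $MV$-structure, and the maximal-ideal claim via $I_\omega=\Ker(s_\omega)$ and Theorem \ref{th:state}(ii). The one soft spot --- producing a characteristic function in $\mathcal T$ dominating two given ones (your ``$a\oplus b$'' shortcut is circular, since forming $a\oplus b$ inside $\mathcal T$ already presupposes such a common dominator) --- is glossed over in exactly the same way in the paper's own proof, so it is not a gap relative to the paper.
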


\begin{proof}
We assert that $\mathcal T$ is an $EMV$-algebra of fuzzy sets. By (iv) we have that if $f,g \in \mathcal T$, there is a characteristic function $a\in \mathcal T$ such that $f,g\le a$. If $b\in \mathcal T$ is another characteristic function such that $a\le b$, we have

\[(f\oplus_a g)(\omega)=\begin{cases}
f(\omega)+g(\omega) & \text{ if } f(\omega)+g(\omega)\le a(\omega)\\
a(\omega) & \text{ if }  f(\omega)+g(\omega) > a(\omega),
\end{cases}\quad \omega \in \Omega,
\]
and
\[(f\oplus_b g)(\omega)=\begin{cases}
f(\omega)+g(\omega) & \text{ if } f(\omega)+g(\omega)\le b(\omega)\\
b(\omega) & \text{ if }  f(\omega)+g(\omega) > b(\omega),
\end{cases}\quad \omega \in \Omega.
\]

If $a(\omega)=0$, then $f(\omega)=g(\omega)=0$ and $(f \oplus_a g)(\omega)= 0= (f \oplus_b g)(\omega) $. If $a(\omega)=1$, then $b(\omega)=1$ and $(f \oplus_a g)(\omega)=  (f \oplus_b g)(\omega)$.  Hence, if $f,g \le u,v$, where $u,v$ are characteristic functions from $\mathcal T$, there is a characteristic function $c\in \mathcal T$ such that $u,v\le c$. Then $f\oplus_u g=f\oplus_c g = f\oplus_v g$, and the binary operation $\oplus$ does not depend on chosen characteristic functions $a,b,u,v,c\in \mathcal T$ dominating $f,g$, and $\oplus $ is a total binary operation such that $(\mathcal T;\oplus, 0_\Omega)$ is a commutative ordered monoid. It is easy to see that,  for $f \in \mathcal T$, we have $f\oplus f=f$ iff $f$ is a characteristic function. Finally $\lambda_a(f)=a-f$ whenever $f\le a$ and $a\in \mathcal T$ is a characteristic function. So that $([0,a];\oplus,\lambda_a,0,a)$ is an $MV$-algebra of fuzzy sets. Consequently, $(\mathcal T; \vee,\wedge,\oplus, 0_\Omega)$ is an $EMV$-algebra.

(i) We can define $f\odot _a g$ in the similar but dual way as we defined already $f\oplus_a g$ if $f,g \in [0,a]$, and if $f,g \in [0,b]$, then $f\odot_a g = f\odot_b g$.

(ii) Let $f,g \in \mathcal T$ and $f\le g$. There is a characteristic function $a\in \mathcal T$ such that $f,g$ belong to the $MV$-algebra $[0,a]$. Then $g-f= g \odot (a-f)\in [0,a]$ and similarly $f\odot_a (a-g)=f\odot_b (b-g)$, so we can define $f\ast g:= f\odot_a (a-g)$.

(iii) Let $f,g \in \mathcal T$, and let $a\in \mathcal T$ be a characteristic function such that $f(\omega),g(\omega)\le a(\omega)$, $\omega \in \Omega$. Then $f\vee g =(f\ast g)\oplus g\in \mathcal T$, $f\wedge g = f\odot ((a-f)\oplus g)\in \mathcal T$, and $(f\wedge g)(\omega)=\min\{f(\omega),g(\omega)\}$ and $(f\vee g)(\omega)=\max\{f(\omega),g(\omega)\}$ for each $\omega \in \Omega$.

The mapping $s_\omega$ is a homomorphism from $\mathcal T$ into the $MV$-algebra $[0,1]$. Since, for $\omega\in \Omega$, there is an $f\in \mathcal T$ such that $f(\omega)=1$,  $s_\omega$ is a state-morphism.

In view of $I_\omega= \{f\in \mathcal T \mid s_\omega(f)=0\}$, due to Theorem \ref{th:state}(ii), $I_\omega$ is a maximal ideal of $\mathcal T$ for each $\omega \in \Omega$.
\end{proof}

\begin{thm}\label{th:semis}
An $EMV$-algebra $(M;\vee, \wedge,\oplus,0)$ is semisimple if and only if $M$ is isomorphic to an $EMV$-clan of fuzzy functions on some $\Omega \ne \emptyset$.
\end{thm}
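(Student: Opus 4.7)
The plan is to prove the equivalence by constructing the evaluation map $\phi:M\to [0,1]^\Omega$ with $\Omega=\mathrm{MaxI}(M)$ and identifying its image with an $EMV$-clan, while the converse is almost automatic from Proposition \ref{pr:clan} and the state-morphism machinery of Theorem \ref{th:state}.

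For the easy direction, suppose $M$ is isomorphic to an $EMV$-clan $\mathcal T\subseteq[0,1]^\Omega$. For every $\omega\in\Omega$, Proposition \ref{pr:clan} yields that the point-evaluation $s_\omega(f):=f(\omega)$ is a state-morphism and $I_\omega:=\{f\in\mathcal T:f(\omega)=0\}$ is a maximal ideal. If $f\in\bigcap_{\omega\in\Omega}I_\omega$, then $f(\omega)=0$ for all $\omega$, so $f=0_\Omega$; hence $\mathrm{Rad}(\mathcal T)=\{0_\Omega\}$ and $\mathcal T$ is semisimple.

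For the harder direction, assume $M$ is semisimple and set $\Omega:=\mathrm{MaxI}(M)$. By Theorem \ref{th:state}(i), for each $I\in\Omega$ there is a state-morphism $s_I:M\to[0,1]$ with $\ker(s_I)=I$. Define $\phi:M\to[0,1]^\Omega$ by $\phi(x)(I):=s_I(x)$. I would carry out three checks. First, $\phi$ preserves the $EMV$-operations pointwise: $\phi$ is trivially a lattice morphism sending $0$ to $0_\Omega$, and for any idempotent $a$ with $x,y\le a$, one has $\phi(x\oplus y)(I)=\min\{\phi(x)(I)+\phi(y)(I),\phi(a)(I)\}$ and $\phi(\lambda_a(x))(I)=\phi(a)(I)-\phi(x)(I)$, using Proposition \ref{pr:property}(ii),(iv) and the fact that $s_I|_{[0,a]}$ is either zero (when $s_I(a)=0$) or a genuine $MV$-state-morphism onto a subalgebra of $[0,1]$ (when $s_I(a)=1$). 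Second, $\phi$ is injective: given $x\ne y$ and an idempotent $b\ge x,y$, the element $d:=\lambda_b(\lambda_b(x)\oplus y)\vee \lambda_b(\lambda_b(y)\oplus x)$ is a non-zero element of $[0,b]$ (it is the symmetric difference in the $MV$-algebra $[0,b]$); by semisimplicity there is $I\in\Omega$ with $d\notin I$, forcing $s_I(b)=1$ and $s_I(x)\ne s_I(y)$, so $\phi(x)\ne\phi(y)$.

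Third, and this is the main technical step, I must verify that $\phi(M)$ satisfies all four axioms of an $EMV$-clan. Axiom (i) is immediate, (iii) is the enough-idempotents hypothesis translated through $\phi$, and (iv) follows because for every maximal $I$ there is an idempotent $a\notin I$ (any idempotent dominating a witness $x\notin I$), giving $\phi(a)(I)=1$. The delicate part is closure under the clan-operations in axiom (ii), because a function $f=\phi(x)\in\phi(M)$ bounded pointwise by some characteristic $\phi(a)$ need not come from an $x$ that is bounded by $a$ in $M$. The fix is to replace $x$ by $\tilde x:=x\wedge a$: then $\phi(\tilde x)(I)=\min\{s_I(x),s_I(a)\}=f(I)$ since $f\le\phi(a)$, while $\tilde x\in[0,a]$ now allows honest use of the $MV$-algebra structure. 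Then $\phi(\lambda_a(\tilde x))=\phi(a)-f$ settles (ii)(a), and for (ii)(b) the analogous replacement of $x,y$ by $\tilde x,\tilde y\in[0,a]$ together with $\tilde x\oplus\tilde y\le a\oplus a=a$ gives $\phi(\tilde x\oplus \tilde y)(I)=\min\{f(I)+g(I),\phi(a)(I)\}$, matching the clan sum exactly. Combining these three steps shows that $\phi$ is an $EMV$-isomorphism of $M$ onto the $EMV$-clan $\phi(M)\subseteq[0,1]^\Omega$.
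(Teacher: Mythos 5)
Your proof is correct and follows essentially the same route as the paper: an evaluation map of $M$ into $[0,1]^\Omega$ built from state-morphisms (Theorem \ref{th:state}), with injectivity/faithfulness supplied by semisimplicity and the converse read off from Proposition \ref{pr:clan}. The only cosmetic differences are that you index $\Omega$ by $\mathrm{MaxI}(M)$ rather than by $\mathcal{SM}(M)$ (these are in bijection by Theorem \ref{th:state}), and you verify clan axiom (ii) via the replacement $\tilde x = x\wedge a$, whereas the paper first establishes that $\hat x\le \hat y$ iff $x\le y$ and then reads the closure properties off directly.
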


\begin{proof}
Put $\Omega =\mathcal{SM}(M)$, and for each $x \in M$, let $\hat x: \mathcal{SM}(M)\to [0,1]$ be a mapping such that $\hat x(s):=s(x)$, $s \in \mathcal{SM}(M)$. Put $\widehat{M}:=\{\hat x\mid x \in M\}$. We assert that $\widehat{M}$ is an $EMV$-clan. Clearly $\hat 0$ is the zero function. If $a\in \mathcal I(M)$, then by Proposition \ref{pr:property}(ii), $\hat a$ is a characteristic function.

We have $\hat x \le \hat y$ iff $x\le y$.  Indeed, there is an idempotent $a \in M$ such that $x,y \le a$. By Proposition \ref{3.17}, for each maximal ideal $I$ on $M$, the set $I\cap [0,a]$ is either $[0,a]$ or a maximal ideal of $[0,a]$. Then $\bigcap\{I\cap [0,a]\mid I \in \mathrm{MaxI}(M)\}=\{0\}$, so that each $[0,a]$ is a semisimple $MV$-algebra. Then $\hat x\le \hat y$ iff $s(x)\le s(y)$ for each state-morphism $s$ on the semisimple $MV$-algebra $[0,a]$, consequently, $x\le y$.

Then the mapping $x\mapsto \hat x$ preserves $0,\vee,\wedge, \oplus$, if $\hat x \le \hat a$, then $x\le a$, so that $\hat \lambda_a(x)=\hat a - \hat x \in \widehat M$, and $[\hat 0,\hat a]$ is an $MV$-algebra. In other words, $\widehat M$ is an $EMV$-clan of fuzzy sets, and the mapping $x\mapsto \hat x$ is an $EMV$-isomorphism from $M$ onto $\widehat M$.

Given a state-morphism $s$ on $M$, there is an element $x\in M$ such that $s(x)=1$. Then $\hat x(s)=1$.

Conversely, let $\phi:M\to  \mathcal T$ be an $EMV$-isomorphism from $M$ onto an $EMV$-clan $\mathcal T$. If we set $I_\omega:=\{f\in \mathcal T \mid f(\omega)=0\}$, then by Proposition \ref{pr:clan}, $I_\omega$ is a maximal ideal of $\mathcal T$. Then $\bigcap\{I\mid I\in \mathrm{MaxI}(M)\} \subseteq \bigcap \{I_\omega\mid \omega \in \Omega\}=\{0_\Omega\}$, so that $\mathcal T$ is semisimple, and consequently so is $M$.
\end{proof}

A special type of an $EMV$-clan is a clan of fuzzy sets: We say that a system $\mathcal C$ of fuzzy sets of a set $\Omega \ne \emptyset$ is a {\it clan} if (i) $1_\Omega \in \mathcal C$, where $1_\Omega(\omega)=1$ for each $\omega \in \Omega$, (ii) if $f \in \mathcal C$, then $f':=1-f\in \mathcal C$, and (iii) if $f,g \in \mathcal C$, then $f\oplus g\in \mathcal C$, where $(f\oplus g)(\omega)=\min\{f(\omega)+g(\omega),1\}$, $\omega \in \Omega$.  Then $(\mathcal C; \oplus, ', 0_\Omega,1_\Omega)$ is an $MV$-algebra where all $MV$-operations are defined by points. In addition, $\min\{f,g\},\max\{f,g\}\in \mathcal C$ whenever $f,g \in \mathcal C$.  It is clear, that any clan can be understood as a bounded $EMV$-clan. In addition, an $EMV$-clan $\mathcal T$ is a clan iff $1_\Omega \in \mathcal T$.

We note that if $\mathcal T$ is any system of fuzzy sets of $\Omega\ne \emptyset$, then there is a minimal clan $\mathcal C_0(\mathcal T)$ containing $\mathcal T$. In particular, if $\mathcal T$ is an $EMV$-clan, then $\mathcal C_0(\mathcal T)$ is the least clan of fuzzy sets on $\Omega$ containing $\mathcal T$.

\begin{cor}\label{3.21}
	Any semisimple $EMV$-algebra can be embedded into an $MV$-algebra.
\end{cor}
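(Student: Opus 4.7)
The plan is to chain the representation theorem for semisimple $EMV$-algebras (Theorem \ref{th:semis}) with the minimal-clan construction $\mathcal C_0(\cdot)$ introduced just above the corollary. Concretely, given a semisimple $EMV$-algebra $(M;\vee,\wedge,\oplus,0)$, I would first apply Theorem \ref{th:semis} to obtain an $EMV$-isomorphism $\phi:M\to \mathcal T$ onto an $EMV$-clan of fuzzy sets on some $\Omega\ne\emptyset$. Then I would pass to the minimal clan $\mathcal C_0(\mathcal T)\subseteq [0,1]^\Omega$ containing $\mathcal T$; by the remark preceding the corollary, $(\mathcal C_0(\mathcal T);\oplus,',0_\Omega,1_\Omega)$ is an $MV$-algebra of fuzzy sets. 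The proposed embedding is then the composition $M\xrightarrow{\phi}\mathcal T\hookrightarrow \mathcal C_0(\mathcal T)$, which is obviously injective; the content is to verify that the set-theoretic inclusion $\iota:\mathcal T\hookrightarrow \mathcal C_0(\mathcal T)$ is an $EMV$-homomorphism.

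The key step, and the one requiring care, is reconciling the two different definitions of $\oplus$ and of the partial complements $\lambda_a$. On $\mathcal T$ the sum of $f,g$ dominated by a characteristic function $a\in\mathcal T$ is truncated at $a(\omega)$, while on the clan $\mathcal C_0(\mathcal T)$ every sum is truncated at $1$. For $\omega\in\Omega$ with $a(\omega)=1$ the two truncations agree trivially, and for $\omega$ with $a(\omega)=0$ one has $f(\omega)=g(\omega)=0$, so both give $0$. Thus the binary operation $\oplus$ of $\mathcal T$ inherited via $\iota$ coincides with the restriction of $\oplus$ on $\mathcal C_0(\mathcal T)$. The meet and join on both sides are pointwise $\min$ and $\max$, so they agree automatically, and $\iota(0_\Omega)=0_\Omega$.

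For the $EMV$-homomorphism condition on $\lambda$, let $a\in\mathcal I(\mathcal T)$ (necessarily a characteristic function in $\mathcal T$ by Proposition \ref{pr:clan} and Proposition \ref{pr:property}(ii)). Its image $\iota(a)=a$ is a Boolean element of the $MV$-algebra $\mathcal C_0(\mathcal T)$, and by Example \ref{3.1}(1) the induced operation $\lambda_a$ on the $MV$-algebra $[0,a]\subseteq\mathcal C_0(\mathcal T)$ is the unique $z\in[0,a]$ with $f\oplus z=a$, which computed pointwise gives $z(\omega)=a(\omega)-f(\omega)$. This is exactly the value of $\lambda_a(f)$ in $\mathcal T$ as recorded in Proposition \ref{pr:clan}. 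Hence $\iota\circ\lambda_a=\lambda_{\iota(a)}\circ\iota$ on $[0,a]_{\mathcal T}$, and by Definition \ref{3.4}(ii) $\iota$ is an $EMV$-homomorphism.

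The main obstacle I expect is precisely the bookkeeping of the two truncation levels ($a(\omega)$ versus $1$); once one notices that $\mathcal T$-idempotents are characteristic functions, the agreement of operations is forced on each fibre and the embedding follows from the fact that any $EMV$-homomorphism into an $MV$-algebra (any clan being a special case) embeds the $EMV$-algebra as an $EMV$-subalgebra of that $MV$-algebra. Composing with $\phi$ then gives the required embedding of $M$ into the $MV$-algebra $\mathcal C_0(\mathcal T)$.
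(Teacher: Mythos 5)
Your proposal is correct and coincides with the second of the two proofs the paper gives for this corollary: apply Theorem \ref{th:semis} to realize $M$ as an $EMV$-clan $\mathcal T$ and then embed $\mathcal T$ into the minimal clan $\mathcal C_0(\mathcal T)$, which is an $MV$-algebra; your careful check that the two truncation levels agree on each fibre and that $\lambda_a(f)=a-f$ is preserved is exactly the verification the paper leaves implicit. (The paper's first proof instead embeds $M$ into $\prod_{I\in\mathrm{MaxI}(M)}M/I$, but that is an alternative, not a requirement.)
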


\begin{proof}
	Let $(M;\vee, \wedge,\oplus,0)$ be a semisimple  $EMV$-algebra.
We present two types of the proofs.

(i) By the proof of Theorem \ref{th:semis},  the natural mapping
	\[ \phi:M\ra \prod_{I\in\mathrm{MaxI}(M)} M/I,\quad \phi(x)=(x/I)_{_{I\in \mathrm{MaxI}(M)}},\   x\in M, \]
is an embedding.
	By Theorem \ref{3.18}, $M/I$ is an $MV$-algebra for all $I\in \mathrm{MaxI}(M)$, so
	$\prod_{I\in\mathrm{MaxI}(M)} M/I$ is an $MV$-algebra.

(ii) By Theorem \ref{th:semis}, there is an $EMV$-clan $\mathcal T$ of fuzzy sets on $\Omega \ne \emptyset$ such that $M$ and $\mathcal T$ are isomorphic. Then $\mathcal C_0(\mathcal T)$ is the least  clan containing $\mathcal T$. Then $M$ can be embedded into the $MV$-algebra $\mathcal C_0(\mathcal T)$.
\end{proof}

\begin{rmk}\label{re:clan}
As we have already said, if $I$ is a maximal ideal of $M$, $M/I$ can be understood as an $MV$-subalgebra of the $MV$-algebra of the real interval $[0,1]$. If $\{I_\alpha\}_\alpha$ is a set of maximal ideals of $M$ such that $\bigcap_\alpha I_\alpha =\{0\}$, the embedding mapping $\phi: M \to \prod_\alpha M/I_\alpha$ defined by $ \phi(x)=(x/I_\alpha)_\alpha$ gets an $EMV$-clan $\phi(M)$ of fuzzy sets on $\Omega =\{I_\alpha\}_\alpha$, and the direct product $\prod_\alpha M/I_\alpha$ defines a clan of fuzzy sets on $\Omega$ such that $\mathcal C_0(\phi(M))\subseteq \prod_\alpha M/I_\alpha$. Question: Is $\mathcal C_0(\phi(M))$ equal to $\prod_\alpha M/I_\alpha$?
\end{rmk}

In the next example we show that the answer to the question posed in Remark \ref{re:clan} could be negative.

\begin{exm}\label{ex:clan}
Let $\Omega$ be an infinite set and $\mathcal T$ be the set of characteristic functions of all finite subsets of $\Omega$. Then $\mathcal T$ is a generalized Boolean algebra that is not a Boolean algebra, more precisely $\mathcal T$ is an $EMV$-clan that is not a clan. It contains a system of maximal ideals $\{I_\omega \mid \omega \in \Omega\}$ such that $\bigcap_\omega I_\omega =\{0\}$ and $\mathcal C_0(\mathcal T)\varsubsetneq  \prod_\omega \mathcal T/I_\omega$.

Let $\mathcal T'$ be the system of fuzzy sets on $\Omega$ such that $f\in \mathcal T'$ if and only if there is a finite subset $A$ such that $f\le \chi_A$. Then $\mathcal T'$ is an $EMV$-clan of fuzzy sets, and $\mathcal C_0(\mathcal T')$ consists of fuzzy sets $f$ on $\Omega$ such that either  there is a finite subset $A$ of $\Omega$ such that $f\le \chi_A$ or there is a co-finite subset $A$ such that $f\ge \chi_A$. In addition, $\mathcal T'$ is a maximal ideal of $\mathcal C_0(\mathcal T')$.
\end{exm}

\begin{proof}
It is evident that $\mathcal C_0(\mathcal T)$ consists of  all characteristic functions of all finite or co-finite subsets of $\Omega$. Given $\omega \in \Omega$, let $I_\omega =\{\chi_A\mid  A \subseteq \Omega,\,\omega \notin A\}$. Lemma \ref{le:GBA}(i) implies that $I_\omega$ is a maximal ideal of $\mathcal T$ and $\bigcap\{I_\omega \mid \omega \in \Omega\} =\{\chi_\emptyset\}$. The mapping $\chi_A \mapsto \{\chi_A/I_\omega\mid \omega \in \Omega\} \in \prod_{\omega \in \Omega}\mathcal T/I_\omega = 2^\Omega$ is an embedding of $\mathcal T$ into the clan $2^\Omega$. Clearly, $\mathcal C_0(\mathcal T)\varsubsetneq  2^\Omega$.

It is easy to verify that $\mathcal T'$ is an $EMV$-clan. Let $f \in \mathcal C_0(\mathcal T')\setminus \mathcal T'$. Then there is a co-finite subset $A$ of $\Omega$ such that $f\le \chi_A$, which yields, $1-f\ge 1_\Omega -\chi_A=\chi_{\Omega \setminus A}$, i.e. $1_\Omega-f \in \mathcal T'$, proving $\mathcal T'$ is a maximal ideal of $\mathcal C_0(\mathcal T')$.
\end{proof}

Now we generalize the latter example and \cite[Thm. 2.2]{CoDa}.

\begin{thm}\label{th:clan1}
Let $\mathcal T$ be an $EMV$-clan of fuzzy sets of $\Omega \ne \emptyset$ and let $1_\Omega \notin \mathcal T$. Then the minimal clan $\mathcal C_0(\mathcal T)$ generated by $\mathcal T$ is the set
$$\mathcal T_0=\{f \in [0,1]^\Omega \mid \exists\, f_0\in \mathcal T\ \text{\rm such that either } f=f_0 \ \text{\rm  or } f = 1-f_0\}.
$$
In addition, $\mathcal T$ is a maximal ideal of $\mathcal C_0(\mathcal T)$.
\end{thm}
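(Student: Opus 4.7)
The plan is to establish the equality $\mathcal C_0(\mathcal T)=\mathcal T_0$ by two inclusions and then verify the maximality of $\mathcal T$ inside $\mathcal C_0(\mathcal T)$. The inclusion $\mathcal T_0\subseteq\mathcal C_0(\mathcal T)$ is immediate: $\mathcal T\subseteq\mathcal C_0(\mathcal T)$, and any clan is closed under the complementation $f\mapsto 1-f$.

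For the reverse inclusion I would verify that $\mathcal T_0$ is itself a clan of fuzzy sets on $\Omega$ containing $\mathcal T$; by minimality of $\mathcal C_0(\mathcal T)$ this forces $\mathcal C_0(\mathcal T)\subseteq\mathcal T_0$. The presence of $0_\Omega$ and $1_\Omega$ and closure under complementation are transparent from the definition of $\mathcal T_0$. The only real work is closure under the clan operation $(f\oplus g)(\omega)=\min\{f(\omega)+g(\omega),1\}$, handled by a four-case analysis according to whether $f$ and $g$ have the form $f_0$ or $1-f_0$ with $f_0\in\mathcal T$. The key technical remark that makes each case close up inside $\mathcal T_0$ is that for any $f_0,g_0\in\mathcal T$ dominated by a common characteristic function $a\in\mathcal T$, the $EMV$-clan operations $\oplus$, $\odot$, $\ominus$ computed in the $MV$-algebra $[0,a]$ agree pointwise with the standard clan operations on $[0,1]^\Omega$ (at $\omega$ with $a(\omega)=0$ both operands vanish; at $\omega$ with $a(\omega)=1$ the caps coincide). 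Using this and Proposition \ref{pr:clan}, the cases reduce (up to symmetry) to $f_0\oplus g_0\in\mathcal T$, to $f_0\oplus(1-g_0)=1-(g_0\ominus f_0)\in\mathcal T_0$, and to $(1-f_0)\oplus(1-g_0)=1-(f_0\odot g_0)\in\mathcal T_0$.

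For the ideal claim, closure of $\mathcal T$ under the clan-$\oplus$ is part of the previous observation. The delicate step is downward closedness. Given $g\in\mathcal C_0(\mathcal T)=\mathcal T_0$ with $g\le f\in\mathcal T$, the case $g=g_0\in\mathcal T$ is trivial, so I must rule out $g=1-g_0$ with $g_0\in\mathcal T$. My plan is to pick characteristic functions $a,b\in\mathcal T$ with $f\le a$ and $g_0\le b$, note that $c:=a\vee b$ is again a characteristic function in $\mathcal T$ (since idempotents of $\mathcal T$ are exactly $\{0,1\}$-valued and are closed under the $EMV$-lattice join), and observe that if $c(\omega)=0$ at some $\omega$ then $f(\omega)=g_0(\omega)=0$, giving $g(\omega)=1>0=f(\omega)$, contradicting $g\le f$. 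Hence $c=1_\Omega\in\mathcal T$, which contradicts the standing hypothesis $1_\Omega\notin\mathcal T$. This is the main obstacle in the whole proof and is precisely where the hypothesis bites; it forces $g=g_0\in\mathcal T$.

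Finally, to see that $\mathcal T$ is \emph{maximal}, pick any $g\in\mathcal C_0(\mathcal T)\setminus\mathcal T$. The description of $\mathcal T_0$ gives $g=1-g_0$ for some $g_0\in\mathcal T$, and the single identity
\[
g\oplus g_0=\min\{(1-g_0)+g_0,\,1\}=1_\Omega,
\]
combined with Corollary \ref{3.16}, shows that $1_\Omega$ lies in the ideal of $\mathcal C_0(\mathcal T)$ generated by $\mathcal T\cup\{g\}$. Since $1_\Omega$ is the top element of $\mathcal C_0(\mathcal T)$, this ideal exhausts $\mathcal C_0(\mathcal T)$, completing the proof.
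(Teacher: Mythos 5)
Your proposal is correct and follows essentially the same route as the paper: show $\mathcal T_0$ is a clan via the same three-case analysis of $\oplus$ (using Proposition \ref{pr:clan} to keep $f_0\odot g_0$ and $g_0\ominus f_0$ inside $\mathcal T$), then establish downward closedness by ruling out $g=1-g_0\le f$ through the hypothesis $1_\Omega\notin\mathcal T$, and get maximality from $g\oplus g_0=1_\Omega$. The only cosmetic difference is that for downward closedness the paper argues algebraically that $f\oplus g_0=1_\Omega$ would lie in $\mathcal T$, while you extract a pointwise contradiction at a point where a dominating characteristic function vanishes; both hinge on the same hypothesis and are equivalent.
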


\begin{proof}
Clearly, $\mathcal T\subseteq \mathcal T_0$, $1_\Omega \in \mathcal T_0$, and if $f \in \mathcal T_0$, then $1-f\in \mathcal T_0$. Now we show that $\mathcal T_0$ is closed under $\oplus$. Let $f,g \in \mathcal T_0$. We have the following three cases: (i) $f=f_0\in \mathcal T$, $g=g_0\in \mathcal T$. Then trivially $f\oplus g \in \mathcal \mathcal T_0$. (ii) Let $f=1-f_0$ and $g=1-g_0$ for some $f_0,g_0 \in \mathcal T$. Then $f\oplus g = (1-f_0)\oplus (1-g_0)= 1-(f_0\odot g_0)$ and $f_0\odot g_0 \in \mathcal T$ by Proposition \ref{pr:clan}(i). (iii) $f=f_0\in \mathcal T$ and $g=1-g_0$, where $g_0 \in \mathcal T$. Then
\[f\oplus g= 1-((1-f_0)\odot g_0)= 1-(g_0\odot (1-f_0))=1-(g_0\odot (1-(g_0\wedge f_0)))=1-(g_0-(g_0\wedge f_0)).\]
Since $f_0\wedge g_0\in \mathcal T$, $g_0-(f_0\wedge g_0)\in \mathcal T$ by Proposition \ref{pr:clan}(ii), so that $f\oplus g \in \mathcal T_0$. Then $\mathcal C_0(\mathcal T)=\mathcal T_0$.

Now we show $\mathcal T$ is a maximal ideal of $\mathcal C_0(\mathcal T)$. Let $g\le f\in \mathcal T$ for $g \in \mathcal C_0(\mathcal T)$. If $g \notin \mathcal T$, then $f\ge g=1-g_0$ for some $g_0\in \mathcal T$ which implies $f_0\odot g_0=1_\Omega \in \mathcal T$, absurd. Hence, $g\in \mathcal T$, and $\mathcal T$ is an ideal of $\mathcal C_0(\mathcal T)$.

Let $g\in \mathcal C_0(\mathcal T) \setminus \mathcal T$. Then $g=1-g_0$ for some $g_0 \in \mathcal T$. Whence, $g_0=1-(1-g_0) \in \mathcal T$ and $\mathcal T$ is a maximal ideal of $\mathcal C_0(\mathcal T)$.
\end{proof}

\begin{cor}\label{co:clan1}
Every proper semisimple $EMV$-algebra can be embedded into an $MV$-algebra as its maximal ideal.
\end{cor}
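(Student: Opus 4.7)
The plan is to combine Theorem \ref{th:semis} with Theorem \ref{th:clan1} in a direct way. Let $M$ be a proper semisimple $EMV$-algebra. By Theorem \ref{th:semis}, there is an $EMV$-isomorphism $\phi\colon M \to \mathcal T$, where $\mathcal T$ is an $EMV$-clan of fuzzy sets on some nonempty set $\Omega$. Since $M$ is proper, it has no top element, and hence $\mathcal T$ has no top element either. The key observation to make is that the only possible top element of an $EMV$-clan on $\Omega$ (satisfying condition (iv) of Definition \ref{de:clan}) is $1_\Omega$: any top element of $\mathcal T$ must be an idempotent, hence a characteristic function $\chi_A$ for some $A\subseteq \Omega$; but if $\omega\in \Omega\setminus A$, then by condition (iv) there exists $f\in \mathcal T$ with $f(\omega)=1$, contradicting $f\le \chi_A$. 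Therefore $1_\Omega\notin \mathcal T$.

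With $1_\Omega\notin \mathcal T$, Theorem \ref{th:clan1} applies and tells us that $\mathcal C_0(\mathcal T)$, the minimal clan generated by $\mathcal T$, consists exactly of the functions of the form $f_0$ or $1-f_0$ with $f_0\in \mathcal T$, and moreover $\mathcal T$ is a maximal ideal of $\mathcal C_0(\mathcal T)$. Since any clan of fuzzy sets containing $1_\Omega$ is an $MV$-algebra (with pointwise operations), $\mathcal C_0(\mathcal T)$ is an $MV$-algebra. Composing $\phi$ with the inclusion $\mathcal T \hookrightarrow \mathcal C_0(\mathcal T)$ then embeds $M$ into the $MV$-algebra $\mathcal C_0(\mathcal T)$ as its maximal ideal, which is precisely the claimed conclusion.

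The only step that requires a bit of care is the identification of why ``proper'' implies $1_\Omega \notin \mathcal T$; once this is established, everything else is an immediate consequence of the two theorems cited above, so I do not expect any real obstacle. One could alternatively phrase this argument by first noting that if $1_\Omega\in \mathcal T$, then $\mathcal T$ is a clan and hence a bounded $EMV$-algebra, which by Example \ref{3.1}(4) would force $M$ to be an $MV$-algebra, contradicting properness.
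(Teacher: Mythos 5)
Your proof is correct and takes essentially the same route as the paper, which simply cites Theorem \ref{th:semis} and Theorem \ref{th:clan1}; you have usefully filled in the one implicit step (that properness of $M$ forces $1_\Omega\notin\mathcal T$, so that Theorem \ref{th:clan1} is actually applicable). No issues.
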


\begin{proof}
It follows from Theorem \ref{th:clan1} and Theorem \ref{th:semis}.
\end{proof}

\section{Filters, Ideals and Representation of $EMV$-algebras}

One of the main purposes of this part is to show that any $EMV$-algebra has at least one maximal ideal. For this reason, first we define the notion of a filter of an $EMV$-algebra, showing that for each filter $F$ there is an ideal related to it. Since any bounded $EMV$-algebra with top element $1$ is an $MV$-algebra, the existence of a maximal ideal is an easy application of Zorn's lemma if $0\ne 1$. Therefore, we will prove the existence of a maximal ideal in any {\it proper $EMV$-algebra} $M$, that is, $M$ has no  maximal element. Therefore, in a proper $EMV$-algebra $M$, for each $x\in M$, we can find an idempotent element $a$ such that $x<a$. In particular, we show that every $EMV$-algebra can be embedded into an MV-algebra, and we show a basic result saying that every $EMV$-algebra is either an $MV$-algebra or it can be embedded into an $MV$-algebra as its maximal ideal.

\begin{lem}\label{le:x<y}
Let $(M;\vee, \wedge,\oplus,0)$ be an $EMV$-algebra. For all $x,y\in M$, we define
$$
x\odot y=\lam_a(\lam_a(x)\oplus \lam_a(y)),
$$
where $a\in\mI(M)$ and $x,y\leq a$. Then $\odot:M\times M\ra M$ is an order preserving, associative well-defined binary operation on $M$ which does not depend on $a\in \mI(M)$ with $x,y \le a$.

In addition, if $x,y \in M$, $x\le y$, then
\begin{equation}\label{eq:x<y}
y \odot \lambda_a(x)=y\odot \lambda_b(x)
\end{equation}
for all idempotents $a,b$ of $M$ with $x,y\le a,b$, and
\begin{equation}\label{eq:x<y1}
y= (y \odot \lambda_a(x))\oplus x.
\end{equation}
If $x,y \in [0,a]$ for some idempotent $a\in M$, then
\begin{equation}\label{eq:x<y2}
x\odot \lambda_a(y)=x\odot \lambda_a(x\wedge y) \quad \mbox{ and }\quad  x=(x\wedge y) \oplus (x\odot \lambda_a(y)).
\end{equation}
Moreover, a binary operation $\ast$ on $M$ defined by $x\ast y = x\odot \lambda_a(y)$ is correctly defined for all $x,y \in M$.

An element $x\in M$ is idempotent if and only if $x\odot x = x$.

\end{lem}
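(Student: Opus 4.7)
The plan is first to establish well-definedness of $\odot$; once that is in hand, order-preservation and associativity transfer from the MV-algebra $[0,a]$, and the remaining identities reduce to standard MV-algebra computations inside a sufficiently large $[0,c]$.

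For well-definedness, I take idempotents $a,b\in\mI(M)$ with $x,y\le a,b$ and fix $c\in\mI(M)$ with $a,b\le c$, which exists because $M$ has enough idempotents. Writing $\odot_d$ for the operation computed in the MV-algebra $[0,d]$, it suffices to show $x\odot_a y=x\odot_c y$. By Proposition \ref{3.5}(ii), together with $\lambda_c(a)$ being idempotent (Proposition \ref{3.5}(v)),
\[\lambda_c(x)\oplus\lambda_c(y)=\lambda_a(x)\oplus\lambda_a(y)\oplus\lambda_c(a)=w\oplus\lambda_c(a),\]
where $w=\lambda_a(x)\oplus\lambda_a(y)\le a$. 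In $[0,c]$ one then has $\lambda_c(w\oplus\lambda_c(a))=\lambda_c(w)\odot_c a$; because $a$ is a Boolean element of $[0,c]$, Theorem \ref{2.1}(vii) gives $\lambda_c(w)\odot_c a=\lambda_c(w)\wedge a$, which equals $\lambda_a(w)$ by Proposition \ref{3.5}(i). Hence $x\odot_c y=\lambda_a(w)=x\odot_a y$, proving well-definedness; associativity and monotonicity of $\odot$ are inherited from any such $[0,c]$.

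For \eqref{eq:x<y}, given $x\le y$ with $x,y\le a,b$ and an idempotent $c\ge a,b$, I work inside $[0,c]$. Using $\lambda_a(x)=\lambda_c(x)\wedge a$ (Proposition \ref{3.5}(i)), distributivity of $\odot_c$ over $\wedge$, and $y\odot_c a=y\wedge a=y$ (since $a$ is Boolean in $[0,c]$ and $y\le a$), I obtain
\[y\odot\lambda_a(x)=(y\odot_c\lambda_c(x))\wedge(y\odot_c a)=(y\odot_c\lambda_c(x))\wedge y=y\odot_c\lambda_c(x),\]
the last step because $y\odot_c\lambda_c(x)\le y$; the symmetric argument with $b$ gives \eqref{eq:x<y}. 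Equation \eqref{eq:x<y1} is the familiar MV-identity $(y\ominus x)\oplus x=y\vee x$ in $[0,a]$, which equals $y$ when $x\le y$. For the first half of \eqref{eq:x<y2}, the De Morgan identity $\lambda_a(x\wedge y)=\lambda_a(x)\vee\lambda_a(y)$ and distributivity of $\odot$ over $\vee$ give $x\odot\lambda_a(x\wedge y)=(x\odot\lambda_a(x))\vee(x\odot\lambda_a(y))=x\odot\lambda_a(y)$; the second half then follows by applying \eqref{eq:x<y1} to the pair $x\wedge y\le x\le a$ and using commutativity of $\oplus$.

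Well-definedness of $\ast$ follows the same template: for $a,b\in\mI(M)$ with $x,y\le a,b$ and $c\in\mI(M)$ above $a$ and $b$, compute inside $[0,c]$:
\[x\odot\lambda_a(y)=(x\odot_c\lambda_c(y))\wedge(x\wedge a)=x\odot_c\lambda_c(y),\]
using $x\le a$ (so $x\wedge a=x$) and $x\odot_c\lambda_c(y)\le x$; the same holds for $b$. Finally, if $x$ is idempotent, computing $x\odot x$ inside $[0,x]$, where $x$ is the top element and $\lambda_x(x)=0$, gives $x\odot x=\lambda_x(0)=x$; conversely, if $x\odot x=x$, pick $a\in\mI(M)$ with $x\le a$, so $x$ is Boolean in $[0,a]$ by Theorem \ref{2.1}(v), and Theorem \ref{2.1}(iv) yields $x\oplus x=x$. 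The principal obstacle is the well-definedness of $\odot$, as it requires the careful transfer between $\lambda_a$ and $\lambda_c$ via Proposition \ref{3.5}; once that identity is established, every other assertion reduces to an MV-algebra computation inside a single $[0,c]$.
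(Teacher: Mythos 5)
Your proof is correct and follows essentially the same route as the paper: well-definedness is obtained by passing to a common upper idempotent $c$ and converting between $\lambda_a$ and $\lambda_c$ via Proposition \ref{3.5}, after which associativity, monotonicity and the remaining identities reduce to MV-algebra computations inside a single interval $[0,c]$. The only differences are cosmetic --- for instance, in proving $y\odot\lambda_a(x)=y\odot\lambda_c(x)$ you use distributivity of $\odot$ over $\wedge$ together with $y\odot_c a=y$, where the paper uses distributivity over $\vee$ together with $y\odot\lambda_c(a)\le y\odot\lambda_c(y)=0$, and you spell out the well-definedness of $\ast$ and the idempotency characterization, which the paper leaves to the reader.
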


\begin{proof}
It suffices to show that $\odot$ is
well defined. Put $x,y\in M$. We show that for all $a,b\in \mI(M)$ such that $x,y\leq a,b$, we have
$\lam_a(\lam_a(x)\oplus \lam_a(y))=\lam_b(\lam_b(x)\oplus \lam_b(y))$. That is $x\odot_{_a}y=x\odot_{_b}y$.
Indeed, take $c\in \mI(M)$ such that $a,b\leq c$. Then by Proposition \ref{3.5}, we have
\begin{eqnarray*}
    \lam_c(\lam_c(x)\oplus \lam_c(y))&=&\lam_c\big(\lam_a(x)\oplus \lam_c(a)\oplus \lam_a(y)\oplus \lam_c(a)\big) \\
	&=& \lam_c\big(\lam_a(x)\oplus \lam_a(y)\oplus \lam_c(a)\oplus \lam_c(a)\big)=\lam_c\big(\lam_a(x)\oplus \lam_a(y)\oplus \lam_c(a)\big)\\
	&=& \lam_c\big(\lam_a(x)\oplus \lam_a(y)\big)\odot_{_c} \lam_c(\lam_c(a))=\lam_c\big(\lam_a(x)\oplus \lam_a(y)\big)\odot_{_c} a\\
	&=& \lam_c\big(\lam_a(x)\oplus \lam_a(y)\big)\wedge a=\Big(\lam_a\big(\lam_a(x)\oplus \lam_a(y)\big)\oplus \lam_c(a)\Big)\wedge a \\
	&=& \Big(\lam_a\big(\lam_a(x)\oplus \lam_a(y)\big)\vee \lam_c(a)\Big)\wedge a=\lam_a\big(\lam_a(x)\oplus \lam_a(y)\big)\wedge a \\
	&=& \lam_a\big(\lam_a(x)\oplus \lam_a(y)\big).
\end{eqnarray*}
In a similar way, we can show that $\lam_c(\lam_c(x)\oplus \lam_c(y))=\lam_b(\lam_b(x)\oplus \lam_b(y))$.

To prove associativity, let $x$, $y$ and $z$ be elements of an $EMV$-algebra $(M;\vee, \wedge,\oplus,0)$. Put $c\in\mI(M)$ such that $x,y,z\leq c$. Then by definition of $\odot$, we have $x\odot y=\lam_c(\lam_c(x)\oplus \lam_c(y))$, $y\odot z=\lam_c(\lam_c(y)\oplus \lam_c(z))$ and
both belong to $[0,c]$. It follows that $(x\odot y)\odot z=\lam_c(\lam_c(x\odot y)\oplus\lam_c(z))$ and
$x\odot (y\odot z)=\lam_c(\lam_c(x)\oplus \lam_c(y\odot z))$, which implies that $(x\odot y)\odot z=x\odot (y\odot z)$. Therefore, in any $EMV$-algebra $(M;\vee, \wedge,\oplus,0)$ the binary operation $\odot$ is associative.

In a similar way, we can see that $\odot$ is order preserving.

Now let $x\le y$, $x,y \le a,b$ for some $a,b \in \mathcal I(M)$. There is an idempotent $c$ such that $a,b\le c$. Check and use Proposition \ref{3.5}(ii)
$$
y \odot \lambda_c(x)= y \odot (\lambda_a(x)\oplus \lambda_c(a))=y \odot (\lambda_a(x)\vee \lambda_c(a))= (y \odot \lambda_a(x)) \vee (y\odot\lambda_c(a))
$$
and
$$y\odot\lambda_c(a) \le y \odot \lambda_c(y)=0$$
because  for $y\le a\le c$ we have $\lambda_c(a)\le \lambda_c(y)$.  This implies $y\odot  \lambda_a(x)=y\odot \lambda_c(x)$. In the same way we have $y\odot  \lambda_b(x)=y\odot \lambda_c(x)$ establishing
$y\odot  \lambda_a(x)=y\odot \lambda_b(x)$.

To prove (\ref{eq:x<y1}), it is enough to calculate it in the $MV$-algebra $[0,a]$.

Now let $x,y \le a$ for some $a \in \mathcal I(M)$. Then $x \odot \lambda_a(x\wedge y)= x \odot (\lambda_a(x)\vee \lambda_a(y))= (x\odot \lambda_a(x))\vee (x\odot \lambda_a(y))= x \odot \lambda_a(y)$.

Using (\ref{eq:x<y1}), we can establish (\ref{eq:x<y2}).

The property $x$ is an idempotent of $M$ iff $x\odot x$ follows from definition of the operation $\odot$.
\end{proof}

For any integer $n\ge 1$ and any $x$ of an $EMV$-algebra $M$, we can define
$$ x^1 =1, \quad x^n=x^{n-1}\odot x, \ n\ge 2,$$
and if $M$ has a top element $1$, we define also $x^0=1$.

\begin{defn}\label{3.22}
	A non-empty subset $F$ of an $EMV$-algebra $(M;\vee, \wedge,\oplus,0)$ is called a \emph{filter} if it satisfies the following conditions:
\vspace{1mm}
	\begin{itemize}[nolistsep]
		\item[(i)] for each $x,y\in M$, if $x\leq y$ and $x\in F$, then $y\in F$ (formally $F$ is an upset);
		\item[(ii)] for each $x,y\in F$, $x\odot y\in F$.
	\end{itemize}
The set of all filters of $M$ is denoted by $\mathrm{Fil}(M)$. Clearly, $M \in \mathrm{Fil}(M)$, and a filter $F$ is {\it proper} if $F\ne M$. A proper filter which cannot be a proper subset of another proper filter of $M$ is said to be {\it maximal}, and we denote by $\mathrm{MaxF}(M)$ the set of maximal filters of $M$. By Zorn's lemma,  $\mathrm{MaxF}(M)\ne \emptyset$.
\end{defn}

Let $(M;\vee, \wedge,\oplus,0)$ be a proper $EMV$-algebra. Then there is a non-zero idempotent element $a\in M$. We can easily see that $\uparrow a$ is a filter of the $EMV$-algebra $M$, which is clearly a proper subset of $M$. In a similar way, we can see that $\uparrow a\setminus\{a\}$ is also a
proper filter of $M$.

\begin{prop}\label{3.23}
	Let $F$ be a filter of a proper  $EMV$-algebra $(M;\vee, \wedge,\oplus,0)$. Then the set
	\[ I_F:=\{\lam_a(x)\mid x\in F, a\in \mI(M), x\leq a\} \]
	is an ideal of $M$.
\end{prop}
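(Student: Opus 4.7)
The plan is to verify the two defining properties of an ideal (Definition 3.11): that $I_F$ is a downset and that $I_F$ is closed under $\oplus$. I would prove the downset property first, because the closure under $\oplus$ step will reduce (via an inequality) to an element we already know lies in $I_F$, and we can finish it by invoking the downset property.

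For the downset step, suppose $y\le z$ with $z=\lambda_a(x)\in I_F$ for some $x\in F$ with $x\le a\in\mathcal I(M)$. Then $y\le a$, and applying $\lambda_a$ (which is order-reversing on $[0,a]$ and involutive, by Proposition \ref{3.5}) yields $\lambda_a(y)\ge \lambda_a(\lambda_a(x))=x$. Since $x\in F$ and $F$ is an upset, $\lambda_a(y)\in F$, and $\lambda_a(y)\le a\in \mathcal I(M)$. Involutivity then gives $y=\lambda_a(\lambda_a(y))\in I_F$, as required.

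For closure under $\oplus$, take $\lambda_a(x),\lambda_b(y)\in I_F$ with $x,y\in F$, $x\le a$, $y\le b$, $a,b\in\mathcal I(M)$. The key move is to pass to a common idempotent upper bound: by ($\GMV_I3$) there is $c\in\mathcal I(M)$ with $a,b\le c$. By Proposition \ref{3.5}(ii), $\lambda_c(x)=\lambda_a(x)\oplus\lambda_c(a)\ge \lambda_a(x)$, and similarly $\lambda_c(y)\ge \lambda_b(y)$; hence
\[
\lambda_a(x)\oplus \lambda_b(y)\le \lambda_c(x)\oplus \lambda_c(y)=\lambda_c(x\odot y),
\]
the last identity holding inside the $MV$-algebra $[0,c]$ and using Lemma \ref{le:x<y} to see that $x\odot y$ computed there coincides with the ambient operation.

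Because $F$ is a filter, $x\odot y\in F$; and $x\odot y\le x\le c$, so $\lambda_c(x\odot y)\in I_F$. The displayed inequality together with the already-established downset property then places $\lambda_a(x)\oplus \lambda_b(y)$ in $I_F$. The main potential snag is a well-definedness issue lurking in $I_F$: an element may be expressible as $\lambda_a(x)$ for many pairs $(x,a)$, so each step must be robust under this freedom. The use of a common idempotent $c\ge a,b$ and the stability of $\odot$ (Lemma \ref{le:x<y}) are precisely what sidestep this, and they are the only nontrivial ingredients beyond routine $MV$-algebra manipulations in $[0,c]$.
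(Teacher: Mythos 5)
Your proof is correct and follows essentially the same route as the paper's: both arguments establish the downset property via the order-reversing, involutive $\lambda_a$, and both handle closure under $\oplus$ by passing to a common idempotent $c\ge a,b$ and invoking Proposition \ref{3.5}(ii). The only cosmetic difference is that the paper computes $\lambda_c(x)\odot\lambda_c(y)=\lambda_c(x\oplus y)$ directly for the ideal elements, whereas you dually bound $\lambda_a(x)\oplus\lambda_b(y)$ above by $\lambda_c(x\odot y)$ and finish with the downset property.
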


\begin{proof}
	First, we note that for each $x\in M$, we have
	\begin{eqnarray*}
	x\in I_F &\Leftrightarrow &  \exists\, w \in F\ \exists\, a\in \mI(M):\ w\leq a,\ \lam_a(w)=x \\
	&\Leftrightarrow & \exists\, a\in \mI(M): x\leq a,\ \lam_a(x)\in F.
	\end{eqnarray*}
Let $x,y\in M$ such that $x\in I_F$ and $y\leq x$. Then there exists $a\in \mI(M)$ such that $x\leq a$ and $\lam_a(x)\in F$.
Since $x,y\in [0,a]$, then $\lam_a(x)\leq \lam_a(y)$ and so by the assumption, $\lam_a(y)\in F$. It follows that $y\in I_F$.
Now, suppose that $x,y\in I_F$. Then there exist $a,b\in \mI(M)$ such that $x\leq a$ and $y\leq b$ and $\lam_a(x)\in F$ and $\lam_b(y)\in F$.
Put $c\in\mI(M)$ such that $a,b\leq c$. Then by Proposition \ref{3.5},
$\lam_c(x),\lam_c(y)\in F$ and so $\lam_c(x)\odot \lam_c(y)\in F$.
Since $\lam_c(x),\lam_c(y)\leq c$, $\lam_c(x)\odot \lam_c(y)=\lam_c(x\oplus y)$, hence $x\oplus y\in I_F$.
Therefore, $I_F$ is an ideal of $M$.
\end{proof}

\begin{prop}\label{3.24}
Let $F$ be a proper filter of an  $EMV$-algebra $(M;\vee, \wedge,\oplus,0)$.

{\rm (i)} For each $x\in M$, the least filter $\lfloor F\cup\{x\}\rfloor$ of $M$ containing
		$F\cup\{x\}$ is the set $\{z\in M\mid z\geq y\odot x^n,\ \exists\, n\in\mathbb{N}, \exists\, y\in F\}$.

{\rm (ii)}  $F$ is a maximal filter if and only if, for each $x \notin F$, there are an integer $n$ and an idempotent $b$ with $x\le b$ such that $\lambda_b(x^n) \in F$.
\end{prop}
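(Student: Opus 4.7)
For (i), my plan is to denote the set on the right-hand side by $S$ and verify that $S$ is the least filter containing $F\cup\{x\}$. That $S$ is an upset is built into its definition; closure under $\odot$ follows by invoking associativity and order-preservation from Lemma~\ref{le:x<y} and commutativity of $\odot$ (visible from its symmetric definition via $\oplus$), together with the filter property of $F$, to conclude that $z_1\odot z_2\ge (y_1\odot y_2)\odot x^{n_1+n_2}$ whenever $z_i\ge y_i\odot x^{n_i}$. To include $F\cup\{x\}$ in $S$, I would pick, for any given $y\in F$, an idempotent $b\ge x,y$ and use that in the $MV$-algebra $[0,b]$ the operation $\odot$ is bounded above by each of its factors, giving $y\odot x\le y$ and $y\odot x\le x$ (so both $y$ and $x$ lie in $S$ with $n=1$). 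Minimality is automatic, since any filter containing $F\cup\{x\}$ is an upset closed under $\odot$, hence contains every $y\odot x^n$ and everything above it.

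For (ii), the strategy is to read the displayed condition through part (i). If $F$ is maximal and $x\notin F$, then $\lfloor F\cup\{x\}\rfloor=M$, so by (i) there exist $y\in F$ and $n\ge 1$ with $y\odot x^n=0$; picking an idempotent $b\ge x,y$, the identity $y\odot x^n=0$ read in $[0,b]$ is exactly $y\le \lambda_b(x^n)$, and since $F$ is an upset containing $y$, we conclude $\lambda_b(x^n)\in F$. Conversely, if $\lambda_b(x^n)\in F$ for some $n\ge 1$ and some idempotent $b\ge x$, then $\lambda_b(x^n)\odot x^n=0$ in the $MV$-algebra $[0,b]$, so $0$ lies in $\lfloor F\cup\{x\}\rfloor$ by (i); an upset containing $0$ equals $M$, so $F$ admits no proper extension.

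The main obstacle is not a conceptual one but a matter of coherence of $\odot$ and $\lambda$ across different ``levels'' $[0,b]$: whenever two or more elements must be compared or combined, one has to choose an idempotent $b$ dominating them and argue inside $[0,b]$, appealing to Lemma~\ref{le:x<y} (well-definedness of $\odot$ independently of the dominating idempotent) and Proposition~\ref{3.5} (compatibility of $\lambda_a$ and $\lambda_b$ for $a\le b$) to ensure that all local $MV$-computations patch consistently to $M$. Once this bookkeeping is set up, both parts reduce to routine $MV$-algebraic manipulations familiar from \cite[Chap.~1]{mundici 1}.
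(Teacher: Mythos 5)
Your proposal is correct and follows essentially the same route as the paper: part (i) is the routine verification the paper leaves as "straightforward" (and your filled-in details — closure via $z_1\odot z_2\ge (y_1\odot y_2)\odot x^{n_1+n_2}$, containment via $y\odot x\le y,x$ — are right), and part (ii) is exactly the paper's argument, translating $c\odot x^n=0$ inside a dominating $MV$-algebra $[0,b]$ into $c\le\lambda_b(x^n)$ and conversely.
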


\begin{proof}
	The proof of the first part is straightforward.

For the second one, let $F$ be a maximal filter and $x \notin F$. By (i), there are an integer $n$ and an element $c \in F$ such that $0=c\odot x^n$. There is an idempotent $b\ge x,c$, so that $c,x$ are in the MV-algebra $[0,b]$. Then $c\odot x^n$ can be calculated in $[0,b]$, so that $c\le \lambda_b(x^n)$ and $\lambda_b(x^n)\in F$.

The converse follows easily from (i).
\end{proof}

\begin{lem}\label{3.25}
		Let $F$ be a proper filter of a proper  $EMV$-algebra $(M;\vee, \wedge,\oplus,0)$.
\vspace{1mm}
		\begin{itemize}[nolistsep]
		\item[{\rm (i)}] For each $a\in F\cap\mI(M)$, $a\notin I_F$.
		\item[{\rm (ii)}] If $a\in\mI(M)\cap F$, then for all $b\in \mI(M)$ such that $a\leq b$, $\lam_b(a)\in I_F$.
		\item[{\rm (iii)}]  If $F$ is a maximal filter of $M$, then for each $a\in\mI(M)$,  $a\notin I_F$ implies $a\in F$.
		\item[{\rm (iv)}] If $J$ is a maximal ideal of $M$, then
		\begin{equation}\label{eq3.25}
		\forall\, a\in \mI(M),\ a\notin J\Longrightarrow (\forall\, b\in\mI(M),\ a<b)\  \lam_b(a)\in J.
		\end{equation}
		\item[{\rm (v)}] If $J$ is an ideal of $M$ satisfying {\rm(\ref{eq3.25})}, then
$$
F_J:=\{\lam_a(x)\mid x\in J,\ a\in \mI(M)\setminus J,\ x< a \}
$$
is a filter of $M$.
\end{itemize}
\end{lem}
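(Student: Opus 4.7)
Proof proposal. The plan is to handle the five statements in order, exploiting two identities inside the $MV$-algebra $[0,b]$ for an idempotent $b$: by Proposition \ref{3.5}(i), if $x\le a\le b$ with $a\in \mI(M)$, then $\lam_a(x) = \lam_b(x)\wedge a$, and De Morgan in $[0,b]$ gives the very useful formula $\lam_b(\lam_a(x)) = x\vee \lam_b(a)$, which will be the workhorse of part (v).

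For (i), suppose for contradiction that $a\in F\cap \mI(M)\cap I_F$. The definition of $I_F$ supplies $w\in F$ and $b\in \mI(M)$ with $w\le b$ and $\lam_b(w)=a$. Since $a,w\in F$ and $F$ is a filter, $a\odot w\in F$, but $a\odot w = \lam_b(w)\odot w = 0$ in $[0,b]$, forcing $F=M$, a contradiction. Part (ii) is immediate from the definition of $I_F$ with $x:=a\in F$. For (iii), assume $F$ is maximal and $a\in \mI(M)\setminus F$. Proposition \ref{3.24}(ii) gives an integer $n$ and an idempotent $b\ge a$ with $\lam_b(a^n)\in F$; since $a$ is idempotent, $a^n=a$, hence $\lam_b(a)\in F$. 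Applying the definition of $I_F$ to the witness $x:=\lam_b(a)\in F$ with $x\le b$ yields $\lam_b(x)=a\in I_F$. The contrapositive is (iii).

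For (iv), fix idempotents $a<b$ with $a\notin J$. By Proposition \ref{3.17}, $J\cap [0,b]$ is either all of $[0,b]$ or a maximal ideal of the $MV$-algebra $[0,b]$; the former is ruled out by $a\in [0,b]\setminus J$. The quotient $[0,b]/(J\cap [0,b])$ is therefore a simple $MV$-algebra and embeds into the real interval $[0,1]$ (Theorem \ref{3.14} together with \cite[Thm.~3.5.1]{mundici 1}), whose only idempotents are $0$ and the top. Since $a$ is an idempotent and $a/J\ne 0$, necessarily $a/J$ is the top, whence $\lam_b(a)/J=0$, i.e., $\lam_b(a)\in J$.

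For (v), the main technical part, I verify the upset condition and closure under $\odot$. Given $y\ge \lam_a(x)$ with $x\in J$, $a\in \mI(M)\setminus J$, $x<a$, use properness of $M$ to pick an idempotent $b$ with $y\le b$ and $a<b$; by (\ref{eq3.25}), $\lam_b(a)\in J$. In $[0,b]$, $\lam_b(\lam_a(x)) = x\vee \lam_b(a)\in J$, so $\lam_b(y)\le \lam_b(\lam_a(x))\in J$ gives $\lam_b(y)\in J$. Setting $x':=\lam_b(y)$, we have $y=\lam_b(x')$; moreover $x'<b$ (else $y=0$, contradicting $y\ge \lam_a(x)>0$ since $x<a$), and $b\notin J$ because $a\le b$ and $a\notin J$, so $y\in F_J$. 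For $\odot$-closure, given $\lam_a(x),\lam_{a'}(x')\in F_J$, choose an idempotent $b$ with $a<b$ and $a'<b$, so $\lam_b(a),\lam_b(a')\in J$. Using $\alpha\odot\beta=\lam_b(\lam_b(\alpha)\oplus\lam_b(\beta))$ together with the identity $\lam_b(\lam_a(x))=x\vee \lam_b(a)$, the product in $[0,b]$ is
$$
\lam_a(x)\odot \lam_{a'}(x') \;=\; \lam_b\bigl((x\vee \lam_b(a))\oplus (x'\vee \lam_b(a'))\bigr).
$$
The element $z:=(x\vee \lam_b(a))\oplus (x'\vee \lam_b(a'))$ lies in $J$ (each join is in $J$ because $J$ is downward closed and $\oplus$-closed, and the outer $\oplus$ preserves membership), and $z<b$ because $b\notin J$. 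Hence $\lam_a(x)\odot \lam_{a'}(x')=\lam_b(z)\in F_J$. The one genuinely delicate step is this product computation in (v); everything else unwinds from definitions plus a single appeal to simplicity of the quotient in (iv).
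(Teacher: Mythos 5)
Your proof is correct. Parts (i)--(iii) and (v) follow essentially the paper's own route: (i) is the same $a\odot w=0$ contradiction, (ii) is definitional, (iii) is the paper's argument with the computation outsourced to Proposition \ref{3.24}(ii) (which the paper's proof re-derives inline via Booleanness of $a$ in $[0,u]$), and (v) has the same two-step structure (upset, then $\odot$-closure), except that you work with the De Morgan form $\lam_b(\lam_a(x))=x\vee\lam_b(a)$ where the paper uses the equivalent identity $\lam_b(x)=\lam_a(x)\oplus\lam_b(a)$ from Proposition \ref{3.5}(ii); both reduce to the same membership checks in $J$, and you are more careful than the paper in verifying the side conditions $z<b$ and $b\notin J$ needed to land back in $F_J$. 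The only genuinely different step is (iv): the paper stays inside $M$, writing $\lam_b(a)\le x\oplus n.a$ via Corollary \ref{3.16} and then using the subdistributive inequality together with the fact that $\lam_b(a)$ and $a$ are complementary Boolean elements of $[0,b]$ to squeeze $\lam_b(a)\le x\in J$; you instead invoke Proposition \ref{3.17} to see that $J\cap[0,b]$ is a maximal ideal of $[0,b]$, pass to the simple quotient, and use that a subalgebra of $[0,1]$ has no idempotents besides $0$ and the top. Your version is shorter and conceptually cleaner but leans on the quotient/simplicity machinery (Theorem \ref{3.14} and the classification of simple $MV$-algebras); the paper's is a purely equational computation. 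Both are valid, and since Proposition \ref{3.17} and Theorem \ref{3.14} precede Lemma \ref{3.25}, there is no circularity.
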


\begin{proof}
	(i) Otherwise, $a\in I_F$ implies that there exists $b\in \mI(M)$ such that $a\leq b$ and $\lam_b(a)\in F$ and so
$\lam_b(a),a\leq b$  and $\lam_b(a),a\in F$. Thus $0=a\odot\lam_b(a)\in F$, which is
a contradiction.

(ii) It follows from definition of $I_F$.

(iii) Let $a\in\mI(M)$ such that $a\notin I_F$. Then by definition, for all $b\in\mI(M)$ with $a\leq b$, $\lam_b(a)\notin F$.
If $a\notin F$, then $\lfloor F\cup\{a\}\rfloor=M$ (since $F$
is maximal) and so there exist $n\in\mathbb{N}$ and $x\in F$ such that
$0\geq x\odot a^n=x\odot_u a^n$ for some $u\in\mI(M)$ such that $x,a\leq u$.
Also, $([0,u];\oplus,\lam_u,0,u)$ is an $MV$-algebra and $a$ is a Boolean element of it, so $a^n=a$ and
$\lam_u(a)$ is the greatest element of $[0,u]$ satisfying the equation $z\odot a=0$. It follows that
$x\leq \lam_u(a)$ and $\lam_u(a)\in F$, which is a contradiction.

(iv) Let $a$ be an idempotent element of $M$ such that $a\notin J$. For each $b\in \mI(M)$ with $a\leq b$, we have
$\lam_b(a)\in \langle J\cup\{a\}\rangle$, so by Corollary \ref{3.16},  there exist $n\in\mathbb{N}$ and $x\in J$ such that
$\lam_b(a)\leq x\oplus n.a$. Hence we get
\begin{eqnarray*}
\lam_b(a)&=&\lam_b(a)\wedge (x\oplus n.a)\leq (\lam_b(a)\wedge x)\oplus (\lam_b(a)\wedge n.a) \\
&=& \lam_b(a)\wedge x,\mbox{ \quad since $\lam_b(a)$ and $a$ are Boolean elements of the $MV$-algebra $[0,b]$ }
\end{eqnarray*}
whence $\lam_b(a)\leq x\in J$. Therefore, $\lam_b(a)\in J$ (since $J$ is an ideal).

(v) Let $x,y\in M$. If $y\geq x\in F_J$, then there exists $a\in\mI(M)\setminus J$ such that $x<a$ and $\lam_a(x)\in J$. Let $b\in \mI(M)$ such that $a,y<b$. Then $\lam_b(y)\leq\lam_b(x)=\lam_a(x)\oplus \lam_b(a)$. By the assumption,  $\lam_b(a)\in J$, so $\lam_a(x)\oplus \lam_b(a)\in J$, which implies that $\lam_b(y)\in J$. Thus $y\in F_J$ and $F_J$ is an upset.
Moreover, if $x,y\in F_J$, then there exist $a,b\in\mI(M)$ such that $x<a$ and $y<b$ and $\lam_a(x),\lam_b(y)\in J$.
Let $c\in\mI(M)$ such that $a,b<c$. Then by the assumption, $\lam_c(a),\lam_c(b)\in J$ and hence by Proposition \ref{3.5}, we have
$\lam_c(x)=\lam_a(x)\oplus \lam_c(a)\in J$ and $\lam_c(y)=\lam_b(y)\oplus \lam_c(b)\in J$. It follows that
$\lam_c(x)\oplus \lam_c(y)\in J$. Now, from definition of $F_J$, we have $\lam_c\big(\lam_c(x)\oplus \lam_c(y)\big)\in F_J$.
That is, $x\odot y=x\odot_{_c}y\in F_J$. Therefore, $F_J$ is a filter of $M$.
\end{proof}

\begin{thm}\label{3.26}
	Any proper $EMV$-algebra has at least one maximal ideal. In addition, if $F$ is a maximal filter of $M$, then
$$
I_F:=\{\lam_a(x)\mid x\in F,\ a\in \mI(M),\ x\leq a\}
$$
is a maximal ideal of $M$.
\end{thm}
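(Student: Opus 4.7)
The plan is to secure a maximal filter and then show that the ideal $I_F$ associated to it by Proposition~\ref{3.23} is in fact a maximal ideal. Since $M$ is proper, pick any nonzero $x\in M$ and an idempotent $a$ with $x\le a$; then $\uparrow a$ is a proper filter (it misses $0$), and Zorn's lemma (as noted right after Definition~\ref{3.22}) yields a maximal filter $F\supseteq \uparrow a$. That $I_F$ is a proper ideal is immediate: Proposition~\ref{3.23} shows it is an ideal, while Lemma~\ref{3.25}(i) guarantees that no idempotent of $F$ (in particular $a$) lies in $I_F$.

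To prove maximality of $I_F$, I fix $y\notin I_F$ and aim to show $\langle I_F\cup\{y\}\rangle=M$; by Corollary~\ref{3.16} this amounts to proving that each $z\in M$ satisfies $z\le v\oplus n.y$ for some $v\in I_F$ and $n\in\mathbb N$. Pick an idempotent $a$ with $y,z\le a$. Since $I_F$ is a downset and $y\le a$, we have $a\notin I_F$, so Lemma~\ref{3.25}(iii) yields $a\in F$. Moreover $\lambda_a(y)\notin F$, for otherwise the very definition of $I_F$ would force $y=\lambda_a(\lambda_a(y))\in I_F$, contrary to hypothesis.

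The heart of the argument exploits the maximality of $F$. Since $\lambda_a(y)\notin F$, one has $\lfloor F\cup\{\lambda_a(y)\}\rfloor =M$, so $0$ belongs to that filter, and Proposition~\ref{3.24}(i) supplies $w\in F$ and $n\in\mathbb N$ with $w\odot \lambda_a(y)^n=0$. Replacing $w$ by $w\odot a$ (which is still in $F$ because $a\in F$, and is $\le a$) I may assume $w\le a$, because $\odot$ is order-preserving by Lemma~\ref{le:x<y}. Now the computation localises to the $MV$-algebra $[0,a]$, where the standard identity $\lambda_a(\lambda_a(y)^n)=n.y$ converts $w\odot\lambda_a(y)^n=0$ into $w\le n.y$, and $w\oplus\lambda_a(w)=a$. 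Combining these,
\[
z\;\le\; a\;=\;w\oplus \lambda_a(w)\;\le\; n.y\oplus \lambda_a(w),
\]
with $\lambda_a(w)\in I_F$ directly from the definition of $I_F$. Thus $z\in\langle I_F\cup\{y\}\rangle$, establishing that $I_F$ is maximal.

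The main obstacle I anticipate is the bookkeeping between the global operations on $M$ and their restrictions to the local $MV$-algebras $[0,b]$, since $M$ need not have a top element and various idempotent ``frames'' must be reconciled. The move that dissolves this difficulty is using $a\in F$ (via Lemma~\ref{3.25}(iii)) to replace $w$ with $w\odot a$; this confines the whole calculation to a single $MV$-algebra $[0,a]$ and allows us to invoke standard $MV$-identities without juggling different dominating idempotents via Proposition~\ref{3.5}.
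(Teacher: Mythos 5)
Your proof is correct, but the maximality argument takes a genuinely different route from the paper's. The paper proves maximality of $I_F$ by taking an arbitrary ideal $J\supseteq I_F$, invoking Lemma \ref{3.25}(v) to build the associated filter $F_J$, showing $F\subseteq F_J$, and then using maximality of $F$ to force $F_J=F$ and hence $J=I_F$; it thus travels through the filter--ideal correspondence a second time. You instead verify the definition of maximality directly: for $y\notin I_F$ you show $\langle I_F\cup\{y\}\rangle=M$ via Corollary \ref{3.16}, by observing $\lambda_a(y)\notin F$, extracting $w\in F$ with $w\odot\lambda_a(y)^n=0$ from Proposition \ref{3.24}(i), replacing $w$ by $w\odot a\in F$ so that the whole computation lives in the single $MV$-algebra $[0,a]$ (legitimate since $a\in F$ by Lemma \ref{3.25}(iii)), and then using the standard $MV$-equivalence $w\odot\lambda_a(y)^n=0\iff w\le n.y$ together with $a=w\oplus\lambda_a(w)$ and $\lambda_a(w)\in I_F$. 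All the supporting steps check out: $a\notin I_F$ because $I_F$ is a downset, $n.y$ computed in $[0,a]$ agrees with $n.y$ in $M$ because $[0,a]$ is closed under the global $\oplus$, and $\lambda_a(w\odot a)\in I_F$ straight from the definition. Your version avoids Lemma \ref{3.25}(v) entirely and is arguably more transparent, at the price of an explicit $MV$-computation; the paper's version is more structural and reuses machinery ($F_J$) that it needs elsewhere (e.g., in Remark \ref{3.28.1} and Theorem \ref{3.30}).
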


\begin{proof}
	Let $(M;\vee, \wedge,\oplus,0)$ be a proper $EMV$-algebra. Then $M$ has a non-zero idempotent $a$ and $\uparrow a$ is a proper filter of $M$.
By Zorn's lemma, we can easily see that, $S$, the set of filters of $M$ not containing $0$, has at least one maximal element which is clearly a maximal filter of $M$, $F$ say. Set
$$
I_F:=\{\lam_a(x)\mid x\in F,\ a\in \mI(M),\ x\leq a\}.
$$
By Proposition \ref{3.23}, $I_F$ is an ideal of $M$. Since $F\neq\emptyset$, there exists $a\in\mI(M)\cap F$. By Lemma \ref{3.25}(i), $a\notin I_F$ and so $I_F\neq M$. We claim that $I_F$ is a maximal ideal. Let $J$ be an ideal of $M$ containing $I_F$. For each $a\in\mI(M)$, if $a\notin J$, then $a\notin I_F$, so by Lemma \ref{3.25}(iii), $a\in F$. It follows that $\lam_b(a)\in I_F\s J$ for all $b\in\mI(M)$ greater than $a$. By Lemma \ref{3.25}(v), $F_J:=\{\lam_a(x)\mid x\in J,\ a\in \mI(M) \setminus J,\ x<a \}$ is a filter of $M$.
	Let $x$ be an arbitrary element of $F$. Since $J$ is a proper ideal, then there is an idempotent element $v\in M$	which is not in $J$ (otherwise, $J=M$). Put $w\in \mI(M)$ such that $x,v<w$. Then $w\notin J$ and by definition,	$\lam_w(x)\in I_F\s J$, hence $x\in F_J$. That is $F\s F_J$. Since $F$ is a maximal filter, then $F_J=F$ or $F_J=M$. From $F_J=M$, we get that $0\in F_J$ and so there are $x\in J$ and $a\in\mI(M)$ such that $x<a$ and $0=\lam_a(x)$ which is a contradiction (since $[0,a]$ is an $MV$-algebra). So, $F_J=F$. Let $x\in J$. Then there is $a\in\mI(M)$ such that $x<a$ and $\lam_a(x)\in F_J=F$. It follows that $x=\lam_a(\lam_a(x))\in I_F$, which simply shows that $I_F=J$. Therefore, $I_F$ is a maximal ideal
of $M$.
\end{proof}

\begin{defn}
	A proper ideal $I$ of an $EMV$-algebra $(M;\vee, \wedge,\oplus,0)$ is called {\em prime} if, for each $x,y\in M$,
	$x\wedge y\in M$ implies that $x\in M$ or $y\in M$. We denote by $\mathcal P(M)$ the set of prime ideals of $M$.
\end{defn}

We note that (i) in the next statement was already proved in Proposition \ref{3.17}, here we proved it in a different way using e.g. the Riesz Decomposition Property.

\begin{prop}\label{pr:2}
Let  $I$ be a maximal ideal of an $EMV$-algebra $M$.

{\rm (i)} For each idempotent $a\in M$, either $I\cap [0,a]=[0,a]$ or $I\cap [0,a]$ is a maximal ideal of the $MV$-algebra $[0,a]$.

{\rm (ii)} $I$ is a prime ideal.
\end{prop}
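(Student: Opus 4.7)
For (i), I will give an alternative proof to the one in Proposition \ref{3.17} that leans on the Riesz Decomposition Property from Remark \ref{3.2}(iii). Fix an idempotent $a\in \mI(M)$ and assume $I\cap [0,a]\neq [0,a]$; pick some $x\in [0,a]\setminus I$. Since $I$ is maximal in $M$, Corollary \ref{3.16} gives, for every $z\in [0,a]\subseteq M$, elements $w\in I$ and $n\in \mathbb{N}$ with $z\le w\oplus n.x$. By the Riesz Decomposition Property there exist $z_1\le w$ and $z_2\le n.x$ with $z=z_1\oplus z_2$. Since $z_1,z_2\le z\le a$, both $z_1$ and $z_2$ lie in $[0,a]$; moreover $z_1\in I\cap [0,a]$, and $z_2$ lies in the ideal of the $MV$-algebra $[0,a]$ generated by $x$. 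Hence in $[0,a]$ one has $z\in\langle (I\cap [0,a])\cup\{x\}\rangle$, showing that $I\cap [0,a]$ is a maximal ideal of the $MV$-algebra $([0,a];\oplus,\lam_a,0,a)$.

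For (ii), the plan is to reduce primality of $I$ in $M$ to the classical fact that every maximal ideal of an $MV$-algebra is prime (see \cite[Prop.~1.2.14]{mundici 1}). Let $x,y\in M$ with $x\wedge y\in I$, and suppose toward a contradiction that $x\notin I$ and $y\notin I$. Choose an idempotent $a\in \mI(M)$ with $x,y\le a$; then $x\wedge y\in I\cap [0,a]$, while $x,y\notin I\cap [0,a]$, so in particular $I\cap [0,a]\neq [0,a]$. By part (i), $I\cap [0,a]$ is therefore a maximal ideal of the $MV$-algebra $[0,a]$, hence prime, and from $x\wedge y\in I\cap [0,a]$ we deduce $x\in I\cap[0,a]\subseteq I$ or $y\in I\cap[0,a]\subseteq I$, contradicting our choice.

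The main place requiring care is part (i), and specifically the application of Riesz Decomposition: we start with an inequality $z\le w\oplus n.x$ taking place in the ambient $EMV$-algebra $M$, but we must produce a decomposition compatible with the $MV$-algebra $[0,a]$. The decisive observation is that the summands $z_1,z_2$ produced by Remark \ref{3.2}(iii) automatically satisfy $z_1,z_2\le z\le a$, and so they already live in $[0,a]$; this automatically localises the computation inside the $MV$-algebra $[0,a]$ without any truncation of $w$ or of $n.x$ being needed. Once (i) is secured in this form, (ii) is immediate because the inclusion $I\cap [0,a]\subseteq I$ lets us transport the $MV$-algebraic primality back to $M$.
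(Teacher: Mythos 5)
Your proof is correct and follows essentially the same route as the paper: part (i) localizes the witness of maximality from Corollary \ref{3.16} into the $MV$-algebra $[0,a]$ via the Riesz Decomposition Property of Remark \ref{3.2}(iii), and part (ii) reduces primality to the maximal (hence prime) ideal $I\cap[0,a]$ of $[0,a]$. The only cosmetic difference is that you decompose an arbitrary $z\in[0,a]$ and verify maximality directly from the definition, whereas the paper decomposes the element $a$ itself and concludes via $\lam_a(n.x)\in I\cap[0,a]$.
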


\begin{proof}
(i) If $a \in I$, then $I\cap [0,a]=[0,a]$. If $a\notin I$, then $I\cap [0,a]$ is a proper ideal of the $MV$-algebra $[0,a]$. Let $x \in [0,a]\setminus (I\cap [0,a])$. Since $I$ is maximal, there are an integer $n$ and an element $c \in I$ such that $a \le c \oplus n.x$. Applying the Riesz Decomposition Property, Remark \ref{3.2}(iii), there is $c_1\le c$ and $x_1,\ldots,x_n\le x$ such that $a= c_1 \oplus x_1\oplus \cdots \oplus x_n\le c_1 \oplus n.x\le a$ because $c_1 \in I\cap [0,a]$. Then $\lambda_a(n.x) \in I\cap [0,a]$ which proves by Proposition \ref{3.24} means that $I\cap [0,a]$ is a maximal ideal of the $MV$-algebra $[0,a]$.

(ii) To prove $I$ is prime, let $x\wedge y \in I$ for some $x,y \in M$. If $M$ has the greatest idempotent, $M$ is an $MV$-algebra and the statement is well known. Thus assume $M$ is proper. Since $I$ is a proper ideal of $M$, there is an element $x_0 \notin I$ and there is an idempotent $a$ of $M$ such that $x_0,x,y\le a$ and of course, $a \notin I$. Then by (i), $I\cap [0,a]$ is a maximal ideal of the $MV$-algebra $[0,a]$ and $x,y \in [0,a]$.  Then $x\wedge y \in I\cap [0,a]$ which proves $x\in I\cap [0,a]$ or $y \in I\cap [0,a]$. Consequently, $I$ is prime.
\end{proof}

\begin{prop}\label{3.28}
Every prime ideal of a proper $EMV$-algebra $(M;\vee, \wedge,\oplus,0)$ is contained in a unique maximal ideal of $M$.
\end{prop}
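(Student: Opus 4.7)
The plan is to pass to the quotient $M/P$ and to reduce to Theorem~\ref{th:finite}. Specifically, I would show that primeness of $P$ forces $M/P$ to be linearly ordered; Theorem~\ref{th:finite} then gives that $M/P$ is an $MV$-algebra with a unique maximal ideal, and the bijection of Theorem~\ref{3.12} between ideals of $M$ containing $P$ and ideals of $M/P$ lifts this uniqueness back to $M$.

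For the linearity of $M/P$, given $x,y\in M$ I pick $a\in\mI(M)$ with $x,y\le a$ and work inside the $MV$-algebra $([0,a];\oplus,\lam_a,0,a)$. A standard $MV$-identity gives
$$
(x\odot\lam_a(y))\wedge(y\odot\lam_a(x))=0;
$$
indeed, $x\odot\lam_a(y)=(x\vee y)\ominus y$ and $y\odot\lam_a(x)=(x\vee y)\ominus x$ in $[0,a]$, so their meet equals $(x\vee y)\ominus(x\vee y)=0$. Since $0\in P$ and $P$ is prime, either $x\odot\lam_a(y)\in P$ or $y\odot\lam_a(x)\in P$. Applying the decomposition $x=(x\wedge y)\oplus(x\odot\lam_a(y))$ from~(\ref{eq:x<y2}), the first alternative gives $[x]=[x\wedge y]\le[y]$ in $M/P$ and the second, symmetrically, gives $[y]\le[x]$. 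Hence $M/P$ is totally ordered.

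Since $P$ is proper, $M/P$ is non-trivial, so Theorem~\ref{th:finite} yields that $M/P$ is an $MV$-algebra with a unique maximal ideal $\overline K$. Writing $\pi:M\to M/P$ for the canonical projection, $K:=\pi^{-1}(\overline K)$ is then a maximal ideal of $M$ containing $P$; any other maximal ideal of $M$ containing $P$ would project via $\pi$ to a maximal ideal of $M/P$ distinct from $\overline K$, contradicting uniqueness. Thus $K$ is the unique maximal ideal of $M$ containing $P$. The only non-routine ingredient is the displayed $MV$-identity, which is verified directly as above; the rest is a mechanical combination of the quotient construction, Theorem~\ref{th:finite}, and the ideal correspondence, so I expect no substantive obstacle.
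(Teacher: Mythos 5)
Your proof is correct, but it takes a genuinely different route from the paper. The paper splits the statement in two: for existence it invokes the filter machinery (it checks that a prime ideal $I$ satisfies condition (\ref{eq3.25}), forms the filter $F_I$ from Lemma \ref{3.25}(v), extends it to a maximal filter $H$ by Zorn, and takes $I_H$ as in Theorem \ref{3.26}); for uniqueness it supposes two distinct maximal ideals $J_1,J_2\supseteq I$, localizes to an interval $[0,a]$ containing witnesses $x\in J_1\setminus J_2$ and $y\in J_2\setminus J_1$, and applies the classical $MV$-algebra fact that the ideals above a prime ideal form a chain. You instead pass to the quotient $M/P$, prove it is linearly ordered from the identity $\lam_a(\lam_a(x)\oplus y)\wedge\lam_a(\lam_a(y)\oplus x)=0$ (which the paper itself records, citing Prop.\ 1.1.7 of CDM, in the remark preceding Theorem \ref{3.30}) together with the decomposition (\ref{eq:x<y2}), and then let Theorem \ref{th:finite} deliver both existence and uniqueness of the maximal ideal of $M/P$ at once. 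Your route is shorter and avoids filters entirely, at the cost of leaning on Theorem \ref{th:finite} (which sits on Theorem \ref{th:maxI} and the embedding of generalized Boolean algebras) and on the order-preserving bijection between ideals of $M$ containing $P$ and ideals of $M/P$; the latter is not literally Theorem \ref{3.12} (that is the ideal--congruence bijection), so you should spell out, as the paper does in the proof of Theorem \ref{3.18}, that preimages under $\pi$ of ideals are ideals containing $P$ and that $\pi^{-1}(\pi(J))=J$ for $J\supseteq P$. There is no circularity, since Theorem \ref{th:finite} precedes Proposition \ref{3.28}, and as a small bonus your argument does not actually use that $M$ is proper.
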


\begin{proof}
	Let $I$ be a prime ideal of an $EMV$-algebra $M$. Put $a\in M \setminus I$.
	For each $b\in\mI(M)$ with $a<b$, we have
	$\lam_b(a)\wedge a=0\in I$ and so $\lam_b(a)\in I$. Hence, $I$ satisfies condition (\ref{eq3.25}) and so by Lemma \ref{3.25}(v),
	$F_I$ is a filter of $M$. It can be easily seen that $X=\{F\in \mathrm{Fil}(M)\mid F_I\s F,\ 0\notin F\}$ has a maximal element, say $H$,
	(note that by definition $0$ does not belong to $F_I$) which is clearly a maximal filter of $M$.
	Now, by Lemma \ref{3.25}(iii) and the proof of Theorem \ref{3.26}, $I_H$ is a maximal ideal of $M$ containing $I$.

Now let $J_1,J_2$ be two different maximal ideals of $M$ containing $I$. Then there are $x \in J_1\setminus J_2$ and $y \in J_2\setminus J_1$. There is an idempotent $a\in \mathcal I(M)$ such that $x,y \in [0,a]$. Then
$a \notin J_1\cup J_2$ and by Proposition \ref{pr:2}(i), $J_1\cap [0,a]$ and $J_2\cap [0,a]$ are maximal ideals of the $MV$-algebra $([0,a];\oplus,\lambda_a,0,a)$. It is easy to verify that $I\cap [0,a]$ is a proper ideal of $[0,a]$ which is also prime. Using \cite[Cor 1.2.12]{mundici 1}, $J_1 \cap [0,a]=J_2\cap [0,a]$ which implies $x,y \in J_1,J_2$,  an absurd.
\end{proof}

\begin{prop}\label{pr:def}
{\rm (i)} Let $P$ be a prime ideal of an $EMV$-algebra and let $I$ be a proper ideal of $M$ containing $P$. Then $I$ is a prime ideal of $M$.

{\rm (ii)} For each prime ideal $J$ of $M$, the set $\mathcal S(J)=\{I \in \mathrm{Ideal}(M)\mid J \subseteq I\ne M\}$ is a linearly ordered set of prime ideals with respect to the set theoretical inclusion with a top element.
\end{prop}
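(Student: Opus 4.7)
The plan is to reduce both parts to the corresponding $MV$-algebra statement in \cite[Cor.~1.2.12]{mundici 1} by localizing to an interval $[0,a]$ for a carefully chosen idempotent $a$. The preparatory fact used repeatedly is: given finitely many proper ideals $J_1,\ldots,J_k$ of $M$ and elements $x_1,\ldots,x_m\in M$, there exists $a\in\mathcal{I}(M)$ with $x_1,\ldots,x_m\le a$ and $a\notin J_1\cup\cdots\cup J_k$. Indeed, choose any idempotent $a_0\ge x_1,\ldots,x_m$, and for each $i$ use that $J_i$ is proper together with the enough-idempotents axiom to pick an idempotent $b_i\notin J_i$; then $a:=a_0\vee b_1\vee\cdots\vee b_k$ is still an idempotent since $(\mathcal{I}(M);\vee,\wedge,0)$ is a generalized Boolean algebra (Example \ref{3.1}(9)), dominates each $x_j$, and $a\ge b_i$ forces $a\notin J_i$.

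For (i), assume $x\wedge y\in I$ with $P\subseteq I$ proper and $P$ prime. Apply the preparatory fact with $J_1=I$ to obtain an idempotent $a$ with $x,y\le a$ and $a\notin I$. Then $P\cap[0,a]\subseteq I\cap[0,a]$ are proper ideals of the $MV$-algebra $([0,a];\oplus,\lambda_a,0,a)$, and $P\cap[0,a]$ is prime in $[0,a]$ directly from the primality of $P$ in $M$. By \cite[Cor.~1.2.12]{mundici 1}, every proper ideal of an $MV$-algebra containing a prime ideal is itself prime, so $I\cap[0,a]$ is prime in $[0,a]$; since $x\wedge y\in I\cap[0,a]$, we conclude $x\in I$ or $y\in I$, proving that $I$ is prime in $M$.

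For (ii), part (i) already shows every member of $\mathcal{S}(J)$ is prime. To establish linearity, suppose toward a contradiction that $I_1,I_2\in\mathcal{S}(J)$ are incomparable, and pick $x\in I_1\setminus I_2$ and $y\in I_2\setminus I_1$. Use the preparatory fact with $J_1=I_1$, $J_2=I_2$ to obtain an idempotent $a$ with $x,y\le a$ and $a\notin I_1\cup I_2$. Then $J\cap[0,a]$, $I_1\cap[0,a]$, and $I_2\cap[0,a]$ are all proper ideals of $[0,a]$, the first is prime, and the latter two contain it. By \cite[Cor.~1.2.12]{mundici 1} once more, $I_1\cap[0,a]$ and $I_2\cap[0,a]$ must be comparable, contradicting the fact that $x$ separates them one way while $y$ separates them the other. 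For the top element, Proposition \ref{3.28} (or its classical $MV$-analogue if $M$ itself is an $MV$-algebra) produces a unique maximal ideal $K$ of $M$ containing $J$. Any $I\in\mathcal{S}(J)$ is prime by (i), hence contained in a unique maximal ideal of $M$; this maximal ideal necessarily contains $J$, and by uniqueness coincides with $K$, so $I\subseteq K$.

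The only genuine subtlety in the whole argument is the bookkeeping that the preparatory fact requires: controlling finitely many proper ideals simultaneously while keeping the chosen idempotent large enough to contain the elements of interest. Once we are inside the $MV$-algebra $[0,a]$, the entire proposition collapses to the well-known $MV$-algebra statements, so there is no new algebraic content beyond exploiting the enough-idempotents hypothesis.
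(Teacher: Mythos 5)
Your proof is correct and follows essentially the same route as the paper: both arguments localize to the $MV$-algebra $[0,a]$ for an idempotent $a$ dominating the relevant elements and lying outside the relevant proper ideals, and then invoke the classical $MV$-algebra facts that ideals containing a prime ideal are prime and form a chain, with Proposition \ref{3.28} supplying the top element. The only difference is that you spell out (via your ``preparatory fact'') why such an idempotent $a$ exists, a step the paper merely asserts.
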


\begin{proof}
(i) Let $x\wedge y \in I$ for some $x,y \in M$. Since $I$ is a proper ideal of $M$, there is an idempotent $a$ of $M$ such that $x,y \in [0,a]$ and $a\notin I$. Then $I\cap [0,a]$ is a prime ideal of the $MV$-algebra $[0,a]$, $x\wedge y \in I \cap [0,a]$ and $I \cap [0,a]$ is an ideal of $[0,a]$. Applying \cite[Thm 1.2.11(i)]{mundici 1}, we have $I \cap [0,a]$ is a prime ideal of $[0,a]$. Then $x\in I \cap [0,a]\subseteq $ or $y\in I \cap[0,a]\subseteq I$ proving $I$ is prime.

(ii) Let $I_1,I_2$ be two proper ideals of $M$ containing $J$ such that $I_1$ and $I_2$ are not comparable. Then there are $x\in I_1\setminus I_2$ and $y \in I_2 \setminus I_1$. We choose an idempotent $a \in \mathcal I(M)$ such that $x,y \in [0,a]$ and $a \notin I_1 \cup I_2$. Then $J\cap [0,a]$ is a prime ideal of the $MV$-algebra $[0,a]$ which is contained in both $I_1\cap [0,a]$ and $I_2 \cap [0,a]$. By \cite[Thm 1.2.11(ii)]{mundici 1}, $I_1\cap [0,a]$ and $I_2 \cap [0,a]$ are comparable ideals of $[0,a]$. If $I_1\cap [0,a]\subseteq I_2 \cap [0,a]$, then $x \in I_2$, an absurd, and dually if $I_2\cap [0,a]\subseteq I_1 \cap [0,a]$, then $y \in I_1$ also an absurd. Then $I_1$ and $I_2$ are comparable. The top element of $\mathcal S(J)$ is a unique maximal ideal of $M$ containing $J$ which is guaranteed by Proposition \ref{3.28}.
\end{proof}

\begin{rmk}\label{3.28.1}
	In the proof of Theorem \ref{3.26} we showed that if $F$ is a maximal filter of a proper $EMV$-algebra $(M;\vee, \wedge,\oplus,0)$, then
$I_F$ is a maximal ideal of $M$. Now, let $I$ be a maximal ideal of $M$. By Lemma \ref{3.25}(v), $F_I$ is a filter of $M$ and so $F_I$ is contained in a maximal filter $H$.
\vspace{1mm}
	\begin{itemize}[nolistsep]
	\item[{\rm(i)}] Since $F_I\s H$, then from definition it follows that $I_{F_I}\s I_H$.
	
	\item[{\rm(ii)}] $I_H\neq M$. Otherwise, if $I_H=M$, then $\mI(M)\s I_H$ and so by Lemma \ref{3.25}(i), $\mI(M)\cap H=\emptyset$. That is,
	$H=\emptyset$.
	
	\item[{\rm(iii)}] Let $x\in I$. Put $a\in M \setminus I$ such that $x<a$. Then $\lam_a(x)\in F_I$ and so $a\in F_I$. By definition,
	$x=\lam_a(\lam_a(x))\in I_{F_I}$. Therefore, $I\s I_{F_I}$.
	\end{itemize}
	From (i), (ii) and (iii) it follows that $I\s I_{I_F}\s I_H \varsubsetneq M$ and so $I=I_H$. Therefore, any maximal
	ideal $I$ of $M$ is of the form $I_H$ for some maximal filter $H$ of $M$.
	
\end{rmk}

\begin{thm}\label{3.29}
Let $(M;\vee, \wedge,\oplus,0)$ be an $EMV$-algebra and let $I$ be a proper ideal of $M$, $a\in M\setminus I$. Then there exists an ideal $P$ of $M$ which is maximal with respect to the property  $I \subseteq P$ and $a\in M\setminus P$. In addition, $P$ is prime.
\end{thm}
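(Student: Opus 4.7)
The plan is to establish the existence of $P$ by a standard Zorn argument and then derive primality by restricting to a suitable $MV$-algebra $[0,b]$, where the classical MV-algebra characterisation of prime ideals applies.

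First I would let $\mathcal F=\{J\in \mathrm{Ideal}(M)\mid I\subseteq J,\ a\notin J\}$. This is non-empty (it contains $I$) and partially ordered by inclusion. For any chain $\{J_\alpha\}$ in $\mathcal F$, the union $\bigcup_\alpha J_\alpha$ is closed under $\oplus$ and downward closed (both properties being finitary and preserved by directed unions), and it still avoids $a$. Zorn's lemma then delivers a maximal element $P\in\mathcal F$, which is the required ideal.

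Next I would prove that $P$ is prime. Assume for contradiction that $x\wedge y\in P$ while $x\notin P$ and $y\notin P$. Since $P$ is maximal in $\mathcal F$, the ideals $\langle P\cup\{x\}\rangle$ and $\langle P\cup\{y\}\rangle$ strictly contain $P$ and therefore fall out of $\mathcal F$; since both still contain $I$, each must contain $a$. Applying Corollary \ref{3.16}, I obtain $p_1,p_2\in P$ and integers $n,m\in\mathbb N$ with $a\le p_1\oplus n.x$ and $a\le p_2\oplus m.y$.

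The key reduction step is to choose an idempotent $b\in \mathcal I(M)$ dominating the finitely many elements $a$, $x$, $y$, $p_1$, $p_2$ (such $b$ exists by the defining property of $EMV$-algebras). Note $b\notin P$, for otherwise $a\le b\in P$ would force $a\in P$. Hence, by Proposition \ref{3.17}, $P\cap[0,b]$ is a proper (in fact maximal) ideal of the $MV$-algebra $([0,b];\oplus,\lambda_b,0,b)$. All the elements $p_1,p_2,x,y,a$ lie in $[0,b]$, and the operations $\oplus$ and $n.(\cdot)$ coincide with those of the $MV$-algebra $[0,b]$ (the truncation at $b$ is irrelevant here because $a\le b$). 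The classical MV-algebra identity $\langle J\cup\{x\}\rangle\cap\langle J\cup\{y\}\rangle=\langle J\cup\{x\wedge y\}\rangle$ (see \cite[Prop.\ 1.2.8, Cor.\ 1.2.12]{mundici 1}) applied to $J:=P\cap[0,b]$ inside $[0,b]$ gives $a\in\langle J\cup\{x\wedge y\}\rangle$. But $x\wedge y\in P\cap[0,b]=J$, so this reduces to $a\in J\subseteq P$, contradicting $a\notin P$. Hence $P$ is prime.

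The only non-routine step is the third paragraph: ensuring that the $EMV$-algebra version of the MV-algebra primality argument really reduces cleanly to a single MV-algebra $[0,b]$. The careful point is choosing $b$ large enough to contain all relevant witnesses and checking that the ideal-generation formulas in $M$ and in $[0,b]$ produce the same witnesses; once this is in place, the classical MV-algebra fact finishes the job.
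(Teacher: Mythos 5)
Your proof follows essentially the same route as the paper's: the same Zorn's-lemma construction of $P$ as a maximal element of $\{J\in\mathrm{Ideal}(M)\mid I\subseteq J,\ a\notin J\}$, the same use of Corollary \ref{3.16} to produce $z_1,z_2\in P$ and integers with $a\le z_1\oplus n.x$ and $a\le z_2\oplus m.y$, and the same reduction to a single $MV$-algebra $[0,b]$ for an idempotent $b$ dominating all the witnesses. The only divergence is the final step: the paper finishes with the explicit computation $a\le (z_1\oplus z_2)\oplus(m.x\wedge m.y)\le 2m.(z_1\oplus z_2)\oplus m^2.(x\wedge y)\in P$ using the sub-distributivity inequality $(u\oplus v)\wedge w\le (u\wedge w)\oplus(v\wedge w)$ from \cite{georgescu}, whereas you invoke the classical identity $\langle J\cup\{x\}\rangle\cap\langle J\cup\{y\}\rangle=\langle J\cup\{x\wedge y\}\rangle$ as a black box; these are the same argument in different packaging, and your observation that $\oplus$ on $[0,b]$ needs no truncation (since $[0,b]$ is closed under $\oplus$) is correct.

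One genuine slip: you justify that $P\cap[0,b]$ is a proper, ``in fact maximal,'' ideal of $[0,b]$ by appealing to Proposition \ref{3.17}, but that proposition concerns maximal ideals of $M$, and your $P$ is only maximal among ideals containing $I$ and omitting $a$ --- it need not be a maximal ideal of $M$ at all. Fortunately nothing in your argument uses maximality (or even properness) of $P\cap[0,b]$: all you need is that $J=P\cap[0,b]$ is an ideal of the $MV$-algebra $[0,b]$, which is immediate, and that the conclusion $a\in\langle J\cup\{x\wedge y\}\rangle=\langle J\rangle=J\subseteq P$ contradicts $a\notin P$. Delete the appeal to Proposition \ref{3.17} (keeping only the trivial remark that $b\notin P$ if you wish) and the proof stands.
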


\begin{proof}
If $M$ is not proper and $M\ne\{0\}$, $M$ is a non-degenerated  $MV$-algebra, and the statement is well-known \cite[Prop 1.2.13]{mundici 1}.
So let $M$ be proper. By Zorn's lemma, $X=\{J\in\mathrm{Ideal}(M)\mid I \subseteq J,\, a\notin J\}$ has a maximal element, $P$ say. Thus $a\notin P$. We will show that $P$ is prime. Let $x\wedge y\in P$ for some $x,y\in M$. If $x,y\notin P$, then
	$a\in \langle P\cup\{x\}\rangle$ and $a\in\langle P\cup \{y\}\rangle$, so by Corollary \ref{3.16},
	$a\leq z_1\oplus m.x$ and $a\leq z_2\oplus m.y$ for some $m\in\mathbb{N}$ and $z_1,z_2\in P$. It follows that
	\[ a\leq (z_1\oplus m.x)\wedge (z_2\oplus m.y)\leq (z_1\oplus z_2\oplus m.x)\wedge (z_1\oplus z_2\oplus m.y).\]
	Let $b\in\mI(M)$ such that $x,y,z_1,z_2\leq b$. Since $([0,b]; \oplus,\lam_b,0,b)$ is an $MV$-algebra, then
	by \cite[Prop 1.17(i)]{georgescu}, we have
	\[ a\leq (z_1\oplus z_2)\oplus (m.x\wedge m.y)\leq 2m.(z_1\oplus z_2)\oplus m^2.(x\wedge y)\in P
	\]
	which is a contradiction. Therefore, $P$ is prime.
\end{proof}

\begin{cor}\label{co:3.29}
Every proper ideal of an $EMV$-algebra $M$ can be embedded into a maximal ideal of $M$.
\end{cor}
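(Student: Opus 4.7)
The plan is to combine Theorem \ref{3.29} with Proposition \ref{3.28}, which together already do essentially all the work. Given a proper ideal $I$ of an $EMV$-algebra $M$, I first choose any element $a \in M \setminus I$, which exists by properness of $I$. I then apply Theorem \ref{3.29} to this pair $(I,a)$ to obtain a prime ideal $P$ of $M$ with $I \subseteq P$ and $a \notin P$; in particular $P$ is a proper ideal of $M$.

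Next I split into two cases depending on whether $M$ is proper as an $EMV$-algebra. If $M$ is proper, Proposition \ref{3.28} applies directly to $P$ and yields a (unique) maximal ideal $J$ of $M$ with $P \subseteq J$; chaining the inclusions gives $I \subseteq P \subseteq J$, which is what we want. If $M$ is not proper, then by Example \ref{3.1}(4), $M$ is an $MV$-algebra, and the classical result for $MV$-algebras (e.g.\ \cite[Prop.\ 1.2.14]{mundici 1}) guarantees that every proper ideal is contained in a maximal ideal, so we are done in this case as well.

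As an alternative unified argument that avoids the case split, I could apply Zorn's lemma to the collection $\mathcal F = \{J \in \mathrm{Ideal}(M) \mid I \subseteq J,\ a \notin J\}$: this family is non-empty (it contains $I$ itself) and the union of any chain in $\mathcal F$ is again an ideal not containing $a$, hence lies in $\mathcal F$. A maximal element $K$ of $\mathcal F$ is automatically a maximal ideal of $M$, since any strictly larger ideal would have to contain $a$ and hence, together with $K$, generate all of $M$. This still gives a maximal ideal containing $I$.

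There is no substantive obstacle here: the heavy lifting has already been done in Theorem \ref{3.29} (existence of a prime ideal extending $I$ and avoiding a given element) and in Proposition \ref{3.28} (every prime ideal sits inside a maximal ideal). The only point worth a line of care is to ensure that the corollary is stated and proved for \emph{every} $EMV$-algebra, not just proper ones, which is handled either by the brief case distinction above or by the direct Zorn argument.
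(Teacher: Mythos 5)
Your primary argument is exactly the paper's proof: apply Theorem \ref{3.29} to get a prime ideal $P\supseteq I$ and then Proposition \ref{3.28} to embed $P$ into a maximal ideal. The case split on whether $M$ is proper is a sensible extra precaution, since Proposition \ref{3.28} is stated only for proper $EMV$-algebras (the paper's own two-line proof glosses over this, and the non-proper case is indeed just the classical $MV$ fact).

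Your ``alternative unified argument'' via Zorn's lemma, however, is not correct. A maximal element $K$ of $\mathcal F=\{J\in\mathrm{Ideal}(M)\mid I\subseteq J,\ a\notin J\}$ is \emph{not} automatically a maximal ideal of $M$. Maximality of $K$ in $\mathcal F$ only tells you that every ideal strictly containing $K$ contains $a$, i.e.\ that every such ideal contains $\langle K\cup\{a\}\rangle$; it does not tell you that $\langle K\cup\{a\}\rangle=M$, which is what your sentence ``together with $K$, generate all of $M$'' silently assumes. Concretely, take $M$ to be Chang's $MV$-algebra $\Gamma(\mathbb Z\lex\mathbb Z,(1,0))$ (an $EMV$-algebra), $I=\{0\}$ and $a=(0,1)$. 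The only ideal of $M$ not containing $(0,1)$ is $\{0\}$ itself, so $K=\{0\}$ is the maximal element of $\mathcal F$, yet $K$ is properly contained in the proper ideal $\mathrm{Rad}(M)=\{(0,n)\mid n\ge 0\}$ and hence is not a maximal ideal. What the Zorn construction does give you is a \emph{prime} ideal (this is precisely the content of Theorem \ref{3.29}), and one still needs Proposition \ref{3.28} to pass from prime to maximal; that step cannot be short-circuited. So keep the first argument and drop the alternative.
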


\begin{proof}
Let $I$ be a proper ideal of $M$. By Theorem \ref{3.29}, there is a prime ideal $P$ of $M$ containing $I$. Applying Proposition \ref{3.28}, we have the assertion in question.
\end{proof}

Note that, if $P$ is a prime ideal of an $EMV$-algebra $(M;\vee, \wedge,\oplus,0)$ and $x,y\in M$, then there exists $a\in\mI(M)$ such that $x,y\leq a$.
Since $([0,a]; \oplus,\lam_a,0,a)$ is an $MV$-algebra, then by \cite[Prop 1.1.7]{mundici 1},
\[\lam_a(\lam_a(x)\oplus y)\wedge \lam_a(\lam_a(y)\oplus x)=0,\]
which implies that $\lam_a(\lam_a(x)\oplus y)\in P$ or $\lam_a(\lam_a(y)\oplus x)\in P$.

\begin{thm}\label{3.30}
	Let $(M;\vee, \wedge,\oplus,0)$ be a proper $EMV$-algebra. Then the radical $\mbox{\rm Rad}(M)$ of $M$, the intersection of all maximal ideals of $M$, is the set
\begin{equation}\label{eq:Rad}
\mathrm{Rad}(M)=\{x\in M\mid x\neq 0,\ \exists\, a\in\mI(M): x\leq a\ \& \ \ (n.x\leq \lam_a(x),\  \forall\, n\in\mathbb{N})\}\cup\{0\}.
\end{equation}
\end{thm}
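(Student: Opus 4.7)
The plan is to reduce both inclusions in \eqref{eq:Rad} to the well-known description of the radical of an $MV$-algebra, namely that for the $MV$-algebra $([0,a];\oplus,\lambda_a,0,a)$ one has $x\in\mathrm{Rad}([0,a])$ if and only if $n.x\le \lambda_a(x)$ for every $n\in\mathbb{N}$ (see \cite{mundici 1}).

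For the inclusion $\supseteq$, I would assume $x\ne 0$ together with the existence of an idempotent $a\ge x$ for which $n.x\le \lambda_a(x)$ for all $n\in\mathbb{N}$, so that $x$ lies in every maximal ideal of the $MV$-algebra $[0,a]$. Then for any maximal ideal $I$ of $M$, either $a\in I$, which forces $x\le a$ to lie in $I$, or $a\notin I$, in which case Proposition \ref{pr:2}(i) tells me that $I\cap[0,a]$ is a maximal ideal of $[0,a]$ and therefore $x\in I\cap[0,a]\subseteq I$. In either case $x\in\mathrm{Rad}(M)$.

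For the inclusion $\subseteq$, I would take $x\in\mathrm{Rad}(M)\setminus\{0\}$ and fix any idempotent $a\ge x$, and I will show that $x\in\mathrm{Rad}([0,a])$; by the classical $MV$-result recalled above, this yields the required inequalities. Via Theorem \ref{th:state} it will suffice to prove $t(x)=0$ for every state-morphism $t\colon [0,a]\to[0,1]$, and to do so I would define $s\colon M\to[0,1]$ by $s(y):=t(y\wedge a)$. Once $s$ is known to be a state-morphism on $M$, the assumption $x\in\mathrm{Rad}(M)$ forces $s(x)=0$, and then $t(x)=t(x\wedge a)=s(x)=0$ as required.

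The main obstacle will be the verification that $s$ is indeed an $EMV$-homomorphism into $[0,1]$. Preservation of $0$, $\vee$ and $\wedge$ is straightforward, while preservation of $\oplus$ will rest on the Boolean-element identity $(y_1\oplus y_2)\wedge a=(y_1\wedge a)\oplus(y_2\wedge a)$, valid in any ambient $MV$-algebra $[0,c]$ with $c\in\mI(M)$ and $c\ge y_1,y_2,a$, which in turn follows from the orthogonal decomposition $y=(y\wedge a)\oplus(y\wedge \lambda_c(a))$ available for Boolean $a$. For preservation of $\lambda_b$ with $b\in\mI(M)$ and $y\le b$, I would combine Proposition \ref{3.5}(i) with the analogous identity $\lambda_c(y)\wedge a=\lambda_c(y\wedge a)\wedge a$ to obtain $\lambda_b(y)\wedge a=\lambda_{a\wedge b}(y\wedge a)$. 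Proposition \ref{pr:property}(ii) then forces $t(a\wedge b)\in\{0,1\}$; when $t(a\wedge b)=1$, the restriction of $t$ to $[0,a\wedge b]$ is an $MV$-state-morphism and delivers $s(\lambda_b(y))=1-s(y)=\lambda_{s(b)}(s(y))$, whereas when $t(a\wedge b)=0$ both $s(y)$ and $s(\lambda_b(y))$ vanish by monotonicity, matching $\lambda_{s(b)}(s(y))=0$. Finally, that $s$ attains the value $1$ on $M$ follows from the corresponding property of $t$ on $[0,a]$, so $s\in \mathcal{SM}(M)$, completing the argument.
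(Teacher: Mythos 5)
Your proof is correct, but it follows a genuinely different route from the paper's. Both of your inclusions are driven by the classical $MV$-fact that $\mathrm{Rad}([0,a])=\{x\mid n.x\le\lam_a(x)\ \forall n\}$, combined with a transfer of maximal ideals between $M$ and the blocks $[0,a]$. For $\supseteq$ you only need Proposition \ref{pr:2}(i) (the trace $I\cap[0,a]$ of a maximal ideal is $[0,a]$ or maximal in $[0,a]$), which is considerably shorter than the paper's argument via maximal filters, Remark \ref{3.28.1}, and the computation showing $\lam_a(x)^n\in I_H$. For $\subseteq$ your key new ingredient is the extension $s=t\circ\pi_a$ of a state-morphism $t$ on $[0,a]$ along the retraction $\pi_a(y)=y\wedge a$; this amounts to proving that every maximal ideal of the $MV$-algebra $[0,a]$ is the trace of a maximal ideal of $M$ --- a converse to Proposition \ref{3.17} that the paper states only for the generalized Boolean algebra $\mI(M)$ (Theorem \ref{th:maxI}), not for the blocks $[0,a]$. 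All the verifications you flag do go through: that $\pi_a$ preserves $\oplus$ is exactly the computation $(x\oplus y)\wedge v_n=(x\wedge v_n)\oplus(y\wedge v_n)$ already carried out in the proof of Proposition \ref{rmkdep}, and the identity $\lam_b(y)\wedge a=\lam_{a\wedge b}(y\wedge a)$ follows from $\lam_c(y\wedge a)\wedge a=(\lam_c(y)\vee\lam_c(a))\wedge a=\lam_c(y)\wedge a$ together with Proposition \ref{3.5}(i). The paper instead handles this inclusion by producing a prime ideal via Theorem \ref{3.29}, passing to the maximal ideal above it by Proposition \ref{3.28}, and tracking idempotents through the filter correspondence of Lemma \ref{3.25}. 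What your approach buys is a more conceptual proof that stays entirely within the maximal-ideal/state-morphism dictionary of Theorem \ref{th:state} and, as a bonus, shows the infinitesimal inequalities $n.x\le\lam_a(x)$ hold for \emph{every} idempotent $a\ge x$, not merely for some; what the paper's buys is independence from state-morphisms, working purely with the ideal--filter machinery of Section 5.
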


\begin{proof}
	Let $x\in M\setminus\bigcap\{I\mid I\in\mathrm{MaxI}(M)\}$. Then there exists a maximal ideal $I$ such that $x\notin I$.
	By Remark \ref{3.28.1}, there exists a maximal filter $H$ such that $I=I_H$ and so $x\notin I_H$.
	\begin{eqnarray*}
	x\notin I_H &\Rightarrow &\lam_a(x)\notin H (\forall\, a\in \mI(M)\cap \uparrow x)\Rightarrow \lfloor H\cup \{\lam_a(x)\}\rfloor=M\\
	&\Rightarrow & 0\in \lfloor H\cup \{\lam_a(x)\}\rfloor \Rightarrow y\odot \lam_a(x)^n=0 (\exists\, n\in\mathbb{N}\ \exists\, y\in H), \mbox{ by Proposition \ref{3.24} } \\
	&\Rightarrow & \lam_a(x)^n\leq \lam_b(y), \ (\forall\, b\in\mI(M)\ : \ b\geq a,y)\\
	&\Rightarrow & \lam_a(x)^n\in I_H,\mbox{ since $y\in H$ and $b\geq y$, then $\lam_b(y)\in I_H$}.
	\end{eqnarray*}
If for all $n\in\mathbb{N}$, $n.x\leq \lam_a(x)$, then $\lam_a(n.x)\geq x$ and so $\lam_a(x)^n\geq x$ (note that
$([0,a]; \oplus,\lam_a,0,a)$ is an $MV$-algebra). It follows that $x\in I_H=I$ which is a contradiction. Hence
\[\{x\in M\mid x\neq 0,\ \exists\, a\in\mI(M): x\leq a\ \& \ \ (n.x\leq \lam_a(x),\  \forall\, n\in\mathbb{N})\}\cup\{0\}\s \bigcap\{I\mid I\in\mathrm{MaxI}(M)\}.\]

Now, let $x\in M\setminus \mathrm{Rad}(M)$. Then for all $a\in\mI(M)$ with $x\leq a$, there exists $n\in\mathbb{N}$ that $n.x\nleq \lam_a(x)$. It follows that
$\lam_a(\lam_a(n.x)\oplus \lam_a(x))>0$. By  Theorem \ref{3.29}, there is a prime ideal $P$ of $M$ such that
$\lam_a(\lam_a(n.x)\oplus \lam_a(x))\notin P$ and so $\lam_a(x\oplus n.x)\in P$ and $a\notin P$ (otherwise, from
$\lam_a(\lam_a(n.x)\oplus \lam_a(x))\leq a$ we have $\lam_a(\lam_a(n.x)\oplus \lam_a(x))\in P$).
By Proposition \ref{3.28}, there is a maximal ideal $J$ of $M$ containing $P$.
We claim that the maximal filter $J$ induced from Proposition \ref{3.28} does not contain $a$.
Recall that $J=F_H$, where $H$ is a maximal filter of $M$ containing $F_P$. Check
\begin{eqnarray*}
a\in \mI(M),\ a\notin P&\Rightarrow& \lam_a(x)\in F_P (\forall\, x\in P: x<a), \mbox{ by definition} \\
&\Rightarrow & a\in  F_P, \mbox{ since $\lam_a(x)\leq a$  }\\
&\Rightarrow & F_P\s H \Rightarrow a\in H \\
&\Rightarrow & a\notin I_H, \mbox{  by Lemma \ref{3.25}(i)}.
\end{eqnarray*}
So our claim is true. From $\lam_a((n+1).x)\in J$ it follows that $(n+1).x\notin J$ and so $x\notin J$. Hence
$x\notin \bigcap\{I\mid I\in\mathrm{MaxI}(M)\}$. Therefore, $\bigcap\{I\mid I\in\mathrm{MaxI}(M)\}\s \mathrm{Rad}(M)$.
\end{proof}

\begin{rmk}\label{3.31}
	Let $(B;\vee,\wedge)$ be a  generalized Boolean algebra that is not a Boolean algebra and $(M;\oplus,',0,1)$ be an $MV$-algebra. In Example \ref{3.1}(3),
	we showed that $M\times B$ is an $EMV$-algebra. By \cite[Thm. 2.2]{CoDa}, there exists a Boolean algebra $\overline{B}$ such that
	$B$ is a maximal ideal of $\overline{B}$. Clearly, $M\times \overline{B}$ is an $MV$-algebra containing $M\times B$.
	It is straightforward to prove that $M\times B$ is a maximal ideal of $M\times \overline{B}$. Therefore, any $EMV$-algebra of
	the form $M\times B$, where $M$ is an $MV$-algebra and $B$ is a generalized Boolean algebra is a maximal ideal of an $MV$-algebra.
\end{rmk}

\begin{prop}\label{rmkdep}
Let $(M;\vee, \wedge,\oplus,0)$ be an $EMV$-algebra. If there exists $\{a_i\mid i\in \mathbb{N}\}\s\mI(M)$ such that $\{a_1\vee a_2\vee\cdots \vee a_n\mid n\in\mathbb{N}\}$ is a full subset of $\mI(M)$ and $a_i\wedge a_j=0$ for all distinct elements $i,j\in\mathbb{N}$, then the $EMV$-algebra $M$ can be embedded into an $MV$-algebra.
\end{prop}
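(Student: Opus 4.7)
The plan is to use the countable family $\{a_i\}$ to decompose every principal ideal $[0,b_n]$ (where $b_n:=a_1\vee\cdots\vee a_n$) as a finite direct product of the $MV$-algebras $[0,a_i]$, and then assemble these decompositions into a single embedding of $M$ into the infinite product $\prod_{i\in\mathbb{N}}[0,a_i]$, which is an $MV$-algebra.

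First I would note that each $b_n\in\mathcal{I}(M)$, that $b_n\le b_{n+1}$, and that by hypothesis every $x\in M$ lies in some $[0,b_n]$; in particular the $\oplus,\vee,\wedge,\lambda_c$ operations of $M$ that involve finitely many elements can be carried out inside a single $MV$-algebra $[0,b_N]$. Since each $a_i$ is an idempotent in $M$, it is an idempotent (hence a Boolean element by Theorem~\ref{2.1}) of the $MV$-algebra $[0,b_n]$ whenever $n\ge i$. The pairwise orthogonality $a_i\wedge a_j=0$ together with $a_1\vee\cdots\vee a_n=b_n$ gives, by the standard Boolean direct decomposition theorem for $MV$-algebras, an $MV$-isomorphism
\[
\psi_n\colon [0,b_n]\;\longrightarrow\;\prod_{i=1}^{n}[0,a_i],\qquad \psi_n(x)=(x\wedge a_i)_{i=1}^{n}.
\]
These isomorphisms are coherent: for $n\le m$ the restriction of $\psi_m$ to $[0,b_n]$ coincides with $\psi_n$ followed by the obvious inclusion, because $x\wedge a_i=0$ for $i>n$ whenever $x\le b_n$ (indeed $x\wedge a_i\le b_n\wedge a_i=\bigvee_{j\le n}(a_j\wedge a_i)=0$).

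Next I would define
\[
\phi\colon M\longrightarrow \prod_{i\in\mathbb{N}}[0,a_i],\qquad \phi(x)=(x\wedge a_i)_{i\in\mathbb{N}},
\]
and observe that $\phi(x)$ has finite support: if $x\le b_N$, then $(x\wedge a_i)=0$ for all $i>N$. The target is an $MV$-algebra (product of $MV$-algebras), and I need to verify that $\phi$ is an $EMV$-homomorphism. For $\oplus,\vee,\wedge$, given $x,y\in M$ pick $N$ with $x,y\le b_N$; then $x\oplus y, x\vee y, x\wedge y\in [0,b_N]$ and the identities
\[
(x\oplus y)\wedge a_i=(x\wedge a_i)\oplus_{a_i}(y\wedge a_i),\quad (x\vee y)\wedge a_i=(x\wedge a_i)\vee(y\wedge a_i),
\]
hold for $i\le N$ because $\psi_N$ is an $MV$-isomorphism, and hold trivially (both sides $0$) for $i>N$. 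For the idempotent unary operation, if $c\in\mathcal{I}(M)$ with $x\le c$, I would pick $N$ with $c\le b_N$ and use $\psi_N(\lambda_c(x))=\lambda_{\psi_N(c)}(\psi_N(x))$, which gives $\lambda_c(x)\wedge a_i=\lambda_{c\wedge a_i}(x\wedge a_i)$ for $i\le N$, and again both sides are $0$ for $i>N$. Injectivity of $\phi$ follows immediately: if $\phi(x)=\phi(y)$ and $N$ is chosen with $x,y\le b_N$, then $\psi_N(x)=\psi_N(y)$ and hence $x=y$.

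The main technical point — and the only step requiring care — is the claim that $\psi_n$ is indeed an $MV$-algebra isomorphism from the $MV$-algebra $[0,b_n]$ onto the product $\prod_{i=1}^{n}[0,a_i]$, since the whole argument reduces to this finite Boolean decomposition applied at each stage and then threaded through the inclusions $[0,b_n]\hookrightarrow[0,b_{n+1}]\hookrightarrow\cdots$. Once that is in place, everything else is routine verification inside a single $MV$-algebra $[0,b_N]$, the operations of $M$ match those of $[0,b_N]$ on such elements, and $\phi$ realises $M$ as a sub-$EMV$-algebra of the $MV$-algebra $\prod_{i\in\mathbb{N}}[0,a_i]$, which is the required embedding.
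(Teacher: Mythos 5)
Your proof is correct, but it takes a genuinely different route from the paper's. The paper defines $f(x)=(x\wedge v_n)_{n\in\mathbb{N}}$ into $\prod_{n}[0,v_n]$, where $v_n=a_1\vee\cdots\vee a_n$, checks that each coordinate map $x\mapsto x\wedge v_n$ preserves $\oplus$ by a direct lattice computation inside some $[0,a]$ with $x,y,v_n\le a$, and gets injectivity from fullness alone (every $x$ satisfies $x\wedge v_n=x$ for $n$ large); notably, the paper's argument never uses the orthogonality $a_i\wedge a_j=0$. You instead make essential use of that orthogonality: each $a_i$ is a Boolean element of $[0,b_n]$ for $n\ge i$, and the finite orthogonal partition $b_n=a_1\vee\cdots\vee a_n$ yields the direct-product decomposition $\psi_n\colon[0,b_n]\cong\prod_{i\le n}[0,a_i]$, which you then thread through the inclusions to obtain the embedding $x\mapsto(x\wedge a_i)_i$ into $\prod_{i}[0,a_i]$. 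The only step you lean on without proof is that Boolean decomposition theorem for $MV$-algebras, which is indeed standard (each $a_i$ is complemented in $[0,b_n]$ by $\bigvee_{j\le n,\,j\ne i}a_j$, using distributivity), and your coherence and finite-support checks are right. The trade-off: the paper's proof is more economical and implicitly shows the orthogonality hypothesis is dispensable for the stated conclusion, whereas your proof extracts more structure --- since $\phi(x)$ always has finite support and every finitely supported tuple is $\psi_N$ of some element of $[0,b_N]$, you actually prove $M\cong\sum_{i\in\mathbb{N}}[0,a_i]$ in the sense of Example \ref{3.1}(6), a sharper statement than mere embeddability into an $MV$-algebra.
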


\begin{proof}
	Let $(M;\vee, \wedge,\oplus,0)$ be an $EMV$-algebra with the mentioned properties. For each $n\in\mathbb{N}$, $v_n:=a_1\vee \cdots \vee _n$ is an idempotent element of $M$ and so $[0,v_n]$ is an $MV$-algebra. Define a map $f:M\ra \prod_{n\in\mathbb{N}}[0,v_n]$ by $f(x)=(x\wedge v_n)_{n\in\mathbb{N}}$ for all $x\in M$. Clearly, $f$ is a one-to-one map which preserves $\vee$, $\wedge$ and $0$. Now, we show that $f$ preserves $\oplus$. Let $n$ be an arbitrary positive integer. We will show that $\pi_n\circ f:M\ra [0,v_n]$ is a homomorphism of $EMV$-algebras, where $\pi_n$ is the $n$-th canonical projection map.	 Put $x,y\in M$. Then there exists $a\in \mI(M)$ such that $x,y,v_n\in [0,a]$. Since $([0,a]; \oplus,\lam_a,0,a)$ is an $MV$-algebra,	then from
	\begin{eqnarray*}
	\pi_n\circ f(x)\oplus \pi_n\circ f(y)&=&(x\wedge v_n)\oplus (y\wedge v_n)=((x\wedge v_n)\oplus y)\wedge ((x\wedge v_n)\oplus v_n)\\
	&=& ((x\wedge v_n)\oplus y)\wedge v_n, \mbox{ since $v_n$ is idempotent } \\
	&=& ((x\oplus y)\wedge (v_n\oplus y))\wedge v_n=(x\oplus y)\wedge v_n\\
	&=&\pi_n\circ f(x\oplus y)
	\end{eqnarray*}
it follows that $f$ preserves $\oplus$. From definition of the unary operation $'$ in the $MV$-algebra $\prod_{n\in\mathbb{N}}[0,v_n]$ and Remark \ref{3.6}, it can be easily seen that $f$ is an $EMV$-algebra homomorphism. Therefore, $M$ can be embedded into an $MV$-algebra.
\end{proof}

\begin{thm}\label{3.33}
	Let $(M;\vee, \wedge,\oplus,0)$ be an $EMV$-algebra. Then $(\mathrm{Ideal}(M);\s)$ is a complete {\em Brouwer} lattice. Consequently, $\mathbb{EMV}$ is a congruence distributive variety.
\end{thm}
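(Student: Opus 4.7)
The plan is first to identify the lattice operations in $\mathrm{Ideal}(M)$, then to verify the infinite distributive law $I\cap \bigvee_\alpha J_\alpha = \bigvee_\alpha(I\cap J_\alpha)$ characterizing Brouwer (frame) lattices, and finally to transport distributivity to $\mathrm{Con}(M)$ via the bijection of Theorem \ref{3.12}.

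First I would show that $(\mathrm{Ideal}(M);\subseteq)$ is a complete lattice. For any family $\{I_\alpha\}_{\alpha\in A}$ of ideals, $\bigcap_\alpha I_\alpha$ is clearly an ideal and serves as the meet. The join $\bigvee_\alpha I_\alpha$ is $\langle\bigcup_\alpha I_\alpha\rangle$, and by Proposition \ref{3.15} it equals the set
\[
\{z\in M\mid z\le x_1\oplus\cdots\oplus x_n,\ \exists\, n\in\mathbb{N},\ \exists\, x_1,\ldots,x_n\in\textstyle\bigcup_\alpha I_\alpha\}.
\]

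The crux is the infinite distributive identity. One inclusion, $\bigvee_\alpha(I\cap J_\alpha)\subseteq I\cap\bigvee_\alpha J_\alpha$, is immediate from monotonicity. For the reverse, take $z\in I\cap\bigvee_\alpha J_\alpha$. Then $z\le x_1\oplus\cdots\oplus x_n$ with $x_i\in J_{\alpha_i}$ for appropriate indices $\alpha_i$. The Riesz Decomposition Property of Remark \ref{3.2}(iii), applied in any $MV$-algebra $[0,a]$ containing $z,x_1,\ldots,x_n$, yields elements $z_1,\ldots,z_n$ with $z_i\le x_i$ and $z=z_1\oplus\cdots\oplus z_n$. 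From $z_i\le x_i\in J_{\alpha_i}$ we get $z_i\in J_{\alpha_i}$, and from $z_i\le z\in I$ we get $z_i\in I$, so each $z_i\in I\cap J_{\alpha_i}$. Therefore $z\in\bigvee_\alpha(I\cap J_\alpha)$, establishing the Brouwer lattice property. This step, which depends essentially on Riesz decomposition, is the main obstacle; without it we could not split $z$ into pieces lying individually in the respective $J_{\alpha_i}$.

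For the second assertion, Theorem \ref{3.12} provides a bijection $I\mapsto\theta_I$ between $\mathrm{Ideal}(M)$ and $\mathrm{Con}(M)$. It is straightforward to verify that this bijection is an order isomorphism: if $I_1\subseteq I_2$, then by the description of $\theta_{I_i}$ via (\ref{ER0}) we have $\theta_{I_1}\subseteq\theta_{I_2}$, and conversely $I_{\theta}=0/\theta$ is monotone in $\theta$. Hence $\mathrm{Con}(M)$ is lattice-isomorphic to the distributive (indeed Brouwer) lattice $\mathrm{Ideal}(M)$, so every $M\in\mathbb{EMV}$ has a distributive congruence lattice. This is exactly the statement that the variety $\mathbb{EMV}$ is congruence distributive.
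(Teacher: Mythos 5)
Your proof is correct and follows the same overall architecture as the paper's (identify meets as intersections and joins as generated ideals via Proposition \ref{3.15}, prove the frame law $I\cap\bigvee_\alpha J_\alpha=\bigvee_\alpha(I\cap J_\alpha)$, then transport to $\mathrm{Con}(M)$ through the bijection of Theorem \ref{3.12}), but the key step is handled with a different lemma. The paper does not decompose $z$ at all: it writes $z=z\wedge(c_{i_1}\oplus\cdots\oplus c_{i_n})$ and invokes the $MV$-algebra sub-distributivity inequality $x\wedge(c_1\oplus\cdots\oplus c_n)\le(x\wedge c_1)\oplus\cdots\oplus(x\wedge c_n)$ (citing \cite[Prop. 1.17]{georgescu}) inside an $MV$-block $[0,b]$, so that $z$ is dominated by a sum of elements $z\wedge c_{i_k}\in I\cap J_{\alpha_k}$. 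You instead apply the Riesz Decomposition Property of Remark \ref{3.2}(iii) to split $z$ itself as $z_1\oplus\cdots\oplus z_n$ with $z_k\le x_k$ and $z_k\le z$. The two devices are equivalent in substance (RDP is what makes the sub-distributivity inequality work), and both require choosing an idempotent $a$ dominating the whole sum $x_1\oplus\cdots\oplus x_n$ so that the computation lives in a single $MV$-algebra $[0,a]$ --- a point worth making explicit in your write-up. Your version buys an exact decomposition $z=z_1\oplus\cdots\oplus z_n$ rather than only an upper bound, though either suffices since the join is a downset closed under $\oplus$. One small omission relative to the paper: the paper also records explicitly that pseudo-complements $I^{\bot}=\max\{J\mid I\cap J=\{0\}\}$ exist; this is automatic from completeness together with the infinite distributive law, so your argument loses nothing, but if the Brouwer structure (relative pseudo-complements) is to be used later it is worth noting. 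Your verification that $I\mapsto\theta_I$ is an order isomorphism is slightly more careful than the paper's one-line appeal to Theorem \ref{3.12} and is welcome.
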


\begin{proof}
Indeed, we have:
	
	(i) Clearly, for each family $\{J_i\mid i\in T\}\s \mathrm{Ideal}(M)$, we have
	\[ \bigwedge_{i\in T} J_i=\bigcap_{i\in T}J_i,\quad \bigvee_{i\in T}J_i=\langle \bigcup_{i\in T}J_i\rangle.
	\]
	
	(ii) Let $I$ be an ideal of $M$. Then for each $x\in I\bigwedge (\bigvee_{i\in T}J_i)$ by Proposition \ref{3.15},
	there exist $n\in\mathbb{N}$ and $c_{i_1},\ldots,c_{i_n}\in \bigcup_{i\in T}J_i$ such that $x\leq c_{i_1}\oplus\cdots\oplus c_{i_n}$.
	Put $b\in M$ such that $x\leq b$. Since $([0,b]; \oplus,\lam_b,0,b)$ is an $MV$-algebra, then by \cite[Prop 1.17]{georgescu},
	\[x=x\wedge (c_{i_1}\oplus\cdots\oplus c_{i_n})\leq (x\wedge c_{i_1})\oplus \cdots \oplus (x\wedge c_{i_n})\in \bigvee_{i\in T}(I\cap J_i).
	\]
	Hence, $\bigwedge$ distributes over arbitrary $\bigvee$.
	
	(iii) For each ideal $I$ of $M$, from (i) and (ii) it follows that
	$\max\{J\in \mathrm{Ideal}(M)\mid I\bigcap J=\{0\}\}$ exists and it is denoted by $I^{\bot}$. In addition, $I^\bot =\{x\in M\mid x\wedge y = 0 \mbox{ for all } y \in I\}$.

(iv) Since  $(\mathrm{Ideal}(M);\s)$ is a Brouwer lattice, it is distributive see e.g. \cite[p. 151]{Bly}. Due to Theorem \ref{ER0}, the lattice of congruences on $M$, $(\mathrm{Con}(M); \s)$ is also a Brouwer lattice. Therefore, $\mathbb{EMV}$ is a congruence distributive variety.
\end{proof}

\begin{prop}\label{3.34}
	Let $(M;\vee, \wedge,\oplus,0)$ be an $EMV$-algebra and $I$ be an ideal of $M$. Then
	$I^\bot=\bigvee_{a\in\mI(M)} I_a^{\bot_a}$, where $I_a=I\cap [0,a]$ and $I^{\bot_a}$ is the pseudo complement of
	$I_a$ in the lattice of ideals of the $MV$-algebra $[0,a]$.
\end{prop}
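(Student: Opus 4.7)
The plan is to prove both inclusions of the equality using the explicit description of the pseudo-complement from Theorem \ref{3.33}(iii), namely $I^\bot=\{x\in M \mid x\wedge y=0\ \forall\, y\in I\}$, together with the fact that meets in $M$ coincide with meets computed inside any interval $[0,a]$ containing the arguments (Remark \ref{3.2}(i)).

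For the inclusion $\bigvee_{a\in\mI(M)} I_a^{\bot_a}\subseteq I^\bot$, I first verify that each $I_a^{\bot_a}$ is actually an ideal of $M$ (not merely of $[0,a]$). Closure under $\oplus$ follows because, for $x,y\in I_a^{\bot_a}\subseteq [0,a]$ and since $a$ is idempotent, one has $x\oplus y\le a\oplus a=a$, so $x\oplus y$ computed in $M$ lies in $[0,a]$ and coincides with $x\oplus y$ in the $MV$-algebra $[0,a]$; downward closure in $M$ is automatic since any $z\le x\le a$ already lives in $[0,a]$, and $z\wedge y\le x\wedge y=0$ for each $y\in I_a$. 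Then for $x\in I_a^{\bot_a}$ and any $y\in I$, I compute $x\wedge y=x\wedge(y\wedge a)$ (using $x\le a$), and since $y\wedge a\in I\cap[0,a]=I_a$, this gives $x\wedge y=0$, so $x\in I^\bot$. As $I^\bot$ is an ideal containing every $I_a^{\bot_a}$, it contains their join.

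For the reverse inclusion $I^\bot\subseteq \bigvee_{a\in \mI(M)} I_a^{\bot_a}$, take $x\in I^\bot$. Since $M$ has enough idempotents, choose $a\in\mI(M)$ with $x\le a$. For any $y\in I_a\subseteq I$, definition of $I^\bot$ gives $x\wedge y=0$, and this meet agrees with the meet in the $MV$-algebra $[0,a]$. Hence $x\in I_a^{\bot_a}\subseteq \bigvee_{a\in\mI(M)} I_a^{\bot_a}$.

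The only non-routine step is the verification that $I_a^{\bot_a}$ is an ideal of $M$, which hinges on the subtle point that both $\oplus$ and $\wedge$ on $[0,a]$ agree with the ambient operations of $M$ once the arguments lie below the idempotent $a$; this is guaranteed by idempotency of $a$ together with Remark \ref{3.2}(i). Once this compatibility is in hand, both inclusions are immediate from the characterization of $I^\bot$ in Theorem \ref{3.33}(iii).
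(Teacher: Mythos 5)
Your proof is correct and follows essentially the same route as the paper's: both arguments reduce everything to the $MV$-algebras $[0,a]$ for $a\in\mI(M)$ and prove the two inclusions via the characterization of the pseudo-complement in Theorem \ref{3.33}(iii). The only (cosmetic) difference is that you work element-wise with the polar description $I^\bot=\{x\in M\mid x\wedge y=0\ \forall\, y\in I\}$, whereas the paper argues at the level of ideals, deducing $I_a^{\bot_a}\subseteq I^\bot$ from $I_a^{\bot_a}\cap I=\{0\}$ and, for the reverse inclusion, decomposing an arbitrary ideal $J$ with $J\cap I=\{0\}$ as $\bigcup_a J_a$ with $J_a\subseteq I_a^{\bot_a}$.
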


\begin{proof}
Let $a\in\mI(M)$. Then
\begin{equation*}
I_a^{\bot_a}\cap I_a=\{0\}\Rightarrow I_a^{\bot_a}\cap I=\{0\} \Rightarrow I_a^{\bot_a}\s I^{\bot} \Rightarrow
\bigvee_{a\in\mI(M)}I_a^{\bot_a}\s I^\bot.
\end{equation*}	
On the other hand, if $J$ is an ideal of $M$ such that $J\cap I=\{0\}$, then $J_a\cap I_a=\{0\}$ for all $a\in\mI(M)$,
which implies that
$J_a\s I_a^{\bot_a}$. Hence
\[ J=\bigcup_{a\in\mI(M)}J_a\s \bigcup_{a\in\mI(M)}I_a^{\bot_a}\s \bigvee_{a\in\mI(M)}I_a^{\bot_a}.
\]
and so $I^\bot\s \bigvee_{a\in\mI(M)}I_a^{\bot_a}$.
\end{proof}

Now we show that every subdirectly irreducible $EMV$-algebra is linearly ordered similarly as  does every subdirectly irreducible MV-algebra, see \cite[Thm 1.3.3]{mundici 1}.

\begin{prop}\label{pr:1}
Every subdirectly irreducible $EMV$-algebra $M$ is linearly ordered.
\end{prop}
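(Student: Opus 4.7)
The plan is to convert subdirect irreducibility into a statement about the lattice of ideals via Theorem~\ref{3.12}: the monolith (smallest non-trivial congruence) corresponds to a smallest non-zero ideal of $M$, so any two non-zero ideals of $M$ meet non-trivially. I will then obstruct any pair of incomparable elements by producing from such a pair two non-zero ideals with trivial intersection.

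The core lemma to be established first is: \emph{if $x,y>0$ in $M$, then $x\wedge y>0$.} Suppose, for contradiction, that $x\wedge y=0$ for some $x,y>0$, and pick $a\in\mI(M)$ with $x,y\le a$; by Remark~\ref{3.2}(i) the identity $x\wedge y=0$ then also holds in the $MV$-algebra $([0,a];\oplus,\lambda_a,0,a)$. Fixing $n,m\in\mathbb{N}$ and enlarging $a$ to an idempotent $b$ with $n.x,m.y\le b$ brings the whole computation into $[0,b]$. Passing through the Mundici functor $\Xi$ to the unital $\ell$-group $(G,b)$, the elements $x,y$ become disjoint positives, whence $(nx)\wedge(my)=0$ in $G$; truncating at $b$ gives $n.x\wedge m.y=0$ in $[0,b]$, and hence in $M$. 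By Proposition~\ref{3.15}, the principal ideals $\langle x\rangle$ and $\langle y\rangle$ consist of the elements dominated by some $n.x$ and $m.y$ respectively, so $\langle x\rangle\cap\langle y\rangle=\{0\}$. Both ideals are non-zero, contradicting subdirect irreducibility.

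With the lemma in hand, linearity of $M$ is immediate. Suppose for contradiction that $x,y\in M$ are incomparable and choose $a\in\mI(M)$ with $x,y\le a$. In the $MV$-algebra $[0,a]$ set
\[
u:=\lambda_a(\lambda_a(x)\oplus y),\qquad v:=\lambda_a(\lambda_a(y)\oplus x).
\]
By the standard $MV$-identities (cf.\ \cite[Prop.~1.1.7]{mundici 1}) one has $u\wedge v=0$, while $u=0\Leftrightarrow x\le y$ and $v=0\Leftrightarrow y\le x$. Incomparability of $x$ and $y$ therefore forces $u,v>0$ with $u\wedge v=0$, contradicting the lemma.

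The hard part will be the implication $x\wedge y=0\Rightarrow n.x\wedge m.y=0$: it cannot be argued directly inside $M$, because $M$ need not have a top element and the usual $MV$-style distributivities are only locally available. Confining the computation to a single sufficiently large block $[0,b]$ and importing the $\ell$-group disjointness fact through $\Xi$ circumvents this; the remainder of the proof is just bookkeeping inside $\mathrm{Ideal}(M)$.
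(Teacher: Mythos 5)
Your proof is correct, but it takes a different route from the paper's. Both arguments begin the same way, using Theorem~\ref{3.12} to translate subdirect irreducibility into the existence of a least non-zero ideal $I$ of $M$. From there the paper observes that $[0,a]$ is itself a non-zero ideal of $M$ for every idempotent $a>0$, so $I\subseteq[0,a]$ and $I$ is the least non-zero ideal of each $MV$-algebra $([0,a];\oplus,\lambda_a,0,a)$; it then simply cites the classification of subdirectly irreducible $MV$-algebras as linearly ordered (\cite[Thm 1.3.3]{mundici 1}) and concludes, since any two elements of $M$ lie in a common block $[0,a]$. You instead unfold that $MV$-algebraic fact into a self-contained argument: an incomparable pair $x,y$ produces $u=\lambda_a(\lambda_a(x)\oplus y)>0$ and $v=\lambda_a(\lambda_a(y)\oplus x)>0$ with $u\wedge v=0$, and your disjointness lemma turns this into two non-zero principal ideals with trivial intersection, contradicting the monolith. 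Every step checks out: the lattice operations and the order of $M$ restricted to $[0,a]$ agree with those of the $MV$-algebra $[0,a]$ by Remark~\ref{3.2}(i), so $u=0\Leftrightarrow x\le y$ is legitimate, and $\langle x\rangle\cap\langle y\rangle=\{0\}$ does follow from $n.x\wedge m.y=0$ via Proposition~\ref{3.15}. Your detour through $\Xi$ and the unital $\ell$-group to get $x\wedge y=0\Rightarrow n.x\wedge m.y=0$ is valid (the whole computation lives in the single $MV$-algebra $[0,b]$, where $\Gamma$--$\Xi$ applies), though heavier than necessary: the inequality $(a\oplus b)\wedge c\le(a\wedge c)\oplus(b\wedge c)$ from \cite[Prop. 1.17(1)]{georgescu}, which the paper already uses elsewhere, gives the same conclusion by induction inside $[0,b]$. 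The trade-off is the usual one: the paper's proof is shorter by outsourcing to the $MV$ literature, while yours is self-contained and makes explicit the structural reason (no two disjoint non-zero elements can coexist) behind linearity.
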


\begin{proof}
If $M=\{0\}$, the statement is clear.
Let $M\ne \{0\}$ be a subdirectly irreducible $EMV$-algebra. Due to Theorem \ref{3.12}, this means that $M$ has the least non-trivial ideal $I$. Let $a>0$ be any idempotent of $M$. Then $[0,a]$ is an ideal of $M$ as well as of the $MV$-algebra $[0,a]$ which yields $I\subseteq [0,a]$ for each $a \in \mI(M)$. It is clear that $I$ is also the least ideal of every $MV$-algebra $[0,a]$. Therefore, every $[0,a]$ is a linearly ordered $MV$-algebra. Let $x,y \in M$. There is an idempotent $a$ of $M$ such that $x,y\le a$. Then $x\le y$ or $y\le x$ as was claimed.
\end{proof}

In Theorem \ref{3.14}, we showed that any simple $EMV$-algebra is an $MV$-algebra and by Theorem \ref{3.18}, we proved that
for each maximal ideal $I$ of an $EMV$-algebra $(M;\vee, \wedge,\oplus,0)$, $M/I$ is an $MV$-algebra. Now, we want to generalize this result.

\begin{thm}\label{3.35}
	Any $EMV$-algebra can be embedded into an $MV$-algebra.
\end{thm}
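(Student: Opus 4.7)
The plan is to realize $M$ as an $EMV$-subalgebra of a direct product of $MV$-algebras. Since such a product is automatically an $MV$-algebra (componentwise operations with the tuple of tops as a greatest element; see Example \ref{3.1}(4)), this yields the desired embedding. The required supply of prime ideals is provided by Theorem \ref{3.29}, and conceptually the argument is nothing but Birkhoff's subdirect representation theorem combined with Proposition \ref{pr:1}.

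Concretely, for every nonzero $x \in M$, apply Theorem \ref{3.29} with $I = \{0\}$ to obtain a prime ideal $P_x$ of $M$ with $x \notin P_x$. Then the diagonal map
\[
\phi:\, M \longrightarrow \prod_{x \in M\setminus\{0\}} M/P_x, \qquad y \longmapsto (y/P_x)_{x},
\]
is an $EMV$-homomorphism from the quotient construction of Section~3, and it is injective because $x/P_x \ne 0/P_x$ for every $x \ne 0$. Once each quotient $M/P_x$ is known to be linearly ordered, Theorem \ref{th:finite} promotes it to an $MV$-algebra, the direct product becomes an $MV$-algebra, and $\phi$ is the sought embedding. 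Equivalently, since $\mathbb{EMV}$ is a variety (Theorem \ref{3.7}), $M$ embeds subdirectly into a product of subdirectly irreducible $EMV$-algebras, which are linearly ordered by Proposition \ref{pr:1} and therefore $MV$-algebras by Theorem \ref{th:finite}.

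The one nontrivial point to verify is that $M/P$ is a chain whenever $P$ is prime. Given $x, y \in M$, pick an idempotent $a \in \mI(M)$ with $x, y \le a$ and work inside the $MV$-algebra $([0,a];\oplus,\lam_a,0,a)$. The standard $MV$-identity recorded in the paragraph preceding Theorem \ref{3.30} gives
\[
\lam_a(\lam_a(x)\oplus y) \wedge \lam_a(\lam_a(y)\oplus x) = 0 \in P,
\]
and primeness of $P$ forces at least one of the two factors into $P$, which in $M/P$ reads as $x/P \le y/P$ or $y/P \le x/P$. This is the only genuinely delicate step; the remaining verifications (that $\phi$ respects $\oplus,\vee,\wedge,\lam_b$ for idempotents $b$, and that the direct product of $MV$-algebras carries its $MV$-structure compatibly with the $EMV$-structure induced from $\mathbb{EMV}$) are entirely routine.
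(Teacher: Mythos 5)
Your proposal is correct and follows essentially the same route as the paper, which likewise gives both the prime-ideal separation argument (via Theorem \ref{3.29}, quotients $M/P$ shown to be chains and hence $MV$-algebras by Theorem \ref{th:finite}, then the diagonal embedding into the product) and the alternative Birkhoff subdirect-representation argument via Theorem \ref{3.7} and Proposition \ref{pr:1}. Your verification that $M/P$ is linearly ordered using the identity $\lam_a(\lam_a(x)\oplus y)\wedge\lam_a(\lam_a(y)\oplus x)=0$ is exactly the observation the paper records just before Theorem \ref{3.30}.
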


\begin{proof} We present two proofs.

(1) Let $(M;\vee, \wedge,\oplus,0)$ be an $EMV$-algebra. By Theorem \ref{3.29}, if $\mathcal P(M)$ is the set of all prime ideals of $M$, then $\cap \mathcal P(M)=\{0\}$. So, the natural map $\phi:M\ra \prod_{I\in \mathcal P(M)}M/I$ defined by $\phi(x)=(x/I)_{I\in S}$ is a one-to-one $EMV$-homomorphism. It is  easy to see that $M/I$ is a chain (since $I_a=I\cap [0,a]$ is a prime ideal of the $MV$-algebra $[0,a]$ for all $a\in \mI(M)$ and all prime ideal $I$ of $M$). By Theorem \ref{th:finite}, $M/I$ is an $MV$-algebra. Therefore, $\prod_{I\in \mathcal P(M)}M/I$ is an $MV$-algebra.

(2) By Theorem \ref{3.7}, the class of $EMV$-algebras is a variety. Therefore, due to the Birkhoff Subdirect Representation Theorem, see \cite[Thm 8.6]{burris}, $M$ is a subdirect product of subdirectly irreducible $EMV$-algebras which are in view of Proposition \ref{pr:1} linearly ordered $EMV$-algebras. By Theorem \ref{th:finite}, every linearly ordered $EMV$-algebra is an $MV$-algebra which gives the result.
\end{proof}


From \cite[Thm. 2.2]{CoDa},  Theorem \ref{th:clan1} and Corollary \ref{co:clan1} we conclude that every proper generalized Boolean algebra, every proper $EMV$-clan and every proper semisimple $EMV$-algebra can be embedded into an $MV$-algebra as a maximal ideal of the $MV$-algebra. In the following, we present a basic result saying that this is true for each proper $EMV$-algebra. 

\begin{thm}\label{th:embed}{\rm [Basic Representation Theorem]}
Every $EMV$-algebra $M$ is either an $MV$-algebra or $M$ can be embedded into an $MV$-algebra $N$ as a maximal ideal of $N$.
\end{thm}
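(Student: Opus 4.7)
If the $EMV$-algebra $M$ has a greatest element, then by Example \ref{3.1}(4), $M$ is already an $MV$-algebra and there is nothing to prove. Therefore the plan is to treat the case when $M$ is proper by embedding $M$ into an ambient $MV$-algebra via Theorem \ref{3.35} and then cutting out a smaller $MV$-algebra in which the image of $M$ sits as a maximal ideal, following the template of Theorem \ref{th:clan1}.

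First, I would use Theorem \ref{3.35} to obtain an $EMV$-embedding $\phi\colon M\hookrightarrow N_0$ into some $MV$-algebra $N_0$; let $1_0$ denote the greatest element of $N_0$. Since $\phi$ preserves the lattice operations and is injective, $\phi$ is an order embedding, so $1_0 \notin \phi(M)$: indeed, $\phi(y)=1_0$ would force $x\leq y$ for every $x\in M$, contradicting the absence of a top in $M$. I would then set
\[
N := \phi(M)\cup\bigl\{\phi(x)' \mid x\in M\bigr\}\ \subseteq\ N_0,
\]
where $'$ denotes the $MV$-complement in $N_0$. The first main step is to verify that $N$ is an $MV$-subalgebra of $N_0$. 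Closure under $'$ is immediate from the definition; closure under $\oplus$ splits into three cases. The case $\phi(x)\oplus \phi(y)=\phi(x\oplus y)$ is direct, while the two remaining cases $\phi(x)'\oplus\phi(y)'$ and $\phi(x)\oplus\phi(y)'$ reduce, using standard $MV$-identities, to the key identities $\phi(x\odot y)=\phi(x)\odot_{N_0}\phi(y)$ and $\phi(y\ast x)=\phi(y)\ominus_{N_0}\phi(x)$, where $\odot$ and $\ast$ are the total operations on $M$ from Lemma \ref{le:x<y}. For this I would pick an idempotent $a\in\mathcal I(M)$ with $x,y\leq a$; then $\phi(a)$ is an idempotent, hence Boolean (Theorem \ref{2.1}), element of $N_0$, and the $MV$-operations restricted to $[0,\phi(a)]$ agree with those inherited from $N_0$, yielding the required identities.

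Next I would show that $\phi(M)$ is a maximal ideal of $N$. Closure of $\phi(M)$ under $\oplus$ is clear, and downward closure inside $N$ reduces to ruling out $\phi(z)'\leq \phi(x)$ for some $x,z\in M$; such an inequality is equivalent to $\phi(x\oplus z)=1_0$, contradicting $1_0\notin\phi(M)$. For maximality, any $f\in N\setminus \phi(M)$ has the form $f=\phi(x)'$, and since $\phi(x)\in\phi(M)$, the ideal $\langle\phi(M)\cup\{f\}\rangle$ contains $f\oplus \phi(x)=1_0$, which is the top of $N$, so this ideal must equal $N$. Finally, $\phi(M)\neq N$ because $1_0\in N\setminus\phi(M)$, so $\phi$ realizes $M$ as a proper maximal ideal of the $MV$-algebra $N$.

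The hardest part will be the $MV$-subalgebra verification in the middle step: $\phi$ is only an $EMV$-homomorphism and a priori knows nothing about the $MV$-operations $\odot$ and $\ominus$ of the ambient $N_0$. Its compatibility with these operations rests on the observation that every idempotent of $M$ maps to a Boolean element of $N_0$, together with the direct-product decomposition $N_0\cong [0,\phi(a)]\times [0,\phi(a)']$ induced by the Boolean element $\phi(a)$, which ensures that the restricted and ambient $MV$-operations coincide on $[0,\phi(a)]$.
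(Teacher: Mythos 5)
Your proposal is correct and follows essentially the same route as the paper: the paper also handles the bounded case via Example \ref{3.1}(4), embeds a proper $M$ into an $MV$-algebra via Theorem \ref{3.35}, forms the set $N_0=\{x_0 \text{ or } x_0^*=\lambda_1(x_0)\mid x_0\in M\}$ (your $\phi(M)\cup\phi(M)'$), verifies closure under $\oplus$ by the same three-case analysis resting on Lemma \ref{le:x<y}, and concludes maximality of $M$ in $N_0$ exactly as you do.
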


\begin{proof}
If $M$ possesses a top element, then $M$ is an $MV$-algebra, see Example \ref{3.1}(4). If $M$ has no top element, $M$ is proper and according to Theorem \ref{3.35}, there is an $MV$-algebra $N$ such that $M$ can be embedded into $N$ as an $EMV$-subalgebra of the $EMV$-algebra $N$. Let $1$ be the top element of $N$ and without loss of generality, we can assume that $M$ is an $EMV$-subalgebra of $N$. Let $N_0(M)$ be the least $EMV$-subalgebra of $N$ containing both $M$ and the element $1$. In what follows, we will use ideas of the proof of Theorem \ref{th:clan1} to describe $N_0(M)$.

For each $x \in N$, let $x^*= \lambda_1(x)$.

Set
\begin{equation}\label{eq:embed}
N_0:=\{x\in N_0(M) \mid \exists\, x_0\in M\ \text{\rm such that either } x=x_0 \ \text{\rm  or } x = x_0^*\}.
\end{equation}
We assert $N_0(M)=N_0$.

Clearly $N_0$ contains $M$ and $1$. Let $x,y \in N_0$. We have three cases: (i) $x=x_0,y=y_0 \in M$. Then $x\vee y,x\wedge y, x\oplus y \in N_0$. (ii) $x=x^*_0$, $y=y_0^*$ for some $x_0,y_0\in M$. Then $x\vee y = x^*_0\vee y^*_0= (x_0\wedge y_0)^*$, $x\wedge y= (x_0\vee y_0)^*$ and $x\oplus y = x^*\oplus y^* = (x_0\odot y_0)^*\in N_0$. (iii) $x=x_0$ and $y=y_0^*$ for some $x_0,y_0\in M$. Then
$$
x\oplus y = x_0\oplus y^*_0 = (y_0\odot x_0^*)^*= (y_0\odot (x_0\wedge y_0)^*)^*=(y_0 \odot \lambda_b(x_0\wedge y_0))^*,
$$
where $b$ is an idempotent of $M$ such that $x_0,y_0 \le b$; for the last equality we use equality (\ref{eq:x<y}) of Lemma \ref{le:x<y}. Using again Lemma \ref{le:x<y}, we have $y_0 \odot \lambda_b(x_0\wedge y_0) \in M$ so that $x\oplus y \in N_0$.

In addition, if we apply equality (\ref{eq:x<y}), we have
$$x\wedge y = x\wedge y^*_0= x_0^*\odot (x^*_0\oplus y^*_0) = x_0\odot (x_0\odot y_0)^*= x_0\odot \lambda_a(x_0\odot y_0),
$$
where $a$ is an idempotent of $M$ such that $x,y \le a$. So that $x\wedge y \in N_0$. Using $x\vee y = (x^*\wedge y^*)^*$, we have, $x\vee y \in N_0$.

We have just proved that $N_0$ is an $EMV$-algebra containing $M$ and $1$, so that $N_0$ is an $MV$-algebra contained in $N_0(M)$. Therefore, $N_0=N_0(M)$ and $N_0$ contains $M$ properly.

Now we prove that $M$ is a maximal ideal of $N_0$. Since $M$ is a proper $EMV$-algebra, $M$ is a proper subset of $N_0$.  To show that $M$ is an ideal it is sufficient assume $y\le x \in M$. If $y=y_0^*$, this is impossible while $1\notin M$. Therefore, $M$ is a proper ideal of $N_0=N_0(M)$. Now let $y \in N_0\setminus M$, then $y=y_0^*$ for some $y_0\in M$. Then $y^*=y_0 \in M$ showing $M$ is a maximal ideal of the $MV$-algebra $N_0$.
\end{proof}

It is important to note that the converse to Theorem \ref{th:embed}, i.e. whether a maximal ideal of an $MV$-algebra is an $EMV$-algebra, is not true, in general. Indeed, if we take the Chang $MV$-algebra $N=\Gamma(\mathbb Z \lex \mathbb Z, (0,1))$, where $\mathbb Z \lex \mathbb Z$ denotes the lexicographic product of the group of natural numbers $\mathbb Z$ with itself, then the set $I=\{(0,n)\mid n \ge 0\}$ is a unique maximal ideal of $N$, but $I$ is only a $qEMV$-algebra but not an $EMV$-algebra because $I$ has only one idempotent, namely $0=(0,0)$. However, if $M$ is an $MV$-algebra and $I$ is a maximal ideal of $I$ having enough idempotent elements, i.e., for each $x\in I$, there is an idempotent element $a$ of $M$ belonging to $I$ such that $x\le a$, then $I$ is an $EMV$-algebra. It is well known that if $a$ is a Boolean element of $M$, then $[0,a]\subseteq I$ and $([0,a];\oplus_a,^{'a},0,a) $ is an $MV$-algebra, where $x\oplus_a y = (x+y)\wedge a= x\oplus y$, $x^{'a} =a-x=\lambda_a(x)$, $x,y \in [0,a]$. Then due to Theorem \ref{th:embed}, we have that the set $I\cup I'$, where $I'=\{x' \mid x \in I\}$, is the least $MV$-subalgebra of $M$ containing $I$ and $1$.

More about $MV$-algebras for which a proper $EMV$-algebra can be embedded as their maximal ideal will be done at the end of this section, see Theorem \ref{th:equi1}.

Theorem  \ref{3.35} allows us to show that the lattice of all subvarieties of the variety $\mathbb{EMV}$ of $EMV$-algebras is countably infinite similarly as in the case of the lattice of subvarieties of the variety $\mathbb{MV}$ of MV-algebras.

\begin{thm}\label{th:3}
The lattice of subvarieties of the variety $\mathbb{EMV}$ of $EMV$-algebras is countably infinite.
\end{thm}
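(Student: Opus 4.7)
My plan is to reduce the question to Komori's classical theorem that the lattice $\mathrm{Sub}(\mathbb{MV})$ of subvarieties of $MV$-algebras is countably infinite, by exhibiting a lattice isomorphism $\mathrm{Sub}(\mathbb{EMV}) \cong \mathrm{Sub}(\mathbb{MV})$.

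The key observation is that, modulo the signature change, the subdirectly irreducible members of the two varieties coincide. Indeed, by Proposition \ref{pr:1} every subdirectly irreducible $EMV$-algebra is linearly ordered, and by Theorem \ref{th:finite} every linearly ordered $EMV$-algebra is in fact an $MV$-algebra; conversely, whenever an $MV$-algebra $M$ is regarded as an $EMV$-algebra $(M;\vee,\wedge,\oplus,0)$ (Example \ref{3.1}(1)), the ideals in the two senses agree, so by Theorem \ref{3.12} its congruence lattices coincide, meaning that $M$ is subdirectly irreducible as an $MV$-algebra if and only if it is subdirectly irreducible as an $EMV$-algebra. Thus, up to the natural signature reduct, $\mathrm{Si}(\mathbb{EMV})=\mathrm{Si}(\mathbb{MV})$.

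Next, both $\mathbb{MV}$ and $\mathbb{EMV}$ are congruence-distributive (the latter by Theorem \ref{3.33}), so Birkhoff's subdirect representation theorem together with J\'onsson's lemma tells us that each subvariety in either lattice is determined by, and generated as $\mathbf{HSP}$ of, its class of subdirectly irreducible members. I would then define mutually inverse, order-preserving maps
\[
\Phi:\mathrm{Sub}(\mathbb{EMV})\to\mathrm{Sub}(\mathbb{MV}),\quad \Phi(\mathcal V):=\mathbf{HSP}_{\mathbb{MV}}(\mathrm{Si}(\mathcal V)),
\]
\[
\Psi:\mathrm{Sub}(\mathbb{MV})\to\mathrm{Sub}(\mathbb{EMV}),\quad \Psi(\mathcal U):=\mathbf{HSP}_{\mathbb{EMV}}(\mathrm{Si}(\mathcal U)),
\]
where in each case we identify $\mathrm{Si}$-classes in the two signatures using the observation above. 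The verification that $\Phi\circ\Psi=\mathrm{id}$ and $\Psi\circ\Phi=\mathrm{id}$ reduces, via J\'onsson's lemma, to showing $\mathrm{Si}(\Psi(\mathcal U))=\mathrm{Si}(\mathcal U)$ and $\mathrm{Si}(\Phi(\mathcal V))=\mathrm{Si}(\mathcal V)$, which in turn follows from the fact that any subdirectly irreducible member of an $\mathbf{HSP}_U$-closure of a class of linearly ordered $MV$-algebras is again a linearly ordered $MV$-algebra (hence lies in $\mathrm{Si}(\mathbb{MV})=\mathrm{Si}(\mathbb{EMV})$).

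Once the isomorphism $\mathrm{Sub}(\mathbb{EMV})\cong\mathrm{Sub}(\mathbb{MV})$ is established, Komori's theorem (see \cite[Chap.~8]{mundici 1}) that $\mathrm{Sub}(\mathbb{MV})$ is countably infinite immediately yields the result. The main obstacle is the careful bookkeeping in verifying that $\Phi$ and $\Psi$ are genuinely inverse to one another: one must check that the operators $\mathbf{H},\mathbf{S},\mathbf{P},\mathbf{P}_U$ behave compatibly when passing between $MV$-terms and $EMV$-terms on linearly ordered $MV$-algebras. This compatibility is not entirely formal because the constant $1$ and unary $'$ are not $EMV$-terms on a proper $EMV$-algebra; however, on a linearly ordered $MV$-algebra $L$, both $1$ and $'$ are recovered from the $EMV$-structure (the top is a term of $L$ as soon as $L$ is non-trivial and bounded), so on the relevant class of s.i.\ algebras the two signatures are interdefinable and the class operators agree.
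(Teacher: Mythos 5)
Your argument is correct in substance but follows a genuinely different route from the paper. The paper works with equations: it invokes the Di Nola--Lettieri result that every subvariety $\mathbb V$ of $\mathbb{MV}$ has a finite equational base using only $\oplus$ and $\odot$ (both of which are term operations of $EMV$-algebras, by Lemma \ref{le:x<y}), defines $\mathcal E(\mathbb V)$ as the $EMV$-subvariety cut out by those same equations, and then uses Theorem \ref{3.35} (embedding $M$ into $N=\prod_{P\in\mathcal P(M)}M/P$, with $N\in Var(M)$) to show $Var(M)=\mathcal E(\mathbb V(N))$; this has the added benefit of delivering explicit equational bases for every subvariety of $\mathbb{EMV}$ (Corollary \ref{co:3}), which your argument does not directly produce. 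You instead build the lattice isomorphism $\mathrm{Sub}(\mathbb{EMV})\cong\mathrm{Sub}(\mathbb{MV})$ through the subdirectly irreducible members, using Proposition \ref{pr:1}, Theorem \ref{th:finite}, Birkhoff's theorem and J\'onsson's lemma (available since $\mathbb{EMV}$ is congruence distributive by Theorem \ref{3.33}); this is more structural and bypasses the specific form of the Di Nola--Lettieri bases entirely. One spot in your write-up needs repair: the parenthetical claim that ``the top is a term of $L$'' is false --- no $EMV$-term can evaluate to the top on every nontrivial bounded $EMV$-algebra, since every $EMV$-term vanishes when all variables are set to $0$. Fortunately the interdefinability of signatures at the level of individual algebras is not what you actually need; what you need is that the class operators $\mathbf{H}$, $\mathbf{S}$, $\mathbf{P}$, $\mathbf{P}_U$ applied to a class of linearly ordered $MV$-algebras give essentially the same result in either signature, and this follows from three concrete facts: (a) $MV$-ideals and $EMV$-ideals of an $MV$-algebra coincide, so by Theorem \ref{3.12} and \cite[Prop 1.2.6]{mundici 1} the congruence lattices (hence $\mathbf{H}$) agree; (b) the only idempotents of a linearly ordered $MV$-algebra are $0$ and $1$, so an $EMV$-subalgebra is either trivial or contains $1$ and is therefore an $MV$-subalgebra; (c) reducts commute with (ultra)products. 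With that substitution your proof is complete.
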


\begin{proof}
According to Komori \cite{Kom}, the lattice of subvarieties of the variety $\mathbb{MV}$ of MV-algebras is countably infinite. Di Nola and Lettieri presented in \cite{DiLe} an equational base of any subvariety of the variety $\mathbb{MV}$ which consists of finitely many MV-equations using only $\oplus$ and $\odot$.  Hence, let $\mathbb V$ be any subvariety of MV-algebras with a finite equational base $\{f_i(x_1,\ldots,x_n)=g_i(y_1,\ldots,y_m) \mid i=1,\ldots,n\}$, where $f_i,g_i$ are finite MV-terms using only $\oplus$ and $\odot$. Let $\mathcal E(\mathbb V)$ be the subvariety of $EMV$-algebras satisfying equations $f_i(x_1,\ldots,x_n)=g_i(y_1,\ldots,y_m)$ for $i=1,\ldots, n$.

Now let $M$ be any $EMV$-algebra. It generates the subvariety $Var(M)$ of $EMV$-algebras.  According to Theorem \ref{3.35}, there is an $MV$-algebra $N$ such that $M$ can be embedded into $N$.  The MV-algebra $N$ generates the subvariety $\mathbb V(N)$ of MV-algebras, hence, $M$ belongs to the variety $\mathcal E(\mathbb V(N))$ which proves $Var(M)\subseteq \mathcal E(\mathbb V(N))$.

On the other hand, by the proof of Theorem \ref{3.35}, we know that $N$ can be chosen in such a way that $N$ is the direct product of the family $\{M/I\mid I\in \mathcal P(M)\}$. Clearly, $M/I$ as a homomorphic image of $M$ belongs to $Var(M)$ for all $I\in \mathcal P(M)$, and so the direct product $\prod_{I\in \mathcal P(M)}M/I$ also belongs to $Var(M)$, which implies that $N\in Var(M)$. Since any $MV$-algebra is an $EMV$-algebra, then $\mathbb V(N)\s Var(N)\s Var(M)$, where $Var(N)$ is the variety of $EMV$-algebras generated by $N$. Then $\mathcal E(\mathbb V(N))\subseteq Var(M)$ and finally,  $\mathcal E(\mathbb V(N))= Var(M)$.

Now let $\{M_\alpha\mid \alpha \in A\}$ be any system of $EMV$-algebras. For every $M_\alpha$, there is an $MV$-algebra $N_\alpha$ such that $M_\alpha$ can be embedded into $N_\alpha$. Then $Var(M_\alpha)=\mathcal E(\mathbb V(N_\alpha))$ for each $\alpha \in A$ which entails $Var(\{M_\alpha\mid \alpha \in A\})=\mathcal E(\mathbb V(\{N_\alpha\mid \alpha \in A\}))$. Hence, the cardinality of the set of subvarieties of  $\mathbb{EMV}$ is $\aleph_0$.
\end{proof}

\begin{cor}\label{co:3}
For every subvariety $\mathbb V_E$ of the variety $\mathbb{EMV}$, there is a subvariety $\mathbb V$ of $MV$-algebras such that $\mathbb V_E =\mathcal E(\mathbb V)$, and the equational base from {\rm \cite{DiLe}} for $\mathbb V$ is also an equational base for  $\mathbb V_E$.
\end{cor}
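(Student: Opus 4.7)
The plan is to extract $\mathbb{V}$ directly from the construction used in the last paragraph of the proof of Theorem \ref{th:3}. First, given a subvariety $\mathbb{V}_E$ of $\mathbb{EMV}$, I would pick any family $\{M_\alpha \mid \alpha \in A\}$ of $EMV$-algebras generating $\mathbb{V}_E$ as a variety (one may simply take $A = \mathbb{V}_E$). By Theorem \ref{3.35}, each $M_\alpha$ embeds into an $MV$-algebra $N_\alpha$. I would then set $\mathbb{V} := \mathbb{V}(\{N_\alpha \mid \alpha \in A\})$, the subvariety of $\mathbb{MV}$ generated by this family. The equality $\mathbb{V}_E = \mathcal{E}(\mathbb{V})$ is then immediate: it is precisely the conclusion already derived at the end of the proof of Theorem \ref{th:3}, where it is shown that $Var(\{M_\alpha\}) = \mathcal{E}(\mathbb{V}(\{N_\alpha\}))$.

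For the equational-base assertion, I would invoke the Di Nola--Lettieri theorem \cite{DiLe} to produce a finite base $\mathcal{B} = \{f_i = g_i \mid i = 1,\ldots,k\}$ for $\mathbb{V}$ in which every term $f_i, g_i$ is built from the variables using only $\oplus$ and $\odot$. To transport $\mathcal{B}$ to the $EMV$-setting, I would appeal to Lemma \ref{le:x<y}: it provides a well-defined total binary operation $\odot$ on any $EMV$-algebra which, on any interval $[0,a]$ with $a \in \mathcal{I}(M)$, agrees with the $MV$-algebra operation $\odot_a$. Hence each identity in $\mathcal{B}$ makes literal sense in the signature of $\mathbb{EMV}$, and by the very definition of the operator $\mathcal{E}$ used in the proof of Theorem \ref{th:3}, the class $\mathcal{E}(\mathbb{V})$ is exactly the subvariety of $\mathbb{EMV}$ axiomatised by $\mathcal{B}$. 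Combined with $\mathbb{V}_E = \mathcal{E}(\mathbb{V})$, this yields that $\mathcal{B}$ is an equational base for $\mathbb{V}_E$.

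The only subtle point I expect to have to verify is the compatibility of the two $\odot$'s under the embedding $M_\alpha \hookrightarrow N_\alpha$: for the argument above to function, the operation $\odot$ defined via Lemma \ref{le:x<y} inside the $EMV$-algebra $M_\alpha$ must coincide with the restriction of the $MV$-operation $\odot$ of $N_\alpha$ to the image of $M_\alpha$. This reduces to the standard fact that, for a Boolean element $a$ of an $MV$-algebra $N_\alpha$, the $MV$-operation $\odot_a$ on the interval $[0,a]$ (with top $a$) coincides with the restriction to $[0,a]$ of the ambient $\odot$ of $N_\alpha$. Since the embedding is an $EMV$-homomorphism and thus preserves $\lambda_a$ for every $a \in \mathcal{I}(M_\alpha)$, this guarantees that identities in $\oplus$ and $\odot$ transfer unambiguously between $M_\alpha$ and $N_\alpha$, hence between $\mathbb{V}_E$ and $\mathbb{V}$, closing the argument.
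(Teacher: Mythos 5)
Your proposal is correct and follows essentially the same route as the paper, whose proof of this corollary is simply a pointer back to the proof of Theorem \ref{th:3} and \cite{DiLe}; you have merely unpacked that pointer, including the (true and worth noting) compatibility of the total operation $\odot$ from Lemma \ref{le:x<y} with the ambient $MV$-operation under the embedding. No gaps.
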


\begin{proof}
The statement follows directly from the proof of Theorem \ref{th:3} and \cite{DiLe}.
\end{proof}

For example, (i) the subvariety satisfying the equation $x=0$ is a singleton containing the one-element $EMV$ algebra $\{0\}$. (ii) The equation $x\oplus x = x$ defines the subvariety of generalized Boolean algebras, which is contained in any non-trivial subvariety of $EMV$-algebras. Indeed, if $\mathbb B$ is the variety of Boolean algebras, or equivalently, $\mathbb B$ is the subvariety $MV$-algebras that satisfy equation $x\oplus x = x$, then $\mathcal E(\mathbb B)$ is the subvariety of generalized Boolean algebras. Then $\mathbb B \subseteq \mathbb V$ for any non-trivial variety $\mathbb V$ of $MV$-algebras, and $\mathcal(\mathbb B) \subseteq \mathcal E(\mathbb V)$.
(iii) The equation $x=x$ determines the whole variety $\mathbb{EMV}$.

\section{Categorical Equivalencies} 

In what follows, we present a categorical equivalence of the category of proper $EMV$-algebras with the special category of $MV$-algebras $N$ with a fixed maximal ideal $I$ having enough idempotents and $N=I\cup I'$.

Let $\mathcal{PEMV}$ be the category of proper $EMV$-algebra whose objects are proper $EMV$-algebras and morphisms are homomorphisms of $EMV$-algebras.  Now let $\mathcal{PMV}$ be the category whose objects are couples $(N,I)$, where $N$ is an $MV$-algebra and $I$ is a fixed maximal ideal of $N$ having enough idempotent elements such that $N=I\cup I'$. If $(N_1,I_1)$ and $(N_2,I_2)$ are two objects of $\mathcal{PMV}$, then a morphism in $\mathcal{PMV}$ from $(N_1,I_1)$ into $(N_2,I_2)$ is a homomorphism of $MV$-algebras $\phi: N_1\to N_2$ such that $\phi(I_1)\subseteq I_2$.

If $G$ is an arbitrary Abelian $\ell$-group, then the $MV$-algebra $N=\Gamma(G\lex \mathbb Z,(0,1))$ (perfect $MV$-algebras, see \cite[Sec 7.4]{mundici 1}) has a unique maximal ideal $I=\{(g,0)\mid g \in G^+\}$ and for it we have $I\cup I'=N$. However, $I$ is not an $EMV$ algebra because $(0,0)$ is a unique idempotent of $I$.

We note that if $\phi:(N_1,I_1)\to (N_2,I_2)$ is a morphisms, then $(\phi(N_1),\phi(I_1))$ is an object of $\mathcal{PMV}$, and it is easy to verify that $\mathcal{PEMV}$ and $\mathcal{PMV}$ are indeed categories. In addition, we underline that $\mathcal{PEMV}$ is not a variety, since due to Theorem \ref{3.18}, if $I$ is a maximal ideal of a proper $EMV$-algebra $M$, then $M/I$ is an $MV$-algebra and thus $M/I$ does not
belong to $\mathcal{PEMV}$.

Define a mapping $\Phi: \mathcal{PMV} \to \mathcal{PEMV}$ as follows: For any object $(N,I) \in  \mathcal{PMV}$, let
$$\Phi(N,I):=I$$
and  if $(N_1,I_1)$ and $(N_2,I_2)$ are objects of $\mathcal{PMV}$ and
$\phi: (N_1,I_1)\to (N_2,I_2)$ is a morphism, then
$$\Phi(\phi)(x) := \phi(x),\quad x \in I_1.
$$

\begin{prop}\label{pr:equi1}
$\Phi$ is a well-defined functor that is faithful and full from
the category $\mathcal{PMV}$ into the category $\mathcal{PEMV}$.
\end{prop}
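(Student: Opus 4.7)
The plan is to verify four things: that $\Phi$ is well-defined on objects and morphisms, functorial, faithful, and full. Well-definedness on objects follows from Example~\ref{3.1}(7) together with the ``enough idempotents'' hypothesis: any ideal $I$ of an $MV$-algebra $N$ is a $qEMV$-algebra, and the hypothesis upgrades this to an $EMV$-algebra. For a morphism $\phi:(N_1,I_1)\to(N_2,I_2)$, the restriction $\phi|_{I_1}$ lands in $I_2$ by assumption, preserves $\vee,\wedge,\oplus,0$ because $\phi$ does, and for each idempotent $a\in I_1$ satisfies $\phi(\lambda_a(x))=\phi(a-x)=\phi(a)-\phi(x)=\lambda_{\phi(a)}(\phi(x))$, so it is an $EMV$-homomorphism. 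Functoriality (preservation of identities and composition) is immediate.

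The key preliminary observation I will use for the remaining two points is $I_1\cap I_1'=\emptyset$: if $x=y'\in I_1$ for some $y\in I_1$, then $1=x\oplus y\in I_1$, contradicting the fact that $I_1$ is a proper ideal of $N_1$. Hence $N_1=I_1\sqcup I_1'$ as a disjoint union. Faithfulness then follows immediately: if $\phi_1|_{I_1}=\phi_2|_{I_1}$, then for each $y'\in I_1'$ we have $\phi_1(y')=\phi_1(y)'=\phi_2(y)'=\phi_2(y')$, so $\phi_1=\phi_2$ on all of $N_1$.

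For fullness, given an $EMV$-homomorphism $f:I_1\to I_2$, I will extend it to $\phi:N_1\to N_2$ by setting $\phi(x)=f(x)$ on $I_1$ and $\phi(y')=f(y)'$ on $I_1'$; the disjointness established above ensures this is unambiguous. Checking that $\phi$ preserves $0$ and the involution is a routine case analysis, and the cases $x,y\in I_1$ or $x,y\in I_1'$ of $\phi(x\oplus y)=\phi(x)\oplus\phi(y)$ reduce to the identity $(u\odot v)'=u'\oplus v'$ together with the fact that $f$ preserves $\oplus$ and $\odot$ inside $I_1$. The main obstacle will be the mixed case $x\in I_1$, $y=v'\in I_1'$. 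My plan is to rewrite $x\oplus v'=(v\odot x')'$ inside $N_1$ and observe that, for an idempotent $a\in\mI(I_1)$ with $x,v\le a$, the element $v\odot x'$ computed in the ambient $MV$-algebra $N_1$ coincides with the $MV$-algebra operation $v\odot_a\lambda_a(x)$ performed inside the $MV$-subalgebra $[0,a]\subseteq I_1$ (both equal $v-(v\wedge x)$, a calculation of the same flavor as the one already exploited in Lemma~\ref{le:x<y}). Since $f$ restricts to an $MV$-homomorphism $[0,a]\to[0,f(a)]$, it preserves this quantity, and applying the involution in $N_2$ delivers the desired equality. Once $\phi$ is confirmed to be an $MV$-homomorphism, the relations $\phi(I_1)\subseteq I_2$ and $\Phi(\phi)=f$ follow immediately, completing the argument.
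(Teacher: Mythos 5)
Your proposal is correct and follows essentially the same route as the paper: restriction gives the functor, the decomposition $N_1=I_1\sqcup I_1'$ (guaranteed by the defining condition $N=I\cup I'$ of $\mathcal{PMV}$) gives faithfulness, and fullness is obtained by extending $f$ via $\phi(y')=f(y)'$ and checking $\oplus$-preservation in three cases, with the mixed case reduced through $x\oplus v'=(v\odot x')'$ and the identities of Lemma~\ref{le:x<y} exactly as in the paper. Your explicit verification that $I_1\cap I_1'=\emptyset$ and your direct appeal to the condition $N=I\cup I'$ (where the paper instead re-invokes Theorem~\ref{th:embed}) are minor, harmless tidyings of the same argument.
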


\begin{proof}
First, we show that $\Phi$ is a well-defined functor. In other words, we
have to establish that if $\phi: (N_1,I_1)\to (N_2,I_2)$ is a morphism of proper $EMV$-algebras, then $\Phi(\phi)$ is a morphism in $\mathcal{PEMV}$. Indeed, the mapping $\Phi(\phi)$ is in fact an $EMV$-homomorphism from the $EMV$-algebra $I_1$ into the $EMV$-algebra $I_2$.

Let $\phi_1$ and $\phi_2$ be two morphisms from $(N_1,I_1)$ into $(N_2,I_2)$ such that $\Phi(\phi_1)=\Phi(\phi_2)$. Then $\phi_1(x)=\phi_2(x)$ for each $x \in I_1$. If $x \in N_1\setminus I_1$, then there is an element $x_0 \in I_1$ such that $x= x_0'$. Then $\phi_1(x)=\phi_1(x'_0)= (\phi_1(x_0))'=(\phi_2(x_0))' = \phi_2(x)$ which entails $\phi_1 = \phi_2$, i.e. $\Phi$ is a faithful functor.

To prove that $\Phi$ is a full functor, let $h: I_1 \to I_2$ be a morphism from $\mathcal{PMV}$, i.e. $h$ is a homomorphism of $EMV$-algebras. By Theorem \ref{th:embed}, there are MV-algebras $N_1$ and $N_2$ such that $I_1$ and $I_2$ can be embedded into $N_1$ and $N_2$, respectively, as their maximal ideals. Without loss of generality, we can assume that $I_i$ is a subalgebra of $N_i$ for $i=1,2$. We assert that there is a morphism $\phi: (N_1,I_1)\to (N_2,I_2)$ such that $\Phi(\phi)=h$. In other words $h$ can be extended to an $MV$-homomorphism $\phi$ from $N_1$ into $N_2$ for some objects $(N_1,I_1)$ and $(N_2,I_2)$ from $\mathcal{PMV}$. By (\ref{eq:embed}), $N_1=N_0(I_1)$. So let $x\in N_1\setminus I_1$. There is a unique element $x_0\in I_1$ such that $x=x_0'$. Then we set $\phi(x)=h(x_0)'$. Clearly $\phi(1)=1$, $\phi(x)=h(x)$ if $x \in I_1$, and $\phi(x')=(\phi(x))'$, $x \in N_1$. Now let $x,y \in N_1$. There are three cases: (1) $x,y \in I_1$, then clearly $\phi(x\oplus y)=\phi(x)\oplus \phi(y)$. (2) $x=x_0'$ and $y= y_0'$ for some $x_0,y_0\in I_1$. Then $\phi(x\oplus y)=\phi(x_0'\oplus y_0')=(\phi(x_0\odot y_0))'= (\phi(x_0)\odot \phi(y_0))'=(\phi(x)'\odot \phi(y)')' = \phi(x)\oplus \phi(y)$. (3) $x= x'_0$ and $y=y_0$ for some $x_0,y_0 \in I_1$. There is an idempotent $a \in I_1$ such that $x,y \le a$. Applying (\ref{eq:x<y}) of Lemma \ref{le:x<y}, we get
\begin{eqnarray*}
\phi(x\oplus y)&=&\phi(x_0'\oplus y_0)=(\phi(x_0\odot y_0'))'=
(\phi(x_0\odot (x_0\wedge y_0)'))'= (\phi(x_0\odot \lambda_a(x_0\wedge y_0)))'\\
&=& (\phi(x_0)\odot \lambda_{\phi(a)}(\phi(x_0)\wedge \phi(y_0)))'=
(\phi(x_0)\odot (\phi(x_0)\wedge \phi(y_0))')' = (\phi(x_0) \odot \phi(y_0)')'\\
&=&  \phi(x)\oplus \phi(y).
\end{eqnarray*}
Therefore, $\phi$ is a homomorphism of $MV$-algebras which is an extension of $h$. Whence, $\Phi(\phi)=h$ and $\Phi$ is a full functor.
\end{proof}

\begin{prop}\label{pr:equi2}
Let $M$ be a proper $EMV$-algebra and $h_i:M \to N_i$ be an embedding of $M$ into an $MV$-algebra $N_i$ for $i=1,2$. Then $N_0^i:=N_0(h_i(M_i))$ are isomorphic $MV$-algebras and $(N_i^0,h_i(M))\in \mathcal{PMV}$ for $i=1,2$.
\end{prop}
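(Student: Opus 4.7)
The plan is to reduce Proposition \ref{pr:equi2} to the explicit description of $N_0(M)$ given by equation (\ref{eq:embed}) and to the extension technique used for the fullness of $\Phi$ in Proposition \ref{pr:equi1}.

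First I would verify that $(N_0^i, h_i(M)) \in \mathcal{PMV}$ for $i=1,2$. Since $h_i$ is an $EMV$-embedding, $h_i(M)$ is a proper $EMV$-subalgebra of $N_i$ which inherits enough idempotents from $M$. Applied with $h_i(M)$ in place of $M$, the calculation in the proof of Theorem \ref{th:embed} gives the description $N_0^i = h_i(M)\cup\{\lambda_1(x)\mid x\in h_i(M)\}$ with top element $1$, and the very same paragraph there shows that $h_i(M)$ is a maximal ideal of the $MV$-algebra $N_0^i$. Thus $(N_0^i, h_i(M))$ is an object of $\mathcal{PMV}$.

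Next I would construct the candidate isomorphism. Put $h := h_2\circ h_1^{-1}\colon h_1(M)\to h_2(M)$; this is an $EMV$-isomorphism. Mimicking the fullness argument in the proof of Proposition \ref{pr:equi1}, I would define $\psi\colon N_0^1 \to N_0^2$ by $\psi(x):= h(x)$ for $x\in h_1(M)$ and $\psi(\lambda_1(x_0)):= \lambda_1(h(x_0))$ for $x_0\in h_1(M)$, and check that $\psi$ is an $MV$-homomorphism. Preservation of $0$, $1$, and the involution is automatic, and preservation of $\oplus$ splits into the three cases used in that earlier proof. The only non-routine case is the mixed one $x=x_0\in h_1(M)$, $y=\lambda_1(y_0)$ with $y_0\in h_1(M)$; this is handled exactly as in case~(3) there, via identity (\ref{eq:x<y}) of Lemma \ref{le:x<y} applied inside the $MV$-algebra $[0,a]$ for some idempotent $a\in h_1(M)$ dominating $x_0$ and $y_0$.

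Finally, by symmetry the $EMV$-isomorphism $h^{-1}$ extends in the same manner to an $MV$-homomorphism $\psi'\colon N_0^2 \to N_0^1$. The compositions $\psi'\circ\psi$ and $\psi\circ\psi'$ are $MV$-homomorphisms which agree with the identity on the generating set $h_i(M)\cup\{1\}$ of the $MV$-algebra $N_0^i$, and hence are the identity map. Therefore $\psi$ is the desired $MV$-isomorphism between $N_0^1$ and $N_0^2$. I do not expect a genuine obstacle here: the entire argument is a direct transcription of the extension method already worked out in Proposition \ref{pr:equi1}, and the only bookkeeping subtlety, namely the mixed $\oplus$-case, reduces via Lemma \ref{le:x<y} to an identity inside an $MV$-algebra of the form $[0,a]$.
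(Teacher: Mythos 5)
Your proposal is correct and follows essentially the same route as the paper: both define the map on $N_0^1=h_1(M)\cup\{\lambda_1(x)\mid x\in h_1(M)\}$ by sending $h_1(x_0)\mapsto h_2(x_0)$ and $h_1(x_0)'\mapsto h_2(x_0)'$, and both verify the homomorphism property by the three-case $\oplus$-analysis from the fullness part of Proposition \ref{pr:equi1} (with Lemma \ref{le:x<y} handling the mixed case). Your explicit construction of the two-sided inverse via symmetry and agreement on generators just spells out the paper's terser remark that the map is a bijection.
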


\begin{proof}
Let $h_i: M \to N_i$ be an embedding for $i=1,2$. By (\ref{eq:embed}) of Theorem \ref{th:embed}, $N_i^0 = N_0(h_i(M))$ for $i=1,2$. Let us define $\psi: N_1^0 \to N_2^0$ such that $\phi(x)=h_2(x_0)$ if $x=h_1(x_0)$ for $x_0\in M_1$ and $\phi(x) = (h_2(x_0))'$ if $x=h_1(x_0)'$ for $x_0 \in M_1$. Then, similarly as in the proof of the Proposition \ref{pr:equi1} that $\Phi$ is a full functor, we can prove that $\phi$ is a homomorphism of $MV$-algebras. In addition, $\phi$ is a bijection, so that it is an isomorphism. Clearly, $(N_i^0,h_i(M))\in \mathcal{PMV}$ for $i=1,2$.
\end{proof}

Let $\mathcal A$ and $\mathcal B$ be two categories and let $f : \mathcal A \to \mathcal B$ be a functor. Suppose that $g,h$ are functors from $\mathcal B$ to $\mathcal A$ such that $g \circ f = id_{\mathcal A}$ and $f \circ h = id_\mathcal B$; then $g$ is a {\it left-adjoint} of $f$ and $h$ is a {\it right-adjoint} of $f$.

\begin{prop}\label{pr:equi3}
The functor $\Phi$ from the category $\mathcal{PMV}$ into the category $\mathcal{PEMV}$ has a left-adjoint.
\end{prop}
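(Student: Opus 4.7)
The plan is to construct the left-adjoint $\Psi:\mathcal{PEMV}\to \mathcal{PMV}$ by inverting, on objects and morphisms, the (already known) fully faithful functor $\Phi$. The construction piggybacks on the description of the minimal $MV$-envelope $N_0(M)$ worked out in the proof of Theorem \ref{th:embed} and on its canonicity established in Proposition \ref{pr:equi2}.

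On objects, for each proper $EMV$-algebra $M$ I would set $\Psi(M):=(N_0(M),M)$, where $N_0(M)$ is realized concretely on the disjoint-union set $M\sqcup\{x^{\ast}\mid x\in M\}$ with $MV$-operations dictated by those of $M$ together with the boundary rules extracted from Theorem \ref{th:embed}: $1:=0^{\ast}$; $(x)^{\prime}:=x^{\ast}$ and $(x^{\ast})^{\prime}:=x$; on pairs from $M$ one uses $\oplus$ already in $M$; on pairs from $M^{\ast}$ one sets $x^{\ast}\oplus y^{\ast}:=(x\odot y)^{\ast}$; and for the mixed case $x\oplus y^{\ast}:=(y\odot \lambda_a(x\wedge y))^{\ast}$, where $a\in\mathcal{I}(M)$ is any idempotent bounding both $x$ and $y$. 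Independence of the choice of $a$ is precisely equation (\ref{eq:x<y}) of Lemma \ref{le:x<y} (together with (\ref{eq:x<y2})), and the fact that $(N_0(M),M)\in \mathcal{PMV}$ is the content of Theorem \ref{th:embed}.

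On morphisms, given an $EMV$-homomorphism $h:M_1\to M_2$, I would define $\Psi(h):N_0(M_1)\to N_0(M_2)$ by $\Psi(h)(x):=h(x)$ for $x\in M_1$ and $\Psi(h)(x^{\ast}):=h(x)^{\ast}$ for $x^{\ast}\in M_1^{\ast}$; the verification that $\Psi(h)$ is an $MV$-homomorphism carrying $M_1$ into $M_2$ is exactly the three-case computation carried out in the ``$\Phi$ is full'' portion of the proof of Proposition \ref{pr:equi1}. Functoriality of $\Psi$ (preservation of identities and composites) then drops out of the uniqueness of the extension: any two $MV$-homomorphisms between $N_0(M_1)$ and $N_0(M_2)$ that agree on $M_1$ must agree on $M_1^{\ast}$ by compatibility with $^{\prime}$.

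To conclude, one computes $\Psi\circ\Phi(N,I)=\Psi(I)=(N_0(I),I)$, and uses the defining hypothesis $N=I\cup I^{\prime}$ for objects of $\mathcal{PMV}$ to produce the canonical $\mathcal{PMV}$-isomorphism $(N,I)\cong (N_0(I),I)$ via $x\mapsto x$ on $I$ and $x^{\prime}\mapsto x^{\ast}$ on $I^{\prime}$; naturality in the argument $(N,I)$ is immediate from the definitions of $\Phi$ and $\Psi$ on morphisms. The main obstacle I anticipate is bookkeeping around the sub-canonicity of $N_0(M)$: to match the strict form of the adjunction identity $\Psi\circ\Phi=id_{\mathcal{PMV}}$ stated in the paper's definition, one has to commit to the explicit disjoint-union model described above and verify the $MV$-axioms there by reducing every computation to an $MV$-computation inside some $[0,a]\subseteq M$, the non-trivial reductions being precisely the identities in Lemma \ref{le:x<y}; alternatively, one reads the identity up to the natural isomorphism just exhibited. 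Either way, no new idea beyond what is already in Theorem \ref{th:embed}, Proposition \ref{pr:equi1} and Proposition \ref{pr:equi2} is needed.
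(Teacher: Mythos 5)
Your construction is correct and is essentially the paper's own argument: the paper likewise takes $\Psi(M)$ to be the pair $(N_0(M),M)$ furnished by Theorem \ref{th:embed} and Proposition \ref{pr:equi2}, and extends a morphism $h$ to $N_0$ by sending $x^{\ast}$ to $h(x)^{\ast}$, merely packaging this as the existence of a universal arrow $((N,I),f)$ from each proper $EMV$-algebra to $\Phi$ instead of as your explicit disjoint-union model. Your closing caveat about strict equality of $\Psi\circ\Phi$ with the identity versus equality up to the canonical isomorphism applies equally to the paper's version (which also only fixes $(N,I)$ up to isomorphism), so nothing is lost.
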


\begin{proof}

We claim, for a proper $EMV$-algebra $M$, there is a universal arrow $((N,I), f)$ i.e., $(N,I)$ is an object in $\mathcal{PMV}$ and $f$ is a morphism from $M$ into $\Phi(N,I)=I$ such that if $(N',I')$ is an object from $\mathcal{PEMV}$ and $f'$ is a morphism from $M$ into $\Phi(N',I')$, then there exists a unique morphism $f^* :(N,I)\to (N',I')$ such that
$\Phi(f^*)\circ f=f'$.

Indeed, by Theorem \ref{th:embed} and Proposition \ref{pr:equi2}, there is a unique (up to isomorphism of $MV$-algebras) $MV$-algebra $N$ and an injective $EMV$-homomorphism $f: M \to N$ such that $f(N)$ is a maximal ideal of $N$.  We assert that $((N,I),f)$ is universal arrow for $M$. Let  $(N',I')$ be an object from $\mathcal{PEMV}$ and let $f'$ be a morphism from $M$ into $\Phi(N',I')$. We can define a mapping $f^*: N \to N'$ such that $f^*(f(x)):=f'(x)$ if $x \in M$ and if $y \in N\setminus f(M)$, there is $y_0\in M$ such that $y=(f(y_0))'$, and we set $f^*(y)= (f'(y_0))'$. Then $f^*:N \to N'$ is a unique $MV$-homomorphism such that $\Phi(f^*)\circ f=f'$.

Define a mapping $\Psi: \mathcal{PEMV} \to \mathcal{PMV}$ by $\Psi(M):=(N,I)$ whenever $((N,I),f)$ is a universal arrow for $M$ and if $f':M\to M'$ is an $EMV$-homomorphism, there is a unique morphism $f^*:(N,I)\to (N',I')$, where $\Phi(N',I')=M'$, then we set $\Psi(f'):=f^*$. Using Theorem  \ref{th:embed}, we have that $\Psi$ is a left-adjoint functor of the functor $\Phi$.
\end{proof}

\begin{thm}\label{th:equi1}
The functor $\Phi$ defines a categorical equivalence of the category $\mathcal{PMV}$ and the category of proper $EMV$-algebras $\mathcal{PEMV}$.

In addition, if $h: \Phi(N,I)\to  \Phi(N',I')$ is a morphism of proper $EMV$-algebras, then there is a unique homomorphism $\phi :(N,I)\to (N',I') $ of $MV$-algebras such that we have $h = \Psi(\phi)$, and
\vspace{1mm}
\begin{enumerate}[nolistsep]
\item[{\rm (i)}] if $h$ is surjective, so is $\phi$;
\item[{\rm(ii)}] if $h$ is injective, so is $\phi$.
\end{enumerate}
\end{thm}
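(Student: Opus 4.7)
The strategy is to verify categorical equivalence by establishing the three standard conditions on $\Phi$: faithful, full, and essentially surjective on objects. Faithfulness and fullness are already contained in Proposition \ref{pr:equi1}, so the only remaining part for the equivalence itself is essential surjectivity. Given any proper $EMV$-algebra $M$, Theorem \ref{th:embed} provides an $MV$-algebra $N$ containing $M$ as a maximal ideal, and by the construction in its proof we may take $N=N_0(M)$ so that $N=M\cup M'$, where $M'=\{x'\mid x\in M\}$. Since $M$ is an $EMV$-algebra it already has enough idempotents, so $(N_0(M),M)$ is an object of $\mathcal{PMV}$ with $\Phi(N_0(M),M)=M$. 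Proposition \ref{pr:equi2} shows this construction is canonical up to isomorphism in $\mathcal{PMV}$. Together with Proposition \ref{pr:equi1} this yields the categorical equivalence, and the functor $\Psi$ of Proposition \ref{pr:equi3} is the quasi-inverse.

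For the second part, given a morphism $h:\Phi(N,I)\to\Phi(N',I')$ of proper $EMV$-algebras, the fullness argument in Proposition \ref{pr:equi1} already produces a unique $MV$-algebra morphism $\phi:(N,I)\to(N',I')$ with $\Phi(\phi)=h$, explicitly determined by
\[
\phi(x)=h(x)\text{ if }x\in I,\qquad \phi(x)=h(x_0)'\text{ if }x=x_0'\text{ with }x_0\in I.
\]
Uniqueness is immediate from faithfulness of $\Phi$.

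For (i), suppose $h$ is surjective and take any $y\in N'$. By the description $N'=I'\cup (I')'$: if $y\in I'$, choose $x\in I$ with $h(x)=y$, so $\phi(x)=y$; if $y=y_0'$ with $y_0\in I'$, pick $x_0\in I$ with $h(x_0)=y_0$ and observe $\phi(x_0')=h(x_0)'=y$. Hence $\phi$ is surjective.

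For (ii), suppose $h$ is injective and $\phi(x)=\phi(y)$. The delicate point, and the main obstacle in the plan, is the mixed case $x\in I$, $y=y_0'\in N'\setminus I'$; here I would argue that $h(x)\in I'$ while $\phi(y)=h(y_0)'\in (I')'$, and that $I'\cap (I')'=\emptyset$ because otherwise $1=z\oplus z'\in I'$ would contradict properness of $I'$. This rules out the mixed case, and the two remaining cases reduce directly to injectivity of $h$: either $h(x)=h(y)$ in $I'$, or $h(x_0)'=h(y_0)'$ giving $h(x_0)=h(y_0)$ hence $x_0=y_0$. Thus $\phi$ is injective, finishing the proof.
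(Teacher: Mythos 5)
Your proof is correct and follows essentially the same route as the paper: the equivalence is obtained from Mac Lane's criterion, with fullness and faithfulness coming from Proposition \ref{pr:equi1} and essential surjectivity from the embedding $M\hookrightarrow N_0(M)$ of Theorem \ref{th:embed}. You in fact supply more detail than the paper, whose proof only invokes the universal arrow for essential surjectivity and leaves the addendum (i)--(ii) unverified; your case analysis for injectivity, resting on the observation that a proper ideal of an $MV$-algebra is disjoint from the set of complements of its elements, is exactly the right way to close that gap.
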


\begin{proof}
According to \cite[Thm IV.4.1 (i),(iii)]{MaL}, since $\Psi$ is faithful and full, it is necessary to show that, for any proper $EMV$-algebra $M$ there is an object $(N,I)$ in $\mathcal{PMV}$ such that $\Phi(N,I)$ is isomorphic to $M$. To show that it is sufficient to take any universal
arrow $((N,I),f)$ of $M$.
\end{proof}

Let $(G,u)$ be an Abelian unital $\ell$-group. An $\ell$-{\it ideal} is a convex $\ell$-subgroup $I$ of $G$. An $\ell$-ideal $I$ is {\it maximal} if it is a value of the strong unit $u$, i.e. a maximal proper $\ell$-ideal of $(G,u)$ not containing $u$. Using categorical equivalence between the category of $MV$-algebras and the category of Abelian unital $\ell$-groups, Theorem \ref{functor}, we have by \cite[Thm 7.2.2]{mundici 1} or \cite[Thm 6.1]{Dvu3}: (i) If $I$ is a (maximal) $\ell$-ideal of $(G,u)$, then $I_0=I\cap [0,u]$ is a (maximal) ideal of the $MV$-algebra $N=\Gamma(G,u)$; (ii) If $I_0$ is a (maximal) ideal of $\Gamma(G,u)$, then $I=\{x\in G \mid |x|\wedge u \in I_0\}$ is a (maximal) $\ell$-ideal of $(G,u)$ such that $I\cap [0,u]=I_0$. In addition,
\begin{equation}\label{eq:ideal}
I=\{x \in G \mid  \exists\,  x_i, y_j\in I_0, x = x_1 + \cdots + x_n - y_1-\cdots -y_m\}.
\end{equation}

\begin{prop}\label{pr:equi4}
Let $I_0$ be a maximal ideal of $N=\Gamma(G,u)$ and
let $I$ be a unique maximal $\ell$-ideal of $(G,u)$ generated by $I_0$. We define $I^u=\{nu-y\mid n\ge 1,\, y\in I,\, 0\le y <nu\}$. Then $I_0\cup I_0'=N$ if and only if $G^+=(I^+)\cup I^u$.
\end{prop}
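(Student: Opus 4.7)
My plan is to prove the equivalence by treating the two directions separately: the forward direction $(\Leftarrow)$ is a direct calculation inside $[0,u]$, while the converse $(\Rightarrow)$ first extracts the structural fact $G/I\cong\mathbb{Z}$ and then applies a good-sequence argument inside $\Gamma(G,Nu)$.

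For the easier direction $(\Leftarrow)$, I would assume $G^+=I^+\cup I^u$ and pick $z\in N=[0,u]$. Since $z\in G^+$, either $z\in I^+=I\cap G^+$, giving $z\in I\cap[0,u]=I_0$, or $z=nu-y$ with $n\ge 1$, $y\in I$, $0\le y<nu$. In the latter case, combining with $z\le u$ rearranges to $(n-1)u\le y$; if $n\ge 2$ this places $u$ below some element of $I$, so by downward closure of $I$ we would get $u\in I$, contradicting properness of $I$. Hence $n=1$, so $z=u-y$ with $y\in I\cap[0,u]=I_0$ (using $y<u$), i.e.\ $z\in I_0'$. Thus $N\subseteq I_0\cup I_0'$, and the reverse containment is immediate from $I_0\cup I_0'\subseteq N$.

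For $(\Rightarrow)$, I assume $N=I_0\cup I_0'$ and first prove $\pi(G)=\mathbb{Z}$ for the canonical quotient $\pi\colon G\to G/I$, normalized so that $\pi(u)=1$. Since $I$ is a maximal $\ell$-ideal of $(G,u)$, the quotient $G/I$ is a totally ordered archimedean $\ell$-group, hence embeds into $\mathbb{R}$. The hypothesis forces $\pi([0,u])\subseteq\{0,1\}$: for $z\in N$ we have $z\in I_0$ (giving $\pi(z)=0$) or $z\in I_0'=u-I_0$ (giving $\pi(z)=1$). Since $u$ is a strong unit, every element of $G$ is a finite integer combination of elements of $[0,u]$, so $\pi(G)\subseteq\mathbb{Z}$, and with $\pi(u)=1$ we conclude $\pi(G)=\mathbb{Z}$.

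With $\pi$ in hand, given $g\in G^+$ I set $n:=\pi(g)\in\mathbb{Z}_{\ge 0}$. If $n=0$ then $g\in I\cap G^+=I^+$. If $n\ge 1$, I define $y:=nu-g$; automatically $\pi(y)=0$, so $y\in I$, and $y<nu$ is equivalent to $g>0$, which is forced by $\pi(g)\ge 1$. The remaining step, and the main obstacle, is to verify $y\ge 0$, equivalently $g\le nu$. I plan to obtain this via the good-sequence decomposition $g=g_1+\cdots+g_N$ inside $\Gamma(G,Nu)$, for some $N\ge n$ with $g\le Nu$, where each $g_i\in [0,u]=N\subseteq I_0\cup I_0'$: writing each $g_i\in I_0'$ as $u-h_i$ with $h_i\in I_0$ and combining summands groups $g$ into (number of $I_0'$-indices)$\cdot u$ plus an element of $I$. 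Applying $\pi$ forces the count of $I_0'$-indices to be exactly $n$, and the monotonicity $g_1\ge g_2\ge\cdots$ of the good sequence, together with the partition $N=I_0\cup I_0'$ and the observation that an idempotent of $N$ landing in $I_0'$ must be $u$, controls the sign of the residual $I$-element and delivers $g\le nu$. The most delicate point will be extracting the sign of this residual rigorously from the good-sequence monotonicity and the $I_0/I_0'$ dichotomy; once established, $g=nu-y\in I^u$ follows and the converse is complete.
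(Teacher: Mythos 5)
Your backward direction ($G^+=I^+\cup I^u\Rightarrow N=I_0\cup I_0'$) is correct and in fact tidier than the paper's: the paper decomposes $y$ by the Riesz Decomposition Property into summands from $I_0$ and evaluates $(nu-y)\wedge u$ as an $\oplus$-sum, whereas you extract $n=1$ directly from $z\le u$ together with the convexity of $I$. Both arguments work.

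The forward direction, however, breaks at exactly the step you flagged, and no bookkeeping with good sequences can repair it, because the inequality $g\le \pi(g)u$ is simply false in general. Take $G=\mathbb{Z}\times\mathbb{Z}$ with the coordinatewise order and $u=(1,1)$, so that $N=\Gamma(G,u)$ is the four-element Boolean algebra, $I_0=\{(0,0),(0,1)\}$, and $I=\{0\}\times\mathbb{Z}$. Then $I_0'=\{(1,1),(1,0)\}$ and $N=I_0\cup I_0'$, yet $g=(1,2)$ has $\pi(g)=1$ and $g\not\le u$; the only $n$ with $nu-g\in I$ is $n=1$, and $u-g=(0,-1)$ is not $\ge 0$, so $g\notin I^+\cup I^u$. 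The same example refutes your auxiliary observation that an idempotent of $N$ lying in $I_0'$ must equal $u$: the Boolean element $(1,0)$ lies in $I_0'$. So the residual can genuinely have the wrong sign, and your plan cannot deliver $g\le nu$. You are in good company: the paper's own proof of this direction writes $x=x_1+\cdots+x_m+(u-y_1)+\cdots+(u-y_n)$ with $x_i,y_j\in I_0$ and then asserts ``$m=0$'' without justification, and that assertion fails on the same example. What this decomposition actually yields is only $G^+=I^+\cup\bigl(I^+ + I^u\bigr)$; the forward implication of the proposition as literally stated is false, so the obstacle you honestly isolated is not a technical delicacy but a genuine counterexample, and the statement (or the definition of $I^u$) would have to be amended before any proof could succeed.
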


\begin{proof}
It is clear that $I^u=\{x\in G \mid \exists\, y_1,\ldots,y_n \in I_0,\, x=y_1'+\cdots+y_n'\}$. In addition, $I\cap I^u=\emptyset$.

Let $I_0\cup I_0'=\Gamma(G,u)$ and choose $x \in G^+$. Then $x= x_1+\cdots+x_m +(u-y_1)+\cdots + (u-y_n)$, where $x_i,y_j \in I_0$, so that $x=x_0+nu-y_0$, where $x_0=x_1+\cdots + x_m\in I^+$ and $y_0=y_1+\cdots +y_n\in I^+$. If $n=0$, then $x=x_0 \in I^+$. If $n>0$, then $x\wedge u = (x_1+\cdots+x_m +(u-y_1)+\cdots + (u-y_n))\wedge u = x_1\oplus \cdots\oplus x_m \oplus (u-y_1)\oplus \cdots \oplus (u-y_n)\in \Gamma(G,u)$. Then $m=0$, $x_0$ and $x =nu-y_0\in I^u$.

Conversely, let $G^+=I^+ \cup I^n$. Then $N= N\cap G^+= (N\cap I^+)\cup (N\cap I^u)=N \cup (N\cap I^u)$.  Let $x\in N\cap I^u$, then $x=nu-y$ for some $0\le y< nu$, $y \in I$. Using the Riesz Decomposition Property for $\ell$-groups, we have $y =y_1+\cdots+y_n$, where $y_i \in I_0$. Then $x= (nu - y)= ((u-y_1)+\cdots + (u-y_n))\wedge u = y_1'\oplus \cdots \oplus y_n'=(y_1\odot \cdots \odot y_n)'$, but $y_0:=y_1\odot \cdots \odot y_n \in I_0$ so that $x= y_0 \in I_0'$ which yields $N\cap I^u=I_0'$.  Then $N= I_0\cup I_0'$.
\end{proof}

Inspired by the previous categorical equivalence, let $\mathcal{PUALG}$ be the category of unital Abelian $\ell$-groups with a fixed maximal $\ell$-ideal with a special property. Namely, the objects are triples $(G,u,I)$ such that $(G,u)$ is an Abelian unital $\ell$-group and $I$ is a fixed maximal $\ell$-ideal $I$ of $(G,u)$ such that $G^+=I^+\cup I^u$ and the ideal $I_0=I\cap [0,u]$ of $\Gamma(G,u)$ has enough idempotent elements. If $(G_1,u_1,I_1)$ and $(G_2,u_2,I_2)$ are two objects of $\mathcal{PUALG}$, then a mapping $f: (G_1,u_1,I_1) \to (G_2,u_2,I_2)$ is a morphism if $f$ is a homomorphism of unital $\ell$-groups such that $f(I_1)\subseteq I_2$. Our aim is to show that $\mathcal{PUALG}$ is categorically equivalent to the category $\mathcal{PMV}$. We will follows techniques used in the previous categorical equivalence.

Let us define a functor $\Gamma_I: \mathcal{PUALG}\to \mathcal{PMV}$ as follows: if $(G,u,I)$ is an object of $\mathcal{PUALG}$, then
$$
\Gamma_I(G,u,I):=(\Gamma(G,u),I\cap[0,u]),
$$
and if $f$ is a morphism from an object $(G_1,u_1,I_1)$ into another one $(G_2,u_2,I_2)$, then
$$\Gamma_I(f)(x) :=f(x), \quad x \in \Gamma(G,u).
$$

\begin{prop}\label{pr:equi5}
$\Gamma_I$ is a well-defined functor that is faithful and full.
\end{prop}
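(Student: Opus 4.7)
The plan is to verify the three claims in order: first that $\Gamma_I$ sends objects to objects and morphisms to morphisms (hence is well-defined), then faithfulness, then fullness. Each step builds on Mundici's functor $\Gamma$ (Theorem \ref{functor}), together with the correspondence between (maximal) $\ell$-ideals of $(G,u)$ and (maximal) ideals of $\Gamma(G,u)$ stated just before Proposition \ref{pr:equi4}, and the translation device supplied by Proposition \ref{pr:equi4}.

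For well-definedness on objects, given $(G,u,I)\in\mathcal{PUALG}$ I would set $I_0=I\cap[0,u]$ and verify each of the four defining properties of an object of $\mathcal{PMV}$: (a) $\Gamma(G,u)$ is an $MV$-algebra by Mundici; (b) $I_0$ is a maximal ideal of $\Gamma(G,u)$ by the translation (i)–(ii) before Proposition \ref{pr:equi4}; (c) $I_0$ has enough idempotents by the hypothesis on $(G,u,I)$; (d) $\Gamma(G,u)=I_0\cup I_0'$ by Proposition \ref{pr:equi4}, using exactly the hypothesis $G^+=I^+\cup I^u$. For morphisms, let $f:(G_1,u_1,I_1)\to(G_2,u_2,I_2)$ be a morphism in $\mathcal{PUALG}$. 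Since $f$ is a unital $\ell$-group morphism, its restriction to $[0,u_1]$ is an $MV$-homomorphism into $\Gamma(G_2,u_2)$ (again Mundici), and $f(I_1)\subseteq I_2$ together with $f([0,u_1])\subseteq[0,u_2]$ yields $f(I_1\cap[0,u_1])\subseteq I_2\cap[0,u_2]$, so $\Gamma_I(f)$ is a morphism in $\mathcal{PMV}$. Preservation of identities and composition is immediate because $\Gamma_I(f)$ is a set-theoretical restriction of $f$.

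For faithfulness, suppose $\Gamma_I(f_1)=\Gamma_I(f_2)$ for morphisms $f_1,f_2:(G_1,u_1,I_1)\to(G_2,u_2,I_2)$. Then $f_1$ and $f_2$ agree on $[0,u_1]$, and since $u_1$ is a strong unit, every $x\in G_1$ can be written as $x=y-z$ with $y,z\in G_1^+$, and every positive element is a finite sum of elements of $[0,u_1]$; additivity of $f_1,f_2$ then forces $f_1=f_2$. (This is just the standard reason why Mundici's $\Gamma$ is faithful, and nothing about the extra ideal data interferes.)

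For fullness, let $\phi:(\Gamma(G_1,u_1),I_1\cap[0,u_1])\to(\Gamma(G_2,u_2),I_2\cap[0,u_2])$ be a morphism in $\mathcal{PMV}$. By Theorem \ref{functor}, $\phi$ extends uniquely to a unital $\ell$-group homomorphism $f:(G_1,u_1)\to(G_2,u_2)$. The only nontrivial point is $f(I_1)\subseteq I_2$: given $x\in I_1$, apply formula (\ref{eq:ideal}) to write $x=x_1+\cdots+x_n-y_1-\cdots-y_m$ with $x_i,y_j\in I_1\cap[0,u_1]$; then $f(x)=\phi(x_1)+\cdots+\phi(x_n)-\phi(y_1)-\cdots-\phi(y_m)$ with each $\phi(x_i),\phi(y_j)\in I_2\cap[0,u_2]$, and (\ref{eq:ideal}) applied to $(G_2,u_2,I_2)$ places $f(x)$ in $I_2$. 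Thus $f$ is a morphism in $\mathcal{PUALG}$ with $\Gamma_I(f)=\phi$. The main obstacle, and the only place where the special structure of $\mathcal{PUALG}$ is really used, is this last verification: it is precisely Proposition \ref{pr:equi4} together with the explicit description (\ref{eq:ideal}) of the $\ell$-ideal generated by a maximal ideal of the $MV$-algebra that makes the ideal-preservation condition transfer between the $MV$-level and the $\ell$-group level.
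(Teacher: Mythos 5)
Your proof is correct and follows essentially the same route as the paper's: restriction gives a well-defined functor, faithfulness comes from the fact that a unital $\ell$-group homomorphism is determined by its values on $[0,u_1]$, and fullness comes from extending an $MV$-homomorphism to a unital $\ell$-group homomorphism and checking ideal preservation via the correspondence between maximal ideals of $\Gamma(G,u)$ and maximal $\ell$-ideals of $(G,u)$. The only difference is one of detail: where the paper merely asserts that $I_i\cap[0,u_i]$ extends uniquely to $I_i$, you verify $f(I_1)\subseteq I_2$ explicitly using the description (\ref{eq:ideal}), which is a welcome clarification rather than a change of method.
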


\begin{proof}
Clearly $\Gamma_I(G,u,I)=(\Gamma(G,u),I\cap [0,u]) \in \mathcal{PMV}$.
If $f: (G_1,u_1,I_1) \to (G_2,u_2,I_2)$ is a morphism, then the restriction of $f$ onto $\Gamma(G_1,u_1)$ is in fact a homomorphism of $MV$-algebras with $f(I_1)\subseteq I_2$, so that $\Gamma_I(f)(I_1\cap [0,u_1])\subseteq I_2\cap [0,u_2]$,  and $\Gamma_I$ is a correctly defined functor.

Let $f_1$ and $f_2$ be two morphisms from $(G_1,u_1,I_1)$ into $(G_2,u_2,I_2)$ such that $\Gamma(f_1)=\Gamma(f_2)$. Then $f_1(x)=f_2(x)$ for each $x\in \Gamma(G_1,u_1)$. Since $f_i$ for $i=1,2$ is a homomorphism of unital $\ell$-groups, it is easy to see that $f_1(x)=f_2(x)$ for each $x \in G_1$ and $f_1=f_2$.

Now let $\mu: \Gamma_I(G_1,u_1,I_1)=(\Gamma(G_1,u_1),I_1\cap [0,u_1])\to \Gamma_I(G_2,u_2,I_2)=(\Gamma(G_2,u_2),I_2\cap [0,u_2])$ be a morphism, i.e. $\mu$ is an $MV$-homomorphism from $\Gamma(G_1,u_1)$ into $\Gamma(G_2,u_2)$ such that $\mu(I_1\cap [0,u_1])\subseteq I_2\cap [0,u_2]$. Using methods of the proof of \cite[Prop 6.1]{Dvu2}, we can uniquely extend $\mu$ to a homomorphism of unital $\ell$-groups $f:G_1 \to G_2$. Since $I_i\cap [0,u_i]$ can be uniquely extended to the $\ell$-ideal $I_i$, $i=1,2$, we have that $f$ is a morphism from $(G_1,u_1,I_1)$ into $(G_2,u_2,I_2)$, which proves $\Gamma_I$ is a full functor because $\Gamma_I(f)=\mu$.
\end{proof}

Now we introduce the following notions. On every $MV$-algebra $N$ we can define a partial addition $+$ such that $x+y$ is defined iff $x\odot y = 0$, and in such a case, $x+y:=x\oplus y$; if $N =\Gamma(G,u)$, then the partial addition coincides with the group addition related to $[0,u]$. We say that a couple $(G,f)$ is a {\it universal group} for an $MV$-algebra $N$ if (i) $f$ is a mapping from $M$ into a po-group $G$ which preserves partial addition $+$ on $N$ such that $G= G^+-G^+$, $f(M)$ generates $G^+$ as a semigroup, (ii) for any group $K$ and any $+$-preserving mapping $h:N\to K$, there is a group homomorphism $\phi: G \to K$ such that $h = \phi \circ f$. Due to \cite[Thm 5.3]{Dvu2} if $N\cong \Gamma(G,u)$, then $(G,f)$ is a universal group for $N$, where $f$ is an isomorphism $f: N \to \Gamma(G,u)$.

\begin{prop}\label{pr:equi6}
The functor $\Gamma_I$ from the category $\mathcal{PUALG}$ into the category $\mathcal{PMV}$ has a left-adjoint.
\end{prop}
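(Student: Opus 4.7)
The plan is to mimic the proof of Proposition \ref{pr:equi3}: for each object $(N,I_0)$ in $\mathcal{PMV}$ I will exhibit a universal arrow into $\Gamma_I$, and then bundle these arrows into a left-adjoint functor $\Delta:\mathcal{PMV}\to \mathcal{PUALG}$.

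Given $(N,I_0)\in \mathcal{PMV}$, Mundici's theorem (Theorem \ref{functor}) supplies a unique (up to isomorphism) unital Abelian $\ell$-group $(G,u)$ and an $MV$-isomorphism $\eta:N\to \Gamma(G,u)$. Using \cite[Thm~7.2.2]{mundici 1} (or formula (\ref{eq:ideal})), the maximal ideal $\eta(I_0)$ of $\Gamma(G,u)$ extends to a unique maximal $\ell$-ideal $I$ of $(G,u)$ with $I\cap [0,u]=\eta(I_0)$. Because $(N,I_0)\in \mathcal{PMV}$ we have $I_0\cup I_0'=N$, so Proposition \ref{pr:equi4} yields $G^+=I^+\cup I^u$; the ``enough idempotents'' requirement is inherited from $I_0$. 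Hence $(G,u,I)\in \mathcal{PUALG}$, and $\eta$ is a $\mathcal{PMV}$-isomorphism from $(N,I_0)$ onto $\Gamma_I(G,u,I)$.

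I claim that $\bigl((G,u,I),\eta\bigr)$ is a universal arrow from $(N,I_0)$ to $\Gamma_I$. Let $(G',u',I')\in \mathcal{PUALG}$ and let $\mu:(N,I_0)\to \Gamma_I(G',u',I')=(\Gamma(G',u'),I'\cap[0,u'])$ be a $\mathcal{PMV}$-morphism. Composing with $\eta^{-1}$ gives an $MV$-homomorphism $\mu\circ\eta^{-1}:\Gamma(G,u)\to \Gamma(G',u')$ sending $I\cap[0,u]$ into $I'\cap[0,u']$. Since $(G,f)$ with $f:N\to \Gamma(G,u)$ is a universal group for $N$ (see the discussion before Proposition \ref{pr:equi6}), the homomorphism $\mu\circ\eta^{-1}$ extends uniquely to a unital $\ell$-group homomorphism $\widehat{\mu}:G\to G'$, as in the proof of \cite[Prop.~6.1]{Dvu2} (this is exactly the argument used to show $\Gamma_I$ is full in Proposition \ref{pr:equi5}). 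Using the explicit description (\ref{eq:ideal}) of $I$ as the set of differences $x_1+\cdots+x_n-y_1-\cdots-y_m$ with $x_i,y_j\in I\cap[0,u]$, and the fact that $\widehat{\mu}$ is additive and maps $I\cap[0,u]$ into $I'\cap[0,u']\subseteq I'$, I get $\widehat{\mu}(I)\subseteq I'$; therefore $\widehat{\mu}$ is a morphism $(G,u,I)\to (G',u',I')$ in $\mathcal{PUALG}$, and by construction $\Gamma_I(\widehat{\mu})\circ \eta=\mu$. Uniqueness of $\widehat{\mu}$ follows from the uniqueness in the universal property of $(G,f)$.

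Finally, I set $\Delta(N,I_0):=(G,u,I)$, and for a morphism $\nu:(N_1,I_1^0)\to (N_2,I_2^0)$ in $\mathcal{PMV}$ I define $\Delta(\nu)$ to be the unique $\mathcal{PUALG}$-morphism supplied by the universal property applied to $\eta_2\circ\nu$. Functoriality of $\Delta$ and the adjunction identity $\Gamma_I\circ \Delta\Rightarrow \mathrm{id}_{\mathcal{PMV}}$ are then formal consequences of the universal arrow construction (see \cite[Thm~IV.1.2]{MaL}). The step I expect to cause the most trouble is the verification that $\widehat{\mu}(I)\subseteq I'$; everything else is an application of Mundici's equivalence plus the already established Propositions \ref{pr:equi4} and \ref{pr:equi5}.
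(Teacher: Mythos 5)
Your proposal is correct and follows essentially the same route as the paper: both construct the universal arrow from the universal group $(G,f)$ of $N$ (Mundici's equivalence), take $I$ to be the maximal $\ell$-ideal generated by the image of $I_0$, extend any $\mathcal{PMV}$-morphism to a unital $\ell$-group homomorphism via the universal property, and verify $\widehat{\mu}(I)\subseteq I'$ from the additive description of $I$ in (\ref{eq:ideal}). Your version is slightly more explicit in checking, via Proposition \ref{pr:equi4}, that $(G,u,I)$ really is an object of $\mathcal{PUALG}$, a point the paper's proof leaves implicit.
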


\begin{proof}
We assert that for an object $(N,I_0)$, there is a universal arrow $((G,u,I),f)$, i.e. $(G,u,I)$ is an object from $\mathcal{PUALG}$ and $f$ is a morphism from $(N,I_0)$ into $\Gamma_I(G,u,I)=(\Gamma(G,u),I\cap [0,u])= (\Gamma(G,u),I_0)$ such that if $(G',u',I')$ is an object from $\mathcal{PUALG}$ and $f'$ is a morphism from $(N,I_0)$ into $\Gamma_I(G',u',I')=(\Gamma(G',u'),I'\cap [0,u'])$, then there is a unique morphism $f^*: (G,u,I) \to (G',u,I')$ such that $\Gamma_I(f^*)\circ f = f'$.

Take the universal group $(G,f)$ for the $MV$-algebra $N$. Then $f$ is an $MV$-bijection from $M$ onto $\Gamma(G,u)$. We assert $((G,u,I),f)$ is a universal arrow for $(N,I_0)$, where $I$ is an $\ell$-ideal of $G$ generated by $f(I_0)$. Indeed, take an object $(G',u',I')$ from $\mathcal{PUALG}$ and let $f'$ be a morphism from $(N,I_0)\to \Gamma_I(G',u',I')= (\Gamma(G',u'),I'_0)$, where $I'_0=I'\cap [0,u']$. Since $f:M \to \Gamma(G,u) \subset G^+$ is a $+$-preserving mapping and $f':M \to \Gamma(G',u')\subseteq G'$ is also a $+$-preserving mapping, then there is a unique homomorphism of unital $\ell$-groups $f^*: G\to G'$ such that $f^*\circ f=f'$. First $x \in I_0$, then $f(x)\in I$ and thus $f^*(f(x))=f'(x)\in I'_0$. If we take a general $x\in I$, see (\ref{pr:equi4}), then $f'(x) \in I'$, so that $f^*$ is also a morphism from $(G,u,I)$ to $(G',u,'I')$, i.e. $((G,u,I),f)$ is a universal arrow in question.

Define a mapping $\Xi_I: \mathcal{PMV}\to \mathcal{PUALG}$ by $\Xi_I(N,I_0)=(G,u,I)$ if $((G,u,I),f)$ is a universal arrow for $(N,I_0)$ and $I$ is a maximal $\ell$-ideal of $G$ generated by $f(I_0)$. If $f'$ is a morphism from $(N,I_0)$ into $(N',I'_0)$, there is a unique morphism $f^*: (G,u,I) \to (G',u',I')$, where $N'\cong\Gamma(G',u')$ and $I'$ is a maximal $\ell$-ideal of $G'$ generated by $f'(I_0')$, then $\Xi_I(f'):= f^*$. Therefore, $\Xi_I$ is a left-adjoint of $\Gamma_I$.
\end{proof}

\begin{thm}\label{th:equi7}
The functor $\Gamma_I$ defines a categorical equivalence of the category $\mathcal{PUALG}$ and the category $\mathcal{PMV}$.
\end{thm}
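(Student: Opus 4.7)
The plan is to invoke the standard criterion for categorical equivalence, \cite[Thm IV.4.1]{MaL}: a functor is an equivalence if it is faithful, full, and essentially surjective on objects. Faithfulness and fullness of $\Gamma_I$ were already established in Proposition \ref{pr:equi5}, so the task reduces to showing that every object $(N,I_0)$ of $\mathcal{PMV}$ is isomorphic (in $\mathcal{PMV}$) to $\Gamma_I(G,u,I)$ for a suitable object $(G,u,I)$ of $\mathcal{PUALG}$.

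First, given an object $(N,I_0)$ of $\mathcal{PMV}$, apply Mundici's functor $\Xi$ from Theorem \ref{functor} to obtain a unital Abelian $\ell$-group $(G,u)$ and an $MV$-isomorphism $f: N\to \Gamma(G,u)$; this is precisely the universal group of $N$ used in the proof of Proposition \ref{pr:equi6}. Let $I$ denote the maximal $\ell$-ideal of $(G,u)$ generated by $f(I_0)$; by the correspondence recalled before Proposition \ref{pr:equi4} (see \cite[Thm 7.2.2]{mundici 1}), $I$ is indeed maximal and satisfies $I\cap[0,u]=f(I_0)$, so $\Gamma_I(G,u,I)=(\Gamma(G,u),f(I_0))$ is isomorphic to $(N,I_0)$ via $f$.

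Second, I must verify that the triple $(G,u,I)$ is genuinely an object of $\mathcal{PUALG}$, i.e.\ that it meets the two side conditions in the definition. The ideal $f(I_0)$ of $\Gamma(G,u)$ has enough idempotents simply because $I_0$ does and $f$ preserves idempotency. The remaining condition $G^+=I^+\cup I^u$ is exactly the content of Proposition \ref{pr:equi4}, whose hypothesis $N=I_0\cup I_0'$ is built into the definition of $\mathcal{PMV}$; so a direct application of Proposition \ref{pr:equi4} (transported through the isomorphism $f$) yields the desired decomposition of $G^+$. This is the main technical step of the proof, and Proposition \ref{pr:equi4} has been set up precisely to shoulder it.

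Finally, assembling the pieces: $\Gamma_I$ is faithful and full by Proposition \ref{pr:equi5}, and essentially surjective by the construction just described. The criterion \cite[Thm IV.4.1]{MaL} therefore certifies that $\Gamma_I$ is a categorical equivalence between $\mathcal{PUALG}$ and $\mathcal{PMV}$, with quasi-inverse given by the left-adjoint $\Xi_I$ constructed in Proposition \ref{pr:equi6}. Combined with Theorem \ref{th:equi1}, this also provides a categorical equivalence between $\mathcal{PUALG}$ and the category $\mathcal{PEMV}$ of proper $EMV$-algebras.
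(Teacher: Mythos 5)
Your proof is correct and follows essentially the same route as the paper: the paper's own proof simply cites \cite[Thm IV.4.1(i),(iii)]{MaL} together with Propositions \ref{pr:equi5} and \ref{pr:equi6}, where the universal-arrow construction of Proposition \ref{pr:equi6} supplies exactly the essential surjectivity you spell out explicitly via Mundici's functor and Proposition \ref{pr:equi4}. You have merely made explicit the verification that the paper leaves packaged inside Proposition \ref{pr:equi6}.
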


\begin{proof}
The statement follows from \cite[Thm IV.4.1(i),(iii)]{MaL} and Propositions \ref{pr:equi5}--\ref{pr:equi6}.
\end{proof}

\begin{cor}\label{co:equi8}
The categories $\mathcal{PUALG}$, $\mathcal{PMV}$ and $\mathcal{PEMV}$ are mutually categorically equivalent.
\end{cor}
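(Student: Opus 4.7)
The plan is to obtain this corollary as a direct consequence of the two categorical equivalences already established in Theorems \ref{th:equi1} and \ref{th:equi7}, together with the elementary fact that the composition of two categorical equivalences is again a categorical equivalence. Since equivalence of categories is symmetric and transitive, once we have $\mathcal{PMV}\simeq \mathcal{PEMV}$ via $\Phi$ and $\mathcal{PUALG}\simeq \mathcal{PMV}$ via $\Gamma_I$, the mutual equivalence of the three categories is immediate.

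First I would simply invoke Theorem \ref{th:equi1} to recall that the functor $\Phi$ witnesses $\mathcal{PMV}\simeq \mathcal{PEMV}$, with the left-adjoint $\Psi$ from the proof of Proposition \ref{pr:equi3} providing a quasi-inverse. Next I would invoke Theorem \ref{th:equi7} to recall that $\Gamma_I$ witnesses $\mathcal{PUALG}\simeq \mathcal{PMV}$, with $\Xi_I$ from the proof of Proposition \ref{pr:equi6} as its quasi-inverse. Then I would form the composite functors $\Phi\circ \Gamma_I : \mathcal{PUALG}\to \mathcal{PEMV}$ and $\Xi_I\circ \Psi: \mathcal{PEMV}\to \mathcal{PUALG}$, and observe that composition of faithful, full, and essentially surjective functors is faithful, full, and essentially surjective. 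By the standard characterization of equivalences (see \cite[Thm IV.4.1]{MaL}, already invoked in the paper), this composite is an equivalence.

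There is essentially no obstacle; the only thing to be careful about is to make explicit that the natural isomorphisms $\Phi\Psi\cong \mathrm{id}_{\mathcal{PEMV}}$, $\Psi\Phi\cong \mathrm{id}_{\mathcal{PMV}}$, $\Gamma_I\Xi_I\cong \mathrm{id}_{\mathcal{PMV}}$, $\Xi_I\Gamma_I\cong \mathrm{id}_{\mathcal{PUALG}}$ combine (via horizontal composition of natural transformations) to give $(\Phi\Gamma_I)(\Xi_I\Psi)\cong \mathrm{id}_{\mathcal{PEMV}}$ and $(\Xi_I\Psi)(\Phi\Gamma_I)\cong \mathrm{id}_{\mathcal{PUALG}}$. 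So a one-paragraph argument suffices: cite the two previous theorems, observe that equivalences compose, and conclude the three categories are pairwise categorically equivalent.
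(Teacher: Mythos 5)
Your proposal is correct and matches the paper's own proof, which simply derives the corollary from Theorem \ref{th:equi1} and Theorem \ref{th:equi7} by composing the two equivalences. The extra detail you give about composing quasi-inverses and natural isomorphisms is a harmless elaboration of the same one-line argument.
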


\begin{proof}
It follows from Theorem \ref{th:equi1} and Theorem \ref{th:equi7}.
\end{proof}

\section{Conclusion}

We have introduced the notion of an $EMV$-algebra, Definition \ref{de:GMV}, which generalizes the notion of an $MV$-algebra and of a generalized Boolean algebra. We have exhibited its basic properties and notions as ideals, congruences, filters, and their mutual relationship, Theorem \ref{3.12}. Nevertheless an $EMV$-algebra $M$ has not necessarily a top element, $M$ has a maximal ideal, Theorem \ref{3.26}. We have defined an $EMV$-clan as an $EMV$-algebra of fuzzy sets. We have shown that every $EMV$-algebra is semisimple iff it is isomorphic to some $EMV$-clan of fuzzy sets, Theorem \ref{th:semis}. A state-morphism is any $EMV$-homomorphism from $M$ into the $MV$-algebra of the real interval $[0,1]$ which attains the value 1. State-morphisms are in a one-to-one relationship with maximal ideals of $M$, Theorem \ref{th:state}.

We have shown that every $EMV$-algebra can be embedded into an $MV$-algebra, Theorem \ref{3.35}. Theorem \ref{th:embed} characterizes any $EMV$-algebra saying that either it is an $MV$-algebra or it can be embedded into an $MV$-algebra as its maximal ideal.

The class of $EMV$-algebras forms a variety, Theorem \ref{3.7}. Using the equational base of any subvariety of the variety of $MV$-algebras, \cite{DiLe}, we describe a functional base of any subvariety of the variety $\mathbb{EMV}$ of $EMV$-algebras, Corollary \ref{co:3}, and the cardinality of all subvarieties of the variety $\mathbb{EMV}$ is $\aleph_0$, Theorem \ref{th:3}. Finally, we presented mutually categorical equivalencies of the category of proper $EMV$-algebras, a special category of $MV$-algebras $N$ with a fixed maximal ideal $I$ having enough idempotents, and a special categories of Abelian unital $\ell$-groups, Theorem \ref{th:equi1}, Theorem \ref{th:equi7} and Corollary \ref{co:equi8}.

With the present paper we have opened a new and interesting window into the realm of unbounded generalizations of $MV$-algebras and generalized Boolean algebra, and we hope to continue in this research, for example with a variant of the Loomis--Sikorski theorem for $\sigma$-complete $EMV$-algebras.


\begin{thebibliography}{AnCo}
\footnotesize{


\bibitem[AnFe]{Anderson1}
M. Anderson and T. Feil, Lattice-Ordered Groups: An Introduction,
{\it Springer Science and Business Media}, USA, 1988.

\bibitem[Bly]{Bly}
T.S. Blyth, Lattice and Ordered Algebraic Structures, {\it Springer-Verlag}, London,  2005.

\bibitem[BuSa]{burris}
S. Burris, H.P. Sankappanavar, A Course in Universal Algebra, {\it Springer-Verlag}, New York, 1981.

\bibitem[Cha]{Cha}
C.C. Chang,  Algebraic analysis of many valued logics, {\it Transaction of the American  Mathemetical Society}, {\bf 88} (1958),  467--490.

\bibitem[CDM]{mundici 1}
R. Cignoli, I.M.L. D'Ottaviano and D. Mundici, Algebraic Foundations of Many-Valued Reasoning,
{\it Springer Science and Business Media}, Dordrecht, 2000.

\bibitem[CoDa]{CoDa}
P. Conrad, M.R. Darnel, Generalized Boolean algebras in lattice-ordered groups, {\it Order}, {\bf 14} (1998), 295--319.

\bibitem[DMN]{dmn}
A. De Simone, D. Mundici, M. Navara, A Cantor-Bernstein theorem for $\sigma $-complete MV-algebras,
{\it Czechoslovak Mathematical Journal}, {\bf 53}(2) (2003), 437--447.

\bibitem[DiSe]{Di Nola}
A. Di Nola, S. Sessa, On MV-algebras of continuous functions, In:
Non-classical Logics and Their Applications to Fuzzy Subsets. A Handbook of the Mathematical Foundations of Fuzzy Set Theory, U. H\"ohle et al. (eds),  {\it Kluwer Academic Publishers}, Dordrecht, 1995, pp. 23--32.

\bibitem[DDT]{DiDvTs}
A. Di Nola, A. Dvure\v{c}enskij, C. Tsinakis, Perfect GMV-algebras,
{\it Communications in Algebra}, {\bf 36}(4) (2008), 1221--1249.

\bibitem[DiLe]{DiLe}
A. Di Nola, A. Lettieri, Equational characterization of all varieties of MV-algebras, {\it Journal of Algebra}, {\bf 221} (1999), 463-–474.

\bibitem[DiRu]{DiRu}
A. Di Nola, C. Russo, The semiring-theoretic approach to MV-algebras:
A survey, {\it Fuzzy Sets and Systems}, {\bf 281} (2015), 134-–154.

\bibitem[Dvu1]{Dvu1}
A. Dvure\v{c}enskij, On pseudo MV-algebras, {\it Soft Computing}, {\bf 5} (2001), 347--354.

\bibitem[Dvu2]{Dvu2}
A. Dvure\v{c}enskij, Pseudo MV-algebras are intervals in $\ell$-groups,
{\it Journal of the Australian Mathematical Society}, {\bf 72} (2002), 427--445.

\bibitem[Dvu3]{Dvu3}
A. Dvure\v censkij, States on pseudo MV-algebras,  {\it Studia Logica}, {\bf 68}  (2001), 301--327.

\bibitem[Dvu4]{Dvu4}
A. Dvure\v censkij, {\it States on pseudo-effect algebras with
general comparability}, Kybernetika {\bf 40} (2004), 397--420.

\bibitem[GaTs]{Tsinakis}
N. Galatos, C. Tsinakis, Generalized MV-algebras, {\it Journal of Algebra}, {\bf 283} (2005), 254--291.

\bibitem[GeIo]{georgescu}
G. Georgescu and A. Iorgulescu, Pseudo $MV$-algebras, {\it Multiple-Valued Logics} {\bf 6} (2001), 193--215.

\bibitem[GlHo]{Glass}
A.M.W. Glass, W. Holland, Lattice-Ordered Groups: Advances and Techniques,  {\bf 48}, Kluwer Academic Publishers, Dordrecht, 1989.

\bibitem[JiTs]{Jipsen}
P. Jipsen, C. Tsinakis, A Survey of Residuated Lattices,
{\it Ordered Algebraic Structures: Proceedings of the Gainesville Conference Sponsored by the University of Florida 28th February--3rd March, 2001}, Springer US, (2002) 19--56.

\bibitem[Kel]{Kel}
J.L. Kelley, General Topology, Van Nostrand, Priceton, New Jersey, 1955.

\bibitem[Kom]{Kom}
Y. Komori, Super \L ukasiewicz propositional logics, {\it Nagoya Mathematical Journal}, {\bf 84} (1981), 119--133.

\bibitem[Loo]{Loo}
L.H. Loomis, On the representation of $\sigma$-complete Boolean algebras, {\it Bulletin of the American Mathematical Society}, {\bf 53} (1947), 757--760.

\bibitem[LuZa]{LuZa}
W.A.J. Luxemburg, A.C. Zaanen, Riesz Spaces, Vol 1, North-Holland Publ. Co, Amsterdam, London 1971.

\bibitem[MaL]{MaL}
S. Mac Lane, Categories for the Working Mathematician, {\it Springer-Verlag,  New York, Heidelberg, Berlin}, 1971.

\bibitem[Mun1]{Mun}
D. Mundici,  Interpretation
of AF $C^*$-algebras in \L ukasiewicz sentential calculus, {\it Journal of Functional Analysis} {\bf 65} (1986),  15--63.

\bibitem[Mun2]{Mun2}
D. Mundici, Averaging the truth-value in \L ukasiewicz logic, {\it Studia Logica},
 {\bf 55} (1995), 113--127.

\bibitem[Mun3]{mundici 2}
D. Mundici, Advanced {\L}ukasiewicz calculus and MV-algebras, {\it Springer, Dordrecht, Heidelberg, London, New York}, 2011.

\bibitem[Rac]{Rach}
J. Rach\r{u}nek, A non-commutative generalization of MV-algebras, {\it Czechoslovak Mathematical Journal} {\bf 52} (2002), 255--273.

\bibitem[RaSa]{Rach2}
J. Rach\r{u}nek, D. \v{S}alounov\'{a},
Monadic GMV-algebras, {\it Archive for Mathematical Logic}, {\bf 47} (2008), 277--297.

\bibitem[ShLu]{Shang}
Y. Shang, R. Lu,
Semirings and pseudo MV algebras, {\it Soft Computing}, {\bf 11} (2007), 847--853.
}

\end{thebibliography}
\end{document}